\documentclass[a4paper]{amsart}

\usepackage[
    a4paper,
    top=30mm,
    bottom=30mm,
    left=25mm,
    right=25mm,
]{geometry}
\usepackage{amsmath}
\usepackage{amssymb}

\usepackage{ascmac}
\usepackage{amsfonts}
\usepackage{amsthm}
\usepackage{mathrsfs}
\usepackage{enumerate}
\usepackage{enumitem}
\usepackage{url}
\usepackage{tikz}
\usepackage[all]{xy}
\usepackage{graphicx}
\usepackage{xcolor}
\usepackage[normalem]{ulem}
\usepackage{hyperref}
\usepackage{cleveref}

\usepackage{fancyhdr}
\allowdisplaybreaks[4]

\theoremstyle{definition}
\newtheorem{main}{Main}[subsection]
\newtheorem{definition}[main]{Definition}
\newtheorem*{definition*}{Definition}
\newtheorem{thm}[main]{Theorem}
\newtheorem*{thm*}{Theorem}
\newtheorem{prop}[main]{Proposition}
\newtheorem*{prop*}{Proposition}
\newtheorem{lem}[main]{Lemma}
\newtheorem*{lem*}{Lemma}
\newtheorem{cor}[main]{Corollary}
\newtheorem*{cor*}{Corollary}
\newtheorem{rmk}[main]{Remark}
\newtheorem*{rmk*}{Remark}
\newtheorem{ex}[main]{Example}
\newtheorem*{ex*}{Example}

\newtheorem*{setting*}{Setting}
\newtheorem{problem}[main]{Problem}
\newtheorem*{problem*}{Problem}
\newtheorem{conjecture}[main]{Conjecture}
\newtheorem*{conjecture*}{Conjecture}

\Crefname{definition}{Definition}{Definitions}
\Crefname{thm}{Theorem}{Theorems}
\Crefname{prop}{Proposition}{Propositions}
\Crefname{lem}{Lemma}{Lemmas}
\Crefname{cor}{Corollary}{Corollaries}
\Crefname{rmk}{Remark}{Remarks}
\Crefname{ex}{Example}{Examples}
\Crefname{setting}{Setting}{Settings}
\Crefname{problem}{Problem}{Problems}
\Crefname{conjecture}{Conjecture}{Conjectures}

\newtheorem{mthm}{Main Theorem}

\Crefname{mthm}{Main Theorem}{Main Theorems}

\numberwithin{equation}{section}

\newcommand{\paren}[1]{\left( #1 \right)}
\newcommand{\bracket}[1]{\left[ #1 \right]}
\newcommand{\set}[1]{\left\{ #1 \right\}}
\newcommand{\abs}[1]{\left| #1 \right|}

\newcommand{\diam}{\mathrm{diam}}

\newcommand{\const}{\mathrm{const}}
\newcommand{\dimh}[1]{\dim_{H}\paren{#1}}
\newcommand{\dimp}[1]{\dim_{P}\paren{#1}}
\newcommand{\dimub}[1]{\overline{\dim}_{B}\paren{#1}}
\newcommand{\dimlb}[1]{\underline{\dim}_{B}\paren{#1}}
\newcommand{\iu}{\sqrt{-1}}
\newcommand{\Lip}{\mathrm{Lip}}
\newcommand{\Int}{\mathrm{Int}}
\newcommand{\Con}{\mathrm{Con}}
\newcommand{\Leb}{\mathrm{Leb}}

\DeclareMathOperator*{\argmax}{arg\,max}

\makeatletter
  \@addtoreset{equation}{subsection}
  
\makeatother

\makeatletter
\def\@settitle{%
  \begin{center}%
    \normalfont\Large\bfseries
    \@title\par
  \end{center}%
}
\makeatother

\title{The Limit Sets of Linear and Nonlinear Infinite IFSs Related to Complex Continued Fractions}
\author{Takumi Okamoto}

\address{Division of Mathematical and Information Sciences \\ Graduate School of Human and Environmental Studies \\
Kyoto University \\ 
Yoshida-nihonmatsu-cho, Sakyo-ku, Kyoto, 606-8501, Japan
}
\email{okamototakumi227@gmail.com}

\begin{document}

\begin{abstract}
    We introduce two families of infinite iterated function systems (IFSs) $\mathcal{F}(\mathbf{d}, T)$ and $\mathcal{G}(\mathbf{d}, T)$,
    parametrized by a sequence of positive real numbers $\mathbf{d}$ and a natural number $T$,
    and investigate the measure-theoretic properties of their limit sets.
    $\mathcal{G}(\mathbf{d}, T)$ is an infinite M\"{o}bius IFS, which is an extension of the IFS of real continued fractions to the IFS on the closed unit disc in the complex plane.
    $\mathcal{F}(\mathbf{d}, T)$ is an infinite linear IFS that shares the same first approximation as $\mathcal{G}(\mathbf{d}, T)$.
    We show that for many choices of $\mathbf{d}$ and $T$,
    the limit sets of both $\mathcal{F}(\mathbf{d}, T)$ and $\mathcal{G}(\mathbf{d}, T)$ exhibit phenomena unique to infinite IFSs,
    such as having zero Hausdorff measure at the Hausdorff dimension, having infinite packing measure at the packing dimension,
    or having different Hausdorff and packing dimensions.
    We also prove that the Hausdorff dimension of the limit set of $\mathcal{F}(\mathbf{d}, T)$ is strictly larger than that of $\mathcal{G}(\mathbf{d}, T)$
    under certain conditions on the parameters.
\end{abstract}

\keywords{infinite iterated function systems, continued fractions, fractal geometry, dimension theory}
\subjclass[2010]{37C45, 37F35}

\maketitle

\section*{Notation}

In this paper, we use the following notation.

\begin{center}
    \begin{tabular}{|c|l|}
        \hline
        Symbol & Meaning \\
        \hline
        $\mathbb{N}$ & The set of positive integers \\
        $\mathbb{Z}$ & The set of integers \\
        $\mathbb{R}$ & The set of real numbers \\
        $\mathbb{C}$ & The set of complex numbers \\
        $\mathbb{D}$ & The unit disc $\set{z \in \mathbb{C} \mid \abs{z} < 1}$ \\
        $B(x, r)$ & The open ball centered at $x$ with radius $r$ $\set{y \in \mathbb{R}^{d} \mid \abs{y-x} < r}$ \\
        $\overline{B}(x, r)$ & The closed ball centered at $x$ with radius $r$ $\set{y \in \mathbb{R}^{d} \mid \abs{y-x} \leq r}$ \\
        $\diam(A)$ & The diameter of the set $A$ \\
        $\Int(A)$ & The interior of the set $A$ \\
        $\Leb_{d}$ & The $d$-dimensional Lebesgue measure on $\mathbb{R}^{d}$ \\
        $\dimh{A}$ & The Hausdorff dimension of the set $A$ \\
        $\dimp{A}$ & The packing dimension of the set $A$ \\
        $\dimub{A}$ & The upper box dimension of the set $A$ \\
        $\dimlb{A}$ & The lower box dimension of the set $A$ \\
        $H_{t}(A)$ & The $t$-dimensional Hausdorff measure of the set $A$ \\
        $\Pi_{t}(A)$ & The $t$-dimensional packing measure of the set $A$ \\
        $\iu$ & The imaginary unit \\
        $e(\theta)$ & $e^{2\pi\iu \theta}$ \\
        $\Lip(s)$ & The Lipschitz constant of the contraction map $s$ \\
        $\Con(x, u, \alpha, l)$ & \parbox[t]{0.7\textwidth}{The open cone with vertex $x$, direction vector $u$, central angle $\alpha$, and altitude $l$ \\ $\set{y \in \mathbb{R}^{d} \mid 0 < \abs{y-x} < l, \frac{\langle y-x, u \rangle_{\mathbb{R}^{d}}}{\abs{y-x}\abs{u}} > \cos\alpha }$} \\
        $\lfloor a \rfloor$ & The greatest integer less than or equal to the real number $a$ \\
        $\lceil a \rceil$ & The smallest integer greater than or equal to the real number $a$ \\
        $\mathrm{Re}(z)$ & The real part of the complex number $z$ \\
        $\mathrm{Im}(z)$ & The imaginary part of the complex number $z$ \\
        \hline
    \end{tabular}
\end{center}

\section{Introduction}\label{sec:introduction}

\subsection{Historical Background}
Typical fractal sets, such as the middle-third Cantor set and the Sierpi\'{n}ski gasket,
can be realized as the limit sets of iterated function systems (IFSs) consisting of finitely many contracting similarities.
The theory of IFSs consisting of finitely many contracting similarities has been extensively studied.
Let $\mathcal{S}$ be an IFS of this type, and let $J_{\mathcal{S}}$ be the limit set of $\mathcal{S}$.
Hutchinson proved the existence of a unique self-similar measure for $J_{\mathcal{S}}$
and showed that the Hausdorff dimension of $J_{\mathcal{S}}$ is equal to the similarity dimension of $\mathcal{S}$  under the open set condition (\cite{Hutchinson1981}).
Furthermore, for such $\mathcal{S}$,
it is known that the Hausdorff, packing, and box dimensions of $J_{\mathcal{S}}$ coincide,
and that $J_{\mathcal{S}}$ is an $s$-set (\cite{Falconer}, \cite{yamaguchi-hata-kigami-en}).

It is a natural and important question whether analogous results remain valid
when the assumptions on $\mathcal{S}$ are relaxed.
Mauldin and Williams proved that similar results hold
even when $\mathcal{S}$ is an IFS consisting of infinitely many similitudes with the open set condition (\cite{MauldinWilliams1986}).
Mauldin and Urba\'{n}ski studied the dimension and measure of the limit sets of IFSs
satisfying the open set condition and consisting of countably many conformal contractions (\cite{MauldinUrbanski1996}, \cite{MauldinUrbanski1999}).
In the case where $\mathcal{S}$ is such an infinite IFS,
it has been proved that the following three phenomena, which do not occur for finite IFSs, can occur (\cite{Mauldin1995}, \cite{MauldinUrbanski1996}, \cite{MauldinUrbanski1999}).

\begin{itemize}
    \item[\textbf{(C1)}] $H_{\dimh{J_{\mathcal{S}}}}(J_{\mathcal{S}}) = 0$.
    \item[\textbf{(C2)}] $\Pi_{\dimp{J_{\mathcal{S}}}}(J_{\mathcal{S}}) = \infty$.
    \item[\textbf{(C3)}] $\dimh{J_{\mathcal{S}}} < \dimlb{J_{\mathcal{S}}} \leq \dimub{J_{\mathcal{S}}} = \dimp{J_{\mathcal{S}}}$.
\end{itemize}

In particular, they showed the following results on real continued fractions.
For each infinite subset $I$ of $\mathbb{N}$, let $\widetilde{\mathcal{G}}(I)$ be the IFS on $[0, 1]$
generated by the maps $\set{x \mapsto 1/(x+n)}_{n \in I}$.

\begin{thm}[\cite{MauldinUrbanski1999} Theorem 6.1] \label{thm:MauldinUrbanski1999-6.1}
    For $p \in \mathbb{N}$ with $p \geq 2$, let $I_{p} := \set{n^{p} \mid n \in \mathbb{N}}$.
    Define $\widetilde{\mathcal{G}}^{(p)} := \widetilde{\mathcal{G}}(I_{p})$.
    Then, the following holds.
    \begin{equation*}
        \dimh{J_{\widetilde{\mathcal{G}}^{(p)}}} = \dimp{J_{\widetilde{\mathcal{G}}^{(p)}}} > 1/p, \quad 0 < H_{\dimh{J_{\widetilde{\mathcal{G}}^{(p)}}}}\paren{J_{\widetilde{\mathcal{G}}^{(p)}}} < \infty, \quad \Pi_{\dimh{J_{\widetilde{\mathcal{G}}^{(p)}}}}\paren{J_{\widetilde{\mathcal{G}}^{(p)}}} = \infty
    \end{equation*}
\end{thm}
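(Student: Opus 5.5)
We prove the statement with the standard thermodynamic machinery of Mauldin and Urba\'{n}ski for conformal IFSs satisfying the open set condition. Set $h := \dimh{J_{\widetilde{\mathcal{G}}^{(p)}}}$. For the generators $\phi_{k}(x) = 1/(x+k)$ we have $|\phi_{k}'(x)| = (x+k)^{-2} \asymp k^{-2}$ uniformly on $[0,1]$. With $k = n^{p}$ this gives $\|\phi_{k}'\| \asymp n^{-2p}$. The pressure function is
\begin{equation*}
    P(t) = \lim_{m \to \infty} \frac{1}{m} \log \sum_{|\omega| = m} \|\phi_{\omega}'\|^{t}.
\end{equation*}
It is finite exactly when $\sum_{n} n^{-2pt} < \infty$, that is, when $t > 1/(2p)$. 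The finiteness parameter is therefore $\theta = 1/(2p)$, and $P(t) \to \infty$ as $t \downarrow \theta$ because the series diverges at $\theta$. The system is thus strongly regular, so $h$ is the unique zero of $P$ and $h > \theta$. Since $I_{p}$ contains $1 = 1^{p}$, the map $x \mapsto 1/(x+1)$ is present. We obtain $h > 1/p$ by exhibiting a finite subsystem whose dimension already exceeds $1/p$, for instance $\{1, 2^{p}, \dots, N^{p}\}$ with $N$ large. A direct estimate of $\sum_{\omega} \|\phi_{\omega}'\|^{1/p}$ over words of length two using the exact derivatives $(x+k)^{-2}$ should show that $P(1/p) > 0$. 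Alternatively, $\sum_{n} (n^{p}+1)^{-2/p}$ already diverges roughly like $\sum n^{-2}$ with a large constant. This divergence fails, but the sum is still $> 1$ once the $n=1$ term $\ge 1/4^{1/p}$ and the other terms are included. I expect this inequality to be a small computational check that needs some care.

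For packing dimension, we use the theorem that $\dimp{J} = \max\{h, \overline{\dim}_{B}(\text{the accumulation set } \overline{\{\phi_{k}(0)\}})\}$. A cleaner formulation is $\dimp{J} = \max\{h, \dimub{\{\phi_{k}(x_{0})\}}\}$. The points $1/n^{p}$ have box dimension $1/(p+1) < 1/p < h$, so $\dimp{J} = h$.

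For Hausdorff measure we apply the Mauldin--Urba\'{n}ski criterion. Let $m$ be the $h$-conformal measure. Positivity of $H_{h}$ holds when $m(B(x,r)) \lesssim r^{h}$ for all $x$ and small $r$, and the only possible failure is at the accumulation point $0$. Near $0$, the ball $B(0,r)$ contains the cylinders $\phi_{n^{p}}([0,1])$ with $n^{p} \gtrsim 1/r$. Their measure is $\asymp \sum_{n \ge r^{-1/p}} n^{-2ph} \asymp r^{(2ph-1)/p}$, and $(2ph-1)/p = 2h - 1/p \ge h$ because $h \ge 1/p$. Hence $m(B(0,r)) \lesssim r^{h}$. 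Bounded distortion propagates this bound to images of $0$ under compositions, which gives $H_{h}(J) > 0$; finiteness of $H_{h}$ is automatic in dimension one under the open set condition.

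For packing measure we need the reverse density $\liminf_{r\to0} m(B(0,r))/r^{h} = 0$ at $0$. Points generic for $m$ visit neighbourhoods of $0$ at arbitrarily fine scales, so the packing density fails to be bounded below at $m$-a.e.\ point, and this yields $\Pi_{h}(J) = \infty$. The estimate $m(B(0,r)) \asymp r^{2h-1/p}$ together with $2h - 1/p > h$, which is strict because $h > 1/p$, gives exactly this. I expect the main obstacle to be this density step: transferring the behaviour at the single point $0$ to almost every point through the cylinder structure and ergodicity of $m$. For that step we would invoke the Mauldin--Urba\'{n}ski theorem that $\Pi_{h}(J) = \infty$ iff some parabolic-like point has vanishing lower density.
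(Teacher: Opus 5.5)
\textbf{Verdict: there is a genuine gap, in the packing-measure step.} Your skeleton matches what the paper attributes to Mauldin--Urba\'nski. The paper only quotes this theorem, noting that its proof rests on \Cref{thm:MauldinUrbanski1999-4.4-5.2}. The steps are: $\theta = 1/(2p)$; hereditary regularity; $h > 1/p = 2\theta$; $\dim_{P} J = \max\{h, \overline{\dim}_{B}\mathcal{S}_x\}$ with $\dim_B\{1/(n^p+x)\} = 1/(p+1)$; and $m(B(0,r)) \asymp r^{2h-1/p}$.

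The result you plan to invoke (``$\Pi_h(J)=\infty$ iff some parabolic-like point has vanishing lower density'') does not exist in that form. There is no parabolic point here, only the asymptotic boundary $X(\infty)=\{0\}$. The one-directional statement you actually need is Lemma 4.13 of \cite{MauldinUrbanski1996}, whose proof this paper says is wrong. Its corrected version, \Cref{thm:packing-measure-x-infty}, requires $z \in X(\infty)\cap \Int(X)$, but $0$ is an endpoint of $X=[0,1]$.

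Your push-forward heuristic breaks at exactly this point. Conformality gives $m(\phi_\omega(A)) = \int_A |\phi_\omega'|^h\,dm$ only for $A \subset X$. The part $(-r,0)$ of $B(0,r)$ is sent outside the cylinder $\phi_\omega(X)$, onto a set whose mass nothing controls.

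The fix in dimension one is to use balls centred at $w \in \phi_{n^p}(X)$ with radius $w$. Then $B(w,w) = (0,2w) \subset X$, and $m(B(w,w))/w^h \le m(B(0,2w))/w^h \lesssim w^{h-1/p} \to 0$. These balls can be fed into Lemma 4.12 of \cite{MauldinUrbanski1996}, as in the proof of \Cref{thm:packing-measure-x-infty}. They also work in your ergodic argument: push $(0,2w)$ forward by $\phi_{\omega|_n}$, where $w$ is the point coded by $(\omega_{n+1},\omega_{n+2},\dots)$ and $\omega_{n+1}$ is large. Alternatively, cite part (2) of \Cref{thm:MauldinUrbanski1999-4.4-5.2} directly, since $h > 1/p = 2\theta$.

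\textbf{Two other steps must actually be carried out.}

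First, $h > 1/p$. Your text here is self-contradictory, since the series does not diverge. What is needed is $\sum_{n\ge 1}(n^p+1)^{-2/p} > 1$. This gives $P(1/p) \ge \log\sum_n \inf_{[0,1]}|\phi_{n^p}'|^{1/p} > 0$, hence $h > 1/p$. Because $(n^p+1)^{1/p}$ is nonincreasing in $p$, the sum is at least $\sum_{n\ge1}(n^2+1)^{-1} = (\pi\coth\pi - 1)/2 \approx 1.077$.

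Second, $H_h(J) > 0$. Pull a ball back by $\phi_\omega^{-1}$, with $\omega$ the longest word such that $B \cap J \subset \phi_\omega(X)$. This produces an interval meeting at least two first-level cylinders, and that interval need not be near $0$. Consider an interval near $1/n^p$ whose length $\ell$ lies between the gap size $\asymp n^{-p-1}$ and $n^{-p}$. It meets $\lesssim \ell n^{p+1}$ cylinders, each of mass $\asymp n^{-2ph}$. The bound $\ell n^{p+1-2ph} \lesssim \ell^h$ then holds precisely because $h \ge 1/p$. So ``the only possible failure is at $0$'' is true for $I_p$, but it is an extra check, not a consequence of bounded distortion. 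For other digit sets this intermediate regime is an independent condition.

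Finally, $H_h(J)<\infty$ holds for every regular CIFS (Lemmas 4.2--4.3 of \cite{MauldinUrbanski1996}), not because the setting is one-dimensional.
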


\begin{thm}[\cite{MauldinUrbanski1999} Theorem 6.2] \label{thm:MauldinUrbanski1999-6.2}
    For $p \in \mathbb{N}$ with $p \geq 2$ and $l \in \mathbb{N}$,
    define $I_{p, l} := \set{n^{p} \mid n \in \mathbb{N}, n \geq l}$.
    Let $\widetilde{\mathcal{G}}^{(p, l)} := \widetilde{\mathcal{G}}(I_{p, l})$.
    Then, for each $p \in \mathbb{N}$ with $p \geq 2$, there exists $l_{p} \in \mathbb{N}$ such that
    for all $l \in \mathbb{N}$ with $l \geq l_{p}$,
    \begin{equation*}
        \dimh{J_{\widetilde{\mathcal{G}}^{(p, l)}}} < \dimlb{J_{\widetilde{\mathcal{G}}^{(p, l)}}} \leq \dimub{J_{\widetilde{\mathcal{G}}^{(p, l)}}} = \dimp{J_{\widetilde{\mathcal{G}}^{(p, l)}}}
    \end{equation*}
    holds.
\end{thm}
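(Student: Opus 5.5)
The plan is to work entirely within the Mauldin--Urba\'{n}ski theory of conformal IFSs with the open set condition. For these systems one has $\dimh{J} = h := \inf\set{t \mid P(t) \leq 0}$, where $P$ is the topological pressure, and $\dimp{J} = \dimub{J} = \max\set{h, \theta}$. Here the finiteness parameter is $\theta := \inf\set{t \mid \sum_{i} \|\varphi_i'\|^{t} < \infty}$. The maps $\varphi_n(x) = 1/(x+n)$, $n \in I_{p,l}$, form a conformal IFS on $[0,1]$ with open set condition $(0,1)$, and the bounded distortion property holds. Moreover $\|\varphi_{n}'\| \asymp n^{-2}$. For $n = m^{p}$ this gives $\|\varphi_{n}'\|^{t} \asymp m^{-2pt}$, so the series $\sum_{m \geq l} m^{-2pt}$ converges iff $t > 1/(2p)$. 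Hence $\theta = 1/(2p)$, independently of $l$.

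First I will show that the system is not regular at $\theta$ and that the Hausdorff dimension drops towards $\theta$ as $l \to \infty$. Since $\sum_{m \ge l} m^{-2p\theta} = \sum_{m\ge l} m^{-1} = \infty$, we get $P(\theta) = \infty$, so the system is strongly regular and $h > \theta$. That is the wrong direction, so I must recheck the scaling. In fact $\varphi_n'(x) = -1/(x+n)^2$ gives $\|\varphi_n'\| = n^{-2} = m^{-2p}$, and the computation above stands. So the natural candidate for (C3) fails at the level of $\theta$. I therefore expect the intended phenomenon to come instead from comparing $h$ with the lower box dimension directly.

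The key steps are as follows.
\begin{enumerate}
\item Compute $\dimlb{J}$ and $\dimub{J}$ by counting $r$-mesh intervals. For small $r$, the points $\varphi_{n}(J)$ with $n \gtrsim r^{-1/2}$ accumulate near $0$ with spacing $\asymp n^{-2} - (n+1)^{-2}$. In the variable $m$ these points are $m^{-p}$, and the standard count gives roughly $r^{-1/(p+1)}$ boxes from the tail cluster. This yields $\dimlb{J} \geq 1/(p+1)$ uniformly in $l$.
\item Show that $h_{l} := \dimh{J_{\widetilde{\mathcal{G}}^{(p,l)}}}$ tends to $\theta = 1/(2p)$ as $l \to \infty$. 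To do this, use $P_l(t) \leq \log \sum_{m \geq l} C m^{-2pt}$, which is negative for any fixed $t > 1/(2p)$ once $l$ is large.
\item Since $1/(2p) < 1/(p+1)$ for $p \geq 2$, choose $l_{p}$ so that $h_{l} < 1/(p+1)$ for all $l \geq l_p$. This gives $\dimh{J} < \dimlb{J}$.
\item The equality $\dimub{J} = \dimp{J}$ follows from the Mauldin--Urba\'{n}ski theorem that $\dimp{J} = \dimub{J}$ for conformal IFSs satisfying the open set condition, where the point $0$ is the only accumulation of the first-level images.
\end{enumerate}

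The main obstacle is step (1): obtaining a lower bound on $\dimlb{J}$ that is uniform in $l$. The tail cluster of first-level cylinders $\varphi_{m^p}([0,1])$ has centers at $m^{-p}$ and lengths $m^{-2p}$. For scale $r$, consecutive centers at $m^{-p}$ are $r$-separated precisely when $p\,m^{-p-1} \gtrsim r$, that is, for $m \lesssim r^{-1/(p+1)}$. These $\asymp r^{-1/(p+1)}$ cylinders, each meeting $J$, need distinct $r$-boxes once $m \geq l$. This holds for $r$ small relative to $l$, which is enough for the lower box dimension.
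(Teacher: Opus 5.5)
Your steps (1)--(4) are correct and follow the route the paper takes for its analogue \Cref{thm:dimension_gap_poly}; the paper does not reprove the cited result, but with $T=1$ and $d_n=n^{p}$ that theorem is essentially this statement. The three ingredients match. Since $\theta=1/(2p)<1/(p+1)$, discarding finitely many digits makes the pressure negative below $1/(p+1)$. The orbit $\set{\varphi_n(x)}$ gives $\dimlb{J}\ge 1/(p+1)$. Because the maps are bi-Lipschitz, $\dimub{J}=\dimp{J}$ (\Cref{thm:box-dimension-and-packing-dimension-biLipschitz-case}).

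You should delete the opening claim $\dimp{J}=\max\set{h,\theta}$ and the detour that follows it. The correct formula (\Cref{thm:packing-dimension-and-box-dimension}) has $\dimub{\mathcal{S}_x}=1/(p+1)$ where you wrote $\theta=1/(2p)$, and that is exactly why $h>\theta$ is compatible with (C3). Two smaller fixes:

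\begin{itemize}
\item In step (1) the spacing is $m^{-p}-(m+1)^{-p}\asymp m^{-p-1}$, not $n^{-2}-(n+1)^{-2}$.
\item Take $l_p\ge 2$, so that $\varphi_1$, which is not a contraction on $[0,1]$, never appears.
\end{itemize}
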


The proof of \Cref{thm:MauldinUrbanski1999-6.1} relies on the following result.
\begin{thm}[\cite{MauldinUrbanski1999} Proposition 4.4, Lemma 5.2]\label{thm:MauldinUrbanski1999-4.4-5.2}
    Let $I$ be an infinite subset of $\mathbb{N}$. Define $\theta_{\widetilde{\mathcal{G}}(I)}$ by
    \begin{equation*}
        \theta_{\widetilde{\mathcal{G}}(I)} := \inf\set{t \geq 0 \mid P_{\widetilde{\mathcal{G}}(I)}(t) < \infty},
    \end{equation*}
    where $P_{\widetilde{\mathcal{G}}(I)}$ is the pressure function of $\widetilde{\mathcal{G}}(I)$ (see \Cref{def:pressure_function} for the pressure function).
    Then, the following statements hold.
    \begin{enumerate}
        \item If $\dimh{J_{\widetilde{\mathcal{G}}(I)}} < 2\theta_{\widetilde{\mathcal{G}}(I)}$, then $H_{\dimh{J_{\widetilde{\mathcal{G}}(I)}}}\paren{J_{\widetilde{\mathcal{G}}(I)}} = 0$.
        \item If $\dimh{J_{\widetilde{\mathcal{G}}(I)}} > 2\theta_{\widetilde{\mathcal{G}}(I)}$, then $\Pi_{\dimh{J_{\widetilde{\mathcal{G}}(I)}}}\paren{J_{\widetilde{\mathcal{G}}(I)}} = \infty$.
    \end{enumerate}
\end{thm}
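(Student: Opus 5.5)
We prove both statements by comparing the conformal measure of balls centred on the limit set with the $h$-th power of their radii, where $h := \dimh{J_{\widetilde{\mathcal{G}}(I)}}$. We use the standard density theorems for the Hausdorff and packing measures: if $\limsup_{r\to 0} m(B(x,r))/r^{h} = \infty$ for $m$-a.e.\ $x$, then $H_h(J)=0$; if $\liminf_{r\to 0} m(B(x,r))/r^{h}=0$ for $m$-a.e.\ $x$, then $\Pi_h(J)=\infty$.

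\emph{Setting up the measures.} Since $\widetilde{\mathcal{G}}(I)$ is infinite, regularity is not automatic, so we treat two cases.
\begin{enumerate}
    \item If $P(h)=0$ (regular case), there is an $h$-conformal probability measure $m$ and an ergodic invariant measure $\mu \sim m$ with bounded density.
    \item If the system is not regular, the standard Mauldin--Urba\'nski theory gives $h=\theta$. Then $h<2\theta$ holds automatically, so part (1) has content, while part (2) is vacuous.
\end{enumerate}
In the non-regular case of part (1) we use the conformal measure at $h$ (or an approximation argument with $t\downarrow h$); the main line of the argument is the regular case.

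\emph{The key geometric fact.} For the real continued-fraction maps $\phi_n(x) = 1/(x+n)$ we have $\abs{\phi_n'} \asymp n^{-2}$. The image $\phi_n([0,1])$ is an interval near $0$ at distance $\asymp 1/n$ from $0$, of length $\asymp n^{-2}$. By conformality (bounded distortion), $m(\phi_n([0,1])) \asymp n^{-2h}$. Also $\sum_{n\in I} n^{-2t} < \infty$ exactly when $t>\theta$, so $\theta$ is the exponent of convergence of $I$ divided by $2$. Near the accumulation point $0$, the set $J\cap[0,r]$ is the union of the intervals $\phi_n(J)$ with $n\gtrsim 1/r$. Hence
\[
m([0,r]) \asymp \sum_{n\in I,\, n\geq 1/r} n^{-2h}.
\]

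\emph{Part (1).} Since $h<2\theta$, the exponent of convergence of $I$ exceeds $h$. So $\sum_{n\in I} n^{-h} = \infty$, and along a sequence $r_k\to 0$ we get $\sum_{n\geq 1/r_k} n^{-2h} \geq r_k^{h}\, \omega_k$ with $\omega_k\to\infty$. Otherwise the tail bound $\sum_{n\ge N} n^{-2h}\lesssim N^{-h}$ would, by summation by parts, force $\sum n^{-h-\varepsilon}$-type convergence below $2\theta$, contradicting the definition of $\theta$. I expect this step to be the main obstacle: the tail estimate must be converted into a contradiction with $\theta$ carefully, and the blow-up must be obtained along a suitable subsequence.

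\emph{Part (2).} Here $h>2\theta$, and we instead find radii $r_k$ with $m([0,r_k])/r_k^{h}\to 0$. The point is that $I$ is sparse: choose gaps $n_j<n_{j+1}$ in $I$ and take $r$ just below $1/n_j$. Then $m(B(0,r))$ collects only terms $n\ge n_{j+1}$, and the summable tail $\sum n^{-2h}$ with $2h>\ldots$ makes this small compared with $r^{h}$ along some subsequence. Again a summation argument against $\theta$ is needed.

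\emph{Transfer to typical points.} In both parts, the ball estimates at $0$ are pulled back to $\mu$-typical points $x=\pi(\omega)$ by pushing forward under $\phi_{\omega|_k}$. Bounded distortion gives
\[
\frac{m\bigl(B(x,\abs{\phi_{\omega|_k}'}\, r)\bigr)}{\bigl(\abs{\phi_{\omega|_k}'}\, r\bigr)^{h}} \asymp \frac{m\bigl(\phi_{\omega|_k}(B(\cdot,r))\bigr)}{\ldots}.
\]
This needs $\sigma^k\omega$ to come close to the fixed word at $0$, i.e.\ its next digit should be large. By ergodicity of $\mu$ (Birkhoff), typical $\omega$ have infinitely many $k$ with $\omega_{k+1}$ arbitrarily large. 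Since the image of $\phi_{\omega|_k}$ of a neighbourhood of $0$ is a ball around $x$ up to distortion, the density ratios at $x$ inherit the $\limsup=\infty$ (respectively $\liminf=0$) behaviour. The density theorems then give $H_h(J)=0$ and $\Pi_h(J)=\infty$.
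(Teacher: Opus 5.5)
The paper does not prove this statement. It is quoted from Mauldin--Urba\'nski, who, as the paper remarks, did not argue via densities. Your strategy of estimating $m(B(0,r))/r^{h}$ at the single point $0\in X(\infty)$ is nonetheless exactly what the paper does for its analogues (\Cref{main1}, \Cref{thm:measure_poly_f}). There the passage to typical points is delegated to Lemmas 4.9 and 4.12 of Mauldin--Urba\'nski (1996) rather than redone via Birkhoff.

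Your regular-case Part (1) is sound, and the step you flag as the main obstacle is routine. Suppose $R(N):=\sum_{n\in I,\,n\ge N}n^{-2h}\le CN^{-h}$ for all $N$; each $R(N)$ is finite since $P(h)=0$. Then for $s\in(h,2h)$, Abel summation gives $\sum_{n\in I}n^{-s}=\sum_{N\ge1}\bigl(N^{2h-s}-(N-1)^{2h-s}\bigr)R(N)\lesssim\sum_{N}N^{h-s-1}<\infty$, whence $2\theta\le h$. Run with a continuous radius $x=1+1/r$, the same argument together with \Cref{thm:hausdorff-measure-x-infty} appears to settle the paper's \Cref{conj:hausdorff_measure_zero_when_less_than_2theta} in general.

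There are, however, genuine gaps.

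\textbf{Part (2) is not proved.} You never actually use $h>2\theta$. The mechanism you suggest (gaps in $I$, a subsequence, and summability of $\sum n^{-2h}$, which only encodes $h>\theta$) cannot yield $o(r^{h})$. Moreover, for $r$ just below $1/n_j$ the interval $[0,r]$ still meets $\phi_{n_j}([0,1])=[1/(n_j+1),1/n_j]$. The missing one-line idea is this: $h/2>\theta$ means $\sum_{n\in I}n^{-h}<\infty$. Since $[0,r]$ meets $\phi_n([0,1])$ only when $n+1\ge 1/r$, i.e.\ $r^{-h}\le(n+1)^{h}$, one gets $r^{-h}m([0,r])\le 2^{h}\sum_{n\in I,\,n\ge 1/r-1}n^{-h}\to0$ for \emph{every} $r\downarrow0$. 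This is precisely the paper's proof of \Cref{main1}.

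\textbf{The irregular case of Part (1) rests on an object that does not exist.} Irregularity means $P(h)<0$. By \Cref{thm:existence-and-uniqueness-of-conformal-measure}, a $t$-conformal measure exists only if $P(t)=0$, so neither ``the conformal measure at $h$'' nor $t\downarrow h$ is available. Use $P(h)<0$ directly instead: $\sum_{|\omega|=n}\diam(\phi_\omega(X))^{h}\lesssim\sum_{|\omega|=n}\|\phi_\omega'\|^{h}\to0$, since the $n$-th root of the right-hand side tends to $e^{P(h)}<1$. Covering by level-$n$ cylinders then gives $H_h(J)=0$.

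\textbf{Your own transfer to typical points needs two repairs.} First, ``$\limsup=\infty$ $m$-a.e.'' gives $H_h(J)=0$ only together with $H_h|_J\ll m$; alternatively, cite Lemma 4.9, which needs only the blow-up at $0$. Second, in Part (2) you may not push forward a two-sided neighbourhood of $0$. Because $0\in\partial[0,1]$, the image of $(-r,0)$ lies outside $\phi_{\omega|_k}(X)$, so conformality gives no upper bound on its measure. This is exactly what the hypothesis $z\in\Int(X)$ in \Cref{thm:packing-measure-x-infty} guards against, and it fails here. Use instead the one-sided balls $B(y,y)=(0,2y)\subset X$ around $y=\pi(\sigma^k\omega)$, which satisfy $B(y,y)\subset B(0,2y)$.
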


In this paper,
we extend these nonlinear one-dimensional infinite IFSs to the infinite IFSs on the closed unit disc in the complex plane
and show that some of the above results generalize to two-dimensional IFSs.
In addition, we introduce an infinite IFS consisting of similarities
that corresponds, in a certain sense, to the nonlinear system above.
These two infinite IFSs provide many examples of the limit sets
exhibiting phenomena specific to infinite IFSs.
We also compare the Hausdorff dimensions of the limit sets of these two IFSs.

\subsection{Main Results}
\begin{definition}[\Cref{def:f-and-g}]
    For each complex number $\alpha$ with $\abs{\alpha} \geq 2$,
    we define $f_{\alpha}, g_{\alpha}: \overline{\mathbb{D}} \to \overline{\mathbb{D}}$ by
    \begin{equation*}
        f_{\alpha}(z) := \frac{z + \overline{\alpha}}{\abs{\alpha}^{2}-1}, \quad g_{\alpha}(z) := \frac{1}{z + \alpha}  \quad (z \in \overline{\mathbb{D}}).
    \end{equation*}
\end{definition}

\begin{definition}[\Cref{def:d}]
    We define the set of sequences of real numbers $\mathcal{D}$ by
    \begin{equation*}
        \mathcal{D} := \set{\set{d_n}_{n=1}^{\infty} \mid d_1 \geq 2 \text{ and } d_{n+1} - d_n \geq 2 \text{ holds for all } n \in \mathbb{N}}.
    \end{equation*}
\end{definition}

\begin{definition}[\Cref{def:f-and-g-ifs}]
    Let $\mathbf{d} = \set{d_n}_{n=1}^{\infty} \in \mathcal{D}$ and $T \in \mathbb{N}$.
    We define the IFSs $\mathcal{F} = \mathcal{F}\paren{\mathbf{d}, T}$ and
    $\mathcal{G} = \mathcal{G}\paren{\mathbf{d}, T}$ on $\overline{\mathbb{D}}$ as follows.
    \begin{align*}
        \mathcal{F} &:= \set{f_{e(j/T)d_{n}} \mid n \in\mathbb{N},\ j=0,1, \dots, T-1}, \\
        \mathcal{G} &:= \set{g_{e(j/T)d_{n}} \mid n \in\mathbb{N},\ j=0,1, \dots, T-1}.
    \end{align*}
\end{definition}

\begin{figure}
    \centering
    \begin{minipage}{0.48\textwidth}
        \centering
        \includegraphics[width=0.9\textwidth]{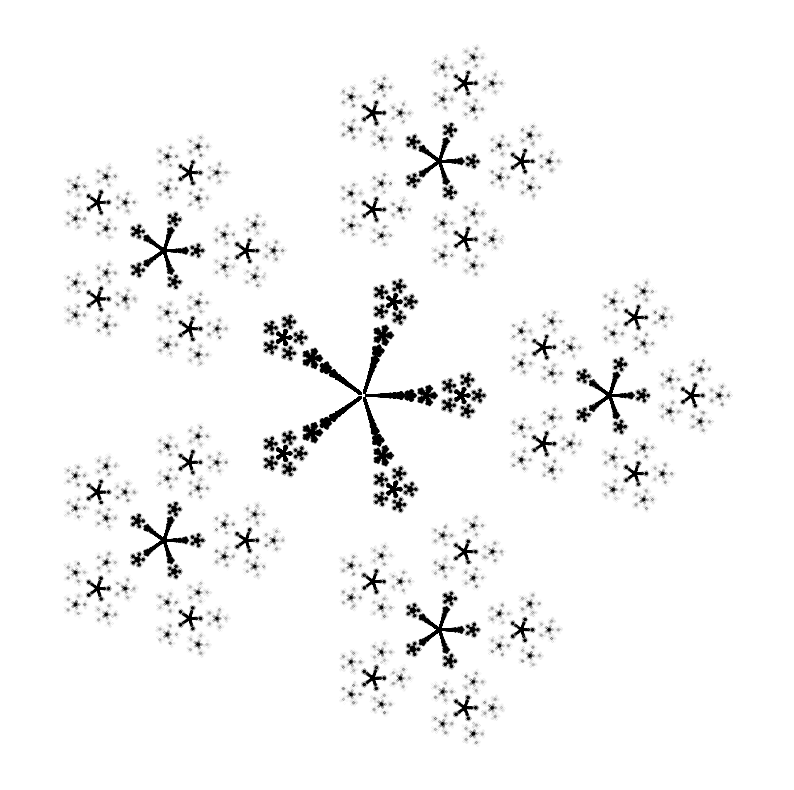}
    \end{minipage}
    \hfill
    \begin{minipage}{0.48\textwidth}
        \centering
        \includegraphics[width=0.9\textwidth]{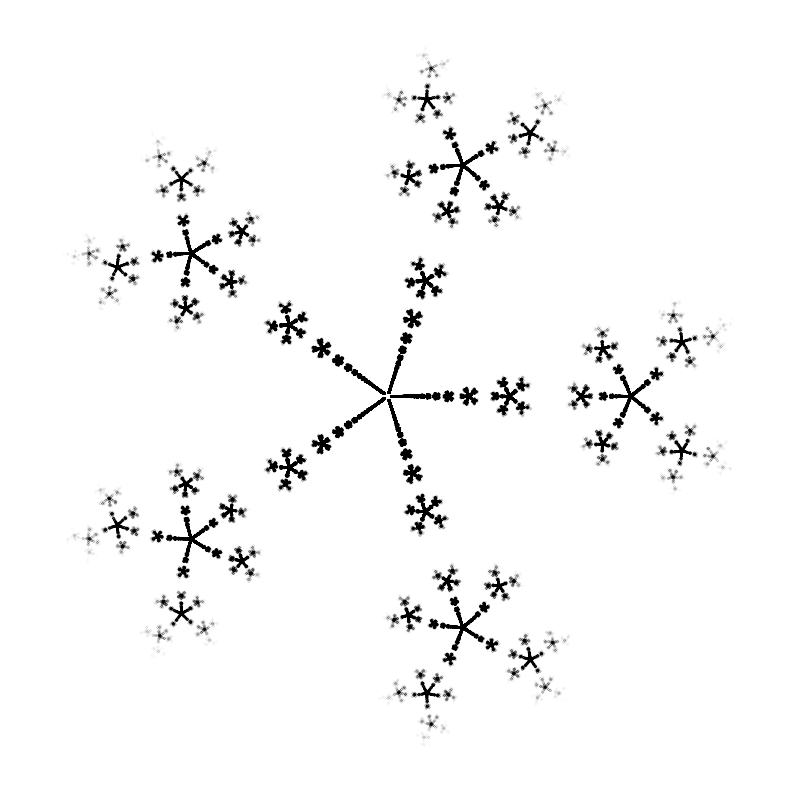}
    \end{minipage}

    \caption{Examples of the limit sets. The left figure shows $J_{\mathcal{F}(\mathbf{d}, T)}$, and the right figure shows $J_{\mathcal{G}(\mathbf{d}, T)}$.
    The parameters are $d_{n} = 2n + 10^{-9}$ and $T=5$.}
\end{figure}

If $T$ and $d_{1}$ satisfy suitable conditions,
then both $\mathcal{F}$ and $\mathcal{G}$ are conformal iterated function systems (CIFSs) (see \Cref{lem:cifs-sufficient-cond} and \Cref{def:CIFS}).
When $\mathcal{F}$ is a CIFS, let $J_{\mathcal{F}}$ and $P_{\mathcal{F}}$ be the limit set and the pressure function of $\mathcal{F}$, respectively.
We define $J_{\mathcal{G}}$ and $P_{\mathcal{G}}$ similarly for $\mathcal{G}$.

The following theorem is based on Lemma 4.9 and a revised version of Lemma 4.13 in \cite{MauldinUrbanski1996}, which relate the density to the dimensions.
This theorem also parallels \Cref{thm:MauldinUrbanski1999-4.4-5.2}.

\begin{mthm}[\Cref{thm:density-at-zero-and-packing-measure-infinity-main1}]\label{main1}
  Let $\mathbf{d} \in \mathcal{D}$ and $T \in \mathbb{N}$. Then the following statements hold.
  \begin{enumerate}
    \item Suppose that $\mathcal{F} = \mathcal{F}\paren{\mathbf{d}, T}$ is a regular CIFS
      (see \Cref{def:regularity} for the definition of a regular CIFS).
      If $P_{\mathcal{F}}(h_{\mathcal{F}}/2) < \infty$, then we have
      \begin{equation*}
        \limsup_{r\downarrow 0}\frac{m_{\mathcal{F}}\paren{B(0, r)}}{r^{h_{\mathcal{F}}}} = 0, \quad \Pi_{h_{\mathcal{F}}}\paren{J_{\mathcal{F}}} = \infty,
      \end{equation*}
      where $h_{\mathcal{F}} = \dimh{J_{\mathcal{F}}}$ and $m_{\mathcal{F}}$ is the $h_{\mathcal{F}}$-conformal measure for $\mathcal{F}$ (see \Cref{def:conformal-measure} for its definition).
    \item Suppose that $\mathcal{G} = \mathcal{G}\paren{\mathbf{d}, T}$ is a regular CIFS.
      If $P_{\mathcal{G}}(h_{\mathcal{G}}/2) < \infty$, then we have
      \begin{equation*}
        \limsup_{r\downarrow 0}\frac{m_{\mathcal{G}}\paren{B(0, r)}}{r^{h_{\mathcal{G}}}} = 0, \quad \Pi_{h_{\mathcal{G}}}\paren{J_{\mathcal{G}}} = \infty,
      \end{equation*}
      where $h_{\mathcal{G}} = \dimh{J_{\mathcal{G}}}$ and $m_{\mathcal{G}}$ is the $h_{\mathcal{G}}$-conformal measure for $\mathcal{G}$.
  \end{enumerate}
\end{mthm}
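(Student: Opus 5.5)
Our plan is to estimate the conformal measure of small balls around $0$ directly, and then use a Mauldin--Urba\'{n}ski-type density theorem (Lemma 4.9 and the revised Lemma 4.13 of \cite{MauldinUrbanski1996}) to pass from zero lower density at the point $0\in J$ to infinite packing measure. We treat $\mathcal{F}$ and $\mathcal{G}$ in parallel, writing $\phi_{n,j}$ for $f_{e(j/T)d_n}$ or $g_{e(j/T)d_n}$ and $h$ for the corresponding $h_{\mathcal{F}}$ or $h_{\mathcal{G}}$. Note that $0$ is a limit point of $J$: the images $\phi_{n,j}(\overline{\mathbb{D}})$ are discs of radius comparable to $d_n^{-2}$ centred at distance comparable to $d_n^{-1}$ from $0$, accumulating only at $0$. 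Regularity gives $P(h)=0$, a conformal measure $m$ with $m(\phi_\omega(A))=\int_A|\phi_\omega'|^h\,dm$, and $\|\phi_{n,j}'\|\asymp d_n^{-2}$ by bounded distortion.

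\textbf{Step 1 (measure of $B(0,r)$).} Fix small $r$. Any first-level cylinder $\phi_{n,j}(J)$ meeting $B(0,r)$ must satisfy $d_n^{-1}\lesssim r$, i.e. $d_n\gtrsim r^{-1}$. Since $m(\phi_{n,j}(J))\asymp d_n^{-2h}$, we get
\[
m(B(0,r))\le C\,T\sum_{n:\,d_n\ge c/r} d_n^{-2h}.
\]
Now $P(h/2)<\infty$ is equivalent to $\sum_n d_n^{-h}<\infty$, because the first-level terms of the pressure at parameter $t$ are $\asymp d_n^{-2t}$. Hence
\[
\sum_{d_n\ge c/r} d_n^{-2h}\le \Big(\sup_{d_n\ge c/r} d_n^{-h}\Big)\sum_{d_n\ge c/r}d_n^{-h}\le (r/c)^{h}\,\varepsilon(r),
\]
where $\varepsilon(r)=\sum_{d_n\ge c/r}d_n^{-h}\to0$ as $r\to 0$. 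This gives $m(B(0,r))/r^h\to0$, so the limsup is $0$. Here the separation $d_{n+1}-d_n\ge2$ and $d_1\ge 2$ are used only to place the discs; the constants $c, C$ come from the explicit formulas for $f_\alpha$ and $g_\alpha$ (for $g_\alpha$, $|g_\alpha(z)|=1/|z+\alpha|\in[1/(d_n+1),1/(d_n-1)]$).

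\textbf{Step 2 (transport to a large set).} We push the estimate at $0$ to every point $x=\pi(\omega)$ whose coding contains arbitrarily long blocks returning near $0$. More precisely, we take $x$ such that for infinitely many $k$ the $(k{+}1)$-th letter has $d_n$ large; for $m$-a.e.\ $x$ this holds, since every letter appears infinitely often by ergodicity of the invariant measure equivalent to $m$. Write $x=\phi_{\omega|k}(y)$ with $y$ close to $0$. Bounded distortion and $\phi_{\omega|k}(B(y,\rho))\approx B(x,\|\phi_{\omega|k}'\|\rho)$ give
\[
\frac{m(B(x,\|\phi'_{\omega|k}\|\rho))}{(\|\phi'_{\omega|k}\|\rho)^h}\lesssim \frac{m(B(y,K\rho))}{\rho^h},
\]
and for $\rho\sim|y|$ the right side is controlled by Step 1, uniformly small along the subsequence. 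Hence $\liminf_{r\to0} m(B(x,r))/r^h=0$ for $m$-a.e.\ $x$.

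\textbf{Step 3 (packing measure).} By the density theorem for packing measure (Lemma 4.13 revised in \cite{MauldinUrbanski1996}), a lower density equal to $0$ on a set of positive $m$-measure forces $\Pi_h(J)=\infty$.

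The main obstacle is Step 2: the balls are around $y$ rather than exactly $0$, so we must choose $\rho$ comparable to $|y|\asymp d_n^{-1}$ and verify that $B(y,K\rho)$ still captures only cylinders with $d_{n'}\gtrsim d_n$, up to finitely many neighbours. We would try $\rho=\delta d_n^{-1}$ with small $\delta$ and use the $2$-separation of $\mathbf{d}$, plus the $T$-fold rotational spacing, to make the count uniform. For $\mathcal{G}$, an additional care point is that the derivative is genuinely nonconstant; the Koebe-type distortion of Möbius maps on $\overline{\mathbb{D}}$ handles this.
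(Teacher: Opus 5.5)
Your proposal is correct in outline, but after Step 1 it takes a longer route than the paper. Your Step 1 is the paper's argument. The paper bounds $m(B(0,r))/r^{h}$ by $T\sum_{n\ge n(r)+1}(d_n-1)^{-h}$, which is a tail of a series that converges because $P(h/2)<\infty$. Your splitting $\sum d_n^{-2h}\le(\sup d_n^{-h})\sum d_n^{-h}$ is the same estimate.

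\textbf{How the routes differ.} The paper stops after Step 1 and invokes Theorem \ref{thm:packing-measure-x-infty}, its corrected version of Lemma 4.13 of Mauldin--Urba\'nski. That theorem needs only one point $z\in X_{\mathcal{S}}(\infty)\cap\Int(X)$ with upper density $0$. Here $z=0$, since $X_{\mathcal{F}}(\infty)=X_{\mathcal{G}}(\infty)=\{0\}\subset\Int(\overline{\mathbb{D}})$.

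Your Steps 2--3 instead re-derive that theorem by hand. You transport the estimate at $0$ to $m$-a.e.\ point using ergodicity, bounded distortion and Koebe, and then apply the classical packing density theorem. That theorem says lower density $0$ on a set of positive measure forces $\Pi_h=\infty$; this is the standard result, not the revised Lemma 4.13, which is the single-point criterion that would let you skip Steps 2--3 entirely.

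Your route is valid and self-contained. It also makes visible where Koebe's $1/4$ theorem and the interiority of $0$ are used: you need $B(y,K\rho)\subset B(0,1/2)\subset\Int(\overline{\mathbb{D}})$ so that the pulled-back ball lies in $X$ and in the domain of conformality. This is exactly the hypothesis whose absence broke the original Lemma 4.13, so state it explicitly. The cost is length, plus reliance on an ergodic shift-invariant measure equivalent to $m$.

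\textbf{The Step 2 obstacle is not real.} Your proposed workaround (taking $\rho=\delta d_n^{-1}$ and counting neighbouring cylinders via the $2$-separation and the rotational spacing) is unnecessary. Step 1 controls $m(B(0,s))/s^{h}$ for \emph{all} small $s$, not just along a sequence. So take $\rho=|y|$ and use $B(y,K|y|)\subset B(0,(K+1)|y|)$:
\[
\frac{m(B(y,K|y|))}{|y|^{h}}\le (K+1)^{h}\,\frac{m\bigl(B(0,(K+1)|y|)\bigr)}{\bigl((K+1)|y|\bigr)^{h}}.
\]
The right-hand side is small once $d_n$ is large, because $|y|\le 1/(d_n-1)$. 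This is precisely the inclusion $B(z_i,|z-z_i|)\subset B(z,2|z-z_i|)$ used in the paper's proof of Theorem \ref{thm:packing-measure-x-infty}.

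To finish the a.e.\ claim, fix $\eta>0$ and choose a single letter $(n,j)$ with $d_n$ large enough. Then use that $m$-a.e.\ coding contains this letter infinitely often, with scales $\lVert\phi_{\omega|k}'\rVert\,|y|\to 0$. Intersecting over $\eta=1/p$ gives $\liminf_{r\to0}m(B(x,r))/r^{h}=0$ for $m$-a.e.\ $x$.
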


For sequences $\mathbf{d} = \set{d_n}_{n=1}^{\infty} \in \mathcal{D}$ exhibiting polynomial growth with respect to $n$,
\Cref{main1} yields the following theorem,
which is analogous to \Cref{thm:MauldinUrbanski1999-6.1}.
This theorem establishes the existence of an abundance of the limit sets satisfying
the conditions \textbf{(C1)} or \textbf{(C2)} that are unique to infinite CIFSs.

\begin{mthm}[\Cref{thm:measure_poly_f}, \Cref{thm:measure_poly_g}, \Cref{cor:hausdorff-dim-equals-packing-dim-and-packing-measure-infty},  \Cref{thm:measure_poly_gamma_general_T_large}]\label{main2}
  Let $\mathbf{d} = \set{d_n}_{n=1}^{\infty} \in \mathcal{D}$ and $T \in \mathbb{N}$.
  Suppose that there exist constants $0 < c_{1} \leq c_{2} < \infty, \ c_{3} > 0, \ \gamma \geq 1$ and $N\in\mathbb{N}$
  such that for all $n \geq N$, $c_{1}n^{\gamma} \leq d_{n} \leq c_{2}n^{\gamma}$ and $d_{n+1} - d_{n} \geq c_{3}n^{\gamma-1}$ hold.
  We also assume that $c_{1}N^{\gamma} > 1$.
  Then the following (1) and (2) hold.
  \begin{enumerate}
    \item Suppose that $\mathcal{F} = \mathcal{F}\paren{\mathbf{d}, T}$ is a CIFS.
      Then, we have the following (a) to (c).
      \begin{enumerate}
        \item If $h_{\mathcal{F}} = \dimh{J_{\mathcal{F}}} < 1/\gamma$, then $H_{h_{\mathcal{F}}}\paren{J_{\mathcal{F}}} = 0$.
        \item If $h_{\mathcal{F}} = \dimh{J_{\mathcal{F}}} > 1/\gamma$, then $\dimp{J_{\mathcal{F}}} = \dimh{J_{\mathcal{F}}}$ and $\Pi_{h_{\mathcal{F}}}\paren{J_{\mathcal{F}}} = \infty$.
        \item If $T \geq c_{2}^{2/\gamma}(N+1)$, then $\dimh{J_{\mathcal{F}}} > 1/\gamma$.
      \end{enumerate}
    \item Suppose that $\mathcal{G} = \mathcal{G}\paren{\mathbf{d}, T}$ is a CIFS.
      Then, we have the following (a) to (c).
      \begin{enumerate}
        \item If $h_{\mathcal{G}} = \dimh{J_{\mathcal{G}}} < 1/\gamma$, then $H_{h_{\mathcal{G}}}\paren{J_{\mathcal{G}}} = 0$.
        \item If $h_{\mathcal{G}} = \dimh{J_{\mathcal{G}}} > 1/\gamma$, then $\dimp{J_{\mathcal{G}}} = \dimh{J_{\mathcal{G}}}$ and $\Pi_{h_{\mathcal{G}}}\paren{J_{\mathcal{G}}} = \infty$.
        \item If $T \geq c_{2}^{2/\gamma}(N+1)$, then $\dimh{J_{\mathcal{G}}} > 1/\gamma$.
      \end{enumerate}
    \end{enumerate}
\end{mthm}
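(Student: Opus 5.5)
We work out the pressure function explicitly in terms of $\gamma$ and then feed the result into \Cref{main1} and the standard Mauldin--Urba\'nski density criteria.

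\textbf{Step 1: contraction ratios.} For $f_\alpha$ the ratio is exactly $\Lip(f_\alpha)=1/(\abs{\alpha}^2-1)$. For $g_\alpha$ we have $\abs{g_\alpha'(z)}=\abs{z+\alpha}^{-2}$, which lies between $(\abs{\alpha}+1)^{-2}$ and $(\abs{\alpha}-1)^{-2}$ on $\overline{\mathbb{D}}$. In both cases, with $\abs{\alpha}=d_n$, the ratio is comparable to $d_n^{-2}$, uniformly for $n\ge N$; the hypothesis $c_1N^\gamma>1$ keeps $d_n\pm1$ comparable to $d_n$. Hence
\[
\sum_{e}\|\phi_e'\|^t \asymp T\sum_n d_n^{-2t} \asymp T\sum_{n\ge N} n^{-2\gamma t}.
\]
It follows that $P(t)<\infty$ if and only if $t>1/(2\gamma)$, so $\theta=1/(2\gamma)$ for both $\mathcal{F}$ and $\mathcal{G}$. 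The condition $P(h/2)<\infty$ therefore becomes $h>1/\gamma$.

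\textbf{Step 2: part (b).} Assume $h>1/\gamma$. Then $h>\theta$, so the system is strongly regular, hence regular. \Cref{main1} then gives $\Pi_h(J)=\infty$. For the equality $\dimp{J}=\dimh{J}$ we invoke the Mauldin--Urba\'nski result that $\dimp{J}=\dimub{J}=\max\{h,\dimub{\overline{\{\phi_e(0)\}}}\}$, or a comparable bound. The fixed points, equivalently the images of $0$, are $\overline{\alpha}/(\abs{\alpha}^2-1)$, resp.\ $1/\alpha$, which have modulus $\approx 1/d_n$. They lie on $T$ rays at radii of order $n^{-\gamma}$ with gaps of order $n^{-\gamma-1}$; this uses the hypothesis $d_{n+1}-d_n\ge c_3n^{\gamma-1}$. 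The box dimension of such a set is $1/(\gamma+1)\le 1/\gamma<h$. So $\dimp{J}=h$.

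\textbf{Step 3: part (a).} Assume $h<1/\gamma$. Since $P(h)=0$ is attained, the system is regular, and the usual argument (as in \cite{MauldinUrbanski1999}) gives $H_h(J)=0$ provided $\limsup_{r\to0} m(B(x,r))/r^h=\infty$ at $m$-a.e.\ $x$. Points of $J$ near $0$ accumulate along the images $\phi_e(J)$, each of diameter $\asymp d_n^{-2}$ at distance $\asymp d_n^{-1}$. Take $r\asymp d_n^{-1}\asymp n^{-\gamma}$. Then
\[
m(B(0,r))\gtrsim T\sum_{k\ge n} d_k^{-2h}\asymp n^{1-2\gamma h},
\]
while $r^h\asymp n^{-\gamma h}$. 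The ratio is $\asymp n^{1-\gamma h}\to\infty$ exactly when $h<1/\gamma$. To transport this from $0$ to $m$-a.e.\ point we use bounded distortion: pull $B(0,r)$ back by $\phi_\omega$ along the coding of a typical point. Ergodicity of the invariant measure, together with the fact that $0\in\overline{\phi_e(J)}$-accumulation is seen infinitely often along typical orbits, yields an infinite upper density a.e., and hence $H_h(J)=0$.

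For $\mathcal{G}$ the conformal maps are M\"obius, and the Koebe distortion constants on $\overline{\mathbb{D}}$ are uniform, so the same estimate applies.

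\textbf{Step 4: part (c).} It suffices to show $P(1/\gamma)>0$, since $P$ is strictly decreasing. Using the lower bounds on derivatives (upper bound $d_n\le c_2n^\gamma$, and $\abs{\alpha}+1$ in the $\mathcal{G}$ case, absorbed via $c_1N^\gamma>1$), we get
\[
\sum_e\inf\abs{\phi_e'}^{1/\gamma}\ \ge\ T\sum_{n\ge N}(c_2^2 n^{2\gamma})^{-1/\gamma}\cdot(\text{const})\ =\ Tc_2^{-2/\gamma}\sum_{n\ge N}n^{-2}.
\]
Bounding $\sum_{n\ge N}n^{-2}>1/N$, or the sharper $\ge 1/(N+1)$-type bound the hypothesis suggests, the hypothesis $T\ge c_2^{2/\gamma}(N+1)$ makes this exceed $1$. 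Since $P(t)\ge\log\sum_e\inf\abs{\phi_e'}^t$, this gives $P(1/\gamma)>0$, i.e.\ $h>1/\gamma$.

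\textbf{Main obstacle.} The constant bookkeeping in Step 4 (the $\pm1$ corrections and the $\mathcal{G}$ distortion) and the a.e.\ density argument in Step 3 are the hardest parts. For Step 3 we would try first to reduce to the density at $0$ via the standard lemma \cite[Lemma 4.9]{MauldinUrbanski1996}.
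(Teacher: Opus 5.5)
Your overall route matches the paper's:
\begin{itemize}
\item $\theta_{\mathcal F}=\theta_{\mathcal G}=1/(2\gamma)$, with divergence at $\theta$;
\item for (a), the bound $m(B(0,r))/r^{h}\gtrsim n(r)^{1-\gamma h}$ fed into \cite[Lemma 4.9]{MauldinUrbanski1996} with $z_j=0\in X(\infty)$, so the ergodic transport you sketch is unnecessary;
\item for (b), \Cref{main1} together with $\dimp{J}=\max\set{h,\dimub{\mathcal{S}_x}}$ and $\dimub{\mathcal{S}_x}\le 1/(\gamma+1)$;
\item for (c), showing $P(1/\gamma)>0$.
\end{itemize}
One minor repair is needed in Step 3. ``Since $P(h)=0$ is attained'' is circular as a justification of regularity. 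Regularity, and also $h>1/(2\gamma)$ (needed for $\sum_{k\ge n}k^{-2\gamma h}$ to be finite and $\gtrsim n^{1-2\gamma h}$), should be derived from $P(\theta)=\infty$. That gives hereditary regularity via \Cref{thm:hereditarily-regularity-and-pressure} and \Cref{thm:theta-and-dimh-if-hereditarily-regular}.

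The genuine gap is Step 4 for $\mathcal G$. Your chain ends with ``$\cdot(\text{const}) = Tc_2^{-2/\gamma}\sum_{n\ge N}n^{-2}$'', but the constant is not $1$, and the hypothesis $T\ge c_2^{2/\gamma}(N+1)$ leaves no room for any multiplicative loss. Absorbing the $+1$ via $c_1N^\gamma>1$ only gives $d_n+1<2d_n$, hence
\[
(d_n+1)^{-2/\gamma}>2^{-2/\gamma}c_2^{-2/\gamma}n^{-2}.
\]
Then the hypothesis yields only $T\sum_n(d_n+1)^{-2/\gamma}>2^{-2/\gamma}(N+1)/N$. For $\gamma=1$ this is at most $1/2$, so $P_{\mathcal G}(1/\gamma)>0$ does not follow.

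The missing idea is the index shift supplied by $\mathbf d\in\mathcal D$: since $d_{n+1}-d_n\ge 2$, we have $d_n+1<d_{n+1}$, so
\[
T\sum_{n\ge 1}(d_n+1)^{-2/\gamma}\ \ge\ T\sum_{n\ge N}d_{n+1}^{-2/\gamma}\ \ge\ Tc_2^{-2/\gamma}\sum_{m\ge N+1}m^{-2}\ >\ \frac{T}{c_2^{2/\gamma}(N+1)}\ \ge\ 1.
\]
This is exactly where the $N+1$ in the hypothesis comes from. For $\mathcal F$ your Step 4 is fine as written, since $d_n^2-1<d_n^2\le c_2^2n^{2\gamma}$ costs nothing. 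It even works under the weaker assumption $T\ge c_2^{2/\gamma}N$.
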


Moreover, we have the following theorem, which is analogous to \Cref{thm:MauldinUrbanski1999-6.2}.
This theorem implies that a wide variety of the limit sets satisfy the condition \textbf{(C3)}, a phenomenon unique to infinite CIFSs.

\begin{mthm}[\Cref{thm:dimension_gap_poly}]\label{main3}
  Under the assumptions in \Cref{main2}, we also suppose that $\gamma > 1$
  and that $\mathcal{F}\paren{\mathbf{d}, T}$ and $\mathcal{G}\paren{\mathbf{d}, T}$ are CIFSs.
  For $l \in \mathbb{N}$, let
  \begin{equation*}
    \mathcal{F}^{(l)} := \mathcal{F}(\set{d_{n}}_{n=l}^{\infty}, T), \quad \mathcal{G}^{(l)} := \mathcal{G}(\set{d_{n}}_{n=l}^{\infty}, T).
  \end{equation*}
  Then, there exists $q = q(\mathbf{d}, T) \in \mathbb{N}$ such that for all $l\in\mathbb{N}$ with $l \geq q$,
  we have the following inequalities.
  \begin{align*}
    \dimh{J_{\mathcal{F}^{(l)}}} &< \dimlb{J_{\mathcal{F}^{(l)}}}\leq \dimub{J_{\mathcal{F}^{(l)}}} = \dimp{J_{\mathcal{F}^{(l)}}}, \\
    \dimh{J_{\mathcal{G}^{(l)}}} &< \dimlb{J_{\mathcal{G}^{(l)}}} \leq \dimub{J_{\mathcal{G}^{(l)}}} = \dimp{J_{\mathcal{G}^{(l)}}}.
  \end{align*}
\end{mthm}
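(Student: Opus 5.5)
We follow the Mauldin–Urbański strategy for \Cref{thm:MauldinUrbanski1999-6.2}. It rests on three standard facts for CIFSs.
\begin{itemize}
\item[(i)] The Hausdorff dimension is given by $h=\inf\set{t\mid P(t)\le 0}$.
\item[(ii)] The finiteness parameter is $\theta=\inf\set{t\mid P(t)<\infty}$. For the polynomially growing families considered here it equals $1/\gamma$.
\item[(iii)] For CIFSs one has $\dimub{J}=\dimp{J}=\max\set{h,\theta'}$. Here $\theta'$ is the box dimension of the set of ``first-level images'' $\set{\phi_{i}(0)}$, and it can be at least $1/\gamma$.
\end{itemize}

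\textbf{Step 1: $\theta$ and $h$ for the truncated systems.} Since $\abs{f_\alpha'}=(\abs{\alpha}^2-1)^{-1}$ and $\abs{g_\alpha'(z)}=\abs{z+\alpha}^{-2}$, which is comparable to $\abs{\alpha}^{-2}$, the first-level sum is
\[
\sum_{n\ge l}\sum_{j} d_n^{-2t}\asymp T\sum_{n\ge l} n^{-2\gamma t}.
\]
This converges if and only if $t>1/(2\gamma)$. Hence $\theta_{\mathcal{F}^{(l)}}=\theta_{\mathcal{G}^{(l)}}=1/(2\gamma)$, independently of $l$.

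Next we show $h_{\mathcal{F}^{(l)}},h_{\mathcal{G}^{(l)}}\to 1/(2\gamma)$ as $l\to\infty$. Fix $t>1/(2\gamma)$. Using bounded distortion,
\[
P^{(l)}(t)\le \log\Big(K^{t}\,T\sum_{n\ge l} c_1^{-2t}n^{-2\gamma t}\Big)\longrightarrow-\infty \quad (l\to\infty).
\]
So $P^{(l)}(t)<0$ for large $l$, which gives $h^{(l)}\le t$. The lower bound $h^{(l)}\ge\theta=1/(2\gamma)$ is standard. Hence for every $\varepsilon>0$ there is $q$ with $h^{(l)}<1/(2\gamma)+\varepsilon$ for all $l\ge q$.

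\textbf{Step 2: lower bound on the box dimension, independent of $l$.} $J^{(l)}$ contains points accumulating at $0$. For $f$, $f_{\alpha}(\overline{\mathbb{D}})$ is a disc of radius about $d_n^{-2}$ centered at $\overline{\alpha}/(\abs{\alpha}^2-1)$, whose modulus is about $1/d_n$. For $g$, $g_\alpha(\overline{\mathbb{D}})$ is a disc near $1/\alpha$ of comparable size. So $J^{(l)}$ contains the points $x_{n,j}\approx e(-j/T)/d_n$, $n\ge l$.

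We count $r$-separated points among them. Consecutive points satisfy
\[
\abs{1/d_n-1/d_{n+1}}\ge (d_{n+1}-d_n)/(d_nd_{n+1})\gtrsim c_3c_2^{-2}n^{-\gamma-1}.
\]
So at least $\asymp r^{-1/(\gamma+1)}$ of them lie at mutual distance $\ge r$, coming from the indices $n$ with $n^{-\gamma-1}\gtrsim r$. Perturbing by image radii of order $n^{-2\gamma}\ll n^{-\gamma-1}$ does not affect this, and finitely many initial $n$ (below $l$) do not matter. This gives
\[
\dimlb{J^{(l)}}\ge 1/(\gamma+1),
\]
uniformly in $l$.

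\textbf{Step 3: the gap.} Since $\gamma>1$, we have $1/(\gamma+1)>1/(2\gamma)$. Choose $\varepsilon=\tfrac12\paren{1/(\gamma+1)-1/(2\gamma)}$ and take $q$ from Step 1. Then for $l\ge q$,
\[
\dimh{J^{(l)}}<\dimlb{J^{(l)}}.
\]

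\textbf{Step 4: $\dimub{J}=\dimp{J}$.} The inequality $\dimlb{J}\le\dimub{J}$ is trivial. For the equality we use the known fact (Mauldin–Urbański, Theorem 2.11 of \cite{MauldinUrbanski1996}) that for any CIFS the packing and upper box dimensions of the limit set coincide. The underlying reason is that $J$ is the countable union of bi-Lipschitz images of itself, intersected with open sets, together with local homogeneity.

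\textbf{Main obstacle.} The main obstacle is uniformity in Step 1 for $\mathcal{G}$: the distortion constant and the comparison $\abs{z+\alpha}\asymp\abs{\alpha}$ must be controlled uniformly in $l$, using the bounded distortion property of $\mathcal{G}^{(l)}$. For $\mathcal{F}$, which consists of similarities, there is no issue.
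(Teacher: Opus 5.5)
Your proposal is correct and follows essentially the paper's route. You make the tail of the first-level pressure sum small to force $\dim_H J^{(l)}<1/(\gamma+1)$; the paper evaluates at $t=1/(\gamma+1)$ directly, while you take $t$ slightly above $\theta=1/(2\gamma)$. You bound $\underline{\dim}_B J^{(l)}\ge 1/(\gamma+1)$ via first-level points accumulating at $0$ with spacing $\gtrsim n^{-\gamma-1}$, and get $\overline{\dim}_B=\dim_P$ from Mauldin--Urba\'nski's bi-Lipschitz result (their 1996 Theorem 3.1, not 2.11, whose hypothesis holds for $f_\alpha, g_\alpha$ on $\overline{\mathbb{D}}$).

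Fix the slips in your preamble: $\theta=1/(2\gamma)$, not $1/\gamma$, and the relevant box dimension is $1/(\gamma+1)$. Your ``main obstacle'' is not one, since $P(t)\le\log\sum_i\sup|s_i'|^t$ needs no distortion control and $\sup_{\overline{\mathbb{D}}}|g_\alpha'|=(|\alpha|-1)^{-2}$.
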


For the IFS $\mathcal{S} = \set{s_{i}: X \to X}_{i \in I}$,
let $J_{\mathcal{S}}^{(N)} := \bigcup_{i_{1}, \dots, i_{N} \in I} s_{i_{1}} \circ \dots \circ s_{i_{N}}\paren{X}$.
If $\mathcal{S}$ is a CIFS, then there are some properties of $J_{\mathcal{S}}$
which depend only on $J_{\mathcal{S}}^{(1)}$
(see \Cref{prop:MauldinUrbanski1996-4.4} and \Cref{thm:MauldinUrbanski1996-4.5}).
We remark that $J_{\mathcal{F}}^{(1)} = J_{\mathcal{G}}^{(1)}$ (\Cref{prop:comparison_of_f_and_g}).
Thus, it may not be surprising that the statements in \Cref{main1,main2,main3} are common to both $\mathcal{F}$ and $\mathcal{G}$.
On the other hand, it is natural to ask whether $\mathcal{F}$ and $\mathcal{G}$ exhibit different properties.
In particular, it is interesting to compare the Hausdorff dimensions of $J_{\mathcal{F}}$ and $J_{\mathcal{G}}$.
The following theorems provide some answers to this question.

\begin{mthm}[\Cref{thm:comparison-of-dimension-f-and-g-irregular}]\label{main4}
    Let $\mathbf{d} \in \mathcal{D}$ and $T\in\mathbb{N}$.
    Suppose that
    $\mathcal{F} = \mathcal{F}(\mathbf{d}, T)$ and $\mathcal{G} = \mathcal{G}(\mathbf{d}, T)$ are CIFSs.
    Then, the following (1) and (2) hold.
    \begin{enumerate}
        \item If $\mathcal{F}$ is irregular (see \Cref{def:regularity}), then $\dimh{J_{\mathcal{F}}} \leq \dimh{J_{\mathcal{G}}}$.
        \item If $\mathcal{G}$ is irregular, then $\dimh{J_{\mathcal{F}}} \geq \dimh{J_{\mathcal{G}}}$.
    \end{enumerate}
\end{mthm}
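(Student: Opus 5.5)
The plan is to reduce both inequalities to a comparison of the finiteness parameters $\theta_{\mathcal{F}}$ and $\theta_{\mathcal{G}}$, where
\begin{equation*}
\theta_{\mathcal{S}} := \inf\set{t \geq 0 \mid P_{\mathcal{S}}(t) < \infty}
\end{equation*}
for a CIFS $\mathcal{S}$. I will use three standard facts from the Mauldin--Urba\'{n}ski theory recalled earlier in the paper.

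First, $h_{\mathcal{S}} = \dimh{J_{\mathcal{S}}} = \inf\set{t \geq 0 \mid P_{\mathcal{S}}(t) \leq 0}$. Second, $P_{\mathcal{S}}(t) < \infty$ if and only if $\sum_{i} \|s_{i}'\|^{t} < \infty$, so $\theta_{\mathcal{S}}$ is determined by the first-level derivative norms alone. Third, $P_{\mathcal{S}}$ is strictly decreasing and continuous on $(\theta_{\mathcal{S}}, \infty)$ and tends to $-\infty$.

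From the first fact, $h_{\mathcal{S}} \geq \theta_{\mathcal{S}}$ always holds, since $P_{\mathcal{S}}(t) = \infty$ for $t < \theta_{\mathcal{S}}$. Next I show that irregularity forces $h_{\mathcal{S}} = \theta_{\mathcal{S}}$. If $P_{\mathcal{S}}(t_{0}) > 0$ for some $t_{0} > \theta_{\mathcal{S}}$, then continuity on $(\theta_{\mathcal{S}}, \infty)$ and $P_{\mathcal{S}} \to -\infty$ give a zero of $P_{\mathcal{S}}$ by the intermediate value theorem, so $\mathcal{S}$ would be regular. Hence irregularity gives $P_{\mathcal{S}}(t) < 0$ for every $t > \theta_{\mathcal{S}}$, and therefore $h_{\mathcal{S}} \leq \theta_{\mathcal{S}}$.

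The key computational step is to show $\theta_{\mathcal{F}} = \theta_{\mathcal{G}}$. For $\alpha = e(j/T)d_{n}$, the map $f_{\alpha}$ is a similarity, so $\|f_{\alpha}'\| = 1/(d_{n}^{2}-1)$. For $g_{\alpha}$ we have $|g_{\alpha}'(z)| = 1/|z+\alpha|^{2}$, and on $\overline{\mathbb{D}}$ this gives
\begin{equation*}
\frac{1}{(d_{n}+1)^{2}} \leq |g_{\alpha}'(z)| \leq \frac{1}{(d_{n}-1)^{2}}.
\end{equation*}
Since $d_{n} \geq 2$, both $\|f_{\alpha}'\|$ and $\|g_{\alpha}'\|$ are comparable to $d_{n}^{-2}$ with uniform constants (at most $9$ in each direction). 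Consequently, for every $t \geq 0$ the two series
\begin{equation*}
\sum_{n}\sum_{j} \|f_{e(j/T)d_{n}}'\|^{t}, \qquad \sum_{n}\sum_{j} \|g_{e(j/T)d_{n}}'\|^{t}
\end{equation*}
are each comparable to $T\sum_{n} d_{n}^{-2t}$, so they converge or diverge together. This gives $\theta_{\mathcal{F}} = \theta_{\mathcal{G}}$, which is also consistent with \Cref{prop:comparison_of_f_and_g}.

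The conclusion now follows directly. If $\mathcal{F}$ is irregular, then $h_{\mathcal{F}} = \theta_{\mathcal{F}} = \theta_{\mathcal{G}} \leq h_{\mathcal{G}}$, which is (1). Statement (2) is the symmetric argument with the roles of $\mathcal{F}$ and $\mathcal{G}$ exchanged.

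The step I expect to need the most care is the identification $h_{\mathcal{S}} = \theta_{\mathcal{S}}$ in the irregular case. It requires continuity of $P_{\mathcal{S}}$ on $(\theta_{\mathcal{S}}, \infty)$ and the right behaviour at $\theta_{\mathcal{S}}$. In particular, if $P_{\mathcal{S}}(\theta_{\mathcal{S}}) = \infty$, I must check that $P_{\mathcal{S}}$ cannot remain positive just to the right of $\theta_{\mathcal{S}}$ without crossing zero. I will cite the corresponding result of \cite{MauldinUrbanski1996} for this (that the Hausdorff dimension equals $\inf\set{t \mid P_{\mathcal{S}}(t) \leq 0}$, and that $P_{\mathcal{S}}$ is convex, hence continuous, where it is finite) rather than reprove it.
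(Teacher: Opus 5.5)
Your proposal is correct and is essentially the paper's proof: irregularity forces $h_{\mathcal{S}} = \theta_{\mathcal{S}}$; the first-level comparison of $1/(d_{n}^{2}-1)$ with $1/(d_{n}\pm 1)^{2}$ gives $\theta_{\mathcal{F}} = \theta_{\mathcal{G}}$ (this is \Cref{thm:comparison_of_f_and_g}(1), not \Cref{prop:comparison_of_f_and_g}); and $\theta_{\mathcal{G}} \leq h_{\mathcal{G}}$ always, so $h_{\mathcal{F}} = \theta_{\mathcal{F}} = \theta_{\mathcal{G}} \leq h_{\mathcal{G}}$. Your intermediate-value argument spells out a step the paper states in one line, and the worry in your last paragraph is moot, since you apply the IVT on $[t_{0}, \infty)$, where $P_{\mathcal{S}}$ is finite and continuous.
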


\begin{mthm}[\Cref{thm:main_theorem_dim1_T1}]\label{main5}
    Let $\mathbf{d} \in \mathcal{D}$ and $T=1$.
    Suppose that $\mathcal{F} = \mathcal{F}(\mathbf{d}, 1)$ and $\mathcal{G} = \mathcal{G}(\mathbf{d}, 1)$ are CIFSs.
    Also, suppose that $\mathcal{G}$ is a regular CIFS.
    Then, we have
    \begin{equation*}
        \dimh{J_{\mathcal{F}}} > \dimh{J_{\mathcal{G}}}.
    \end{equation*}
\end{mthm}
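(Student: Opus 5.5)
The plan is to compare the pressure functions at the parameter $h_{\mathcal{G}} = \dimh{J_{\mathcal{G}}}$. I will show that $P_{\mathcal{F}}(h_{\mathcal{G}}) > 0$. By the Mauldin–Urba\'{n}ski characterization $\dimh{J_{\mathcal{S}}} = \inf\set{t \geq 0 \mid P_{\mathcal{S}}(t) \leq 0}$, together with the strict monotonicity of $P_{\mathcal{F}}$ on its finiteness interval, this gives $h_{\mathcal{F}} > h_{\mathcal{G}}$. The case $P_{\mathcal{F}}(h_{\mathcal{G}}) = \infty$ is trivially included.

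First, for $T=1$ all parameters $\alpha = d_n$ are real and at least $2$, and everything is explicit. Since $\mathcal{F}$ consists of similarities with ratio $(d_n^2-1)^{-1}$, its pressure is
\[
P_{\mathcal{F}}(t) = \log \sum_{n=1}^{\infty} (d_n^2-1)^{-t}.
\]
For $\mathcal{G}$, I use regularity: there is an $h_{\mathcal{G}}$-conformal measure $m = m_{\mathcal{G}}$ supported on $J_{\mathcal{G}}$ with $m(J_{\mathcal{G}})=1$ and $m(g_{d_n}(A)) = \int_A \abs{g_{d_n}'}^{h_{\mathcal{G}}}\,dm$. The open set condition makes the overlaps $g_{d_n}(J_{\mathcal{G}}) \cap g_{d_k}(J_{\mathcal{G}})$ $m$-null for $n \neq k$ (part of the definition and standard properties of conformal measures). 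Hence
\[
1 = m(J_{\mathcal{G}}) = \sum_{n=1}^{\infty} m\paren{g_{d_n}(J_{\mathcal{G}})} = \sum_{n=1}^{\infty} \int_{J_{\mathcal{G}}} \abs{x + d_n}^{-2h_{\mathcal{G}}}\,dm(x).
\]

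Second, I locate $J_{\mathcal{G}}$. Each $g_{d}$ maps $\overline{\mathbb{D}} \cap \mathbb{R} = [-1,1]$ into $[1/(d+1), 1/(d-1)] \subset (0,1]$. Every point of $J_{\mathcal{G}}$ is a limit of points $g_{\omega|_k}(0)$, which are real, and $J_{\mathcal{G}} \subset \bigcup_n g_{d_n}(\overline{\mathbb{D}})$. Combining these, $J_{\mathcal{G}} \subset (0,1]$; more precisely, every $x \in J_{\mathcal{G}}$ satisfies $x \geq 1/(d_n+1) > 0$ for some $n$. For such real $x>0$,
\[
\abs{x+d_n}^{-2h_{\mathcal{G}}} < d_n^{-2h_{\mathcal{G}}} < (d_n^2-1)^{-h_{\mathcal{G}}}.
\]
Since $h_{\mathcal{G}} > 0$ and $m$ is a probability measure, integrating gives a strict inequality for each $n$. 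Summing yields
\[
\sum_{n} (d_n^2-1)^{-h_{\mathcal{G}}} > 1,
\]
that is, $P_{\mathcal{F}}(h_{\mathcal{G}}) > 0$.

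Finally, I conclude. If $P_{\mathcal{F}}(h_{\mathcal{G}}) = \infty$, then $h_{\mathcal{G}} \leq \theta_{\mathcal{F}} \leq h_{\mathcal{F}}$. Equality is excluded, because in the linear case at $\theta_{\mathcal{F}}$ either the pressure is infinite, in which case $h_{\mathcal{F}} > \theta_{\mathcal{F}}$ since $P_{\mathcal{F}}(t)\to\infty$ as $t \downarrow \theta_{\mathcal{F}}$, or it is finite; either way $P_{\mathcal{F}} > 0$ near $h_{\mathcal{G}}$ forces $h_{\mathcal{F}} > h_{\mathcal{G}}$. If $P_{\mathcal{F}}(h_{\mathcal{G}}) \in (0,\infty)$, continuity and strict decrease of $P_{\mathcal{F}}$ on $[h_{\mathcal{G}},\infty)$ give $P_{\mathcal{F}} > 0$ on a neighborhood to the right of $h_{\mathcal{G}}$, so again $h_{\mathcal{F}} > h_{\mathcal{G}}$.

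The main obstacle is the step using the conformal measure: making sure that regularity of $\mathcal{G}$ gives an honest probability $h_{\mathcal{G}}$-conformal measure with the disjointness identity above. Here I would cite the Mauldin–Urba\'{n}ski results recalled in the paper, namely that regularity means $P_{\mathcal{G}}(h_{\mathcal{G}})=0$ and that this is equivalent to the existence of $m_{\mathcal{G}}$. A second, smaller point is that the positivity $x > 0$ on $J_{\mathcal{G}}$ holds $m$-a.e.; this follows from $J_{\mathcal{G}} \subset \bigcup_n g_{d_n}([-1,1])$.
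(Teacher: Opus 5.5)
Your proof is correct, but it reaches $P_{\mathcal{F}}(h_{\mathcal{G}})>0$ by a different route from the paper.

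The paper does not use the conformal measure here. It proves the global inequality $P_{\mathcal{G}}(t)\le\log\sum_{n}d_n^{-2t}\le P_{\mathcal{F}}(t)$ for every $t\ge 0$ (\Cref{lem:pressure_ineq_T1}). It does this by evaluating the pressure at the point $0$ (\Cref{lem:other-expressions-of-pressure-function}), writing $\abs{g_\omega'(0)}=\prod_j[d_{n_j},\dots,d_{n_N}]^2$, and using $[d_{n_j},\dots,d_{n_N}]\le 1/d_{n_j}$, so that the $N$-fold sum factorizes. Regularity enters only through $P_{\mathcal{G}}(h_{\mathcal{G}})=0$. Finiteness of $P_{\mathcal{F}}(h_{\mathcal{G}})$ comes from $F(\mathcal{F})=F(\mathcal{G})$ (\Cref{thm:comparison_of_f_and_g}).

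You instead use $m_{\mathcal{G}}$ to turn $P_{\mathcal{G}}(h_{\mathcal{G}})=0$ into the one-step identity $\sum_n\int\abs{x+d_n}^{-2h_{\mathcal{G}}}\,dm_{\mathcal{G}}(x)=1$. You then apply the same pointwise fact, $\abs{g_d'(x)}=(x+d)^{-2}<d^{-2}$ for real $x>0$. The paper applies it along orbits of $0$; you apply it $m_{\mathcal{G}}$-a.e.\ on $J_{\mathcal{G}}\subset(0,1]$.

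Each route has a cost and a payoff. Yours avoids compositions, the continued-fraction derivative formula and the limit in $N$, but it needs the existence of the conformal measure and the localization of $J_{\mathcal{G}}$. The paper's gives a pressure comparison valid for all $t$ without any regularity assumption. It reuses that comparison later, in \Cref{prop:irregular_example_T1}, where $\mathcal{G}$ is irregular and no $h_{\mathcal{G}}$-conformal measure is available.

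Your separate treatment of the case $P_{\mathcal{F}}(h_{\mathcal{G}})=\infty$ is logically sound but somewhat muddled, and it is unnecessary. Since $P_{\mathcal{G}}(h_{\mathcal{G}})=0<\infty$, \Cref{thm:comparison_of_f_and_g}(1) gives $P_{\mathcal{F}}(h_{\mathcal{G}})<\infty$ directly. Continuity and monotonicity of $P_{\mathcal{F}}$ then finish the argument.
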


\begin{mthm}[\Cref{thm:comparison_of_dimensions_T_4}, \Cref{cor:comparison_of_dimensions_T_4_at_half}]\label{main6}
    Let $\mathbf{d} = \set{d_{n}}_{n=1}^{\infty} \in \mathcal{D}$ and $T=4$.
    Suppose that $\mathcal{F} = \mathcal{F}(\mathbf{d}, 4)$ is a CIFS and $\mathcal{G} = \mathcal{G}(\mathbf{d}, 4)$ is a regular CIFS.
    If $h_{\mathcal{G}} := \dimh{J_{\mathcal{G}}} \in (0, 1)$,
    then there exists a constant $L = L(h_{\mathcal{G}}) \geq 2$ such that the following holds.
    \begin{itemize}
        \item If $d_{1} \geq L$, then $\dimh{J_{\mathcal{F}}} > \dimh{J_{\mathcal{G}}}$ holds.
    \end{itemize}
    In addition, $L = L(h_{\mathcal{G}})$ is non-decreasing with respect to $h_{\mathcal{G}}$,
    and if $0 < h_{\mathcal{G}} \leq 1/2$, then $L(h_{\mathcal{G}}) = 2$ holds.
\end{mthm}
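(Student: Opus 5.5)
The plan is to compare the two systems through their pressure functions, evaluated at the single exponent $t = h_{\mathcal{G}}$. Since $\mathcal{F}$ consists of similarities, its pressure is explicit:
\begin{equation*}
P_{\mathcal{F}}(t) = \log\Bigl(4\sum_{n=1}^{\infty}(d_n^{2}-1)^{-t}\Bigr).
\end{equation*}
For any CIFS the Hausdorff dimension of the limit set equals $\inf\{t \mid P(t)\leq 0\}$, and $P$ is strictly decreasing where it is finite. Hence it suffices to prove $P_{\mathcal{F}}(h_{\mathcal{G}})>0$, i.e.
\begin{equation*}
4\sum_{n}(d_n^{2}-1)^{-h_{\mathcal{G}}} > 1.
\end{equation*}
If this sum is infinite the inequality is trivial, so we may assume it is finite.

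Next we rewrite the normalisation $1$ using the $h_{\mathcal{G}}$-conformal measure $m_{\mathcal{G}}$. This measure exists because $\mathcal{G}$ is regular, and $|g_\alpha'(z)| = |z+\alpha|^{-2}$, so
\begin{equation*}
1 = m_{\mathcal{G}}(J_{\mathcal{G}}) = \sum_{n}\sum_{j=0}^{3}\int |z+i^{j}d_n|^{-2h_{\mathcal{G}}}\,dm_{\mathcal{G}}(z).
\end{equation*}
The key structural input is the symmetry coming from $T=4$. The identity $g_{i\alpha}(iz) = -i\,g_\alpha(z)$ shows that $J_{\mathcal{G}}$ is invariant under $z\mapsto iz$. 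By uniqueness of the conformal measure, $m_{\mathcal{G}}$ is also invariant under $z\mapsto iz$. Consequently the integrand may be replaced pointwise by the average over the four rotations of $z$.

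It therefore suffices to prove the following pointwise inequality, with strict inequality on a set of positive $m_{\mathcal{G}}$-measure. For $|z| = r\leq 1$, $z=re^{\sqrt{-1}\theta}$, $h=h_{\mathcal{G}}$ and $d=d_n$,
\begin{equation*}
\sum_{j=0}^{3}\bigl(d^{2}+r^{2}+2dr\cos(\theta+j\pi/2)\bigr)^{-h} < 4(d^{2}-1)^{-h}.
\end{equation*}
Grouping opposite terms gives $\phi(a+b)+\phi(a-b)+\phi(a+b')+\phi(a-b')$ with $\phi(x)=x^{-h}$, $a=d^2+r^2$ and $b^2+b'^2 = 4d^2r^2$. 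I would take $r=1$ as the worst case and then maximise over the angle. Using convexity of $s\mapsto \phi(a+\sqrt{s})+\phi(a-\sqrt{s})$ in $s$, the maximum should occur at an endpoint: either $b'=0$ (the aligned case) or $b=b'$ (the diagonal case).

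In the aligned case we get $\phi((d+1)^2)+\phi((d-1)^2)+2\phi(d^2+1)$, to be compared with $4\phi(d^2-1)$. Using $(d-1)^2(d+1)^2 = (d^2-1)^2$ and AM–GM-type estimates for the exponent, this comparison holds for all $d\geq 2$ when $h\leq 1/2$. For larger $h$ it reduces to $d\geq L(h)$, where $L(h)$ is the least threshold making the inequality hold; $L$ is non-decreasing because the defect grows with $h$. Strictness is automatic for $r<1$ or off the extremal angles, and $m_{\mathcal{G}}$ is not concentrated on the finitely many extremal points of the circle. Integrating and summing over $n$ then yields $1<4\sum_n(d_n^2-1)^{-h_{\mathcal{G}}}$, which is the required $P_{\mathcal{F}}(h_{\mathcal{G}})>0$.

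The main obstacle is this elementary but delicate four-term inequality. That includes justifying the reduction to $r=1$ and to the extremal angles, and extracting an explicit threshold $L(h)$ with $L\equiv 2$ on $(0,1/2]$. I would first try a Taylor or convexity expansion in $1/d$, where the leading terms cancel and the sign of the remainder is governed by $h(2h-1)$. This is exactly where the cutoff $h=1/2$ should appear.
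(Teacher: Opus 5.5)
Your reduction is sound and is essentially the paper's. Both arguments bound $\sum_{j=0}^{3}\abs{z+e(j/4)d}^{-2h}$ on $\overline{\mathbb{D}}$ by its value at $z=-1$, namely $(d^{2}-1)^{-h}\,\widetilde q_{h}(1/d)$ with $\widetilde q_{h}(x)=\paren{\frac{1+x}{1-x}}^{h}+\paren{\frac{1-x}{1+x}}^{h}+2\paren{\frac{1-x^{2}}{1+x^{2}}}^{h}$. The paper locates this maximum for every $T$ via positive Fourier coefficients and then iterates to bound $\sum_{\omega}\abs{g_{\omega}'(0)}^{t}$; your identity $1=\sum_{\alpha}\int\abs{g_{\alpha}'}^{h}\,dm_{\mathcal{G}}$ does the same in one step. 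The reduction steps you leave open are easy:
\begin{itemize}
\item $z\mapsto\abs{z+\alpha}^{-2h}$ is subharmonic near $\overline{\mathbb{D}}$, which gives $r=1$.
\item $s\mapsto\phi(a+\sqrt{s})+\phi(a-\sqrt{s})$ is a power series in $s$ with positive coefficients. So the angular maximum is at $b=0$ or $b'=0$, and the diagonal $b=b'$ is the \emph{minimum}, not a competing endpoint.
\item Invariance of $m_{\mathcal{G}}$ is unnecessary, because the integrand is already invariant under $z\mapsto\sqrt{-1}\,z$.
\item The case $P_{\mathcal{F}}(h_{\mathcal{G}})=\infty$ never occurs, since $F(\mathcal{F})=F(\mathcal{G})$ and $P_{\mathcal{G}}(h_{\mathcal{G}})=0$.
\end{itemize}

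The genuine gap is the scalar inequality $\widetilde q_{h}(1/d)<4$. It carries everything the statement says about $L$, and you only assert it; the route you sketch does not deliver it.
\begin{itemize}
\item Expanding in $1/d$ gives $\widetilde q_{h}(x)=4+4h(h-1)x^{2}+O(x^{4})$, whose leading coefficient is negative for \emph{every} $h\in(0,1)$. This yields some threshold for large $d$, but it singles out nothing at $h=1/2$ and says nothing at $d=2$. In fact $\widetilde q_{h}(1/2)<4$ persists up to $h\approx0.68$.
\item A factor $h(2h-1)$ only comes out of a global one-sided estimate. Write $q(t,v):=2\cosh(tv)+2(\cosh v)^{-t}=\widetilde q_{t}(\tanh(v/2))$. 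The paper proves $q(t,v)\le4+t(1+t)(\cosh v-1)\paren{\cosh v-\frac{3-t}{1+t}}$. At $d=2$ (where $\cosh v=5/3$) this reads $q\le4+\frac{8}{9}t(2t-1)$, and for $t\le1/2$ it is at most $4$ for every $d\ge2$ (note $\cosh v-1=2/(d^{2}-1)$).
\item ``The defect grows with $h$'' is false as stated: $h\mapsto q(h,v)-4$ vanishes at $h=0$ and initially decreases. The paper obtains monotonicity from a unique-crossing lemma for $v\mapsto q(t,v)$ plus the implicit function theorem.
\end{itemize}

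A shorter repair of your argument: $q(h,v)$ is convex in $h$, equals $4$ at $h=0$, has $h$-derivative $-2\log\cosh v<0$ there, and exceeds $4$ at $h=1$. Hence $\set{h\in(0,1)\mid q(h,v)<4}$ is an initial interval. This gives monotonicity of your least threshold and reduces the case $h\le1/2$ to $h=1/2$. There, with $c=\cosh(v/2)=d/\sqrt{d^{2}-1}\in(1,2/\sqrt3]$, the inequality becomes $(2-c)^{2}(2c^{2}-1)>1$, i.e.\ $u(2-5u+2u^{3})>0$ for $u=c-1\le0.155$.
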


We obtain the following examples by applying the above theorems.
\Cref{main7} provides an example where \textbf{(C1)} occurs, which is established by \Cref{main2}.

\begin{mthm}[\Cref{ex:C1_satisfied_example}]\label{main7}
    Take $\gamma > 1$ and $T \in \mathbb{N}$ arbitrarily.
    For each $L \in \mathbb{N}$,
    define the sequence $\mathbf{d}^{(L)} := \set{(n+L)^{\gamma}}_{n=1}^{\infty}$.
    Then, if $L$ is large enough,
    both $\mathcal{F}(\mathbf{d}^{(L)}, T)$ and $\mathcal{G}(\mathbf{d}^{(L)}, T)$ are CIFSs.
    Moreover, we have
    \begin{equation*}
        H_{h_{\mathcal{F}\paren{\mathbf{d}^{(L)}, T}}}\paren{J_{\mathcal{F}\paren{\mathbf{d}^{(L)}, T}}} = H_{h_{\mathcal{G}\paren{\mathbf{d}^{(L)}, T}}}\paren{J_{\mathcal{G}\paren{\mathbf{d}^{(L)}, T}}} = 0.
    \end{equation*}
\end{mthm}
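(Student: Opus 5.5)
Since \Cref{main7} is meant as an application of \Cref{main2}, the plan is to check the hypotheses of \Cref{main2} for $\mathbf{d}^{(L)} = \set{(n+L)^{\gamma}}_{n=1}^{\infty}$ and to show that, for large $L$, the Hausdorff dimensions drop below $1/\gamma$. Then part (a) of \Cref{main2}(1) and \Cref{main2}(2) gives the vanishing Hausdorff measures.

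First, $\mathbf{d}^{(L)} \in \mathcal{D}$ for $L$ large. We have $d_{1} = (1+L)^{\gamma} \geq 2$, and by the mean value theorem $d_{n+1}-d_{n} \geq \gamma (n+L)^{\gamma-1} \geq \gamma L^{\gamma-1} \geq 2$ once $L$ is large, since $\gamma>1$.

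Second, the CIFS property. The sufficient condition \Cref{lem:cifs-sufficient-cond} is phrased in terms of $T$ and $d_{1}$; I expect it to demand that $d_{1}$ be large relative to $T$, so that the first-level images of $\overline{\mathbb{D}}$, which are discs of radius about $1/d_{n}$ around points of modulus about $1/d_{n}$ in $T$ directions, are disjoint and stay inside a slightly larger disc. This holds once $L$ is large.

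Third, the polynomial growth hypotheses of \Cref{main2}. Take $N=1$ and $c_{1}=1$, so that $n^{\gamma} \leq (n+L)^{\gamma} \leq (1+L)^{\gamma} n^{\gamma}$, giving $c_{2}=(1+L)^{\gamma}$. Also $d_{n+1}-d_{n} \geq \gamma (n+L)^{\gamma-1} \geq \gamma n^{\gamma-1}$, so $c_{3}=\gamma$. The requirement $c_{1}N^{\gamma}>1$ fails with $c_1 = N = 1$, so instead I take $N=2$, where $c_{1}2^{\gamma}>1$. The constants depend on $L$, but this is harmless because \Cref{main2}(a) holds for every admissible choice.

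The main step is to show $h_{\mathcal{F}}<1/\gamma$ and $h_{\mathcal{G}}<1/\gamma$ for large $L$. For a CIFS the Hausdorff dimension satisfies $h \leq \inf\set{t \mid P(t) \leq 0}$, so it suffices to find $t<1/\gamma$ with $P(t)<0$. By bounded distortion and $P(t) \leq \log \sum_{i}\|s_{i}'\|^{t} + \const$, it is enough to bound the first-level sums. For $\mathcal{F}$, $\Lip(f_{\alpha}) = 1/(\abs{\alpha}^{2}-1)$ exactly. For $\mathcal{G}$, $\abs{g_{\alpha}'(z)} = \abs{z+\alpha}^{-2} \leq (\abs{\alpha}-1)^{-2}$. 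In both cases the sum is at most
\begin{equation*}
T\sum_{n\geq 1} \paren{(n+L)^{\gamma}-1}^{-2t} \lesssim T\sum_{m>L} m^{-2\gamma t}.
\end{equation*}
Fix $t\in(1/(2\gamma),1/\gamma)$, say $t = 3/(4\gamma)$. Then $2\gamma t>1$, so the tail tends to $0$ as $L\to\infty$, and the pressure becomes negative.

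The obstacle is that the distortion constant $K$ (or the constant in the pressure bound) must be uniform in $L$. I would handle this by using the cruder bound $P(t) \leq \log \sum_{i}\|s_{i}'\|_{\infty}^{t}$, obtained from submultiplicativity of sup-norms of derivatives under composition, which needs no distortion constant at all. The bounds above are already sup-norm bounds, so this suffices. This gives $P(t)<0$, hence $h<t<1/\gamma$ for large $L$, and \Cref{main2}(1)(a) and (2)(a) finish the proof.
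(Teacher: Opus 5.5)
Your proposal is correct and follows essentially the same route as the paper: both establish the CIFS property via \Cref{lem:cifs-sufficient-cond} for large $L$, use the first-level upper bound of \Cref{lem:pressure_bounds_rough_estimate} to make the pressure negative for large $L$, and then apply the $h<1/\gamma$ case of \Cref{main2} (\Cref{thm:measure_poly_f} and \Cref{thm:measure_poly_g}). The differences are minor: you evaluate at $t=3/(4\gamma)<1/\gamma$, so $h<1/\gamma$ follows directly from \Cref{thm:hausdorff-dimension-and-pressure} rather than via regularity, and you explicitly check the growth constants of \Cref{main2} (taking $N=2$ so that $c_{1}N^{\gamma}>1$), which the paper leaves implicit.
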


The following \Cref{main8} provides an example satisfying \textbf{(C2)} with $\dimh{J_{\mathcal{F}}} > \dimh{J_{\mathcal{G}}}$.

\begin{mthm}[\Cref{ex:c2-and-comparison_of_dimensions_T4_example}]\label{main8}
    Define $\mathbf{d} = \set{d_{n}}_{n=1}^{\infty}$ by
    \begin{equation*}
        d_{n} = \begin{cases}
            17 & (n=1) \\
            19 & (n=2) \\
            n^{3} & (n \geq 3)
        \end{cases}.
    \end{equation*}
    Let $\mathcal{F} = \mathcal{F}(\mathbf{d}, 4), \ \mathcal{G} = \mathcal{G}(\mathbf{d}, 4)$.
    Then, the following hold.
    \begin{enumerate}
        \item $\dimp{J_{\mathcal{F}}} = \dimh{J_{\mathcal{F}}}$ and $\Pi_{\dimh{J_{\mathcal{F}}}}(J_{\mathcal{F}}) = \infty$.
        \item $\dimp{J_{\mathcal{G}}} = \dimh{J_{\mathcal{G}}}$ and $\Pi_{\dimh{J_{\mathcal{G}}}}(J_{\mathcal{G}}) = \infty$.
        \item $\dimh{J_{\mathcal{G}}} < \dimh{J_{\mathcal{F}}} < 1/2$.
    \end{enumerate}
\end{mthm}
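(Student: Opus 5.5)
The plan is to obtain each item from \Cref{main2} and \Cref{main6}, applied with $\gamma = 3$, $T = 4$, $N = 3$; the only real work is two explicit numerical estimates.

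\textbf{Step 1 (setup).} First check that $\mathbf{d} \in \mathcal{D}$. We have $d_{1} = 17 \geq 2$, $d_{2}-d_{1} = 2$, $d_{3}-d_{2} = 27-19 = 8$, and $(n+1)^{3}-n^{3} \geq 2$ for $n \geq 3$. Next use \Cref{lem:cifs-sufficient-cond} to verify that $\mathcal{F}$ and $\mathcal{G}$ are CIFSs. With $T=4$ and $d_{1}=17$ the first-level images are small discs, far apart compared with their radii, so the hypotheses should hold comfortably.

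For $n \geq N = 3$ take $c_{1}=c_{2}=1$, so that $c_{1}n^{3} \leq d_{n} \leq c_{2}n^{3}$. Since $d_{n+1}-d_{n} = 3n^{2}+3n+1 \geq 3n^{2}$, take $c_{3}=3$. Finally $c_{1}N^{\gamma} = 27 > 1$.

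\textbf{Step 2 (items (1),(2)).} We have
\[
c_{2}^{2/\gamma}(N+1) = 4 = T .
\]
Hence \Cref{main2}(1)(c) and (2)(c) give $\dimh{J_{\mathcal{F}}} > 1/3$ and $\dimh{J_{\mathcal{G}}} > 1/3$. Parts (1)(b) and (2)(b) of \Cref{main2} then yield $\dimp{J} = \dimh{J}$ and infinite packing measure at that dimension, for both $\mathcal{F}$ and $\mathcal{G}$.

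\textbf{Step 3 (upper bounds $<1/2$).} Each $f_{\alpha}$ is a similarity with ratio $(\abs{\alpha}^{2}-1)^{-1}$. At $t=1/2$ the pressure is governed by
\[
4\Bigl(288^{-1/2} + 360^{-1/2} + \sum_{n\geq 3}(n^{6}-1)^{-1/2}\Bigr).
\]
The terms are approximately $0.059$, $0.053$, and $0.078$ for the tail (via $\zeta(3)-1-1/8$ plus a tiny correction). The total is about $4 \times 0.19 \approx 0.76 < 1$. So $P_{\mathcal{F}}(1/2) < 0$ and $h_{\mathcal{F}} < 1/2$.

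For $\mathcal{G}$, use $\abs{g_{\alpha}'(z)} = \abs{z+\alpha}^{-2} \leq (\abs{\alpha}-1)^{-2}$ on $\overline{\mathbb{D}}$. This bounds the sum at $t=1/2$ by
\[
4\Bigl(\tfrac{1}{16} + \tfrac{1}{18} + \sum_{n\geq 3}\tfrac{1}{n^{3}-1}\Bigr) \approx 0.8 < 1 .
\]
Submultiplicativity of the $n$-th level sums then gives $P_{\mathcal{G}}(1/2) < 0$, hence $h_{\mathcal{G}} < 1/2$. Together with Step 2, both dimensions lie in $(1/3, 1/2)$. The tail sums should be bounded rigorously by comparison with integrals.

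\textbf{Step 4 (comparison).} Apply \Cref{main6}. This needs $\mathcal{G}$ to be regular. I expect this to follow because $\theta_{\mathcal{G}} = 1/6$, since the pressure is finite exactly when $\sum_{n} d_{n}^{-2t} < \infty$, and because $\sum_{n} n^{-6t}$ diverges at $t = 1/6$. So $P_{\mathcal{G}}(\theta_{\mathcal{G}}) = \infty$, which makes $\mathcal{G}$ strongly regular and hence regular. Since $h_{\mathcal{G}} \in (0, 1/2]$, \Cref{main6} gives $L(h_{\mathcal{G}}) = 2 \leq 17 = d_{1}$. Therefore $\dimh{J_{\mathcal{F}}} > \dimh{J_{\mathcal{G}}}$, and with Step 3 this proves (3).

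The main obstacle is making the numerical bounds in Step 3 rigorous, together with the CIFS verification. The margins are comfortable, so careful integral comparisons should suffice.
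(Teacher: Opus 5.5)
Your proposal is correct and follows essentially the same route as the paper. Items (1) and (2) come from \Cref{main2}(c) and (b) with $c_1=c_2=1$, $\gamma=3$, $N=3$ (so $T=4=c_2^{2/\gamma}(N+1)$). Item (3) comes from showing the pressures at $1/2$ are negative and then applying the $T=4$ comparison with $L(h_{\mathcal{G}})=2$ (\Cref{cor:comparison_of_dimensions_T_4_at_half}), with regularity supplied by \Cref{prop:hereditarily_regular_poly}. The only cosmetic difference is that the paper treats $\mathcal{F}$ and $\mathcal{G}$ at $t=1/2$ with one common bound, $4\sum_{n}(d_n-1)^{-1}\le 4\bigl(\tfrac{1}{16}+\tfrac{1}{18}+\tfrac{27}{208}\bigr)=\tfrac{116}{117}<1$, obtained from $(d_n^2-1)^{1/2}\ge d_n-1$ and $\sum_{n\ge 3}(n^3-1)^{-1}\le \tfrac{27}{26}\int_2^\infty x^{-3}\,dx$; this also makes your approximate numerics rigorous.
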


\subsection{Organization of This Paper}
In \Cref{sec:CIFS}, we review the definitions and fundamental properties of infinite conformal iterated function systems (CIFSs)
based on \cite{MauldinUrbanski1996} and \cite{Takemoto-en}.
In particular, we introduce the pressure functions, regularity, the asymptotic boundary, and the relationship between dimensions of the limit sets and the measure-theoretic properties.
In \Cref{sec:f-and-g}, we define the IFSs $\mathcal{F}$ and $\mathcal{G}$,
which extend the IFSs of real continued fractions studied in \cite{MauldinUrbanski1999}
to the IFSs on the closed unit disc in the complex plane.
After investigating some fundamental properties of $\mathcal{F}$ and $\mathcal{G}$,
we prove \Cref{main1}.
In \Cref{sec:relationship_between_increasing_rate_and_measures},
we consider the case where the sequence $\mathbf{d} = \set{d_{n}}_{n=1}^{\infty}$ has polynomial growth with respect to $n$,
as in \cite{MauldinUrbanski1999},
and we prove \Cref{main2,main3}.
In \Cref{sec:comparison-of-dimension-f-and-g}, we prove \Cref{main4,main5,main6},
which are the results comparing the Hausdorff dimensions of the limit sets of $\mathcal{F}$ and $\mathcal{G}$.
In \Cref{sec:examples}, we give some examples illustrating the theorems proved in the preceding sections.

\section{Conformal Iterated Function Systems}\label{sec:CIFS}
In this section, we review the definitions and fundamental properties of infinite conformal iterated function systems (CIFSs).
Our exposition follows \cite{MauldinUrbanski1996} and \cite{Takemoto-en}.

Unless otherwise stated, we assume that $I$ is a countable (possibly finite) set with at least two elements
and that $X$ is a compact connected subset of $\mathbb{R}^{d}$.

\subsection{Definition of Infinite Conformal Iterated Function Systems and Limit Sets}
\begin{definition}[Iterated Function System]
  Let $\mathcal{S} = \set{s_{i}: X \to X}_{i \in I}$ be a family of injective contraction maps on $X$.
  We call $\mathcal{S}$ an \textit{iterated function system} (IFS) on $X$
  if $\sup_{i \in I} \Lip(s_{i}) < 1$.
\end{definition}

Throughout this section,
we assume that $\mathcal{S} = \set{s_{i}: X \to X}_{i \in I}$ is an IFS on $X$.

\begin{definition}
    \begin{enumerate}
        \item For $N\in\mathbb{N}, \omega = (\omega_{1}, \omega_{2}, \dots, \omega_{N}) \in I^{N}$,
                define $s_{\omega} : X \to X$ by
                $s_{\omega} := s_{\omega_{1}} \circ s_{\omega_{2}} \circ \dots \circ s_{\omega_{N}}$.
        \item For $\omega =  (\omega_{1}, \omega_{2}, \dots) \in I^{\mathbb{N}}, N\in\mathbb{N}$,
                define $\omega|_{N} \in I^{N}$ by $\omega|_{N} := (\omega_{1}, \omega_{2}, \dots, \omega_{N})$
    \end{enumerate}
\end{definition}

\begin{definition}[Limit set]
    The limit set of $\mathcal{S}$, denoted by $J_{\mathcal{S}}$, is defined by
    \begin{equation*}
        J_{\mathcal{S}} := \bigcup_{\omega \in I^{\mathbb{N}}} \bigcap_{N=1}^{\infty} s_{\omega|_{N}}(X).
    \end{equation*}
\end{definition}

\begin{prop}[Self-similarity of the limit set]\label{prop:self-similarity-of-limit-set}
    We have
    \begin{equation*}
        J_{\mathcal{S}} = \bigcup_{i \in I} s_{i}(J_{\mathcal{S}}).
    \end{equation*}
\end{prop}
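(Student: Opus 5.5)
We verify the two inclusions directly from the definition of $J_{\mathcal{S}}$ as the union over codes $\omega \in I^{\mathbb{N}}$ of $\bigcap_{N} s_{\omega|_{N}}(X)$. A preliminary remark fixes the structure of these intersections. The sets $s_{\omega|_{N}}(X)$ are compact and nonempty, since $X$ is compact and each $s_i$ is continuous. They are nested, because $s_{\omega|_{N+1}}(X) = s_{\omega|_{N}}(s_{\omega_{N+1}}(X)) \subset s_{\omega|_{N}}(X)$. Their diameters satisfy $\diam(s_{\omega|_{N}}(X)) \leq \paren{\sup_{i}\Lip(s_{i})}^{N}\diam(X) \to 0$. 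Hence each intersection is a single point, which we call $\pi(\omega)$, and $J_{\mathcal{S}} = \pi(I^{\mathbb{N}})$.

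\emph{Inclusion $J_{\mathcal{S}} \subset \bigcup_{i} s_{i}(J_{\mathcal{S}})$.} Take $x = \pi(\omega)$ and let $\sigma\omega = (\omega_{2}, \omega_{3}, \dots)$ be the shifted code. For every $N$,
\begin{equation*}
    s_{\omega_{1}}(\pi(\sigma\omega)) \in s_{\omega_{1}}\paren{s_{(\sigma\omega)|_{N}}(X)} = s_{\omega|_{N+1}}(X).
\end{equation*}
So $s_{\omega_{1}}(\pi(\sigma\omega))$ lies in $\bigcap_{N} s_{\omega|_{N}}(X) = \{\pi(\omega)\}$. Therefore $x = s_{\omega_{1}}(\pi(\sigma\omega)) \in s_{\omega_{1}}(J_{\mathcal{S}})$.

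\emph{Inclusion $\bigcup_{i} s_{i}(J_{\mathcal{S}}) \subset J_{\mathcal{S}}$.} Take $y = s_{i}(\pi(\tau))$ and form the concatenated code $i\tau = (i, \tau_{1}, \tau_{2}, \dots)$. Since $\pi(\tau) \in s_{\tau|_{N}}(X)$ for every $N$, we get
\begin{equation*}
    y \in s_{i}\paren{s_{\tau|_{N}}(X)} = s_{(i\tau)|_{N+1}}(X)
\end{equation*}
for all $N$. Hence $y = \pi(i\tau) \in J_{\mathcal{S}}$.

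Nothing here is a real obstacle. The only point needing care is that $\bigcap_{N} s_{\omega|_{N}}(X)$ is a singleton, and this is exactly where the uniform bound $\sup_{i}\Lip(s_{i}) < 1$ is used. That bound matters because $I$ may be infinite, so a contraction ratio chosen separately for each map would not give uniformly shrinking diameters. Even without the singleton property, the argument still yields the second inclusion as set containments, which is all that direction requires.
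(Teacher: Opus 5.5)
Your proof is correct and complete. Nestedness, compactness and the uniform bound $\sup_{i}\Lip(s_{i})<1$ make each $\bigcap_{N} s_{\omega|_{N}}(X)$ a single point $\pi(\omega)$. The identities $s_{\omega_{1}}(\pi(\sigma\omega))=\pi(\omega)$ and $s_{i}(\pi(\tau))=\pi(i\tau)$ then give the two inclusions; membership at level $N=1$ follows from nestedness.

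The paper states this proposition as a standard fact without proof, so there is no argument to compare against; yours is the standard coding-map proof. One small remark: the singleton property is not needed for the first inclusion either. Injectivity of $s_{\omega_{1}}$, which is part of the paper's definition of an IFS, lets you pull any $x\in\bigcap_{N}s_{\omega|_{N}}(X)$ back to $s_{\omega_{1}}^{-1}(x)\in\bigcap_{N}s_{(\sigma\omega)|_{N}}(X)$.
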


\begin{definition}\label{def:S_x}
    For $x\in X$, we define
    \begin{equation*}
        \mathcal{S}_{x} := \set{s_{i}(x) \mid i \in I}.
    \end{equation*}
\end{definition}

\begin{definition}[Asymptotic boundary]
    The asymptotic boundary of $J_{\mathcal{S}}$, denoted by $X_{\mathcal{S}}(\infty)$, is defined as follows.
    \begin{equation*}
        X_{\mathcal{S}}(\infty) := \left\{ x \in X \;\middle|\;
        \begin{aligned}
            & \exists \text{ infinite subset } I' \subset I, \\
            & \exists x_{i} \in s_{i}(X) \ (i \in I') \text{ such that } x = \lim_{i\in I'} x_{i}
        \end{aligned}
        \right\}.
    \end{equation*}
    Here, $x = \lim_{i\in I'}x_{i}$ means that for any $\varepsilon > 0$,
    there exists a finite subset $F \subset I'$ such that
    for all $i \in I' \setminus F$, we have $|x - x_{i}| < \varepsilon$.
\end{definition}

\begin{rmk}
    \begin{enumerate}
        \item If $I$ is a finite set, then $X_{\mathcal{S}}(\infty) = \emptyset$.
        \item If $I$ is an infinite set, then $X_{\mathcal{S}}(\infty) \ne \emptyset$ by compactness of $X$.
        \item If $\lim_{i\in I}\diam\paren{s_{i}(X)} = 0$, then for each $x\in X$,
            we have
            \begin{equation*}
                X_{\mathcal{S}}(\infty) = \mathrm{Acc}(\mathcal{S}_{x}),
            \end{equation*}
            where $\mathrm{Acc}(\mathcal{S}_{x})$ denotes the set of accumulation points of $\mathcal{S}_{x}$.
    \end{enumerate}
\end{rmk}

\begin{prop}[\cite{MauldinUrbanski1996} Lemma 2.1]\label{prop:closure-of-limit-set}
    If $\lim_{i\in I}\diam\paren{s_{i}(X)} = 0$, then the following equality holds.
    \begin{equation*}
        \overline{J_{\mathcal{S}}} = J_{\mathcal{S}} \cup X_{\mathcal{S}}(\infty) \cup \bigcup_{N\in\mathbb{N}}\bigcup_{\omega \in I^{N}}s_{\omega}(X_{\mathcal{S}}(\infty)).
    \end{equation*}
\end{prop}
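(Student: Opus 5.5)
We prove the two inclusions separately. Throughout, write $\Omega := X_{\mathcal{S}}(\infty) \cup \bigcup_{N\in\mathbb{N}}\bigcup_{\omega \in I^{N}}s_{\omega}(X_{\mathcal{S}}(\infty))$. Set $\lambda := \sup_{i}\Lip(s_{i}) < 1$, so that $\diam\paren{s_{\omega}(X)} \leq \lambda^{|\omega|}\diam(X)$ for every finite word $\omega$.

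\emph{The inclusion $J_{\mathcal{S}} \cup \Omega \subset \overline{J_{\mathcal{S}}}$.} It suffices to show $X_{\mathcal{S}}(\infty) \subset \overline{J_{\mathcal{S}}}$. Indeed, $s_{\omega}(\overline{J_{\mathcal{S}}}) \subset \overline{s_{\omega}(J_{\mathcal{S}})}$ by continuity, and $s_{\omega}(J_{\mathcal{S}}) \subset J_{\mathcal{S}}$ by \Cref{prop:self-similarity-of-limit-set}; together these handle the images $s_{\omega}(X_{\mathcal{S}}(\infty))$.

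Now let $x = \lim_{i\in I'} x_{i}$ with $x_{i} \in s_{i}(X)$ and $I'$ infinite. Pick any $y \in J_{\mathcal{S}}$; this set is nonempty, since the nested compact sets $s_{\omega|_N}(X)$ have nonempty intersection. Then $s_{i}(y) \in J_{\mathcal{S}}$ and
\begin{equation*}
    \abs{s_{i}(y) - x_{i}} \leq \diam\paren{s_{i}(X)} \to 0 \quad (i \to \infty \text{ in } I'),
\end{equation*}
using the hypothesis $\lim_{i\in I}\diam\paren{s_{i}(X)} = 0$. Hence $s_{i}(y) \to x$ along $I'$, and therefore $x \in \overline{J_{\mathcal{S}}}$.

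\emph{The inclusion $\overline{J_{\mathcal{S}}} \subset J_{\mathcal{S}} \cup \Omega$.} Let $x \in \overline{J_{\mathcal{S}}}$ and choose $x^{(k)} \in J_{\mathcal{S}}$ with $x^{(k)} \to x$. Write $x^{(k)} = \pi(\omega^{(k)})$ with $\omega^{(k)} \in I^{\mathbb{N}}$, where $\pi(\omega)$ denotes the single point of $\bigcap_{N} s_{\omega|_N}(X)$. We then carry out a diagonal argument, one coordinate at a time.

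Consider the first letters $\omega^{(k)}_{1}$. There are two cases.
\begin{enumerate}
    \item If infinitely many of the first letters are distinct, pass to a subsequence in which they are pairwise distinct. Then $x^{(k)} \in s_{\omega^{(k)}_{1}}(X)$ with distinct indices and $x^{(k)} \to x$. To match the definition's notion of limit, note that the indices along the subsequence are distinct and the points converge, so every cofinite subset of this index set eventually approximates $x$. Thus $x \in X_{\mathcal{S}}(\infty)$.
    \item Otherwise, some letter $a_{1}$ occurs infinitely often; restrict to those $k$.
\end{enumerate}
Repeat this at coordinate $2$, and so on.

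Suppose the process stops at stage $N+1$, having fixed $a_{1},\dots,a_{N}$. Then $x^{(k)} = s_{a_1\cdots a_N}(y^{(k)})$, where $y^{(k)} \in s_{\omega^{(k)}_{N+1}}(X)$ and the letters $\omega^{(k)}_{N+1}$ are pairwise distinct. By compactness of $X$, pass to a further subsequence with $y^{(k)} \to y$; then $y \in X_{\mathcal{S}}(\infty)$, and by continuity $x = s_{a_1\cdots a_N}(y) \in \Omega$.

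Suppose instead the process never stops. Then we obtain an infinite word $a = (a_1, a_2, \dots) \in I^{\mathbb{N}}$, and for each $N$, infinitely many $x^{(k)}$ lie in $s_{a|_N}(X)$. This set is compact, so $x \in s_{a|_N}(X)$ for every $N$. Hence $x \in \bigcap_{N} s_{a|_N}(X) \subset J_{\mathcal{S}}$.

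The main obstacle is bookkeeping in the diagonal extraction. In particular, one must ensure that the ``distinct letters'' case genuinely produces convergence in the sense of the definition of $X_{\mathcal{S}}(\infty)$, where the limit is taken along the index set $I'$ rather than along $k$. This is handled by taking subsequences in which the letters are pairwise distinct, so that the map from $k$ to the letter is injective and the two notions of convergence agree.
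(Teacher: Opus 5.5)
Your proof is correct and complete. The diameter hypothesis gives $X_{\mathcal{S}}(\infty)\subset\overline{J_{\mathcal{S}}}$ through the points $s_{i}(y)$, and invariance $s_{\omega}(J_{\mathcal{S}})\subset J_{\mathcal{S}}$ takes care of the images $s_{\omega}(X_{\mathcal{S}}(\infty))$. For the reverse inclusion, your letter-by-letter pigeonhole and compactness extraction on codings works, and taking the letters pairwise distinct correctly turns sequential convergence into the cofinite limit used in the definition of $X_{\mathcal{S}}(\infty)$. The paper gives no proof of its own; it only quotes the result from \cite{MauldinUrbanski1996}, Lemma 2.1, and your argument is the standard one for it. Note also that your reverse inclusion never uses the diameter hypothesis.
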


\begin{definition}[Conformal Iterated Function System (CIFS)]\label{def:CIFS}
    We say that an IFS
    \begin{equation*}
        \mathcal{S} = \set{s_{i} : X \to X}_{i \in I}
    \end{equation*}
    is a conformal iterated function system (CIFS)
    if the following four conditions are satisfied.
    \begin{description}[style=nextline, leftmargin=1.5em, font=\bfseries]
        \item[Open Set Condition (OSC)] For any $i, j \in I$ with $ i \ne j$, we have $s_{i}\paren{\Int(X)} \cap s_{j}\paren{\Int(X)} = \emptyset$.
        \item[Cone Condition (CC)] There exist constants $0 < \alpha < \pi/2, l > 0$ such that for every $x \in \partial X \subset \mathbb{R}^{d}$
                there exists $0 \ne u_{x} \in \mathbb{R}^{d}$ with $\Con(x, u_{x}, \alpha, l) \subset \Int(X)$.
        \item[Conformality] There exists an open connected subset $V$ of $\mathbb{R}^{d}$ containing $X$ such that
                            all maps $s_i (i \in I)$ extend to $C^{1 +\varepsilon}$ diffeomorphisms on $V$ and are conformal on $V$.
        \item[Bounded Distortion Property (BDP)] There exists $K \geq 1$ such that $\abs{s_{\omega}'(y)} \leq K \abs{s_{\omega}'(x)}$
            for every $\omega \in \bigcup_{N\in\mathbb{N}} I^{N}$ and $x, y \in V$,
                where $\abs{s_{\omega}'(x)}$ is the operator norm of the derivative of $s_{\omega}$ at $x$.
    \end{description}
\end{definition}

\begin{rmk}
    If \textbf{(CC)} holds, then $\Int(X)$ is obviously non-empty.
    Conversely, if $\Int(X) \ne \emptyset$ and $X$ is convex,
    then \textbf{(CC)} holds (\cite{MauldinUrbanski1996}).
\end{rmk}

\begin{definition}\label{def:approx-limit-set}
    For $N \in\mathbb{N}$, we define $J_{\mathcal{S}}^{(N)}$ by
    \begin{equation*}
        J_{\mathcal{S}}^{(N)} := \bigcup_{\omega \in I^{N}} s_{\omega}(X).
    \end{equation*}
\end{definition}

\begin{lem}[\cite{MauldinUrbanski1996} Lemma 2.6, (2.5)]\label{lem:Borel-measurability-of-limit-set}
    If $\mathcal{S}$ is a CIFS, then
    \begin{equation*}
        J_{\mathcal{S}} = \bigcap_{N=1}^{\infty} J_{\mathcal{S}}^{(N)}
    \end{equation*}
    holds. In particular, $J_{\mathcal{S}}$ is a Borel measurable set of $X$.
\end{lem}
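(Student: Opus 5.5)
We must prove $J_{\mathcal{S}} = \bigcap_{N} J_{\mathcal{S}}^{(N)}$ for a CIFS. The plan is to prove the two inclusions separately, using only two facts: compactness of $X$ and the uniform contraction $s := \sup_{i\in I}\Lip(s_i) < 1$.

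The inclusion $J_{\mathcal{S}} \subset \bigcap_N J_{\mathcal{S}}^{(N)}$ is immediate. If $x \in \bigcap_{N} s_{\omega|_N}(X)$ for some $\omega \in I^{\mathbb{N}}$, then for each $N$ we have $x \in s_{\omega|_N}(X) \subset J_{\mathcal{S}}^{(N)}$.

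For the reverse inclusion, fix $x \in \bigcap_N J_{\mathcal{S}}^{(N)}$. For each $N$, choose a word $\tau^{(N)} \in I^N$ with $x \in s_{\tau^{(N)}}(X)$. The difficulty is that these words need not be prefixes of one another, and since $I$ may be infinite we cannot run a naive finite-branching König argument. I would proceed as follows.

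\begin{enumerate}
\item Use the nesting $s_{\omega|_{N+1}}(X) = s_{\omega|_N}(s_{\omega_{N+1}}(X)) \subset s_{\omega|_N}(X)$, which shows $J_{\mathcal{S}}^{(N+1)} \subset J_{\mathcal{S}}^{(N)}$.
\item Observe that $\diam(s_\tau(X)) \le s^{|\tau|}\diam X$ for every finite word $\tau$.
\item Use the open set condition to make the first letters coherent. By (OSC), distinct first-level images overlap only on boundaries $s_i(\partial X)$. If $x \in \Int$ of the relevant pieces, the first letter of $\tau^{(N)}$ would be forced to be unique, and induction would give a single infinite word. In general, however, $x$ may lie on boundaries of several cylinders, so uniqueness of letters fails.
\item To avoid this, argue by compactness of a suitable space of words. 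If $x$ lies in infinitely many distinct first-level cylinders $s_i(X)$, then $\diam(s_i(X)) \to 0$ along those $i$. That alone does not produce a contradiction.
\end{enumerate}

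So I would instead rely on the cited source. As stated in \cite{MauldinUrbanski1996} Lemma 2.6, the equality uses the cone condition and conformality as follows. Via (BDP) and (CC), each $s_i(X)$ contains a ball of radius comparable to $\diam(s_i(X))$. By (OSC) these balls are disjoint, so only finitely many $i$ can have $s_i(X)$ meeting any fixed ball $B(x,r)$ with $\diam(s_i(X)) \ge r$.

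This gives the key local-finiteness statement: for each $N$ and each $\delta>0$, only finitely many words $\tau \in I^N$ have $x \in s_\tau(X)$ and $\diam(s_\tau(X)) \ge \delta$. The proof is a volume comparison inside $B(x, 2\diam X)$. I expect this finiteness to be the main obstacle, since one must handle words whose cylinders are small but which contain $x$.

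The cleanest route is to enumerate a tree of candidate words and apply König's lemma. If some infinite branch of admissible prefixes exists, then $x$ lies in each cylinder $s_{\omega|_N}(X)$ of that branch, so $x \in J_{\mathcal{S}}$. If not, the candidate words must shrink: letters tend to $\infty$ in $I$, and the points $s_{\tau^{(N)}}(X)$ accumulate in a way that, combined with a telescoping argument, forces $x \in J_{\mathcal{S}}$ anyway via closedness of the tail cylinders. I would follow the Mauldin–Urbański argument here rather than reconstruct it fully.

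Finally, Borel measurability follows because each $J_{\mathcal{S}}^{(N)}$ is a countable union of compact sets $s_\omega(X)$, hence $F_\sigma$. The countable intersection $\bigcap_N J_{\mathcal{S}}^{(N)}$ is therefore an $F_{\sigma\delta}$ set, and in particular Borel.
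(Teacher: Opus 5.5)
Your first inclusion and the Borel (indeed $F_{\sigma\delta}$) conclusion are fine. The inclusion $\bigcap_{N} J_{\mathcal{S}}^{(N)} \subset J_{\mathcal{S}}$, however, is not proved. The only finiteness you establish is that finitely many $\tau \in I^{N}$ have $x \in s_{\tau}(X)$ and $\diam(s_{\tau}(X)) \geq \delta$. That is a volume-packing statement, so it only counts pieces whose size is bounded below. It says nothing about the infinitely many small cylinders through $x$ that you yourself flag in step 4. Hence the prefix-closed tree $\mathcal{T}_{x} := \{\tau : x \in s_{\tau}(X)\}$ is not known to be finitely branching, and K\"{o}nig's lemma is unavailable. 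A prefix-closed tree with every level nonempty but infinite branching can consist of finite branches of every length and have no infinite branch. Your fallback for this case (letters tending to infinity, a ``telescoping argument'', closedness of tail cylinders) is not an argument. Closedness of the individual sets $s_{\tau}(X)$ cannot help when no single infinite word is consistent at all levels. Some finiteness input that does not depend on size is indispensable.

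That input is a multiplicity bound, and it is why the cone condition is a hypothesis: $\#\{\tau \in I^{N} : x \in s_{\tau}(X)\} \leq C(\alpha, d)$ for all $N$ and $x$. To see it, write $x = s_{\tau}(y)$ with $y \in X$.
If $y \in \partial X$: $s_{\tau}$ is a $C^{1}$ diffeomorphism near $y$ whose derivative is a similarity. So $s_{\tau}(\Con(y, u_{y}, \alpha, l))$ contains a cone $\Con(x, s_{\tau}'(y)u_{y}, \alpha/2, \rho_{\tau})$ for some altitude $\rho_{\tau} > 0$. The altitude may depend on $\tau$, which does no harm.
If $y \in \Int(X)$: $s_{\tau}(\Int(X))$ contains a ball about $x$, hence such a cone too.
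In both cases the cone lies in $s_{\tau}(\Int(X))$. These open sets are pairwise disjoint for distinct $\tau \in I^{N}$ (iterate (OSC), using $s_{\sigma}(\Int(X)) \subset \Int(X)$). Pairwise disjoint cones with common vertex $x$ and angle $\alpha/2$ are boundedly many. Indeed, for any finite subfamily, their traces on a sphere about $x$ of radius smaller than all the altitudes are disjoint caps of angular radius $\alpha/2$.
Hence every level of $\mathcal{T}_{x}$ is finite, and every level is nonempty when $x \in \bigcap_{N} J_{\mathcal{S}}^{(N)}$. K\"{o}nig's lemma then yields $\omega \in I^{\mathbb{N}}$ with $x \in s_{\omega|_{N}}(X)$ for all $N$, i.e.\ $x \in J_{\mathcal{S}}$. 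This finite-multiplicity statement is what the cited Lemma 2.6 of Mauldin--Urba\'{n}ski provides. The paper uses it in exactly this way in the proof of Theorem~\ref{thm:packing-measure-x-infty}, to see that only finitely many $s_{i}(X)$ contain a given point.
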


We define a stronger separation condition than the open set condition.
\begin{definition}[Strong Open Set Condition (SOSC)]
    We say $\mathcal{S}$ satisfies the strong open set condition (SOSC) if
    \begin{equation*}
        J_{\mathcal{S}} \cap \Int(X) \ne \emptyset
    \end{equation*}
    holds.
\end{definition}

\subsection{Pressure Function and Regularity}

Throughout this section, unless otherwise stated,
we assume that $\mathcal{S} = \set{s_{i}: X \to X}_{i \in I}$ is a CIFS on $X$.

\begin{definition}[Pressure Function]\label{def:pressure_function}
    Define $P_{\mathcal{S}}: [0, \infty) \to \mathbb{R} \cup \set{\infty}$ by
    \begin{equation*}
        P_{\mathcal{S}}(t) := \lim_{N\to\infty} \frac{1}{N} \log\paren{\sum_{\omega \in I^{N}} \sup_{x\in X}\abs{s_{\omega}'(x)}^{t}} = \inf_{N\in\mathbb{N}} \frac{1}{N} \log\paren{\sum_{\omega \in I^{N}} \sup_{x\in X}\abs{s_{\omega}'(x)}^{t}} \quad (t \geq 0).
    \end{equation*}
    We call $P_{\mathcal{S}}$ the \textit{pressure function} of $\mathcal{S}$.
\end{definition}

The following bounds follow from the chain rule.

\begin{lem}\label{lem:pressure_bounds_rough_estimate}
    For $t \geq 0$, the following inequalities hold.
    \begin{equation*}
        \log\paren{\sum_{i \in I} \inf_{x\in X}\abs{s_{i}'(x)}^{t}} \leq P_{\mathcal{S}}(t) \leq \log\paren{\sum_{i \in I} \sup_{x\in X}\abs{s_{i}'(x)}^{t}}.
    \end{equation*}
\end{lem}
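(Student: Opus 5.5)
The plan is to bound the $N$-th partial sum in the definition of $P_{\mathcal{S}}(t)$ from above and below by the $N$-th power of a first-level sum, and then pass to the limit (or infimum) over $N$.

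\emph{Upper bound.} Fix $\omega = (\omega_{1},\dots,\omega_{N}) \in I^{N}$ and $x \in X$. By the chain rule,
\begin{equation*}
    s_{\omega}'(x) = s_{\omega_{1}}'(y_{1})\, s_{\omega_{2}}'(y_{2}) \cdots s_{\omega_{N}}'(y_{N}),
\end{equation*}
where $y_{k} = s_{\omega_{k+1}}\circ\dots\circ s_{\omega_{N}}(x) \in X$ and $y_{N} = x$. Submultiplicativity of the operator norm gives
\begin{equation*}
    \abs{s_{\omega}'(x)} \leq \prod_{k=1}^{N} \sup_{y\in X}\abs{s_{\omega_{k}}'(y)}.
\end{equation*}
Hence
\begin{equation*}
    \sum_{\omega\in I^{N}}\sup_{x\in X}\abs{s_{\omega}'(x)}^{t} \leq \paren{\sum_{i\in I}\sup_{x\in X}\abs{s_{i}'(x)}^{t}}^{N}.
\end{equation*}
Taking $\frac{1}{N}\log$ and using the expression of $P_{\mathcal{S}}$ as an infimum (or just the case $N=1$) gives the upper bound. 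If the right-hand side is $\infty$, there is nothing to prove.

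\emph{Lower bound.} By conformality, each $s_{i}'(y)$ is a scalar multiple of an orthogonal matrix. So the operator norm is multiplicative along compositions:
\begin{equation*}
    \abs{s_{\omega}'(x)} = \prod_{k=1}^{N}\abs{s_{\omega_{k}}'(y_{k})} \geq \prod_{k=1}^{N}\inf_{y\in X}\abs{s_{\omega_{k}}'(y)}.
\end{equation*}
Since the supremum over $x$ dominates any single value, summing over $\omega$ gives
\begin{equation*}
    \sum_{\omega\in I^{N}}\sup_{x\in X}\abs{s_{\omega}'(x)}^{t} \geq \paren{\sum_{i\in I}\inf_{x\in X}\abs{s_{i}'(x)}^{t}}^{N}.
\end{equation*}
Taking $\frac{1}{N}\log$ and letting $N\to\infty$ yields the lower bound. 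If some infimum is $0$, the inequality still holds trivially in $[-\infty,\infty]$.

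The only point needing care is the equality $\abs{A B} = \abs{A}\abs{B}$ for conformal derivatives. This is where conformality on $V \supset X$ is used, together with the fact that every intermediate point $y_{k}$ lies in $X$. The upper bound needs only submultiplicativity. All other steps are routine.
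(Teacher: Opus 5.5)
Your proposal is correct and follows essentially the same route as the paper, which justifies the lemma only by saying the bounds ``follow from the chain rule.'' You supply exactly those details: factor $s_{\omega}'$ via the chain rule, bound each factor by the first-level $\sup$ or $\inf$ (using that the norm is multiplicative for conformal derivatives in the lower bound), sum to obtain an $N$-th power, and take $\frac{1}{N}\log$.
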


Moreover, (BDP) provides other expressions for the pressure function.

\begin{lem}\label{lem:other-expressions-of-pressure-function}
    For each $x \in X$ and $t \geq 0$, we have
    \begin{equation*}
        P_{\mathcal{S}}(t) = \lim_{N\to\infty} \frac{1}{N} \log\paren{\sum_{\omega \in I^{N}} \abs{s_{\omega}'(x)}^{t}}.
    \end{equation*}
    Moreover, we have
    \begin{equation*}
        P_{\mathcal{S}}(t) = \lim_{N\to\infty} \frac{1}{N} \log\paren{\sum_{\omega \in I^{N}} \inf_{x\in X}\abs{s_{\omega}'(x)}^{t}} \quad (t \geq 0).
    \end{equation*}
\end{lem}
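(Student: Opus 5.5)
The two equalities in \Cref{lem:other-expressions-of-pressure-function} will both come from (BDP): on each cylinder the derivative is essentially constant, so replacing the supremum in \Cref{def:pressure_function} by a point value or by the infimum changes each term by at most a bounded factor. The plan is to prove two-sided inequalities term by term for each fixed $N$ and then pass to the limit.

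First fix $x \in X$, $t \geq 0$ and $N \in \mathbb{N}$. For every $\omega \in I^{N}$ and $y \in X \subset V$, (BDP) gives
\begin{equation*}
    \abs{s_{\omega}'(y)} \leq K \abs{s_{\omega}'(x)}, \qquad \abs{s_{\omega}'(x)} \leq K \abs{s_{\omega}'(y)}.
\end{equation*}
Taking the supremum over $y$ in the first inequality and the infimum over $y$ in the second yields
\begin{equation*}
    K^{-t}\sup_{y\in X}\abs{s_{\omega}'(y)}^{t} \leq \abs{s_{\omega}'(x)}^{t}, \qquad K^{-t}\abs{s_{\omega}'(x)}^{t} \leq \inf_{y\in X}\abs{s_{\omega}'(y)}^{t} \leq \abs{s_{\omega}'(x)}^{t} \leq \sup_{y\in X}\abs{s_{\omega}'(y)}^{t}.
\end{equation*}
Here $t \geq 0$ is needed so that $u \mapsto u^{t}$ preserves the order. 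The derivatives are nonzero because each $s_{\omega}$ is a diffeomorphism.

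Summing over $\omega \in I^{N}$, with all sums allowed to be $+\infty$, gives
\begin{equation*}
    K^{-2t}\sum_{\omega\in I^{N}}\sup_{X}\abs{s_{\omega}'}^{t} \leq K^{-t}\sum_{\omega\in I^{N}}\abs{s_{\omega}'(x)}^{t} \leq \sum_{\omega\in I^{N}}\inf_{X}\abs{s_{\omega}'}^{t} \leq \sum_{\omega\in I^{N}}\abs{s_{\omega}'(x)}^{t} \leq \sum_{\omega\in I^{N}}\sup_{X}\abs{s_{\omega}'}^{t}.
\end{equation*}
Next, take $\frac{1}{N}\log$ of every term. The constants contribute only $-\frac{t}{N}\log K$ and $-\frac{2t}{N}\log K$, which tend to $0$ as $N \to \infty$. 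The outer two quantities have common limit $P_{\mathcal{S}}(t)$ by \Cref{def:pressure_function}, so the limits of the middle quantities exist and equal $P_{\mathcal{S}}(t)$, by squeezing along $\liminf$ and $\limsup$.

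If some sum is infinite, then all of them are infinite for that $N$ by the displayed comparison. In that case both sides equal $+\infty$ and the conclusion is consistent. The one subtle point is that when $P_{\mathcal{S}}(t) = \infty$ the squeeze must be read in $\mathbb{R} \cup \set{\infty}$; there the lower bound alone forces the middle limits to be $+\infty$. Apart from this the argument is routine and presents no real obstacle.
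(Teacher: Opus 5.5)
Your proposal is correct and is essentially the argument the paper relies on. The paper gives no written proof and only remarks that (BDP) yields these expressions; your term-by-term comparison with the factors $K^{-t}$ and $K^{-2t}$, which vanish after taking $\frac{1}{N}\log$, fills that in. You also handle the case $P_{\mathcal{S}}(t)=\infty$ correctly.
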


\begin{definition}
    Define $F(\mathcal{S})$ and $\theta_{\mathcal{S}}$ by
    \begin{equation*}
        F(\mathcal{S}) := \set{t \geq 0 \mid P_{\mathcal{S}}(t) < \infty}, \quad \theta_{\mathcal{S}} := \inf F(\mathcal{S}).
    \end{equation*}
\end{definition}

\begin{prop}[\cite{MauldinUrbanski1996} Proposition 3.3]\label{prop:properties-of-pressure-function}
    $P_{\mathcal{S}}$ satisfies the following properties.
    \begin{enumerate}
        \item $P_{\mathcal{S}}$ is non-increasing on $[0, \infty)$.
        \item $P_{\mathcal{S}}$ is strictly decreasing on $[\theta_{\mathcal{S}}, \infty)$.
        \item $P_{\mathcal{S}}$ is continuous and convex on $F(\mathcal{S})$.
    \end{enumerate}
\end{prop}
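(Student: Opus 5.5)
The three properties follow from writing the partition sums as powers, $\abs{s_{\omega}'}^{t} = \exp(t\log\abs{s_{\omega}'})$. Define
\begin{equation*}
Z_{N}(t) := \sum_{\omega \in I^{N}} \sup_{x\in X}\abs{s_{\omega}'(x)}^{t}.
\end{equation*}

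\textbf{Monotonicity.} Fix $s \in (0,1)$ with $\abs{s_{i}'} \leq s$ for all $i \in I$. Such an $s$ exists because $\mathcal{S}$ is an IFS, so $\sup_{i\in I}\Lip(s_i)<1$. By the chain rule, $\sup_{x}\abs{s_{\omega}'(x)} \leq s^{N}$ for every $\omega \in I^{N}$. Hence for $0 \leq t < u$,
\begin{equation*}
Z_{N}(u) \leq s^{N(u-t)} Z_{N}(t).
\end{equation*}
Taking $\frac1N\log$ and letting $N\to\infty$ gives
\begin{equation*}
P_{\mathcal{S}}(u) \leq P_{\mathcal{S}}(t) + (u-t)\log s.
\end{equation*}
This gives (1) immediately, with the convention $\infty \leq \infty$.

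\textbf{Strict decrease.} On $F(\mathcal{S})$ the same inequality is strict, since $\log s<0$ and the right side is finite. It remains to treat the endpoint $t=\theta_{\mathcal{S}}$ when it lies outside $F(\mathcal{S})$. In that case $P_{\mathcal{S}}(\theta_{\mathcal{S}})=\infty>P_{\mathcal{S}}(u)$ for any $u>\theta_{\mathcal{S}}$. Also $F(\mathcal{S})$ is an interval $[\theta_{\mathcal{S}},\infty)$ or $(\theta_{\mathcal{S}},\infty)$ by (1). I must also check that $P_{\mathcal{S}} > -\infty$, so that the subtraction is legitimate. This follows from the lower bound $P_{\mathcal{S}}(t)\ge \log \sum_i \inf\abs{s_i'}^t > -\infty$ in \Cref{lem:pressure_bounds_rough_estimate}.

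\textbf{Convexity.} For each $N$, the function
\begin{equation*}
t \mapsto \log \sum_{\omega} \exp\bigl(t\,a_\omega\bigr), \qquad a_\omega=\log\sup_x\abs{s_\omega'(x)},
\end{equation*}
is convex on $F(\mathcal{S})$ by Hölder's inequality. Indeed, with $\lambda\in[0,1]$,
\begin{equation*}
\sum_\omega e^{(\lambda t+(1-\lambda)u)a_\omega} \leq \Bigl(\sum_\omega e^{ta_\omega}\Bigr)^{\lambda}\Bigl(\sum_\omega e^{ua_\omega}\Bigr)^{1-\lambda}.
\end{equation*}
Dividing by $N$ and passing to the limit, which exists by the definition of the pressure function, preserves convexity. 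The same argument shows that $F(\mathcal{S})$ is convex, which is consistent with (1).

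\textbf{Continuity.} A finite convex function is continuous on the interior of its interval domain, so only the left endpoint $\theta_{\mathcal{S}}$ needs care, and only when $\theta_{\mathcal{S}}\in F(\mathcal{S})$. This is the main obstacle. Convexity gives only upper semicontinuity from the inside, $\limsup_{u\downarrow\theta}P(u)\le P(\theta)$, and monotonicity (1) gives $P(u)\le P(\theta)$.

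For the reverse inequality I would work at level $N$, because $P_{\mathcal{S}}=\inf_N \frac1N\log Z_N$ is an infimum. Each $Z_N$ is a sum of nonnegative terms increasing as $u\downarrow\theta$, so monotone convergence gives $Z_N(u)\uparrow Z_N(\theta)$. Hence $\frac1N\log Z_N$ is continuous from the right at $\theta$ for each $N$.

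An infimum of right-continuous decreasing functions is only upper semicontinuous, so this alone does not give the lower bound. Instead I would use (BDP) and \Cref{lem:other-expressions-of-pressure-function}. They give submultiplicativity $Z_{N+M}\le Z_NZ_M$ and a matching supermultiplicative bound $Z_{N+M}\ge K^{-t}Z_NZ_M$. The latter yields
\begin{equation*}
P_{\mathcal{S}}(t)=\sup_N \frac1N\log\bigl(K^{-t}Z_N(t)\bigr).
\end{equation*}
So $P_{\mathcal{S}}$ is also a supremum of right-continuous functions, hence lower semicontinuous from the right. Combined with the upper bound, this gives continuity at $\theta_{\mathcal{S}}$.
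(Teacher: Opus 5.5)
Your proof is correct and complete. The paper gives no argument of its own here; it only quotes Proposition 3.3 of \cite{MauldinUrbanski1996}, and your route is the standard one: the contraction estimate $Z_N(u)\le s^{N(u-t)}Z_N(t)$ for (1) and (2), and H\"older's inequality for convexity. You also correctly isolate the only delicate point, right-continuity at $\theta_{\mathcal{S}}$ when $\theta_{\mathcal{S}}\in F(\mathcal{S})$, which neither convexity nor the infimum formula gives. The bounded-distortion estimate $Z_{N+M}(t)\ge K^{-t}Z_N(t)Z_M(t)$ yields $P_{\mathcal{S}}(t)=\sup_N\frac{1}{N}\log\bigl(K^{-t}Z_N(t)\bigr)$, and combined with monotone convergence for each fixed $N$ it is exactly what closes this gap.
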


\begin{definition}\label{def:regularity}
    \begin{enumerate}
        \item We say that $\mathcal{S}$ is regular if there exists $t \geq 0$ such that $P_{\mathcal{S}}(t) = 0$.
        \item We say that $\mathcal{S}$ is irregular if it is not regular.
        \item We say that $\mathcal{S}$ is hereditarily regular if for any subset $I' \subset I$ with $\# I' \geq 2$ and $\#\paren{I\setminus I'} < \infty$,
                the subsystem $\set{s_{i}}_{i \in I'}$ is regular.
    \end{enumerate}
\end{definition}

\begin{prop}\label{prop:finite-IFS-is-hereditarily-regular}
    If $\# I < \infty$, then $\mathcal{S}$ is hereditarily regular.
\end{prop}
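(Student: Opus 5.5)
The plan is to show that when $\# I < \infty$ the pressure function is finite everywhere, tends to $-\infty$, and is continuous, so it has a zero. Since any subsystem $\set{s_{i}}_{i\in I'}$ with $\#I' \geq 2$ is again a finite CIFS (the open set condition, cone condition, conformality and bounded distortion are all inherited by subfamilies, with the same $V$ and $K$), it is enough to prove that every finite CIFS $\mathcal{S}$ with $\#I\ge 2$ is regular. Heredity then follows at once.

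First, finiteness. By \Cref{lem:pressure_bounds_rough_estimate},
\begin{equation*}
P_{\mathcal{S}}(t) \leq \log\paren{\sum_{i\in I}\sup_{x\in X}\abs{s_{i}'(x)}^{t}}.
\end{equation*}
The right-hand side is a finite sum of finite numbers: each $\abs{s_i'}$ is bounded on the compact set $X$ by continuity of the $C^{1+\varepsilon}$ extension, and at $t=0$ the sum equals $\#I$. Hence $F(\mathcal{S}) = [0,\infty)$ and $\theta_{\mathcal{S}} = 0$. By \Cref{prop:properties-of-pressure-function}, $P_{\mathcal{S}}$ is then continuous on $[0,\infty)$.

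Second, the values at the two ends. At $t=0$, \Cref{lem:pressure_bounds_rough_estimate} gives $P_{\mathcal{S}}(0) \geq \log \#I \geq \log 2 > 0$. For large $t$, set $s := \sup_{i}\Lip(s_{i}) < 1$. Conformality, together with the convexity needed to pass from derivative to Lipschitz bounds, gives $\abs{s_{i}'(x)} \le s$ on $X$. There is one delicate point here: $X$ need not be convex, and the derivative bound is only needed on $X$. I would handle this as follows. For $x\in\Int(X)$, $\abs{s_i'(x)}$ is the limit of difference quotients $\abs{s_i(y)-s_i(x)}/\abs{y-x}$ along $y \to x$ in $X$, each of which is at most $\Lip(s_i)$. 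By continuity, the same bound extends to $\overline{\Int(X)}$, which equals $X$ under the cone condition, since every boundary point is the vertex of a cone lying in $\Int(X)$. Therefore
\begin{equation*}
P_{\mathcal{S}}(t) \leq \log(\#I) + t\log s \to -\infty \quad (t\to\infty).
\end{equation*}

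Finally, the intermediate value theorem yields $t$ with $P_{\mathcal{S}}(t)=0$, so $\mathcal{S}$ is regular. I expect the main obstacle to be the bound $\sup_X\abs{s_i'}\le\Lip(s_i)$ on $X$ itself, which I handle through the density of $\Int(X)$ in $X$ coming from the cone condition. A cruder alternative would avoid this point altogether: use submultiplicativity of $\sup\abs{s_\omega'}$, together with bounded distortion and a diameter comparison, to get $\sup_X\abs{s_\omega'}\le C s^N$ for $\omega\in I^N$. This again forces $P_{\mathcal{S}}(t)\le \log\#I + t\log s$.
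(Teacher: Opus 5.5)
Your proposal is correct and follows essentially the same route as the paper: $P_{\mathcal{S}}(0)=\log\#I>0$, $P_{\mathcal{S}}(t)\to-\infty$ from $\sup_i \Lip(s_i)<1$ together with conformality and the cone condition, continuity of $P_{\mathcal{S}}$ on $[0,\infty)$, the intermediate value theorem, and the same argument repeated for every subsystem with at least two maps. Your added details fill in what the paper's short proof leaves implicit, namely finiteness of $P_{\mathcal{S}}$ via \Cref{lem:pressure_bounds_rough_estimate} and the bound $\sup_X\abs{s_i'}\le \Lip(s_i)$ obtained from the density of $\Int(X)$ in $X$ under the cone condition.
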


\begin{proof}
    Suppose that $2 \leq \# I < \infty$.
    We have $0 < P_{\mathcal{S}}(0) = \log(\# I) < \infty$ by the definition of $P_{\mathcal{S}}$.
    We also have $\lim_{t \to \infty} P_{\mathcal{S}}(t) = -\infty$
    by $\sup_{i \in I} \Lip(s_{i}) < 1$, conformality of each $s_{i}$ and the cone condition.
    Therefore, since $P_{\mathcal{S}}$ is continuous on $[0, \infty)$ (\Cref{prop:properties-of-pressure-function} (3)),
    $P_{\mathcal{S}}$ has a positive zero.
    Thus, $\mathcal{S}$ is regular.
    Similarly, for any subset $I' \subset I$ with $\# I' \geq 2$ and $\#\paren{I\setminus I'} < \infty$,
    the subsystem $\set{s_{i}}_{i \in I'}$ is also regular.
    Hence $\mathcal{S}$ is hereditarily regular.
\end{proof}

\begin{thm}[\cite{MauldinUrbanski1996} Theorem 3.20]\label{thm:hereditarily-regularity-and-pressure}
    If $\#I = \infty$, then the following are equivalent.
    \begin{enumerate}
        \item $\mathcal{S}$ is hereditarily regular.
        \item $P_{\mathcal{S}}(\theta_{\mathcal{S}}) = \infty$.
    \end{enumerate}
\end{thm}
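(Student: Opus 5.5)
The plan is to reduce everything to the first-level series $\Sigma_{I'}(t) := \sum_{i\in I'} \|s_i'\|^{t}$, where $\|s_i'\| := \sup_{x\in X}\abs{s_i'(x)}$, and then work with it and subsystems $I'\subset I$ with $\#(I\setminus I')<\infty$.

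\emph{Preliminary step.} By (BDP), $\inf_{x\in X}\abs{s_i'(x)} \geq K^{-1}\|s_i'\|$. Hence \Cref{lem:pressure_bounds_rough_estimate} gives, for every such subsystem $\mathcal{S}'=\set{s_i}_{i\in I'}$,
\begin{equation*}
  \log\paren{K^{-t}\Sigma_{I'}(t)} \leq P_{\mathcal{S}'}(t) \leq \log \Sigma_{I'}(t).
\end{equation*}
Thus $P_{\mathcal{S}'}(t)<\infty$ if and only if $\Sigma_{I'}(t)<\infty$. Removing finitely many indices changes $\Sigma$ only by a finite amount, so $F(\mathcal{S}')=F(\mathcal{S})$ and $\theta_{\mathcal{S}'}=\theta_{\mathcal{S}}=:\theta$. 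Moreover $P_{\mathcal{S}'}(\theta)=\infty$ if and only if $P_{\mathcal{S}}(\theta)=\infty$.

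\emph{(2)$\Rightarrow$(1).} Assume $P_{\mathcal{S}}(\theta)=\infty$, and fix a cofinite $I'$; it has $\#I'\geq 2$ since $I$ is infinite. Then $\theta\notin F(\mathcal{S}')$, so $F(\mathcal{S}') = (\theta,\infty)$; this set is nonempty because $F(\mathcal{S})\neq\emptyset$ for a CIFS, which I take as part of the standing theory.

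First I would show $P_{\mathcal{S}'}(t)\to\infty$ as $t\downarrow\theta$. Since each $\|s_i'\|\leq \sup_i\Lip(s_i)<1$, every term $\|s_i'\|^t$ increases as $t$ decreases to $\theta$. By monotone convergence, $\Sigma_{I'}(t)\uparrow\Sigma_{I'}(\theta)=\infty$, and the lower bound $P_{\mathcal{S}'}(t)\geq \log(K^{-t}\Sigma_{I'}(t))$ then forces $P_{\mathcal{S}'}(t)\to\infty$ (the factor $K^{-t}$ stays bounded near $\theta$).

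Next I would show $P_{\mathcal{S}'}(t)\to-\infty$ as $t\to\infty$. Fix $t_0\in F(\mathcal{S}')$ and put $\lambda:=\sup_i \Lip(s_i)<1$. For $t\geq t_0$,
\begin{equation*}
  \Sigma_{I'}(t)\leq \lambda^{t-t_0}\,\Sigma_{I'}(t_0),
\end{equation*}
so $P_{\mathcal{S}'}(t)\leq (t-t_0)\log\lambda + \log\Sigma_{I'}(t_0)\to-\infty$.

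Finally, $P_{\mathcal{S}'}$ is continuous on the interval $F(\mathcal{S}')$ by \Cref{prop:properties-of-pressure-function} (3). The intermediate value theorem gives a zero, so $\mathcal{S}'$ is regular. Since $I'$ was an arbitrary cofinite subset, $\mathcal{S}$ is hereditarily regular.

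\emph{(1)$\Rightarrow$(2), by contraposition.} Suppose $P_{\mathcal{S}}(\theta)<\infty$. Then $\Sigma_I(\theta)<\infty$, so I can discard finitely many indices to obtain an infinite $I'$ with $\Sigma_{I'}(\theta)<1$. The upper bound gives $P_{\mathcal{S}'}(\theta)<0$. For $t<\theta=\theta_{\mathcal{S}'}$ we have $P_{\mathcal{S}'}(t)=\infty$, and on $[\theta,\infty)$ the function $P_{\mathcal{S}'}$ is non-increasing by \Cref{prop:properties-of-pressure-function} (1), hence negative there. So $P_{\mathcal{S}'}$ has no zero, and $\mathcal{S}$ is not hereditarily regular.

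\emph{Main obstacle.} The delicate point is the blow-up $P_{\mathcal{S}'}(t)\to\infty$ as $t\downarrow\theta$ in the first direction. Continuity of $P_{\mathcal{S}'}$ on $F(\mathcal{S}')$ alone does not give this, because $\theta\notin F(\mathcal{S}')$. I would handle it exactly through the monotone convergence of the first-level series combined with the BDP-based lower bound, as above.
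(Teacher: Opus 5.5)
Your argument is correct and is essentially the standard Mauldin--Urba\'nski proof, which the paper cites without reproducing. By BDP, finiteness of $P_{\mathcal{S}'}$ reduces to finiteness of the first-level series, so all cofinite subsystems share $F(\mathcal{S})$ and $\theta_{\mathcal{S}}$. If $P_{\mathcal{S}}(\theta_{\mathcal{S}})=\infty$, each cofinite pressure blows up at $\theta_{\mathcal{S}}$ and tends to $-\infty$, so continuity gives a zero. If $P_{\mathcal{S}}(\theta_{\mathcal{S}})<\infty$, discarding finitely many maps makes $P_{\mathcal{S}'}(\theta_{\mathcal{S}})<0$, which rules out a zero.

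The only facts you use implicitly are standard. One is $F(\mathcal{S})\neq\emptyset$. The other is $\sup_{x\in X}\abs{s_i'(x)}\leq\Lip(s_i)$, which holds because the cone condition forces $X=\overline{\Int(X)}$.
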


The following theorems describe the relationship between the Hausdorff dimension of the limit set and the pressure function.

\begin{thm}[\cite{MauldinUrbanski1996} Theorem 3.15]\label{thm:hausdorff-dimension-and-pressure}
    We have
    \begin{equation*}
        \dimh{J_{\mathcal{S}}} = \inf\set{t \geq 0 \mid P_{\mathcal{S}}(t) < 0}.
    \end{equation*}
    In particular, if $t > 0$ satisfies $P_{\mathcal{S}}(t)=0$, then
    $t=\dimh{J_{\mathcal{S}}}$.
    Moreover, we have $\theta_{\mathcal{S}} \leq \dimh{J_{\mathcal{S}}}$.
\end{thm}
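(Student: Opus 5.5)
The plan is to write $h := \inf\set{t \geq 0 \mid P_{\mathcal{S}}(t) < 0}$ and prove $\dimh{J_{\mathcal{S}}} \leq h$ and $\dimh{J_{\mathcal{S}}} \geq h$ separately. The two supplementary claims then follow quickly from \Cref{prop:properties-of-pressure-function}.

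\emph{Upper bound.} Fix $t$ with $P_{\mathcal{S}}(t) < 0$ and choose $\varepsilon > 0$ with $P_{\mathcal{S}}(t) + \varepsilon < 0$. By \Cref{lem:Borel-measurability-of-limit-set}, for every $N$ the family $\set{s_{\omega}(X)}_{\omega \in I^{N}}$ covers $J_{\mathcal{S}}$. Since $\sup_{i}\Lip(s_{i}) =: \lambda < 1$, each piece has diameter at most $\lambda^{N}\diam(X)$, so these covers have uniformly small mesh. Conformality and (BDP) give $\diam(s_{\omega}(X)) \leq C \sup_{x \in X}\abs{s_{\omega}'(x)}$ with $C$ independent of $\omega$. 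The only subtle point is connecting two points of $X$ by a path of controlled length inside $V$. I would use the cone condition together with the compactness and connectedness of $X$, or else pass to a slightly smaller neighbourhood where a quasi-convexity constant exists. Then, for all large $N$,
\begin{equation*}
    \sum_{\omega \in I^{N}} \diam(s_{\omega}(X))^{t} \leq C^{t}\sum_{\omega \in I^{N}}\sup_{x\in X}\abs{s_{\omega}'(x)}^{t} \leq C^{t} e^{N(P_{\mathcal{S}}(t)+\varepsilon)} \to 0 .
\end{equation*}
Hence $H_{t}(J_{\mathcal{S}}) = 0$, so $\dimh{J_{\mathcal{S}}} \leq t$. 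Taking the infimum over such $t$ gives $\dimh{J_{\mathcal{S}}} \leq h$.

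\emph{Lower bound.} Let $t < h$, so $P_{\mathcal{S}}(t) \geq 0$. Choose $t < t' < h$. By \Cref{prop:properties-of-pressure-function}, $P_{\mathcal{S}}$ is strictly decreasing on $[\theta_{\mathcal{S}},\infty)$ and equals $\infty$ below $\theta_{\mathcal{S}}$. Hence either $P_{\mathcal{S}}(t) > 0$ or $P_{\mathcal{S}}(t) = \infty$, and in both cases $P_{\mathcal{S}}(t) > 0$.

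Next I approximate by finite subsystems. I would show that $P_{\mathcal{S}}(t) = \sup_{F} P_{F}(t)$ over finite $F \subset I$. The inequality $\leq$ in the limit comes from truncating the sums $\sum_{\omega \in I^{N}}$ and using the subadditivity/infimum expression in \Cref{def:pressure_function}, together with \Cref{lem:other-expressions-of-pressure-function} to move between $\sup$ and $\inf$ of derivatives at a fixed cost. This yields a finite $F$ with $P_{F}(t) > 0$.

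By \Cref{prop:finite-IFS-is-hereditarily-regular}, $P_{F}$ has a zero $h_{F}$, and $h_{F} > t$ by strict monotonicity. For a finite CIFS I then show $\dimh{J_{F}} \geq h_{F}$ by the standard argument:
\begin{enumerate}
    \item Construct an $h_{F}$-conformal measure $m$ on $J_{F}$ via the Perron--Frobenius operator and a fixed point argument.
    \item Apply the mass distribution principle: for a ball $B(x,r)$, consider the cylinders $s_{\omega}(X)$ of size comparable to $r$ that meet it. By (OSC), the cone condition and (BDP), their images of $\Int(X)$ are disjoint and each contains a ball of radius $\asymp r$. A volume count therefore bounds their number by a constant, which gives $m(B(x,r)) \lesssim r^{h_{F}}$.
\end{enumerate}
Since $J_{F} \subset J_{\mathcal{S}}$, we get $\dimh{J_{\mathcal{S}}} \geq h_{F} > t$. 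Letting $t \uparrow h$ gives $\dimh{J_{\mathcal{S}}} \geq h$.

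\emph{Remaining claims.} If $t > 0$ satisfies $P_{\mathcal{S}}(t) = 0$, then $t \geq \theta_{\mathcal{S}}$. Strict decrease gives $P_{\mathcal{S}} < 0$ on $(t,\infty)$ and $P_{\mathcal{S}} > 0$ on $[0,t)$, so $h = t$. For $\theta_{\mathcal{S}} \leq \dimh{J_{\mathcal{S}}}$: every $t$ with $P_{\mathcal{S}}(t) < 0$ lies in $F(\mathcal{S})$, hence $t \geq \theta_{\mathcal{S}}$, and taking the infimum gives $h \geq \theta_{\mathcal{S}}$.

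I expect the main obstacle to be the lower bound for finite systems, namely the existence of the conformal measure and the uniform counting estimate from (OSC) and the cone condition. The approximation $P_{\mathcal{S}} = \sup_{F} P_{F}$ is the second delicate step. Both are where the conformal structure, rather than mere contraction, is used.
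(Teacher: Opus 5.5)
The paper does not prove this statement itself but quotes it from \cite{MauldinUrbanski1996}. Your outline is essentially the standard argument given there, and it is correct: natural cylinder covers with $\diam(s_{\omega}(X))\lesssim\sup_{X}\abs{s_{\omega}'}$ for the upper bound, and for the lower bound the passage to finite subsystems plus Bowen's formula for finite CIFSs via a conformal measure and the mass distribution principle.

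The one step you phrase in the wrong direction is the finite approximation. The infimum (subadditive) formula bounds pressure only from above, so it cannot produce $P_{F}(t)>0$. What you need is the exact supermultiplicativity $\inf_{X}\abs{s_{\omega\tau}'}\geq\inf_{X}\abs{s_{\omega}'}\cdot\inf_{X}\abs{s_{\tau}'}$, which together with the second formula in \Cref{lem:other-expressions-of-pressure-function} gives $P_{F}(t)\geq\frac{1}{N}\log\sum_{\omega\in F^{N}}\inf_{X}\abs{s_{\omega}'}^{t}$ for every $N$. Then choose $N$ with $\sum_{\omega\in I^{N}}\inf_{X}\abs{s_{\omega}'}^{t}>1$ (possible since $P_{\mathcal{S}}(t)>0$), and a sufficiently large finite $F$ gives $P_{F}(t)>0$.
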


\begin{thm}[\cite{MauldinUrbanski1996} Theorem 3.20]\label{thm:theta-and-dimh-if-hereditarily-regular}
    If $\mathcal{S}$ is hereditarily regular, then $\theta_{\mathcal{S}} < \dimh{J_{\mathcal{S}}}$.
\end{thm}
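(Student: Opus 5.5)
The plan is to split according to whether $I$ is finite or infinite. In both cases the key observation is that hereditary regularity contains plain regularity: taking $I' = I$ in \Cref{def:regularity} (3), which is allowed since $\# I \geq 2$ and $\#(I \setminus I) = 0$, shows that $\mathcal{S}$ itself is regular. So there is some $t_{0} \geq 0$ with $P_{\mathcal{S}}(t_{0}) = 0$.

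\emph{Finite case.} Suppose $2 \leq \# I < \infty$. Then $P_{\mathcal{S}}(0) = \log(\# I) < \infty$, so $0 \in F(\mathcal{S})$ and $\theta_{\mathcal{S}} = 0$. Since $P_{\mathcal{S}}(0) = \log(\# I) > 0$, the zero $t_{0}$ satisfies $t_{0} > 0$, exactly as in the proof of \Cref{prop:finite-IFS-is-hereditarily-regular}. By \Cref{thm:hausdorff-dimension-and-pressure}, $t_{0} = \dimh{J_{\mathcal{S}}}$. Hence $\theta_{\mathcal{S}} = 0 < \dimh{J_{\mathcal{S}}}$.

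\emph{Infinite case.} Suppose $\# I = \infty$. First, $t_{0} > 0$: at $t = 0$ every summand in the definition of $P_{\mathcal{S}}(0)$ equals $1$, so $P_{\mathcal{S}}(0) = \infty \neq 0$. By \Cref{thm:hausdorff-dimension-and-pressure}, $t_{0} = \dimh{J_{\mathcal{S}}}$, and therefore
\begin{equation*}
P_{\mathcal{S}}\paren{\dimh{J_{\mathcal{S}}}} = 0 < \infty .
\end{equation*}
On the other hand, \Cref{thm:hereditarily-regularity-and-pressure} gives $P_{\mathcal{S}}(\theta_{\mathcal{S}}) = \infty$. We already know $\theta_{\mathcal{S}} \leq \dimh{J_{\mathcal{S}}}$ from \Cref{thm:hausdorff-dimension-and-pressure}. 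If equality held, the two displayed facts would assign both the value $0$ and the value $\infty$ to $P_{\mathcal{S}}$ at the same point, which is impossible. Hence $\theta_{\mathcal{S}} < \dimh{J_{\mathcal{S}}}$.

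There is no serious obstacle here; the argument is bookkeeping with results already stated. The only point needing care is excluding $t_{0} = 0$, so that the identification $t_{0} = \dimh{J_{\mathcal{S}}}$ in \Cref{thm:hausdorff-dimension-and-pressure} applies. In the infinite case this uses $P_{\mathcal{S}}(0) = \infty$. In the finite case it uses $P_{\mathcal{S}}(0) = \log(\# I) > 0$.
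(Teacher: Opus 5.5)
Your argument is correct. Both cases check out, and you handle the one delicate point properly. That point is excluding a zero of the pressure at $t=0$, via $P_{\mathcal{S}}(0)=\log(\# I)>0$ in the finite case and $P_{\mathcal{S}}(0)=\infty$ in the infinite case, so that \Cref{thm:hausdorff-dimension-and-pressure} identifies the zero with $\dimh{J_{\mathcal{S}}}$.

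The paper gives no proof of this statement at all. It quotes it from Theorem 3.20 of \cite{MauldinUrbanski1996}, the same source it cites for \Cref{thm:hereditarily-regularity-and-pressure}. Your reduction of the strict inequality to that equivalence plus \Cref{thm:hausdorff-dimension-and-pressure} is therefore a legitimate derivation from results the paper already states.

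Be aware that in the infinite case all of the substance sits in the black-boxed implication ``hereditarily regular $\Rightarrow P_{\mathcal{S}}(\theta_{\mathcal{S}})=\infty$''. If you want that case self-contained, you can argue directly by contradiction:
\begin{enumerate}
\item Suppose $\theta_{\mathcal{S}}=\dimh{J_{\mathcal{S}}}$. Then $P_{\mathcal{S}}(\theta_{\mathcal{S}})=0<\infty$.
\item By \Cref{lem:pressure_bounds_rough_estimate} and (BDP), which gives $\sup_{X}\abs{s_{i}'}\leq K\inf_{X}\abs{s_{i}'}$, the series $\sum_{i\in I}\sup_{X}\abs{s_{i}'}^{\theta_{\mathcal{S}}}$ converges. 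By the same two facts, $\sum_{i\in I}\inf_{X}\abs{s_{i}'}^{t}$ diverges for every $t<\theta_{\mathcal{S}}$.
\item Discarding a suitable finite set of indices leaves an infinite subsystem whose sum at $\theta_{\mathcal{S}}$ is less than $1$.
\item By \Cref{lem:pressure_bounds_rough_estimate} and \Cref{prop:properties-of-pressure-function}(1), the pressure of this subsystem is negative on $[\theta_{\mathcal{S}},\infty)$ and infinite on $[0,\theta_{\mathcal{S}})$.
\item Hence this cofinite subsystem is irregular, contradicting hereditary regularity.
\end{enumerate}
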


\subsection{Measure and Dimension of the Limit Sets}
In this subsection, we review the measure-theoretic properties of the limit sets $J_{\mathcal{S}}$ of CIFSs $\mathcal{S}$.
If $\mathcal{S}$ is a regular CIFS, then there exists a probability measure on $J_{\mathcal{S}}$ called conformal measure.
This measure plays important roles in investigating the Hausdorff and packing dimensions of $J_{\mathcal{S}}$.

\begin{definition}\label{def:conformal-measure}
    Let $m$ be a Borel probability measure on $X$ and let $t > 0$.
    We say that $m$ is a $t$-conformal measure for $\mathcal{S}$
    if the following (1), (2), and (3) hold.
    \begin{enumerate}
        \item $m\paren{J_{\mathcal{S}}} = 1$.
        \item For each $\omega \in \bigcup_{N\in\mathbb{N}}I^{N}$ and for each Borel set $A$ on $J_{\mathcal{S}}$, we have
            \begin{equation*}
                m\paren{s_{\omega}(A)} = \int_{A} \abs{s_{\omega}'(x)}^{t} dm(x).
            \end{equation*}
        \item For each $i, j \in I$ with $i \ne j$, we have $m\paren{s_{i}(X) \cap s_{j}(X)} = 0$.
    \end{enumerate}
\end{definition}

\begin{thm}[\cite{MauldinUrbanski1996} Theorem 3.18]\label{thm:existence-and-uniqueness-of-conformal-measure}
    \begin{enumerate}
        \item For each $t>0$, there exists at most one $t$-conformal measure for $\mathcal{S}$.
        \item For each $t>0$, $P_{\mathcal{S}}(t)=0$ if and only if there exists a $t$-conformal measure for $\mathcal{S}$.
    \end{enumerate}
\end{thm}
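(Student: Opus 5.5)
The plan is to prove the two parts in reverse logical order. First I derive the basic estimate satisfied by any $t$-conformal measure. That estimate gives the implication ``conformal measure $\Rightarrow P_{\mathcal{S}}(t)=0$'' and is also the input for uniqueness. The converse, constructing a measure when $P_{\mathcal{S}}(t)=0$, is the main work.

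\emph{Basic estimate and the forward implication.} Let $m$ be $t$-conformal. Since $J_{\mathcal{S}} = \bigcup_{i} s_{i}(J_{\mathcal{S}})$ (\Cref{prop:self-similarity-of-limit-set}) and overlaps are $m$-null by (3), iterating gives $\sum_{\omega\in I^{N}} m(s_{\omega}(J_{\mathcal{S}})) = 1$ for every $N$. Here I use that the $s_\omega$ are injective and the limit set is Borel (\Cref{lem:Borel-measurability-of-limit-set}). By (2) and (BDP),
\begin{equation*}
K^{-t}\sup_{X}\abs{s_{\omega}'}^{t} \le m\paren{s_{\omega}(J_{\mathcal{S}})} = \int_{J_{\mathcal{S}}}\abs{s_{\omega}'}^{t}\,dm \le \sup_{X}\abs{s_{\omega}'}^{t}.
\end{equation*}
Summing over $\omega\in I^N$ shows $\sum_{\omega\in I^{N}}\sup\abs{s_{\omega}'}^{t}\in[1,K^{t}]$ for all $N$. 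Taking $\frac1N\log$ and letting $N\to\infty$ gives $P_{\mathcal{S}}(t)=0$.

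\emph{Uniqueness (1).} Let $m_1,m_2$ be $t$-conformal. The estimate above shows $m_1(s_\omega(J_{\mathcal{S}}))\asymp m_2(s_\omega(J_{\mathcal{S}}))$ with constant $K^{t}$. Cylinders $s_\omega(J_{\mathcal{S}})$ generate the Borel sets of $J_{\mathcal{S}}$ up to null overlaps, because diameters shrink uniformly. Hence $m_1\ll m_2$ with density $\rho$ bounded between $K^{-t}$ and $K^{t}$. Comparing the two expressions
\begin{equation*}
m_1(s_\omega A)=\int_A \abs{s_\omega'}^t\rho\,dm_2, \qquad m_1(s_\omega A)=\int_{s_\omega A}\rho\,dm_2=\int_A (\rho\circ s_\omega)\abs{s_\omega'}^t\,dm_2
\end{equation*}
gives $\rho\circ s_\omega=\rho$ $m_2$-a.e.\ for all $\omega$. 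Transferred to the coding space, $\rho$ becomes a shift-invariant function. The pulled-back $m_2$ is quasi-Bernoulli: $m_2(s_\omega s_\tau J)\asymp m_2(s_\omega J)\,m_2(s_\tau J)$ by (BDP). A standard argument then shows that any invariant set has measure $0$ or $1$, so the shift is ergodic. Hence $\rho$ is constant, equal to $1$ since both measures are probability measures, and $m_1=m_2$.

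\emph{Existence when $P_{\mathcal{S}}(t)=0$.} This is the main obstacle. I would exhaust $I$ by finite sets $I_n\uparrow I$. Each finite subsystem is regular (\Cref{prop:finite-IFS-is-hereditarily-regular}), with zero $t_n$. Moreover $P_{I_n}(t)\uparrow P_{\mathcal{S}}(t)=0$, so $t_n\le t$ and I expect $t_n\to t$. For finite conformal systems I obtain a $t_n$-conformal measure $m_n$ as a fixed point of the dual transfer operator $\mathcal{L}_{t_n}^{*}$, normalized by Schauder--Tychonoff or Patterson--Sullivan, where $\mathcal{L}_{s}f(x)=\sum_{i}\abs{s_i'(x)}^{s}f(s_i x)$. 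Let $m$ be a weak$^*$ limit point on the compact set $X$.

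The difficulty is passing to the limit. I must show that $\mathcal{L}_t^{*}m=m$, that no mass escapes to $X_{\mathcal{S}}(\infty)$ or its images, and hence that $m(J_{\mathcal{S}})=1$. I would first try to control the tails $\sum_{i\notin I_n}\sup\abs{s_i'}^{t_n}$ uniformly. This is easy if $t>\theta_{\mathcal{S}}$, since then pressure is finite slightly below $t$. The delicate borderline case $t=\theta_{\mathcal{S}}$ would need finiteness at $t$ together with $t_n\uparrow t$, handled via monotone convergence. Next I would check that $m(X\setminus J^{(N)}_{\mathcal{S}})=0$ for each $N$, using the cylinder estimates that pass to the limit. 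Finally, condition (3) follows from (OSC) together with the observation that boundaries $s_i(\partial X)$ carry no mass, again by the cylinder comparison and the cone condition.
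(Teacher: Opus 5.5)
Your proof that a $t$-conformal measure forces $P_{\mathcal S}(t)=0$ is sound, and so is your uniqueness argument. The paper gives no proof of its own here; it quotes Mauldin--Urba\'nski.

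The gap is in existence, exactly at the step you flagged, and the proposed remedy fails. Consider the borderline case $t=\theta_{\mathcal S}$. It does occur: take $\mathcal F(\mathbf d^{(\lambda)},1)$ from the last subsection of \Cref{sec:examples}, with $\lambda$ chosen so that $\sum_n((d^{(\lambda)}_n)^2-1)^{-1/2}=1$; there $\theta=1/2$ and $P_{\mathcal F}(1/2)=0$. In this case your exponents satisfy $t_n<t=\theta_{\mathcal S}$, since a proper finite subsystem has strictly smaller pressure at $t$. Hence $P_{\mathcal S}(t_n)=\infty$, and by \Cref{lem:pressure_bounds_rough_estimate} we get $\sum_{i\notin F}\sup_X|s_i'|^{t_n}=\infty$ for every $n$ and every finite $F\subset I$.

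So there is no summable majorant. No monotone or dominated convergence argument can make $\sum_{i\in I_n\setminus F}m_n(s_i(X))$ small uniformly in $n$. Finiteness of $P_{\mathcal S}$ at $t$ says nothing about the moving exponents $t_n<t$. Without tightness, the weak$^*$ limit may a priori carry mass on $X_{\mathcal S}(\infty)$ and its images.

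There are two ways to repair this.
(a) For the finite subsystems, take eigenmeasures at the \emph{fixed} exponent: $\mathcal L_{t,I_n}^*\nu_n=e^{P_{I_n}(t)}\nu_n$. Write $\nu_n=e^{-P_{I_n}(t)}\sum_{i\in I_n}(s_i)_*(|s_i'|^t\nu_n)$. The $i$-th piece has mass at most $e^{-P_{I_1}(t)}\sup_X|s_i'|^t$, and $\sum_{i\in I}\sup_X|s_i'|^t\le K^t$. So tightness is uniform in $n$, also at $t=\theta_{\mathcal S}$, and the eigenvalues tend to $e^{P_{\mathcal S}(t)}=1$.
(b) Keep your $m_n$, and pass to the limit in $\int g\,dm_n=\sum_{i\in I_n}\int|s_i'|^{t_n}\,g\circ s_i\,dm_n$ for continuous $g\ge0$. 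This gives $m=\mathcal L_t^*m+\nu$, where $\nu\ge0$ is the escaped mass. Iterating yields $1\ge\sum_{k<N}(\mathcal L_t^{*k}\nu)(X)\ge NK^{-t}\nu(X)$, because $\sum_{\omega\in I^k}\sup_X|s_\omega'|^t\ge e^{kP_{\mathcal S}(t)}=1$ by \Cref{def:pressure_function}. Hence $\nu=0$, so $m=\mathcal L_t^{*N}m$ for all $N$. Then $m(J^{(N)}_{\mathcal S})=1$ for all $N$, and $m(J_{\mathcal S})=1$ follows from \Cref{lem:Borel-measurability-of-limit-set}.

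Your justification of condition (3) is also false as stated: images of the boundary need not be null. Take the CIFS $s_1(z)=z/3$, $s_2(z)=(z+2)/3$ on $X=[0,1]^2\subset\mathbb C$. Its limit set is the middle-third Cantor set on the bottom edge. So the conformal measure has $m(\partial X)=1$ and $m(s_1(\partial X))\ge m(s_1(J_{\mathcal S}))=1/2$.

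What (OSC) together with $X=\overline{\Int(X)}$ (from the cone condition) does give is $s_k^{-1}\paren{s_i(X)\cap s_j(X)}\subset\partial X$ for every $k$ and every $i\ne j$. Hence $m(s_i(X)\cap s_j(X))\le K^t m(\partial X)$ whenever $m=\mathcal L_t^*m$. This settles (3) only when $m(\partial X)=0$. In general you need a genuine argument that the overlaps themselves are $m$-null. The same issue is already present for your Schauder--Tychonoff fixed points of the finite subsystems.
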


Throughout this paper, we often denote $\dimh{J_{\mathcal{S}}}$ by $h_{\mathcal{S}}$ for simplicity.

\begin{thm}[\cite{MauldinUrbanski1996} Theorem 3.18]\label{thm:existence-and-uniqueness-of-conformal-measure-for-hausdorff-dimension}
    Suppose $\mathcal{S}$ is a regular CIFS.
    Then,
    $P_{\mathcal{S}}(h_{\mathcal{S}}) = 0$ holds and there exists a unique
    $h_{\mathcal{S}}$-conformal measure for $\mathcal{S}$.
\end{thm}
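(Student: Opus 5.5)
Let $\mathcal{S}$ be a regular CIFS. The plan is to show first that $P_{\mathcal{S}}(h_{\mathcal{S}}) = 0$, and then to obtain the conformal measure from \Cref{thm:existence-and-uniqueness-of-conformal-measure}. By \Cref{def:regularity}, regularity gives some $t \geq 0$ with $P_{\mathcal{S}}(t) = 0$. I would first exclude $t = 0$. Since $\# I \geq 2$, every word length $N$ has at least $2^{N}$ words, so $P_{\mathcal{S}}(0) \geq \log 2 > 0$ directly from \Cref{def:pressure_function}. Hence $t > 0$.

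For $t > 0$ with $P_{\mathcal{S}}(t) = 0$, \Cref{thm:hausdorff-dimension-and-pressure} (the clause ``in particular, if $t > 0$ satisfies $P_{\mathcal{S}}(t) = 0$, then $t = \dimh{J_{\mathcal{S}}}$'') gives $t = h_{\mathcal{S}}$. Therefore $P_{\mathcal{S}}(h_{\mathcal{S}}) = 0$ and $h_{\mathcal{S}} > 0$.

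Next I would apply \Cref{thm:existence-and-uniqueness-of-conformal-measure} (2) with $t = h_{\mathcal{S}} > 0$. Since $P_{\mathcal{S}}(h_{\mathcal{S}}) = 0$, an $h_{\mathcal{S}}$-conformal measure exists. By part (1) of the same theorem it is unique.

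If one wants to justify the ``in particular'' clause rather than quote it, the argument is short. $P_{\mathcal{S}}$ is strictly decreasing on $[\theta_{\mathcal{S}}, \infty)$ and continuous on $F(\mathcal{S})$ (\Cref{prop:properties-of-pressure-function}), and $t \in F(\mathcal{S})$ because $P_{\mathcal{S}}(t) = 0 < \infty$. Moreover $t \geq \theta_{\mathcal{S}}$. Strict decrease gives $P_{\mathcal{S}}(s) < 0$ for every $s > t$. For $s < t$, either $s \notin F(\mathcal{S})$, in which case $P_{\mathcal{S}}(s) = \infty$, or $s \in [\theta_{\mathcal{S}}, t)$, in which case $P_{\mathcal{S}}(s) > 0$ by strict decrease. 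Taking the infimum of $\{s \geq 0 \mid P_{\mathcal{S}}(s) < 0\}$ in \Cref{thm:hausdorff-dimension-and-pressure} then yields exactly $t$. I expect no real obstacle; the only delicate point is ruling out $t = 0$, which the counting bound above handles.
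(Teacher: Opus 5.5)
Your proof is correct. It differs from the paper only in that the paper gives no argument at all: this statement is quoted directly from Mauldin--Urba\'nski (their Theorem 3.18), next to \Cref{thm:existence-and-uniqueness-of-conformal-measure}, which comes from the same source.

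What you do instead is show that it follows formally from the other quoted results:
\begin{enumerate}
\item Regularity gives a zero $t$ of $P_{\mathcal{S}}$.
\item The count $P_{\mathcal{S}}(0)=\log \# I\geq \log 2>0$ forces $t>0$. The paper uses the same observation in the proof of \Cref{prop:finite-IFS-is-hereditarily-regular}.
\item \Cref{thm:hausdorff-dimension-and-pressure} then identifies $t=h_{\mathcal{S}}$.
\item Parts (2) and (1) of \Cref{thm:existence-and-uniqueness-of-conformal-measure} give existence and uniqueness of the $h_{\mathcal{S}}$-conformal measure.
\end{enumerate}
Your optional check of the ``in particular'' clause via \Cref{prop:properties-of-pressure-function} is also sound. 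Any $s<t$ with $P_{\mathcal{S}}(s)<\infty$ lies in $F(\mathcal{S})$, hence in $[\theta_{\mathcal{S}},t)$, so $P_{\mathcal{S}}(s)>0$. This makes $\{s\geq 0 : P_{\mathcal{S}}(s)<0\}=(t,\infty)$.

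Your route makes the dependence among the quoted facts explicit and shows this statement adds nothing beyond them. It is not, however, independent of Mauldin--Urba\'nski. The substantive work, constructing a $t$-conformal measure from $P_{\mathcal{S}}(t)=0$ and proving it is unique, sits inside the general-$t$ theorem you invoke, which you take as given just as the paper does.
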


Therefore, for a regular CIFS $\mathcal{S}$,
we denote the $h_{\mathcal{S}}$-conformal measure for $\mathcal{S}$
by $m_{\mathcal{S}}$.

If $\mathcal{S}$ is an infinite regular CIFS,
then the density at a point in $X_{\mathcal{S}}(\infty)$ with respect to the $h_{\mathcal{S}}$-conformal measure
has the following relationship with the Hausdorff and packing measures of $J_{\mathcal{S}}$.

\begin{thm}[\cite{MauldinUrbanski1996} Lemma 4.9]\label{thm:hausdorff-measure-x-infty}
    Let $\mathcal{S}$ be a regular CIFS.
    If there exists a sequence $\{z_{j}\}_{j\in\mathbb{N}}$ in $X_{\mathcal{S}}(\infty)$ and
    a sequence of positive real numbers $\{r_{j}\}_{j\in\mathbb{N}}$ such that
    \begin{equation*}
        \limsup_{j\to\infty}\frac{m_{\mathcal{S}}(B(z_{j}, r_{j}))}{r_{j}^{h_{\mathcal{S}}}} = \infty,
    \end{equation*}
    then $H_{h_{\mathcal{S}}}(J_{\mathcal{S}}) = 0$.
\end{thm}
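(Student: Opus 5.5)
The plan is to show that $J_{\mathcal{S}}$ has zero $h$-dimensional Hausdorff measure by producing, for every $\delta>0$, covers of almost all of $J_{\mathcal{S}}$ by balls of radius less than $\delta$ whose $h$-dimensional cost is as small as we like. Here $h := h_{\mathcal{S}}$ and $m := m_{\mathcal{S}}$, and the argument has three steps.

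\emph{Step 1 (move the bad balls into $J_{\mathcal{S}}$).} Fix $M>0$. By hypothesis we may pick $z=z_{j}\in X_{\mathcal{S}}(\infty)$ and $r=r_{j}$ with $m(B(z,r)) \geq M r^{h}$. The ball is centered at $z$, which need not lie in $J_{\mathcal{S}}$. Since $z$ is a limit of points $x_{i}\in s_{i}(X)$ along an infinite set $I'$, and $\diam s_{i}(X)\to 0$, the sets $s_{i}(J_{\mathcal{S}})$ for $i\in I'$ enter every neighbourhood of $z$. Hence $z\in \overline{J_{\mathcal{S}}}$, and the enlarged ball $B(z,2r)$ contains a ball $B(y,r)$ with $y\in J_{\mathcal{S}}$; more precisely $B(z,r)\subset B(y,2r)$ for some such $y$. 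So we obtain points $y\in J_{\mathcal{S}}$ and radii $r$ with $m(B(y,2r))\ge M r^{h}$. We may also assume $r$ is small: replacing $(y,r)$ by the images $s_{\omega}(y)$, $s_\omega(B)$ shrinks radii, as described in Step 2.

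\emph{Step 2 (transport the ball to every cylinder).} For each finite word $\omega$, consider $s_{\omega}(B(y,2r)\cap X)$. By (BDP) and the Koebe-type consequence of conformality together with (CC), this set contains, and is comparable to, a ball $B(s_{\omega}(y), c\,|s_{\omega}'(y)|r)$ with $c$ depending only on $K$ and the cone constants. On the other hand, conformality of $m$ gives
\[
m(s_{\omega}(B(y,2r)))\ge K^{-h}|s_{\omega}'(y)|^{h} m(B(y,2r)) \ge K^{-h} M (|s_\omega'(y)| r)^{h}.
\]
So every point of the form $s_{\omega}(y)$ is the center of a ball $B_\omega$ with small radius $\rho_\omega \asymp |s_\omega'(y)|r$ satisfying $m(B_\omega)\ge C^{-1}M\rho_\omega^{h}$, where $C$ is independent of $M$ and $\omega$.

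\emph{Step 3 (density argument).} By ergodicity of $m$ under the shift — equivalently, by the standard fact from \cite{MauldinUrbanski1996} that $m$-a.e.\ coding $\tau$ contains every finite block infinitely often — $m$-a.e.\ $x=\pi(\tau)\in J_{\mathcal{S}}$ has infinitely many $n$ such that $\tau|_{n}$ is followed by the first several letters of the code of $y$. Then $x$ lies in a ball of the form $s_{\tau|_n}(\cdot)$ of comparable size, so $x$ belongs to balls of arbitrarily small radius $\rho$ with $m(B(x,2\rho))\ge C^{-1}M\rho^{h}$. Consequently $\limsup_{\rho\to0} m(B(x,\rho))/\rho^{h}\ge c M$ for $m$-a.e.\ $x$. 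Since $M$ is arbitrary, the upper density is $\infty$ $m$-a.e. The standard density theorem (a Rogers--Taylor type comparison: if $\limsup m(B(x,\rho))/\rho^h \ge \lambda$ on $A$ then $H_h(A)\le C_d\, m(A)/\lambda$) gives $H_{h}(A)=0$ for a set $A$ of full $m$-measure.

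Finally, $m(J_{\mathcal{S}}\setminus A)=0$ needs to be upgraded to $H_{h}(J_{\mathcal{S}}\setminus A)=0$. I would argue by contradiction using the fact (\cite{MauldinUrbanski1996}) that if $H_h(J_{\mathcal{S}})>0$ then $H_h|_{J_{\mathcal{S}}}$ is, after normalisation, an $h$-conformal measure; by uniqueness (\Cref{thm:existence-and-uniqueness-of-conformal-measure}) it is a multiple of $m$, hence gives $A$ positive measure, contradicting $H_h(A)=0$. The main obstacle is Step 2's geometric control near $\partial X$: balls centered near the boundary map to distorted sets. I would handle this with (CC) and the $C^{1+\varepsilon}$ extension to $V$, which give distortion on a fixed neighbourhood of $X$, so images of balls are comparable to balls.
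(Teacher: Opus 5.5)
The paper gives no proof of this statement; it is quoted from \cite{MauldinUrbanski1996} (Lemma 4.9), so there is no in-paper argument to compare with. Judged on its own, your proof is correct, and it uses the standard mechanism. Let $\pi$ be the coding map and $\sigma$ the shift. Take a heavy ball $B(y,2r)$ with $y\in J_{\mathcal{S}}$, a coding $\rho$ of $y$, and $k$ with $\diam s_{\rho|_k}(X)<r$. Whenever $\sigma^n\tau$ begins with $\rho|_k$, the map $s_{\tau|_n}$ carries $B(y,2r)$ into a ball of radius $\asymp\|s_{\tau|_n}'\|r$ containing $\pi(\tau)$. Since $m(s_{\rho|_k}(J_{\mathcal{S}}))>0$, ergodicity of the shift-invariant measure equivalent to the lift of $m$ makes this happen infinitely often for a.e.\ $\tau$. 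The density comparison $H_h(A)\le C\,m(A)/\lambda$ then finishes the argument.

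Three remarks.

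(i) Step~1 never uses $z_j\in X_{\mathcal{S}}(\infty)$. Since $m(J_{\mathcal{S}})=1$, any ball of positive mass already meets $J_{\mathcal{S}}$. So you actually prove the stronger fact that $\sup_{z,r}m(B(z,r))/r^{h}=\infty$ forces $H_h(J_{\mathcal{S}})=0$. The restriction to $X_{\mathcal{S}}(\infty)$ is what makes the criterion easy to apply (here $X_{\mathcal{F}}(\infty)=\{0\}$ by \Cref{prop:comparison_of_f_and_g}); the proof does not need it.

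(ii) Step~2 only uses the outer containment $s_\omega(B(y,2r)\cap X)\subset B(s_\omega(y),C\|s_\omega'\|r)$. This follows from (BDP) on $V$ when $2r<\mathrm{dist}(X,\mathbb{R}^d\setminus V)$, and from $\diam s_\omega(X)\le D\|s_\omega'\|$ otherwise. The inner containment of a comparable ball fails for $y\in\partial X$, but it is never needed, so the boundary issue you flag as the main obstacle is not one. Also, apply conformality of $m$ to $B(y,2r)\cap J_{\mathcal{S}}$, since condition (2) of \Cref{def:conformal-measure} concerns Borel subsets of $J_{\mathcal{S}}$.

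(iii) The final upgrade is more roundabout than necessary. For every $x=\pi(\tau)\in J_{\mathcal{S}}$, the ball $B(x,2\diam s_{\tau|_n}(X))$ contains $s_{\tau|_n}(J_{\mathcal{S}})$, whose mass is at least $K^{-h}\|s_{\tau|_n}'\|^{h}$, while $\diam s_{\tau|_n}(X)\le D\|s_{\tau|_n}'\|$. Hence the upper $h$-density of $m$ is at least a fixed $c>0$ at \emph{every} point of $J_{\mathcal{S}}$. The same density theorem then gives $H_h(E)\le C' m(E)$ for all $E\subset J_{\mathcal{S}}$, which disposes of $J_{\mathcal{S}}\setminus A$ at once.

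Your appeal to uniqueness of conformal measures is fine as a black box. However, checking that the normalized $H_h|_{J_{\mathcal{S}}}$ is $h$-conformal requires $H_h(J_{\mathcal{S}})<\infty$ and the null-overlap condition (3) of \Cref{def:conformal-measure}, and the latter is not automatic from (OSC). The natural proof of both is precisely this comparison $H_h|_{J_{\mathcal{S}}}\le C'm$, so the detour adds nothing.
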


\begin{cor}\label{cor:hausdorff-measure-x-infty}
    If there exists $z \in X_{\mathcal{S}}(\infty)$ such that
    \begin{equation*}
        \limsup_{r\downarrow 0}\frac{m_{\mathcal{S}}(B(z, r))}{r^{h_{\mathcal{S}}}} = \infty,
    \end{equation*}
    then $H_{h_{\mathcal{S}}}(J_{\mathcal{S}}) = 0$.
\end{cor}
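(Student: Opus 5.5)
This corollary follows directly from \Cref{thm:hausdorff-measure-x-infty} by choosing the sequences $\{z_{j}\}$ and $\{r_{j}\}$ appropriately. (Implicitly, as in the surrounding discussion, $\mathcal{S}$ is a regular CIFS, so that $m_{\mathcal{S}}$ exists by \Cref{thm:existence-and-uniqueness-of-conformal-measure-for-hausdorff-dimension}.)

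Take the constant sequence $z_{j} := z$ for all $j \in \mathbb{N}$; it lies in $X_{\mathcal{S}}(\infty)$ by hypothesis. The hypothesis
\begin{equation*}
    \limsup_{r\downarrow 0}\frac{m_{\mathcal{S}}(B(z, r))}{r^{h_{\mathcal{S}}}} = \infty
\end{equation*}
gives, for each $j \in \mathbb{N}$, some $r_{j} \in (0, 1/j)$ with
\begin{equation*}
    \frac{m_{\mathcal{S}}(B(z, r_{j}))}{r_{j}^{h_{\mathcal{S}}}} > j.
\end{equation*}
Indeed, if no such $r_{j}$ existed, the quotient would be bounded by $j$ on $(0, 1/j)$, and the limsup as $r \downarrow 0$ would be finite, contradicting the hypothesis.

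With these choices,
\begin{equation*}
    \limsup_{j\to\infty} \frac{m_{\mathcal{S}}(B(z_{j}, r_{j}))}{r_{j}^{h_{\mathcal{S}}}} = \infty,
\end{equation*}
since the $j$-th term exceeds $j$. \Cref{thm:hausdorff-measure-x-infty} then yields $H_{h_{\mathcal{S}}}(J_{\mathcal{S}}) = 0$.

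The only step requiring any care is extracting the sequence $r_{j}$ from the $\limsup$, and that is routine; there is no substantive obstacle.
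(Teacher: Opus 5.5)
Your proof is correct and is essentially what the paper has in mind: the corollary is stated without proof as an immediate specialization of \Cref{thm:hausdorff-measure-x-infty}, using the constant sequence $z_{j} = z$ and radii $r_{j} \downarrow 0$ extracted from the $\limsup$. Making the implicit regularity assumption on $\mathcal{S}$ explicit, so that $m_{\mathcal{S}}$ exists, is a good touch.
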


The proof of Lemma 4.13 in \cite{MauldinUrbanski1996} has an error.
However, the following corrected version of the lemma holds by changing the assumption slightly.

\begin{thm}\label{thm:packing-measure-x-infty}
    Let $\mathcal{S}$ be a regular CIFS.
    If there exists $z\in X_{\mathcal{S}}(\infty)\cap \Int(X)$ such that
    \begin{equation*}
        \limsup_{r\downarrow 0}\frac{m_{\mathcal{S}}(B(z, r))}{r^{h_{\mathcal{S}}}} = 0,
    \end{equation*}
    then $\Pi_{h_{\mathcal{S}}}(J_{\mathcal{S}}) = \infty$.
\end{thm}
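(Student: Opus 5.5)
The plan is to show that, at every sufficiently small scale, the packing measure of a small piece of $J_{\mathcal{S}}$ is large compared with its $m_{\mathcal{S}}$-mass, and then to sum these estimates over $J_{\mathcal{S}}$. Write $h = h_{\mathcal{S}}$ and $m = m_{\mathcal{S}}$. Since $z \in \Int(X)$, choose $\rho_{0}>0$ with $B(z,\rho_{0}) \subset \Int(X)$.

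\textbf{Step 1: transport the zero density at $z$.} Fix a finite word $\omega$ and a point $y = s_{\omega}(z)$. By conformality and \textbf{(BDP)}, for small $r$ the image $s_{\omega}(B(z,r))$ is sandwiched between balls around $y$ of radii comparable (with constants depending only on $K$) to $\abs{s_{\omega}'(z)}\, r$. Combining this with the conformal relation $m(s_{\omega}(A)) = \int_{A}\abs{s_{\omega}'}^{h}\,dm$, the hypothesis $m(B(z,r)) = o(r^{h})$ should yield
\begin{equation*}
  \liminf_{r\downarrow 0} \frac{m(B(y,r))}{r^{h}} = 0 \quad \text{for every } y \in \bigcup_{\omega} s_{\omega}\paren{X_{\mathcal{S}}(\infty)\cap\Int(X)}.
\end{equation*}
Here $\Int(X)$ is essential. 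The ball $B(y,r)$ must only meet $J_{\mathcal{S}}$ inside $s_{\omega}(B(z,r'))$, up to an $m$-null overlap. That is guaranteed because $B(z,r') \subset \Int(X)$, and by \textbf{(OSC)} together with condition (3) of \Cref{def:conformal-measure}, other cylinders $s_{\tau}(X)$ with $\tau$ incomparable to $\omega$ meet $s_{\omega}(\Int X)$ only in $m$-null sets. I expect this is exactly where the error in the original Lemma 4.13 lay: a boundary point $z$ allows neighbouring cylinders to contribute mass.

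\textbf{Step 2: density of these points.} The goal is to show that the set of such $y$ is dense in $J_{\mathcal{S}}$, i.e.\ every open set $U$ with $m(U)>0$ contains one. Given $x \in J_{\mathcal{S}}$ with coding $\tau$, the points $s_{\tau|_{N}}(z)$ lie within $\diam(s_{\tau|_{N}}(X))\to 0$ of $x$ (using $z \in X$). So every cylinder contains such a point, hence every open set of positive measure does.

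\textbf{Step 3: the packing measure estimate.} By the standard density theorem for packing measures, if $\liminf_{r\downarrow0} m(B(y,r))/r^{h} = 0$ holds for all $y$ in a set $A$, then $\Pi_{h}(A)=\infty$ in the local form $\Pi_{h}(A) \geq C^{-1} m(A)$ for every $C$. Step 2, however, only gives a dense countable family of good points. The first thing to try is to strengthen Step 1 to a uniform quantitative version. For each $C$ and $\delta$, and each cylinder $s_{\omega}(X)$, there is $r<\delta$ with $B(s_{\omega}(z), r)$ inside that cylinder and $m(B) \leq C^{-1} r^{h}$. Choosing a maximal disjoint family of such balls, one per cylinder of generation $N$, gives a $\delta$-packing of $J_{\mathcal{S}}$ with $\sum r^{h} \geq C\sum m(B)$. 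This forces $P_{h}(J_{\mathcal{S}}) = \infty$ only if the masses $\sum m(B)$ stay bounded below, so the radii must be chosen so that each ball captures a fixed proportion of its cylinder. I plan to get this proportion from the conformal measure being comparable to $\abs{s_{\omega}'}^{h}$ near $s_{\omega}(z)$ at a fixed relative scale. Applying this to every piece of any countable cover $J_{\mathcal{S}}=\bigcup A_{k}$, at least one $A_{k}$ has positive $m$-measure and its points inherit such packings. This gives $\Pi_{h}=\infty$.

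The main obstacle is Step 3, specifically the passage from the packing premeasure to the packing measure. This means producing, inside every set of positive $m$-measure, packings whose masses do not shrink. I would try first to argue this via the ergodicity or Lebesgue density of $m$ applied to $m$-almost every point, pulling back by $s_{\omega}$ as in Step 1.
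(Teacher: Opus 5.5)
Your proposal has a genuine gap. The clearest symptom is that the hypothesis $z\in X_{\mathcal S}(\infty)$ is never used: Steps 1 and 2 rely only on $z\in\Int(X)$ and the vanishing density. Both steps hold verbatim for a finite system. Take the middle-thirds Cantor system on $X=[0,1]$ with $z=1/2$. There $m(B(z,r))=0$ for $r<1/6$, every $s_\omega(z)$ has density zero, and every cylinder contains such a point, yet $0<\Pi_h(J_{\mathcal S})<\infty$. So nothing built only on Steps 1--2 can give $\Pi_h=\infty$, and Step 3 does break down, for two reasons.

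First, the points $s_\omega(z)$ form a countable, hence $m$-null, set that in general does not even meet $J_{\mathcal S}$ (for this paper's systems, $z=0\notin J_{\mathcal F}$). The packing density theorem needs small lower density at every point of a set of positive $m$-measure, so it gets nothing from these points.

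Second, your quantitative variant defeats itself. The ball $B(s_\omega(z),\rho\abs{s_\omega'(z)})$ carries mass $\asymp\abs{s_\omega'(z)}^{h}m(B(z,\rho))$, i.e.\ a proportion $\asymp m(B(z,\rho))=o(\rho^{h})$ of its cylinder. Summing over generation-$N$ cylinders gives $\sum r^{h}\asymp\rho^{h}\sum_{\abs{\omega}=N}\abs{s_\omega'}^{h}\asymp\rho^{h}$, which stays bounded. Small density and capturing a fixed proportion of the cylinder are incompatible. Also, pulling back ``as in Step 1'' only moves information from $z$ to $s_\omega(z)$, never to $m$-typical points.

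The missing idea is to use $z\in X_{\mathcal S}(\infty)$ to produce small-density balls centred at points of $J_{\mathcal S}$. Take an infinite $I'\subset I$ and $x_i\in s_i(X)$ with $x_i\to z$, and set $R_i:=\abs{z-x_i}+\diam(s_i(X))\to0$. For every $w\in s_i(X)$ and all but finitely many $i\in I'$, you get $B(w,R_i)\subset B(z,2R_i)\subset B(z,\rho_0)\subset X$. Hence $m(B(w,R_i))/R_i^{h}\le 2^{h}m(B(z,2R_i))/(2R_i)^{h}\to0$.

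This is exactly the paper's estimate $B(z_i,\abs{z-z_i})\subset B(z,2\abs{z-z_i})\subset X$. It is where $\Int(X)$ is really needed: the ball must lie in $X$ so it can be pushed forward by every $s_\omega$ with bounded distortion. The paper then concludes by citing Lemma 4.12 of \cite{MauldinUrbanski1996}.

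To do that last step yourself, proceed as follows.
\begin{itemize}
\item Let $x\in J_{\mathcal S}$ be coded by $\tau$ with $\tau_{n+1}=i\in I'$, and write $x=s_{\tau|_n}(w)$ with $w\in s_i(J_{\mathcal S})$.
\item By (BDP), $s_{\tau|_n}(B(w,R_i))\supset B(x,K^{-1}\abs{s_{\tau|_n}'(w)}R_i)$.
\item Conformality of $m$ bounds the density ratio of this ball by $K^{2h}2^{h}m(B(z,2R_i))/(2R_i)^{h}$.
\item $m$-almost every coding uses letters $i\in I'$ with arbitrarily small $R_i$ infinitely often: the $m$-measure of codings avoiding them for $k$ consecutive steps decays geometrically in $k$, by conformality and (BDP).
\item Hence $\liminf_{r\downarrow0}m(B(x,r))/r^{h}=0$ for $m$-a.e.\ $x\in J_{\mathcal S}$.
\item The density theorem $\Pi_h(A)\gtrsim c^{-1}m(A)$, letting $c\downarrow0$, gives $\Pi_h(J_{\mathcal S})=\infty$.
\end{itemize}
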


\begin{proof}
    Since $z \in \Int(X)$, there exists $r > 0$ such that $B(z, r) \subset X$.
    Since $z \in X_{\mathcal{S}}(\infty)$,
    there exists an infinite subset $I' \subset I$ and $x_{i} \in s_{i}(X) \ (i \in I')$
    such that $z = \lim_{i\in I'} x_{i}$.
    Note that $\lim_{i\in I'}\diam(s_{i}(X)) = 0$ by (OSC) and (BDP) (see \cite{MauldinUrbanski1996} Lemma 2.5 for details).
    Passing to a further infinite subset of $I'$ if necessary, we may assume that
    \begin{equation*}
        \sup_{i\in I'}|x_{i} - z| < r/4, \quad \sup_{i\in I'}\diam(s_{i}(X)) < r/4.
    \end{equation*}
    Fix $z_{i} \in s_{i}(X)$ for each $i \in I'$.
    Let $I'' := \set{i \in I' \mid |z - z_{i}| > 0}$.
    Then, $I' \setminus I''$ is finite (see \cite{MauldinUrbanski1996} Lemma 2.6 for details),
    and hence $I''$ is infinite.

    For each $i \in I''$, if $x \in B(z_{i}, \abs{z - z_{i}})$, then we have
    \begin{align*}
        |z - x| &\leq |z - z_i| + |z_{i} - x| \\
        &< 2|z - z_i| \\
        &\leq 2(|z - x_i| + |x_i - z_{i}|) \\
        &< 2(r/4 + r/4) = r.
    \end{align*}
    These inequalities imply that
    \begin{equation*}
        B(z_i, |z - z_i|) \subset B(z, 2|z - z_i|) \subset B(z, r) \subset X.
    \end{equation*}
    Therefore, for each $i \in I''$, we have
    \begin{equation*}
        \frac{m_{\mathcal{S}}(B(z_{i}, |z - z_{i}|))}{|z - z_{i}|^{h_{\mathcal{S}}}} \leq 2^{h_{\mathcal{S}}}\cdot\frac{m_{\mathcal{S}}(B(z, 2|z - z_{i}|))}{(2|z - z_{i}|)^{h_{\mathcal{S}}}}.
    \end{equation*}

    Let $\eta > 0$. By the assumptions of the theorem,
    there exists $\varepsilon > 0$ such that
    \begin{equation*}
        \sup_{0 < r < \varepsilon}\frac{m_{\mathcal{S}}(B(z, r))}{r^{h_{\mathcal{S}}}} < \frac{\eta}{2^{h_{\mathcal{S}}}}.
    \end{equation*}
    Hence, for each $i \in I''$ with $2\abs{z - z_{i}} < \varepsilon$, we have
    \begin{equation*}
        \frac{m_{\mathcal{S}}(B(z_{i}, |z - z_{i}|))}{|z - z_{i}|^{h_{\mathcal{S}}}} < \eta.
    \end{equation*}
    Since $\lim_{i\in I''}|z - z_{i}| = 0$, it follows that
    \begin{equation*}
        \lim_{i\in I''}\frac{m_{\mathcal{S}}(B(z_{i}, |z - z_{i}|))}{|z - z_{i}|^{h_{\mathcal{S}}}} = 0.
    \end{equation*}
    Therefore, by Lemma 4.12 in \cite{MauldinUrbanski1996}, we conclude that $\Pi_{h_{\mathcal{S}}}\paren{J_{\mathcal{S}}} = \infty$.
\end{proof}

\begin{rmk}
    Let $\mathcal{S} = \set{s_{i} : X \to X}_{i \in I}$ be a CIFS.
    If $I$ is finite, the following holds (see \cite{MauldinUrbanski1996} Lemma 3.14).
    \begin{equation*}
        h_{\mathcal{S}} = \dimh{J_{\mathcal{S}}} = \dimp{J_{\mathcal{S}}}, \quad 0 < H_{h_{\mathcal{S}}}(J_{\mathcal{S}}) < \infty, \quad 0 < \Pi_{h_{\mathcal{S}}}(J_{\mathcal{S}}) < \infty.
    \end{equation*}
    Consequently, for regular CIFSs, the properties \textbf{(C1)} and \textbf{(C2)} in \Cref{sec:introduction},
    which appear in the conclusions of \Cref{thm:hausdorff-measure-x-infty} and \Cref{thm:packing-measure-x-infty},
    can occur only when $I$ is infinite.
\end{rmk}

\begin{rmk}
    It is known that the following hold for a regular CIFS $\mathcal{S}$ (see Lemmas 4.2 and 4.3 of \cite{MauldinUrbanski1996}).
    \begin{enumerate}
        \item $H_{h_{\mathcal{S}}}(J_{\mathcal{S}}) < \infty$ .
        \item If $\# I < \infty$ or (SOSC) holds,
            then $\Pi_{h_{\mathcal{S}}}(J_{\mathcal{S}}) > 0$.
    \end{enumerate}
\end{rmk}

At the end of this section, we review the relationship between the Hausdorff, packing, and box dimensions of the limit sets of CIFSs.

\begin{lem}[\cite{Falconer} p.56]
    Let $A$ be a non-empty bounded set of $\mathbb{R}^{d}$. Then,
    \begin{equation*}
        \dimh{A} \leq \dimp{A} \leq \dimub{A}.
    \end{equation*}
\end{lem}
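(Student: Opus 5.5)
We prove the two inequalities separately for an arbitrary non-empty bounded $A \subset \mathbb{R}^{d}$, using only the standard definitions of the three dimensions.

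\emph{Step 1: $\dimp{A} \leq \dimub{A}$.} Fix $t > s > \dimub{A}$. Let $N_{\delta}(A)$ be the minimal number of sets of diameter $\delta$ needed to cover $A$. Any $\delta$-packing of $A$ by disjoint balls $B(x_{k}, r_{k})$ with $x_{k} \in A$ and $r_{k} \leq \delta$ is handled dyadically. We split the balls according to $2^{-j-1}\delta < r_{k} \leq 2^{-j}\delta$. The balls in the $j$-th group are disjoint and centred in $A$, with radii comparable to $2^{-j}\delta$. A volume comparison then shows their number is at most $C_{d} N_{2^{-j}\delta}(A)$, where $C_{d}$ is a dimensional constant.

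Since $s > \dimub{A}$, we have $N_{\rho}(A) \leq \rho^{-s}$ for all small $\rho$. Hence
\begin{equation*}
\sum_{k} (2r_{k})^{t} \leq C \sum_{j \geq 0} (2^{-j}\delta)^{t-s} \leq C' \delta^{t-s},
\end{equation*}
which tends to $0$ as $\delta \to 0$. So the packing premeasure $P_{0}^{t}(A)$ vanishes. Because $\Pi_{t}(A) \leq P_{0}^{t}(A)$, we get $\Pi_{t}(A) = 0$ and therefore $\dimp{A} \leq t$. Letting $t \downarrow \dimub{A}$ gives the claim.

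\emph{Step 2: $\dimh{A} \leq \dimp{A}$.} This is the standard comparison $H_{t} \leq \Pi_{t}$, which we prove via a Vitali-type covering argument. Suppose $\Pi_{t}(A) < \infty$. Then $A = \bigcup_{i} A_{i}$ with $P_{0}^{t}(A_{i}) < \infty$ for each $i$. For each $A_{i}$ and each small $\delta$, take a maximal disjoint family of balls $B(x, \delta)$ with centres in $A_{i}$. By maximality, the balls $B(x, 2\delta)$ cover $A_{i}$. By disjointness, the number of balls times $(2\delta)^{t}$ is at most $P_{\delta}^{t}(A_{i})$.

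It follows that $H^{t}_{4\delta}(A_{i}) \leq 2^{t} P^{t}_{\delta}(A_{i})$. Letting $\delta \to 0$ gives $H_{t}(A_{i}) \leq 2^{t} P_{0}^{t}(A_{i}) < \infty$. By countable subadditivity, $H_{t}(A) < \infty$, so $\dimh{A} \leq t$. Taking the infimum over $t$ with $\Pi_{t}(A) = 0$ yields $\dimh{A} \leq \dimp{A}$.

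The only delicate point is the counting step in Step 1: bounding the number of disjoint balls of a given dyadic radius centred in $A$ by a covering number. This is handled by noting that each covering set of diameter $\rho$ can meet at most $C_{d}$ disjoint balls of radius at least $\rho/2$ centred in $A$. Since this is the classical result cited from \cite{Falconer}, one may alternatively just invoke it directly.
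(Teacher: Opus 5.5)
Your argument is correct and is the standard textbook proof behind the citation; the paper gives no proof of its own and simply cites Falconer. For $\dimp{A}\le\dimub{A}$ you split a packing into dyadic layers and bound each layer by a covering number, which gives $P^t_\delta(A)\le C\delta^{t-s}$. For $\dimh{A}\le\dimp{A}$ you use a maximal-packing argument to get $H_t\le 2^t\Pi_t$.

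One inference in Step~2 needs a small repair. You only record that $P^t_0(A_i)<\infty$ for each $i$. From that, countable subadditivity does not give $H_t(A)<\infty$, because a countable sum of finite terms can diverge. The fix is to use that $\Pi_t(A)<\infty$ supplies a cover with $\sum_i P^t_0(A_i)<\infty$. Then $H_t(A)\le 2^t\sum_i P^t_0(A_i)<\infty$, and taking the infimum over covers gives $H_t\le 2^t\Pi_t$. Alternatively, appeal to countable stability of Hausdorff dimension.
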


\begin{thm}[\cite{MauldinUrbanski1999} Theorem 2.11]\label{thm:packing-dimension-and-box-dimension}
    Let $\mathcal{S} = \set{s_{i}:X \to X}_{i \in I}$ be a CIFS.
    Then, for each $x\in X$,
    \begin{equation*}
        \dimp{J_{\mathcal{S}}} = \max\set{\dimh{J_{\mathcal{S}}}, \dimub{\mathcal{S}_{x}}}.
    \end{equation*}
\end{thm}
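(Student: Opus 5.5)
The plan is to prove the two inequalities $\dimp{J_{\mathcal{S}}} \geq \max\set{\dimh{J_{\mathcal{S}}}, \dimub{\mathcal{S}_{x}}}$ and $\dimp{J_{\mathcal{S}}} \leq \max\set{\dimh{J_{\mathcal{S}}}, \dimub{\mathcal{S}_{x}}}$. The first step removes the dependence on $x$ by showing that $\dimub{\mathcal{S}_{x}}$ is the same for all $x \in X$. Write $N_{r}(A)$ for the minimal number of $r$-balls covering $A$.

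\emph{Independence of $x$.} Fix $x, y \in X$. For indices $i$ with $\diam(s_{i}(X)) < r$, the point $s_{i}(y)$ lies within $r$ of $s_{i}(x)$. Hence a cover of $\mathcal{S}_{x}$ by $r$-balls, with each ball enlarged to radius $2r$, covers all these $s_{i}(y)$.

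The remaining indices are the ``large'' ones, with $\diam(s_{i}(X)) \geq r$. For them I will use (CC), conformality and (BDP): each $s_{i}(X)$ contains a ball of radius $c\,\diam(s_{i}(X))$ with $c>0$ independent of $i$. Any large $s_{i}(X)$ meeting $B(p,r)$ then contains a ball of radius about $cr$ inside $B(p, Cr)$. By (OSC) these balls are disjoint, so a volume comparison shows that at most $M$ large pieces meet any fixed $r$-ball. Each large $s_{i}(x)$ lies in one of the $N_{r}(\mathcal{S}_{x})$ covering balls, so the number of large indices is at most $M N_{r}(\mathcal{S}_{x})$. Together these give $N_{2r}(\mathcal{S}_{y}) \leq (M+1) N_{r}(\mathcal{S}_{x})$. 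By symmetry, $\dimub{\mathcal{S}_{x}} = \dimub{\mathcal{S}_{y}}$. I expect this counting of large pieces to be the main obstacle, since a naive volume bound only gives $r^{-d}$.

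\emph{Lower bound.} Trivially $\dimh{J_{\mathcal{S}}} \leq \dimp{J_{\mathcal{S}}}$. Set $s := \dimub{\mathcal{S}_{x}}$ and choose $x \in J_{\mathcal{S}}$. Suppose $J_{\mathcal{S}} \subset \bigcup_{k} E_{k}$; by the definition of packing dimension through covers, it suffices to show $\sup_{k}\dimub{E_{k}} \geq s$. Each closure $\overline{E_{k}}$ is closed, and we may intersect everything with $\overline{J_{\mathcal{S}}}$. A Baire-category argument on the compact set $\overline{J_{\mathcal{S}}}$, or more precisely on the $G_{\delta}$-type set $J_{\mathcal{S}} = \bigcap_{N} J^{(N)}_{\mathcal{S}}$ from \Cref{lem:Borel-measurability-of-limit-set}, should produce a word $\omega$ such that $s_{\omega}(J_{\mathcal{S}})$ is contained in some $\overline{E_{k}}$, up to a relatively open piece. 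Since $x \in J_{\mathcal{S}}$, we have $s_{\omega}(\mathcal{S}_{x}) \subset s_{\omega}(J_{\mathcal{S}})$. By (BDP), $s_{\omega}$ is bi-Lipschitz on $X$ with constants comparable to $\abs{s_{\omega}'}$, so $\dimub{s_{\omega}(\mathcal{S}_{x})} = s$. Hence $\dimub{E_{k}} = \dimub{\overline{E_{k}}} \geq s$. If the Baire step turns out to be delicate, I will fall back on the standard criterion (Falconer) that $\dimp{F} = \dimub{F}$ whenever every open set meeting $F$ has the same upper box dimension, applied with the copies $s_{\omega}(\overline{J_{\mathcal{S}}})$.

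\emph{Upper bound.} Since $\dimp{J_{\mathcal{S}}} \leq \dimub{J_{\mathcal{S}}}$ does not suffice in general, I will use the countable stability of packing dimension. Let $u > \max\set{h_{\mathcal{S}}, s}$; then $P_{\mathcal{S}}(u) < 0$ by \Cref{thm:hausdorff-dimension-and-pressure}. I will cover $J_{\mathcal{S}}$ at scale $r$ by the following stopping-time procedure. Follow each word $\omega$ until $\diam(s_{\omega}(X)) < r$ or until the next step leaves only children whose diameters are below $r$. The children in the latter case are $s_{\omega}(s_{i}(X))$ with $\diam(s_{\omega i}(X))<r$. These lie within $r$ of $s_{\omega}(\mathcal{S}_{x})$, so by bi-Lipschitz scaling they need at most $\lesssim (\abs{s_{\omega}'}/r)^{u}$ balls. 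Summing over stopped words and using $\sum_{\omega}\abs{s_{\omega}'}^{u}<\infty$ (because $P_{\mathcal{S}}(u)<0$) gives $N_{r}(J_{\mathcal{S}}) \lesssim r^{-u}$. Hence $\dimub{J_{\mathcal{S}}} \leq u$, and letting $u \downarrow \max\set{h_{\mathcal{S}}, s}$ gives $\dimp{J_{\mathcal{S}}} \leq \max\set{h_{\mathcal{S}}, s}$.
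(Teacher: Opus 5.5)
The paper gives no proof of this statement; it simply quotes \cite{MauldinUrbanski1999}. Your architecture is the right one: $h_{\mathcal{S}}\le\dimp{J_{\mathcal{S}}}$, a rescaled copy of $\mathcal{S}_{x}$ inside every cylinder for the lower bound, and a pressure-weighted covering for the upper bound. Once repaired, it even yields $\dimub{J_{\mathcal{S}}}=\dimp{J_{\mathcal{S}}}$. The lower bound is also immediate from the result the paper quotes right after this one (\cite{MauldinUrbanski1996}, Theorem 3.1), whose bi-Lipschitz hypothesis holds for every CIFS: for $x\in J_{\mathcal{S}}$ you have $\mathcal{S}_{x}\subset J_{\mathcal{S}}$, so $\dimp{J_{\mathcal{S}}}=\dimub{J_{\mathcal{S}}}\ge\dimub{\mathcal{S}_{x}}$. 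As written, however, three steps need repair.

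\emph{Baire step.} Baire on $\overline{J_{\mathcal{S}}}$ fails because the $E_{k}$ cover only $J_{\mathcal{S}}$. Also, $J_{\mathcal{S}}=\bigcap_{N}J_{\mathcal{S}}^{(N)}$ is an intersection of countable unions of compact sets, so it is in general only $F_{\sigma\delta}$, not $G_{\delta}$. Falconer's criterion applies to compact sets, so it would only concern $\overline{J_{\mathcal{S}}}$, which bounds $\dimp{J_{\mathcal{S}}}$ from the wrong side. Work on the symbol space instead. The coding map $\Phi: I^{\mathbb{N}}\to J_{\mathcal{S}}$ is continuous, since $\diam(s_{\tau|_{n}}(X))\le(\sup_{i}\Lip(s_{i}))^{n}\diam(X)$, and $I^{\mathbb{N}}$ is completely metrizable. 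The closed sets $\Phi^{-1}(\overline{E_{k}})$ cover $I^{\mathbb{N}}$, so by Baire one of them contains a cylinder $[\omega]$. Then $s_{\omega}(J_{\mathcal{S}})=\Phi([\omega])\subset\overline{E_{k}}$ exactly, and the rest of your argument goes through.

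\emph{Stopping rule.} As worded, a word $\omega$ with $\diam(s_{\omega}(X))\ge r$ that has at least one large child is not stopped. Its infinitely many small children are then stopped one by one by the first criterion, and each would cost a ball. You must instead, for \emph{every} word $\omega$ (empty word included) with $\diam(s_{\omega}(X))\ge r$, cover all of its children of diameter $<r$ at once through the $r$-neighbourhood of $s_{\omega}(\mathcal{S}_{x})$. Each $\Phi(\tau)$ lies in such a child: take the first $n$ with $\diam(s_{\tau|_{n}}(X))<r$. Since $r\le\diam(s_{\omega}(X))\le D\sup_{X}\abs{s_{\omega}'}$ (with $D$ depending only on $X$ and the distortion constant), the rescaled radius $r/\sup_{X}\abs{s_{\omega}'}$ stays bounded. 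Hence $N_{\rho}(\mathcal{S}_{x})\le C\rho^{-u}$ applies, and $N_{2r}(J_{\mathcal{S}})\le Cr^{-u}\sum_{\omega}\sup_{X}\abs{s_{\omega}'}^{u}$, where the series converges because $P_{\mathcal{S}}(u)<0$. This bounds $\dimub{J_{\mathcal{S}}}$ directly. So, contrary to your remark, $\dimp{J_{\mathcal{S}}}\le\dimub{J_{\mathcal{S}}}$ does suffice, countable stability is never used, and the bound holds for every $x\in X$.

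\emph{Independence of $x$.} An inscribed ball of radius $c\,\diam(s_{i}(X))$ may lie at distance $\asymp\diam(s_{i}(X))\gg r$ from $p$. So it does not give a ball of radius $\asymp r$ inside $B(p,Cr)$; you would need (CC) at the preimage of a point of $s_{i}(X)\cap B(p,r)$, pushed forward with (BDP). You can skip this entirely. The upper bound holds for every $x\in X$ and the lower bound for $x\in J_{\mathcal{S}}$, so you only need $\max\set{h_{\mathcal{S}},\dimub{\mathcal{S}_{x}}}$ to be independent of $x$. For any $t>\theta_{\mathcal{S}}$ you have $\#\set{i \mid \diam(s_{i}(X))\ge r}\le(D/r)^{t}\sum_{i}\sup_{X}\abs{s_{i}'}^{t}$. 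Hence $N_{2r}(\mathcal{S}_{y})\le N_{r}(\mathcal{S}_{x})+C_{t}r^{-t}$, i.e.\ $\dimub{\mathcal{S}_{y}}\le\max\set{\dimub{\mathcal{S}_{x}},\theta_{\mathcal{S}}}$. Since $\theta_{\mathcal{S}}\le h_{\mathcal{S}}$, this is enough.
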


\begin{thm}[\cite{MauldinUrbanski1996} Theorem 3.1]\label{thm:box-dimension-and-packing-dimension-biLipschitz-case}
    Let $\mathcal{S} = \set{s_{i}:X \to X}_{i \in I}$ be a CIFS.
    Suppose that each $s_{i}$ is bi-Lipschitz continuous on $X$.
    Then,
    \begin{equation*}
        \dimp{J_{\mathcal{S}}} = \dimub{J_{\mathcal{S}}} = \dimp{\overline{J_{\mathcal{S}}}} = \dimub{\overline{J_{\mathcal{S}}}}.
    \end{equation*}
\end{thm}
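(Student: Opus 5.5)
The plan is to prove the chain
\begin{equation*}
\dimp{J_{\mathcal{S}}} \leq \dimp{\overline{J_{\mathcal{S}}}} \leq \dimub{\overline{J_{\mathcal{S}}}} = \dimub{J_{\mathcal{S}}}
\end{equation*}
together with the reverse inequality $\dimub{J_{\mathcal{S}}} \leq \dimp{J_{\mathcal{S}}}$. The first inequality is monotonicity of packing dimension. The second is the Falconer inequality quoted above, applied to the bounded set $\overline{J_{\mathcal{S}}}$. The equality holds because upper box dimension is unchanged under taking closures. So everything reduces to showing $\dimp{J_{\mathcal{S}}} \geq \dimub{J_{\mathcal{S}}}$.

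For this I use the characterization
\begin{equation*}
\dimp{A} = \inf\set{\sup_{k} \dimub{E_{k}} \mid A \subset \bigcup_{k} E_{k}},
\end{equation*}
which is standard (Falconer). Since $\dimub{E_{k}} = \dimub{\overline{E_{k}}}$, I may assume every $E_{k}$ is closed. The key tool is a Baire category argument on the symbolic space rather than on $J_{\mathcal{S}}$ itself, because $J_{\mathcal{S}}$ need not be complete.

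Let $\pi: I^{\mathbb{N}} \to J_{\mathcal{S}}$ be the coding map, where $\pi(\omega)$ is the unique point of $\bigcap_{N} s_{\omega|_{N}}(X)$. Put $s := \sup_{i} \Lip(s_{i}) < 1$. Then $\diam\paren{s_{\omega|_{N}}(X)} \leq s^{N}\diam(X)$, so $\pi$ is continuous for the product topology on $I^{\mathbb{N}}$ ($I$ discrete) and is surjective. The space $I^{\mathbb{N}}$ is completely metrizable, hence a Baire space.

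Given a cover $J_{\mathcal{S}} \subset \bigcup_{k} E_{k}$ by closed sets, the sets $\pi^{-1}(E_{k})$ are closed and cover $I^{\mathbb{N}}$. By Baire, some $\pi^{-1}(E_{k})$ contains a cylinder $[\tau] = \set{\omega \mid \omega|_{n} = \tau}$ with $\tau \in I^{n}$. Since $\pi([\tau]) = s_{\tau}(J_{\mathcal{S}})$, this gives $s_{\tau}(J_{\mathcal{S}}) \subset E_{k}$.

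Now $s_{\tau} = s_{\tau_{1}} \circ \dots \circ s_{\tau_{n}}$ is a finite composition of maps that are bi-Lipschitz on $X$, and each $s_{\tau_{j}}$ maps $X$ into $X$. Hence $s_{\tau}$ is bi-Lipschitz on $X$, with lower constant the product of the lower constants. Bi-Lipschitz maps preserve upper box dimension, so
\begin{equation*}
\dimub{E_{k}} \geq \dimub{s_{\tau}(J_{\mathcal{S}})} = \dimub{J_{\mathcal{S}}}.
\end{equation*}
Taking the supremum over $k$ and then the infimum over all covers yields $\dimp{J_{\mathcal{S}}} \geq \dimub{J_{\mathcal{S}}}$.

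Combining this with the chain above forces all four quantities to coincide, which is the statement. The step needing the most care is the Baire argument. One must check that the cover of $J_{\mathcal{S}}$ pulls back to a cover of all of $I^{\mathbb{N}}$, which uses surjectivity of $\pi$, and that the closed sets pull back to closed sets, which uses continuity of $\pi$. This is also precisely where the bi-Lipschitz hypothesis enters: it is what transfers the full box dimension of $J_{\mathcal{S}}$ to the small piece $s_{\tau}(J_{\mathcal{S}})$.
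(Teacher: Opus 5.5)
Your argument is correct and complete. The paper gives no proof of its own here; it only quotes Theorem 3.1 of Mauldin--Urba\'nski. Your route is the standard proof of that result:
\begin{itemize}
\item reduce everything to $\dimub{J_{\mathcal{S}}} \leq \dimp{J_{\mathcal{S}}}$;
\item use that packing dimension equals the modified upper box dimension, taken over countable closed covers;
\item pull a closed cover back by the continuous surjection $\pi : I^{\mathbb{N}} \to J_{\mathcal{S}}$ and apply the Baire category theorem on the completely metrizable code space, which places some cylinder image $s_{\tau}(J_{\mathcal{S}})$ inside one piece of the cover;
\item conclude by bi-Lipschitz invariance of upper box dimension.
\end{itemize}

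Two points about what this approach buys. First, running Baire on $I^{\mathbb{N}}$ rather than on $\overline{J_{\mathcal{S}}}$ is exactly what gives the equality for the possibly non-closed set $J_{\mathcal{S}}$ itself. Falconer's compact-set criterion applied to $\overline{J_{\mathcal{S}}}$ would only give $\dimp{\overline{J_{\mathcal{S}}}} = \dimub{\overline{J_{\mathcal{S}}}}$, and monotonicity then yields only an upper bound for $\dimp{J_{\mathcal{S}}}$. Second, you never use (OSC), conformality or (BDP). So your proof establishes the statement for any IFS whose maps are bi-Lipschitz on $X$ with $\sup_{i} \Lip(s_{i}) < 1$.
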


\section{Definition and fundamental properties of \texorpdfstring{$\mathcal{F}$ and $\mathcal{G}$}{F and G}}\label{sec:f-and-g}
In this section, we define the main objects of this paper, the IFSs $\mathcal{F}$ and $\mathcal{G}$,
and we study their fundamental properties.

\subsection{Definition and Fundamental Properties of \texorpdfstring{$f_{\alpha}$ and $g_{\alpha}$}{f and g}}
We first introduce the maps $f_{\alpha}$ and $g_{\alpha}$ that generate the IFSs $\mathcal{F}$ and $\mathcal{G}$,
and we investigate their basic properties.

\begin{definition}\label{def:f-and-g}
    Let $\alpha$ be a complex number with $\abs{\alpha} \geq 2$.
    We define the maps $f_{\alpha}, g_{\alpha} : \overline{\mathbb{D}} \to \overline{\mathbb{D}}$ by
    \begin{equation*}
        f_{\alpha}(z) := \frac{z + \overline{\alpha}}{\abs{\alpha}^{2}-1}, \quad g_{\alpha}(z) := \frac{1}{z + \alpha}.
    \end{equation*}
\end{definition}

\begin{prop}\label{prop:image}
    \begin{enumerate}
        \item Let $\alpha \in \mathbb{C}$ with $\abs{\alpha} \geq 2$.
            Then,
            \begin{equation*}
                f_{\alpha}\paren{\overline{\mathbb{D}}} = g_{\alpha}\paren{\overline{\mathbb{D}}} = \overline{B}\paren{\frac{\overline{\alpha}}{|\alpha|^{2}-1}, \frac{1}{|\alpha|^{2}-1}}.
            \end{equation*}
        \item Let $\alpha_{1}, \alpha_{2}\in\mathbb{R}$ with $\alpha_{1} - \alpha_{2} \geq 2$ and $\alpha_{2} \geq 2$.
            Then,
            \begin{equation*}
                f_{\alpha_{1}}\paren{\overline{\mathbb{D}}} \cap f_{\alpha_{2}}\paren{\overline{\mathbb{D}}} = g_{\alpha_{1}}\paren{\overline{\mathbb{D}}} \cap g_{\alpha_{2}}\paren{\overline{\mathbb{D}}} = \begin{cases}
                    \emptyset. & (\alpha_{1} - \alpha_{2} > 2) \\
                    \left\{\frac{1}{\alpha_{1}-1}\right\} = \left\{\frac{1}{\alpha_{2}+1}\right\} & (\alpha_{1} - \alpha_{2} = 2).
                \end{cases}
            \end{equation*}
    \end{enumerate}
\end{prop}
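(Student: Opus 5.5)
For (1) I compute the two images separately and check that they are the same disc. The map $f_{\alpha}$ is the composition of the translation $z \mapsto z+\overline{\alpha}$ with multiplication by the positive real number $1/(\abs{\alpha}^{2}-1)$. It therefore sends $\overline{\mathbb{D}}$ onto the closed disc with centre $\overline{\alpha}/(\abs{\alpha}^{2}-1)$ and radius $1/(\abs{\alpha}^{2}-1)$. For $g_{\alpha}$, the translation $z \mapsto z+\alpha$ sends $\overline{\mathbb{D}}$ onto $\overline{B}(\alpha,1)$. Since $\abs{\alpha}\geq 2>1$, this disc does not contain $0$, so its image under the inversion $w\mapsto 1/w$ is again a closed disc.

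To identify that disc, I use that the points of $\overline{B}(\alpha,1)$ of minimal and maximal modulus lie on the line through $0$ and $\alpha$. These points are $\alpha(1-1/\abs{\alpha})$ and $\alpha(1+1/\abs{\alpha})$. The inversion maps this diametral segment onto a diameter of the image disc, because $w\mapsto 1/w$ maps the ray through $\alpha$ onto the ray through $\overline{\alpha}$ and preserves symmetry about that line. The endpoints go to $\frac{\overline{\alpha}}{\abs{\alpha}(\abs{\alpha}-1)}$ and $\frac{\overline{\alpha}}{\abs{\alpha}(\abs{\alpha}+1)}$. The centre is the midpoint of these two points:
\[
\frac{\overline{\alpha}}{2\abs{\alpha}}\paren{\frac{1}{\abs{\alpha}-1}+\frac{1}{\abs{\alpha}+1}}=\frac{\overline{\alpha}}{\abs{\alpha}^{2}-1}.
\]
The radius is half the distance between them, which is $\frac{1}{2}\paren{\frac{1}{\abs{\alpha}-1}-\frac{1}{\abs{\alpha}+1}}=\frac{1}{\abs{\alpha}^{2}-1}$. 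This matches the image of $f_{\alpha}$. As an alternative check, I can verify directly that $\abs{1/w-c}=r$ is equivalent to $\abs{w-\alpha}=1$ by clearing denominators.

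For (2), by (1) both intersections are the same set. When $\alpha_{1},\alpha_{2}$ are real, both discs are centred on the real axis. Their real intervals are $[1/(\alpha_{i}+1),\,1/(\alpha_{i}-1)]$, obtained as centre $\pm$ radius, i.e. $(\alpha_{i}\pm 1)/(\alpha_{i}^{2}-1)$. Two closed discs centred on a common line intersect exactly when these intervals intersect. If they are tangent, the unique common point is the shared endpoint on the line. Since $\alpha_{1}>\alpha_{2}$, the interval for $\alpha_{1}$ lies to the left. Comparing $1/(\alpha_{1}-1)$ with $1/(\alpha_{2}+1)$ gives:
\begin{itemize}
\item if $\alpha_{1}-1>\alpha_{2}+1$, i.e. $\alpha_{1}-\alpha_{2}>2$, the intervals are disjoint and the intersection is empty;
\item if $\alpha_{1}-\alpha_{2}=2$, the discs are externally tangent at $1/(\alpha_{1}-1)=1/(\alpha_{2}+1)$.
\end{itemize}
To justify tangency, I check that the distance between the centres equals the sum of the radii. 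The only step needing some care is the inversion claim in (1), which the explicit endpoint computation settles.
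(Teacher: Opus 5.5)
Your proof is correct. There is no competing route to compare with: the paper states Proposition~\ref{prop:image} without any proof, treating it as a routine computation, and your write-up supplies the omitted verification.

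The one step you phrase informally is the claim that $w\mapsto 1/w$ carries the diametral segment of $\overline{B}(\alpha,1)$ on the line through $0$ and $\alpha$ onto a diameter of the image disc. This claim is valid. For a closed disc not containing $0$, the points of minimal and maximal modulus are the two ends of the diameter through $0$, and since $\abs{1/w}=1/\abs{w}$, inversion swaps these extremal points between $\overline{B}(\alpha,1)$ and its image.

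Your direct check also settles (1) on its own. Set $c=\overline{\alpha}/\paren{\abs{\alpha}^{2}-1}$ and $r=1/\paren{\abs{\alpha}^{2}-1}$. Multiplying $\abs{1/w-c}\le r$ by $\abs{w}\paren{\abs{\alpha}^{2}-1}$ and squaring reduces it to $\abs{w-\alpha}^{2}\le 1$. The inequality $\abs{c}>r$ ensures $0\notin\overline{B}(c,r)$, so the image is the whole closed disc.

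Part (2) then follows as you describe, by comparing $1/(\alpha_{1}-1)$ with $1/(\alpha_{2}+1)$.
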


\begin{definition}
    Let $N\in\mathbb{N}, \alpha_{1}, \dots, \alpha_{N} \in \mathbb{C}$.
    Assume that for each $j=1, 2, \dots, N$, $\abs{\alpha_{j}} \geq 2$.
    We define the continued fraction $[\alpha_{1}, \alpha_{2}, \dots, \alpha_{N}]$ by
    \begin{equation*}
        [\alpha_{1}, \alpha_{2}, \dots, \alpha_{N}] := \cfrac{1}{\alpha_{1} + \cfrac{1}{\alpha_{2} + \cfrac{1}{\ddots + \cfrac{1}{\alpha_{N}}}}}.
    \end{equation*}
\end{definition}

\begin{prop}\label{prop:derivative}
    Assume that $N\in\mathbb{N}, \ \alpha_{1}, \dots, \alpha_{N} \in \mathbb{C}$ and
    $\abs{\alpha_{j}} \geq 2$ for each $j=1, 2, \dots, N$.
    Then, for each $z \in \overline{\mathbb{D}}$,
    \begin{align*}
        (f_{\alpha_{1}} \circ \dots \circ f_{\alpha_{N}})'(z) &= \prod_{j=1}^{N}\frac{1}{(|\alpha_{j}|^{2}-1)}, \\
        (g_{\alpha_{1}} \circ \dots \circ g_{\alpha_{N}})'(z) &= (-1)^{N}\prod_{j=1}^{N}[\alpha_{j}, \alpha_{j+1}, \dots, \alpha_{N}+z]^{2}.
    \end{align*}
\end{prop}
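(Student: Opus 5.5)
For the first formula, each $f_{\alpha_j}$ is affine, $f_{\alpha_j}(z) = (|\alpha_j|^2-1)^{-1} z + \overline{\alpha_j}/(|\alpha_j|^2-1)$. Its derivative is therefore the constant $(|\alpha_j|^2-1)^{-1}$. The chain rule gives the derivative of the composition as the product of these constants, evaluated at any point. The only thing to check is that all intermediate points stay in $\overline{\mathbb{D}}$, which holds by \Cref{prop:image}. Nothing further is needed.

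For the second formula I will argue by induction on $N$.

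\textbf{Base case $N=1$.} Here $g_{\alpha_1}'(z) = -(z+\alpha_1)^{-2} = (-1)^1[\alpha_1+z]^2$, since $[\alpha_1+z] = 1/(\alpha_1+z)$. This is well defined because $|z+\alpha_1| \ge |\alpha_1|-1 \ge 1$.

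\textbf{Inductive step.} Assume the formula holds for any $N-1$ parameters with modulus at least $2$. Write
\[
g_{\alpha_1}\circ\cdots\circ g_{\alpha_N} = g_{\alpha_1}\circ G,\qquad G := g_{\alpha_2}\circ\cdots\circ g_{\alpha_N}.
\]
The chain rule gives
\[
(g_{\alpha_1}\circ G)'(z) = g_{\alpha_1}'(G(z))\,G'(z) = -\bigl(G(z)+\alpha_1\bigr)^{-2}G'(z).
\]
The key identity to verify is
\[
G(z) = [\alpha_2,\dots,\alpha_{N-1},\alpha_N+z],
\]
which follows by unwinding the compositions, or by a separate induction. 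Consequently
\[
\bigl(G(z)+\alpha_1\bigr)^{-1} = [\alpha_1,\alpha_2,\dots,\alpha_N+z].
\]
The induction hypothesis applied to $G$ gives
\[
G'(z) = (-1)^{N-1}\prod_{j=2}^{N}[\alpha_j,\dots,\alpha_N+z]^2.
\]
Multiplying the two factors gives $(-1)^N\prod_{j=1}^N[\alpha_j,\dots,\alpha_N+z]^2$, which is the claimed formula.

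\textbf{Well-definedness.} The continued fractions have a nonzero denominator at each level. Each partial value $[\alpha_j,\dots,\alpha_N+z]$ equals $g_{\alpha_j}\circ\cdots\circ g_{\alpha_N}(z)$. This point lies in $\overline{\mathbb{D}}$ by \Cref{prop:image}, so its denominator $\alpha_{j-1}+(\cdot)$ has modulus at least $1$.

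This bookkeeping of indices, interpreting the last entry as $\alpha_N+z$, is the only mildly delicate point. It is not a real obstacle.
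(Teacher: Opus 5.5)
Your proof is correct and is essentially the standard chain-rule computation; the paper states this proposition without a written proof. Your two ingredients are exactly right: each $f_{\alpha_j}$ is affine with constant derivative $(|\alpha_j|^2-1)^{-1}$, and for the $g$'s the induction writes the composition as $g_{\alpha_1}\circ G$ with $G(z)=[\alpha_2,\dots,\alpha_N+z]$, while each partial value lies in $\overline{\mathbb{D}}$, so every denominator has modulus at least $1$.
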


\begin{lem}
    Let $\alpha \in \mathbb{C}$.
    \begin{enumerate}
        \item If $\abs{\alpha} \geq 2$, then we have that $\Lip(f_{\alpha}) = \frac{1}{\abs{\alpha}^{2}-1} \leq 1/3$.
        \item If $\abs{\alpha} > 2$, then we have that $\Lip(g_{\alpha}) = \frac{1}{(\abs{\alpha}-1)^{2}}<1$.
    \end{enumerate}
\end{lem}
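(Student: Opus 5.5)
For part (1), I would compute directly. Since $f_{\alpha}$ is affine with $f_{\alpha}'(z) = 1/(\abs{\alpha}^{2}-1)$, a positive real constant, it is a similarity. We have $\abs{f_{\alpha}(z)-f_{\alpha}(w)} = \abs{z-w}/(\abs{\alpha}^{2}-1)$ for all $z,w$, so $\Lip(f_{\alpha}) = 1/(\abs{\alpha}^{2}-1)$ exactly. The bound $\leq 1/3$ then follows from $\abs{\alpha}^{2}-1 \geq 3$ when $\abs{\alpha}\geq 2$. This part is routine.

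For part (2), I would use $g_{\alpha}'(z) = -1/(z+\alpha)^{2}$, so $\abs{g_{\alpha}'(z)} = 1/\abs{z+\alpha}^{2}$.

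\emph{Upper bound.} For $z \in \overline{\mathbb{D}}$ we have $\abs{z+\alpha} \geq \abs{\alpha}-1 > 1$, hence $\abs{g_{\alpha}'(z)} \leq 1/(\abs{\alpha}-1)^{2}$. Since $\overline{\mathbb{D}}$ is convex and $g_{\alpha}$ is holomorphic on a neighbourhood of it (the pole $-\alpha$ lies outside), integrating along segments gives $\abs{g_{\alpha}(z)-g_{\alpha}(w)} \leq \abs{z-w}/(\abs{\alpha}-1)^{2}$. Alternatively, one can use the identity $g_{\alpha}(z)-g_{\alpha}(w) = (w-z)/((z+\alpha)(w+\alpha))$ and bound each factor of the denominator from below by $\abs{\alpha}-1$.

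\emph{Sharpness.} I would take $z,w$ tending to the point $z_{0} = -\alpha/\abs{\alpha} \in \partial\mathbb{D}$, where $\abs{z_{0}+\alpha} = \abs{\alpha}-1$. Using the difference-quotient identity, the ratio $\abs{g_{\alpha}(z)-g_{\alpha}(w)}/\abs{z-w} = 1/(\abs{z+\alpha}\abs{w+\alpha})$ tends to $1/(\abs{\alpha}-1)^{2}$. So the supremum is attained in the limit, and $\Lip(g_{\alpha}) = 1/(\abs{\alpha}-1)^{2}$.

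Finally, $\abs{\alpha}>2$ gives $\abs{\alpha}-1>1$, hence $\Lip(g_{\alpha})<1$. The only point needing any care is the sharpness step, and the explicit difference-quotient formula settles it immediately. I expect no real obstacle.
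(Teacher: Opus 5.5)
Your proposal is correct and follows the paper's route: part (1) is read off from the affine form of $f_{\alpha}$, and part (2) uses the identity $g_{\alpha}(z)-g_{\alpha}(w) = (w-z)/((z+\alpha)(w+\alpha))$ together with $\abs{z+\alpha}\geq\abs{\alpha}-1$ on $\overline{\mathbb{D}}$. Your sharpness step, letting $z,w\to -\alpha/\abs{\alpha}$, is a worthwhile addition: the statement asserts equality for $\Lip(g_{\alpha})$, but the paper's proof only establishes the upper bound.
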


\begin{proof}
    The part (1) follows from the definition of $f_{\alpha}$.
    For any $z, w \in \overline{\mathbb{D}}$, we have
    \begin{align*}
        \left|g_{\alpha}(z) - g_{\alpha}(w)\right| &= \left|\frac{1}{z+\alpha} - \frac{1}{w+\alpha}\right| \\
        &= \frac{|z-w|}{|z+\alpha||w+\alpha|} \leq \frac{|z-w|}{(|\alpha| - 1)^{2}}.
    \end{align*}
    Hence the part (2) holds.
\end{proof}

\begin{lem}\label{lem:extension-to-bigger-domain}
    Let $\alpha \in \mathbb{C}$.
    \begin{enumerate}
        \item Let $|\alpha| \geq 2$ and let $V = B(0, 2)$. Then, $f_{\alpha}$ extends to a conformal $C^{\infty}$-diffeomorphism on $V$ to its image.
        \item Let $|\alpha| \geq r > 2$ and let $0 \leq \varepsilon < \sqrt{r^{2}-4}$.
        Define $V_{\varepsilon} = B\paren{0, \frac{r+\varepsilon}{2}}$.
        Then, $g_{\alpha}$ extends to a conformal $C^{\infty}$-diffeomorphism on $V_{\varepsilon}$ to its image.
    \end{enumerate}
\end{lem}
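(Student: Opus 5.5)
For part (1), the natural extension of $f_{\alpha}$ is the same affine formula $f_{\alpha}(z) = (z+\overline{\alpha})/(\abs{\alpha}^{2}-1)$, now read on all of $\mathbb{C}$. Since $\abs{\alpha}\geq 2$, we have $\abs{\alpha}^{2}-1 \geq 3 > 0$, so this is a composition of a translation with a nonzero real scaling. Its restriction to $V = B(0,2)$ is therefore a $C^{\infty}$ bijection onto the open disc $B\paren{\overline{\alpha}/(\abs{\alpha}^{2}-1),\, 2/(\abs{\alpha}^{2}-1)}$, with $C^{\infty}$ inverse $w \mapsto (\abs{\alpha}^{2}-1)w - \overline{\alpha}$. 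It is holomorphic with constant nonzero derivative, and hence conformal. This part is immediate.

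For part (2), the plan is to extend $g_{\alpha}$ by the same Möbius formula $z \mapsto 1/(z+\alpha)$. This map is holomorphic and injective on $\mathbb{C}\setminus\set{-\alpha}$, with derivative $-1/(z+\alpha)^{2} \neq 0$ there. A Möbius transformation is a biholomorphism of the Riemann sphere. So once the pole $-\alpha$ is shown to lie outside $V_{\varepsilon}$, the restriction to $V_{\varepsilon}$ is a conformal $C^{\infty}$-diffeomorphism onto its open image, with inverse $w \mapsto 1/w - \alpha$ also $C^{\infty}$ there.

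The one thing to check is therefore $-\alpha \notin V_{\varepsilon}$, i.e. $\abs{\alpha} \geq (r+\varepsilon)/2$. Since $\abs{\alpha} \geq r$, it suffices to show $(r+\varepsilon)/2 \leq r$, i.e. $\varepsilon \leq r$. This follows from $\varepsilon < \sqrt{r^{2}-4} < r$. In fact we get $\abs{z+\alpha} \geq r - (r+\varepsilon)/2 = (r-\varepsilon)/2 > 0$ for $z \in V_{\varepsilon}$, so the pole stays a definite distance away from $V_{\varepsilon}$.

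We should also note that $V_{\varepsilon}$ is an open connected set containing $\overline{\mathbb{D}}$, which is what the conformality condition of \Cref{def:CIFS} requires. For $\varepsilon > 0$ this holds because $(r+\varepsilon)/2 > 1$, since $r > 2$. For $\varepsilon = 0$ it holds because $r/2 > 1$.

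The sharper hypothesis $\varepsilon < \sqrt{r^{2}-4}$, rather than just $\varepsilon < r$, is presumably there for later use. It appears to be exactly what makes $g_{\alpha}(V_{\varepsilon}) \subset V_{\varepsilon}$: the image is contained in the disc of radius about $1/((r-\varepsilon)/2)$ around $0$. Requiring $2/(r-\varepsilon) < (r+\varepsilon)/2$ gives $r^{2}-\varepsilon^{2} > 4$, which is the stated condition. This invariance is what a subsequent BDP argument via Koebe-type distortion would need. I would record this invariance as a remark, although the lemma as stated only needs the non-singularity, so there is no real obstacle.
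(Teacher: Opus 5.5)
Your proposal is correct and follows essentially the same route as the paper. Part (1) is the trivial affine case. For part (2), the Möbius formula is a conformal diffeomorphism on $V_{\varepsilon}$ because the pole $-\alpha$ lies outside it; your explicit bound $\abs{z+\alpha} \geq (r-\varepsilon)/2 > 0$ fills in the step the paper simply calls ``straightforward''. The paper's proof also derives the invariance $g_{\alpha}(V_{\varepsilon}) \subset V_{\varepsilon}$ through the same estimate $\abs{g_{\alpha}(z)} \leq 2/(r-\varepsilon) < (r+\varepsilon)/2$ that you record as a remark.
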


\begin{proof}
    (1) is trivial.
    As for (2), it is straightforward to see that $g_{\alpha}$ is a conformal $C^{\infty}$-diffeomorphism on the larger domain $V_{\varepsilon}$ onto its image.
    Moreover, we have
    \begin{equation*}
        g_{\alpha}(V_{\varepsilon}) =\overline{B}\paren{\frac{\overline{\alpha}}{|\alpha|^{2} - ((r+\varepsilon)/2)^{2}}, \frac{(r+\varepsilon)/2}{|\alpha|^{2} - ((r+\varepsilon)/2)^{2}}}.
    \end{equation*}
    Thus, for all $z\in V_{\varepsilon}$, we obtain
    \begin{align*}
        |g_{\alpha}(z)| &\leq \frac{|\alpha|}{|\alpha|^{2} - ((r+\varepsilon)/2)^{2}} + \frac{(r+\varepsilon)/2}{|\alpha|^{2} - ((r+\varepsilon)/2)^{2}} \\
        &= \frac{1}{|\alpha| - (r+\varepsilon)/2} \\
        &\leq \frac{1}{r-(r+\varepsilon)/2} = \frac{(r+\varepsilon)/2}{(r^{2} - \varepsilon^{2}) / 4} < \frac{r+\varepsilon}{2}.
    \end{align*}
    Hence, $g_{\alpha}(V_{\varepsilon}) \subset V_{\varepsilon}$.
\end{proof}

We will use the following theorem to prove that $\mathcal{F}$ and $\mathcal{G}$ satisfy the BDP.

\begin{thm}[Koebe's distortion theorem (\cite{Pommerenke1992} Theorem 1.3, \cite{Sugita-en} Theorem 6.9)]\label{thm:koebe_distortion_theorem}
    Let $\varphi: \mathbb{D} \to \mathbb{C}$ be a univalent holomorphic function with $\varphi(0) = 0$ and $\varphi'(0) = 1$.
    Then, for all $z \in \mathbb{D}$,
    \begin{equation*}
        \frac{1-|z|}{(1+|z|)^{3}} \leq |\varphi'(z)| \leq \frac{1+|z|}{(1-|z|)^{3}}.
    \end{equation*}
\end{thm}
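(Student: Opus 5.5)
The plan is the classical route: the area theorem gives the Bieberbach bound $\abs{a_{2}} \leq 2$, a disc-automorphism trick turns this into a pointwise bound on $\varphi''/\varphi'$, and radial integration of $\log\abs{\varphi'}$ yields the two-sided estimate.

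\textbf{Step 1 (area theorem).} Let $g(\zeta) = \zeta + b_{0} + \sum_{n\geq 1} b_{n}\zeta^{-n}$ be univalent on $\set{\abs{\zeta} > 1}$. The complement of $g(\set{\abs{\zeta} > \rho})$ has area
\begin{equation*}
\pi\paren{\rho^{2} - \sum_{n \geq 1} n\abs{b_{n}}^{2}\rho^{-2n}} \geq 0,
\end{equation*}
computed from the boundary curve by Green's formula. Letting $\rho \downarrow 1$ gives $\sum_{n \geq 1} n\abs{b_{n}}^{2} \leq 1$, and in particular $\abs{b_{1}} \leq 1$.

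\textbf{Step 2 (the bound $\abs{a_{2}} \leq 2$).} Write $\varphi(z) = z + a_{2}z^{2} + \cdots$. The function $\psi(z) = \sqrt{\varphi(z^{2})}$ is odd and univalent on $\mathbb{D}$; univalence uses the oddness and the fact that $\varphi(z) = 0$ only at $z = 0$. Apply Step 1 to $g(\zeta) = 1/\psi(1/\zeta) = \zeta - (a_{2}/2)\zeta^{-1} + \cdots$. This gives $\abs{a_{2}/2} \leq 1$, that is, $\abs{a_{2}} \leq 2$.

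\textbf{Step 3 (bound on $\varphi''/\varphi'$).} Fix $w \in \mathbb{D}$ and form the Koebe transform
\begin{equation*}
F(z) = \frac{\varphi\paren{\frac{z+w}{1+\overline{w}z}} - \varphi(w)}{(1-\abs{w}^{2})\varphi'(w)}.
\end{equation*}
It is univalent and normalized, with $F(0) = 0$ and $F'(0) = 1$. Its second coefficient is
\begin{equation*}
\frac{1}{2}\paren{(1-\abs{w}^{2})\frac{\varphi''(w)}{\varphi'(w)} - 2\overline{w}}.
\end{equation*}
Step 2 then yields
\begin{equation*}
\abs{\frac{w\varphi''(w)}{\varphi'(w)} - \frac{2\abs{w}^{2}}{1-\abs{w}^{2}}} \leq \frac{4\abs{w}}{1-\abs{w}^{2}}.
\end{equation*}

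\textbf{Step 4 (radial integration).} Since $\varphi' \neq 0$ on the simply connected $\mathbb{D}$, fix a branch of $\log\varphi'$. Write $w = re^{i\theta}$. Then
\begin{equation*}
r\,\partial_{r}\log\abs{\varphi'(re^{i\theta})} = \mathrm{Re}\paren{w\varphi''(w)/\varphi'(w)},
\end{equation*}
so by Step 3,
\begin{equation*}
\frac{2r^{2}-4r}{1-r^{2}} \leq r\,\partial_{r}\log\abs{\varphi'} \leq \frac{2r^{2}+4r}{1-r^{2}}.
\end{equation*}
Divide by $r$ and integrate from $0$ to $\abs{z}$, using $\log\abs{\varphi'(0)} = 0$ and the partial fractions
\begin{equation*}
\frac{2r+4}{1-r^{2}} = \frac{3}{1-r} - \frac{1}{1+r}, \qquad \frac{2r-4}{1-r^{2}} = -\frac{1}{1-r} - \frac{3}{1+r}.
\end{equation*}
Exponentiating gives exactly the two-sided bound $(1-\abs{z})/(1+\abs{z})^{3} \leq \abs{\varphi'(z)} \leq (1+\abs{z})/(1-\abs{z})^{3}$.

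The main obstacle is Steps 1 and 2. These are where univalence is genuinely used: to justify the area computation, and to check that the square-root transform is well defined and univalent. The remaining steps are computations, checking the coefficient of $F$ and the partial-fraction integration. Since the paper only invokes the statement, citing \cite{Pommerenke1992} for Steps 1 and 2 would suffice.
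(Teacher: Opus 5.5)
Your proof is correct and is the classical argument (area theorem, then $|a_{2}| \leq 2$ via the odd square-root transform, then the Koebe transform, then radial integration), which is the proof in Pommerenke's Theorem 1.3; the paper gives no proof of its own and simply cites that source. There is one slip in Step 4: the first identity should read $\frac{2r+4}{1-r^{2}} = \frac{3}{1-r} + \frac{1}{1+r}$. With your minus sign the integration would produce $1/\bigl((1+|z|)(1-|z|)^{3}\bigr)$ rather than $(1+|z|)/(1-|z|)^{3}$, although the final bound you state is the correct one.
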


\begin{lem}\label{lem:koebe_for_g}
    Let $r > 2$. For $0 <\varepsilon <\sqrt{r^{2} - 4}$,
    set $V_{\varepsilon} := B\paren{0, \frac{r+\varepsilon}{2}}$.
    Also, let $N \in \mathbb{N}$
    and suppose that $\alpha_{1}, \dots, \alpha_{N} \in \mathbb{C}$
    satisfy $\abs{\alpha_{j}} \geq r$ for each $j=1, \dots, N$.
    Define $g := g_{\alpha_{1}} \circ \dots \circ g_{\alpha_{N}}$.
    Then, for each $z\in V_{\varepsilon}$, we have
    \begin{equation*}
        \frac{1-\left|\frac{2z}{r+\varepsilon}\right|}{\paren{1+\left|\frac{2z}{r+\varepsilon}\right|}^{3}} \leq \frac{|g'(z)|}{|g'(0)|} \leq \frac{1+\left|\frac{2z}{r+\varepsilon}\right|}{\paren{1-\left|\frac{2z}{r+\varepsilon}\right|}^{3}}.
    \end{equation*}
\end{lem}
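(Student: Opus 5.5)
The plan is to apply Koebe's distortion theorem (\Cref{thm:koebe_distortion_theorem}) to a normalized rescaling of $g$ on the unit disc. Set $R := (r+\varepsilon)/2$. By \Cref{lem:extension-to-bigger-domain} (2), each $g_{\alpha_{j}}$ with $\abs{\alpha_{j}} \geq r$ is a conformal $C^{\infty}$-diffeomorphism of $V_{\varepsilon} = B(0, R)$ onto its image, and $g_{\alpha_{j}}(V_{\varepsilon}) \subset V_{\varepsilon}$. Hence the composition $g = g_{\alpha_{1}} \circ \dots \circ g_{\alpha_{N}}$ is well defined, holomorphic and injective on $V_{\varepsilon}$. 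Injectivity is clear because each $g_{\alpha_{j}}$ is a M\"{o}bius map whose only pole, $-\alpha_{j}$, lies outside $V_{\varepsilon}$, since $\abs{\alpha_{j}} \geq r > R$. By \Cref{prop:derivative}, or simply because each factor has nonvanishing derivative, we have $g'(0) \neq 0$.

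Next, define $\varphi : \mathbb{D} \to \mathbb{C}$ by
\begin{equation*}
    \varphi(w) := \frac{g(Rw) - g(0)}{R\, g'(0)}.
\end{equation*}
This map is univalent and holomorphic on $\mathbb{D}$, with $\varphi(0) = 0$ and $\varphi'(0) = 1$. Since $\varphi'(w) = g'(Rw)/g'(0)$, \Cref{thm:koebe_distortion_theorem} gives
\begin{equation*}
    \frac{1-|w|}{(1+|w|)^{3}} \leq \frac{|g'(Rw)|}{|g'(0)|} \leq \frac{1+|w|}{(1-|w|)^{3}}
\end{equation*}
for every $w \in \mathbb{D}$. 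Substituting $w = z/R = 2z/(r+\varepsilon)$ for $z \in V_{\varepsilon}$ yields exactly the claimed inequality.

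The only point needing care is the hypothesis $\varepsilon < \sqrt{r^{2}-4}$, which is used through \Cref{lem:extension-to-bigger-domain} to guarantee the self-mapping property $g_{\alpha_{j}}(V_{\varepsilon}) \subset V_{\varepsilon}$, so that the iterated composition stays in the domain where each factor is univalent. Even if that inclusion were delicate at the boundary, it suffices that each image lies in the open disc, and the lemma's strict inequality $|g_{\alpha}(z)| < R$ provides this. The case $\varepsilon = 0$ is excluded here, but it would follow anyway by the same argument.
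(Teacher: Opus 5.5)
Your proposal is correct and is essentially the paper's proof: you normalize $g$ on the disc of radius $(r+\varepsilon)/2$ to obtain $\varphi$ with $\varphi(0)=0$ and $\varphi'(0)=1$, apply Koebe's distortion theorem, and use $\varphi'(w)=g'(Rw)/g'(0)$. Your extra justification of univalence is a welcome addition that the paper leaves implicit: the pole $-\alpha_j$ lies outside $V_{\varepsilon}$, and $g_{\alpha_j}(V_{\varepsilon})\subset V_{\varepsilon}$ by \Cref{lem:extension-to-bigger-domain}.
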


\begin{proof}
    Define $\varphi : \mathbb{D} \to \mathbb{C}$ by
    \begin{equation*}
        \varphi(z) := \paren{g\paren{\frac{r+\varepsilon}{2}z} - g(0)}\frac{1}{g'(0)}\frac{2}{r+\varepsilon}.
    \end{equation*}
    Then, $\varphi$ is a univalent holomorphic function on $\mathbb{D}$ and satisfies
    $\varphi(0) = 0, \ \varphi'(0) = 1$.
    Therefore, by Koebe's distortion theorem, we obtain
    \begin{equation*}
        \frac{1-|z|}{(1+|z|)^{3}} \leq |\varphi'(z)| \leq \frac{1+|z|}{(1-|z|)^{3}}.
    \end{equation*}
    Note that $\varphi'(z) = g'\paren{\frac{r+\varepsilon}{2}z}/g'(0) \ (z\in\mathbb{D})$.
    This equation implies that $g'(z)/g'(0) = \varphi'\paren{\frac{2z}{r+\varepsilon}}$ holds
    for each $z\in V_{\varepsilon}$. Therefore, the lemma follows.
\end{proof}

\begin{lem}\label{lem:bdp}
    Let $N\in\mathbb{N}$ and let $\alpha_{1}, \dots, \alpha_{N} \in \mathbb{C}$.
    \begin{enumerate}
        \item Let $V = B(0, 2)$.
            Suppose that $\abs{\alpha_{j}} \geq 2$ for each $j=1, 2, \dots, N$.
            Define $f := f_{\alpha_{1}} \circ \dots \circ f_{\alpha_{N}}$.
            Then,
            \begin{equation*}
                \sup_{x, y \in V}\frac{|f'(x)|}{|f'(y)|} = 1.
            \end{equation*}
        \item Let $r>2$ and let $V = B\paren{0, \frac{r}{2}}$.
            Suppose that $\abs{\alpha_{j}} \geq r$ for each $j=1, 2, \dots, N$.
            Define $g := g_{\alpha_{1}} \circ \dots \circ g_{\alpha_{N}}$.
            Then, for any
            $0 < \varepsilon < \sqrt{r^{2} - 4}$,
            \begin{equation*}
                \sup_{x, y \in V}\frac{|g'(x)|}{|g'(y)|} \leq \paren{\frac{2r+\varepsilon}{\varepsilon}}^{4}.
            \end{equation*}
    \end{enumerate}
\end{lem}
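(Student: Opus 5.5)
Part (1) is immediate from \Cref{prop:derivative}: the derivative of $f = f_{\alpha_{1}} \circ \dots \circ f_{\alpha_{N}}$ is the constant $\prod_{j=1}^{N}(\abs{\alpha_{j}}^{2}-1)^{-1}$. It is independent of $z$ and nonzero, so the ratio $\abs{f'(x)}/\abs{f'(y)}$ equals $1$ for all $x, y \in V$. Each $f_{\alpha_{j}}$ is affine, so the composition extends to all of $V = B(0,2)$, and the formula remains valid there.

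For part (2), I will apply \Cref{lem:koebe_for_g} with the same $r$ and the given $\varepsilon \in (0, \sqrt{r^{2}-4})$. This lemma requires $\abs{\alpha_{j}} \geq r$, which is assumed, and it works on $V_{\varepsilon} = B\paren{0, \frac{r+\varepsilon}{2}}$. \Cref{lem:extension-to-bigger-domain} (2) guarantees that each $g_{\alpha_{j}}$ is a conformal diffeomorphism on $V_{\varepsilon}$ with $g_{\alpha_{j}}(V_{\varepsilon}) \subset V_{\varepsilon}$. Hence the composition $g$ is univalent and holomorphic on $V_{\varepsilon}$, which is what the Koebe argument needs.

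Since $V = B(0, r/2) \subset V_{\varepsilon}$, every $x \in V$ satisfies
\[
\rho := \abs{\tfrac{2x}{r+\varepsilon}} < \tfrac{r}{r+\varepsilon}.
\]
Then $1-\rho > \frac{\varepsilon}{r+\varepsilon}$ and $1+\rho < \frac{2r+\varepsilon}{r+\varepsilon}$. Writing $\abs{g'(x)}/\abs{g'(y)} = \paren{\abs{g'(x)}/\abs{g'(0)}}\cdot\paren{\abs{g'(0)}/\abs{g'(y)}}$, the upper Koebe bound for $x$ gives at most
\[
\frac{1+\rho_x}{(1-\rho_x)^{3}} < \frac{(2r+\varepsilon)(r+\varepsilon)^{2}}{\varepsilon^{3}}.
\]
The reciprocal of the lower Koebe bound for $y$ gives at most
\[
\frac{(1+\rho_y)^{3}}{1-\rho_y} < \frac{(2r+\varepsilon)^{3}}{\varepsilon^{3}}\cdot\frac{r+\varepsilon}{r+\varepsilon}\cdot\frac{1}{1}\cdot\frac{(r+\varepsilon)}{(r+\varepsilon)^{3}}\cdot(r+\varepsilon)^{2}/\varepsilon^{0},
\]
and after simplification this equals $\frac{(2r+\varepsilon)^{3}}{(r+\varepsilon)^{2}\varepsilon}$.

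Multiplying the two bounds gives
\[
\frac{(2r+\varepsilon)^{4}}{\varepsilon^{4}} = \paren{\frac{2r+\varepsilon}{\varepsilon}}^{4},
\]
because the factors of $(r+\varepsilon)$ cancel exactly. Taking the supremum over $x, y \in V$ yields the claim.

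The only delicate point is this final bookkeeping of powers of $(r+\varepsilon)$, $\varepsilon$ and $(2r+\varepsilon)$, together with checking that the inequalities are oriented correctly (using $\rho < r/(r+\varepsilon)$ in both factors). No further ideas are needed.
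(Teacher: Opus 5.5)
Your proof is correct and follows the paper's route: you apply \Cref{lem:koebe_for_g} on $V_{\varepsilon}$, use $\abs{2x/(r+\varepsilon)} < r/(r+\varepsilon)$ for $x \in B(0, r/2)$, and observe that the two Koebe factors at $\rho_{0} = r/(r+\varepsilon)$ multiply to $\paren{(1+\rho_{0})/(1-\rho_{0})}^{4} = \paren{(2r+\varepsilon)/\varepsilon}^{4}$, while part (1) is immediate from the constant derivative. The one flaw is your garbled middle display, which as written simplifies to $(2r+\varepsilon)^{3}/\varepsilon^{3}$ rather than the value you claim; it should read $(1+\rho_{y})^{3}/(1-\rho_{y}) < \paren{\frac{2r+\varepsilon}{r+\varepsilon}}^{3}\cdot\frac{r+\varepsilon}{\varepsilon} = \frac{(2r+\varepsilon)^{3}}{(r+\varepsilon)^{2}\varepsilon}$, which is the bound you go on to use.
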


\begin{proof}
    (1) is clear. It follows from \Cref{lem:koebe_for_g} that
    \begin{equation*}
        \sup_{x, y \in V}\frac{|g'(x)|}{|g'(y)|} \leq \sup_{x, y \in V}\frac{1+\left|\frac{2x}{r+\varepsilon}\right|}{\paren{1-\left|\frac{2x}{r+\varepsilon}\right|}^{3}}\frac{\paren{1+\left|\frac{2y}{r+\varepsilon}\right|}^{3}}{1-\left|\frac{2y}{r+\varepsilon}\right|} \leq \paren{\frac{1+\frac{r}{r+\varepsilon}}{1-\frac{r}{r+\varepsilon}}}^{4} = \paren{\frac{2r+\varepsilon}{\varepsilon}}^{4},
    \end{equation*}
    and hence (2) holds.
\end{proof}

\subsection{Definition of \texorpdfstring{$\mathcal{F}$ and $\mathcal{G}$}{F and G}}
In this subsection, we define the IFSs $\mathcal{F}$ and $\mathcal{G}$,
and show the fundamental properties that they satisfy.

\begin{definition}\label{def:d}
    We define the set $\mathcal{D}$ of sequences of real numbers as follows:
    \begin{equation*}
        \mathcal{D} := \set{\set{d_n}_{n=1}^{\infty} \mid d_1 \geq 2 \text{ and for all } n \in \mathbb{N}, d_{n+1} - d_n \geq 2}.
    \end{equation*}
\end{definition}

\begin{definition}\label{def:f-and-g-ifs}
    Let $\mathbf{d} = \set{d_n}_{n=1}^{\infty} \in \mathcal{D}$ and $T \in \mathbb{N}$.
    We define the families $\mathcal{F} = \mathcal{F}(\mathbf{d}, T)$ and $\mathcal{G} = \mathcal{G}(\mathbf{d}, T)$
    of maps on $\overline{\mathbb{D}}$ as follows.
    \begin{align*}
        \mathcal{F} = \mathcal{F}(\mathbf{d}, T) &:= \set{f_{e\paren{j/T}d_{n}} \mid n \in\mathbb{N}, j=0,1, \dots, T-1}, \\
        \mathcal{G} = \mathcal{G}(\mathbf{d}, T) &:= \set{g_{e\paren{j/T}d_{n}} \mid n \in\mathbb{N}, j=0,1, \dots, T-1}.
    \end{align*}
    Also, we call each $e(j/T)d_{n}$ a digit of $\mathcal{F}, \mathcal{G}$.
\end{definition}

\begin{lem}\label{lem:cifs-sufficient-cond}
    Let $\mathbf{d} = \set{d_n}_{n=1}^{\infty} \in \mathcal{D}$ and $T \in \mathbb{N}$.
    Suppose that
    \begin{equation*}
        T \leq \pi/\arctan(1/\sqrt{d_{1}^{2}-1}) = \pi/\arcsin(1/d_{1}).
    \end{equation*}
    Then, the following statements hold.
    \begin{enumerate}
        \item $\mathcal{F}\paren{\mathbf{d}, T}$ is a CIFS.
        \item If $d_{1} > 2$, then $\mathcal{G}\paren{\mathbf{d}, T}$ is a CIFS.
    \end{enumerate}
\end{lem}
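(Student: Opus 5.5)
We verify the four conditions of \Cref{def:CIFS} for $X = \overline{\mathbb{D}}$, taking the IFS property $\sup \Lip < 1$ first. For $\mathcal{F}$, every digit $\alpha = e(j/T)d_n$ has $\abs{\alpha} = d_n \geq 2$, so $\Lip(f_\alpha) \leq 1/3$. For $\mathcal{G}$ with $d_1 > 2$, we have $\Lip(g_\alpha) = (\abs{\alpha}-1)^{-2} \leq (d_1-1)^{-2} < 1$. Injectivity is clear, and $f_\alpha(\overline{\mathbb{D}}), g_\alpha(\overline{\mathbb{D}}) \subset \overline{\mathbb{D}}$ follows from \Cref{prop:image}(1), since the image disc has $\abs{\text{center}}+\text{radius} = 1/(\abs{\alpha}-1) \leq 1$.

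\textbf{(CC).} Since $\overline{\mathbb{D}}$ is convex with nonempty interior, the cone condition holds by the remark after \Cref{def:CIFS}.

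\textbf{Conformality.} For $\mathcal{F}$ we use \Cref{lem:extension-to-bigger-domain}(1) with $V = B(0,2)$. For $\mathcal{G}$ we apply \Cref{lem:extension-to-bigger-domain}(2) with $r = d_1 > 2$ and a fixed $0 < \varepsilon < \sqrt{r^2-4}$; this gives conformal extensions on $V_\varepsilon \supset \overline{\mathbb{D}}$, which is open and connected, and the maps are M\"obius, hence $C^\infty$.

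\textbf{(BDP).} By \Cref{lem:bdp}(1), $K = 1$ works for $\mathcal{F}$. For $\mathcal{G}$, \Cref{lem:bdp}(2) gives a uniform constant $((2r+\varepsilon)/\varepsilon)^4$ on $B(0, r/2)$ with $r = d_1$. Since $r/2 > 1$, this ball is an open connected set containing $\overline{\mathbb{D}}$, and it is mapped into itself by the computation in the proof of \Cref{lem:extension-to-bigger-domain} (applied with $\varepsilon = 0$, where $\abs{g_\alpha(z)} \leq 1/(r - r/2) = 2/r < r/2$). So we take this ball as the domain $V$ in \Cref{def:CIFS} for both conformality and BDP.

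\textbf{(OSC).} This is the main obstacle. By \Cref{prop:image}(1), $f_\alpha(\Int\overline{\mathbb{D}})$ and $g_\alpha(\Int\overline{\mathbb{D}})$ are both the open disc $D_\alpha$ with center $\overline{\alpha}/(\abs{\alpha}^2-1)$ and radius $1/(\abs{\alpha}^2-1)$. So the same argument covers both systems, and it suffices to show that the discs $D_\alpha$ for distinct digits are pairwise disjoint. There are two cases.
\begin{enumerate}
\item \emph{Same direction} ($j$ equal, $n$ different). Rotating by $e(-j/T)$ reduces this to real $\alpha_1 = d_m$, $\alpha_2 = d_n$ with $d_m - d_n \geq 2$. \Cref{prop:image}(2) shows the closed discs meet in at most one point, so the open discs are disjoint.
\item \emph{Different directions} ($j \neq j'$). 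The disc $D_\alpha$ lies in the open sector of half-angle $\arcsin\bigl(\tfrac{1}{\abs{\alpha}^2-1}\big/\tfrac{\abs{\alpha}}{\abs{\alpha}^2-1}\bigr) = \arcsin(1/\abs{\alpha})$ about the ray through $\overline{\alpha}$. Since $\abs{\alpha} \geq d_1$, this half-angle is at most $\arcsin(1/d_1)$. The rays for distinct $j$ are separated by angle at least $2\pi/T$, so the sectors are disjoint provided $2\arcsin(1/d_1) \leq 2\pi/T$, which is exactly the hypothesis $T \leq \pi/\arcsin(1/d_1)$.
\end{enumerate}
Open sectors that meet only along a boundary ray contain disjoint open discs, so OSC holds. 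Finally, the identity $\arctan(1/\sqrt{d_1^2-1}) = \arcsin(1/d_1)$ is elementary trigonometry.
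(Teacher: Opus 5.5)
Your proposal is correct and follows the same route as the paper. You get the cone condition from convexity of $\overline{\mathbb{D}}$, conformality and (BDP) from \Cref{lem:extension-to-bigger-domain} and \Cref{lem:bdp}, and (OSC) from two facts: each image disc lies in the open sector bounded by its two tangent lines through the origin, and the rotation symmetry $f_{pd_{n}}(\overline{\mathbb{D}}) = \overline{p}\, f_{d_{n}}(\overline{\mathbb{D}})$ holds for $\abs{p}=1$. Your version is more complete than the paper's terse proof, since you also verify $\sup\Lip<1$ (where $d_{1}>2$ is needed for $\mathcal{G}$), treat the same-direction case of (OSC) via \Cref{prop:image}(2), and fix a single domain $V = B(0, d_{1}/2)$ for both conformality and (BDP).
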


\begin{proof}
    We show that the four conditions in \Cref{def:CIFS} hold for $\mathcal{F}\paren{\mathbf{d}, T}$ and $\mathcal{G}\paren{\mathbf{d}, T}$.
    It is clear that the cone condition holds.
    Conformality follows from \Cref{lem:extension-to-bigger-domain} and the BDP follows from \Cref{lem:bdp}.

    We now show that the open set condition (OSC) holds.
    By \Cref{prop:image}, for each $n \in \mathbb{N}$, we have
    \begin{equation*}
        f_{d_{n}}\paren{\overline{\mathbb{D}}} = g_{d_{n}}\paren{\overline{\mathbb{D}}} = \overline{B}\paren{\frac{d_{n}}{d_{n}^{2}-1}, \frac{1}{d_{n}^{2}-1}}.
    \end{equation*}
    Since $\overline{B}\paren{\frac{d_{n}}{d_{n}^{2}-1}, \frac{1}{d_{n}^{2}-1}}$ is tangent to
    two lines $\mathrm{Im}(z) = \pm \sqrt{\frac{1}{d_{n}^{2}-1}}\mathrm{Re}(z)$,
    it follows that for each $n \in \mathbb{N}$,
    $f_{d_{n}}\paren{\overline{\mathbb{D}}} = g_{d_{n}}\paren{\overline{\mathbb{D}}}$
    is contained in the region
    bounded by two lines $\mathrm{Im}(z) = \pm \sqrt{\frac{1}{d_{1}^{2}-1}}\mathrm{Re}(z)$,
    where the real part is positive.
    Noting this and $f_{pd_{n}}\paren{\overline{\mathbb{D}}} = \overline{p} f_{d_{n}}\paren{\overline{\mathbb{D}}} \ (|p| = 1)$,
    it follows that when $T \leq \pi/\arctan(1/\sqrt{d_{1}^{2}-1}) = \pi/\arcsin(1/d_{1})$, the OSC holds.
\end{proof}

\begin{rmk}
    Since we assume $d_{1} \geq 2$,
    we have $\arcsin(1/d_{1}) \leq \arcsin(1/2) = \pi/6$.
    Therefore, if $T\leq 6$, the assumption of \Cref{lem:cifs-sufficient-cond} always holds.
\end{rmk}

\begin{prop}\label{prop:comparison_of_f_and_g}
    Let $\mathbf{d} \in \mathcal{D}$ and $T \in \mathbb{N}$.
    Then, the following statements hold.
    \begin{enumerate}
        \item $J_{\mathcal{F}\paren{\mathbf{d}, T}}^{(1)} = J_{\mathcal{G}\paren{\mathbf{d}, T}}^{(1)}$.
        \item $X_{\mathcal{F}\paren{\mathbf{d}, T}}(\infty) = X_{\mathcal{G}\paren{\mathbf{d}, T}}(\infty) = \{0\}$.
    \end{enumerate}
\end{prop}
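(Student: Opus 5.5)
Part (1) follows directly from \Cref{prop:image}~(1). By \Cref{def:approx-limit-set}, $J^{(1)}_{\mathcal{F}(\mathbf{d},T)}$ is the union of $f_{\alpha}(\overline{\mathbb{D}})$ over all digits $\alpha = e(j/T)d_{n}$, and $J^{(1)}_{\mathcal{G}(\mathbf{d},T)}$ is the corresponding union of the sets $g_{\alpha}(\overline{\mathbb{D}})$. Every digit satisfies $\abs{\alpha} = d_{n} \geq d_{1} \geq 2$. \Cref{prop:image}~(1) therefore gives, digit by digit,
\begin{equation*}
f_{\alpha}(\overline{\mathbb{D}}) = g_{\alpha}(\overline{\mathbb{D}}) = \overline{B}\paren{\overline{\alpha}/(\abs{\alpha}^{2}-1),\ 1/(\abs{\alpha}^{2}-1)}.
\end{equation*}
Taking the union over the same index set yields (1).

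For part (2), I first check that $\lim_{i\in I}\diam\paren{s_{i}(\overline{\mathbb{D}})} = 0$ for both systems. Each image has diameter $2/(d_{n}^{2}-1)$. Since $d_{n} \geq 2n$ (from $d_{1} \geq 2$ and $d_{n+1}-d_{n}\geq 2$), we have $d_{n}\to\infty$. For each $n$ there are only $T$ digits with modulus $d_{n}$. Hence for any $\varepsilon>0$ only finitely many indices have diameter at least $\varepsilon$. Moreover, since the image sets of $\mathcal{F}$ and $\mathcal{G}$ coincide digit by digit, the sets $X_{\mathcal{F}}(\infty)$ and $X_{\mathcal{G}}(\infty)$ coincide directly from the definition. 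This is because the definition of the asymptotic boundary refers only to the sets $s_{i}(X)$. So it suffices to compute $X_{\mathcal{F}}(\infty)$.

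To show $X_{\mathcal{F}}(\infty) \subset \{0\}$, take an infinite $I'$ and points $x_{i}\in s_{i}(\overline{\mathbb{D}})$ converging to $x$. Every point of the ball attached to $\alpha$ has modulus at most
\begin{equation*}
\frac{\abs{\alpha}+1}{\abs{\alpha}^{2}-1} = \frac{1}{\abs{\alpha}-1}.
\end{equation*}
Since $I'$ is infinite and each modulus $d_{n}$ is shared by only $T$ digits, the moduli $\abs{\alpha_i}$ along $I'$ tend to infinity in the sense of the net convergence in the definition. Thus $\abs{x_{i}}\to 0$, and so $x=0$.

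For the reverse inclusion, take $I' = \set{d_{n} \mid n\in\mathbb{N}}$, i.e.\ the digits with $j=0$, and $x_{i} = f_{d_{n}}(0) = d_{n}/(d_{n}^{2}-1)\to 0$. This shows $0 \in X_{\mathcal{F}}(\infty)$.

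There is no real obstacle here. The only point needing care is making the limit over the index set rigorous: one must observe that a finite exceptional set of indices corresponds to finitely many values of $n$. This holds because $T$ is finite.
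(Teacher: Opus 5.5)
Your proof is correct and follows the paper's route. The paper simply says both parts follow immediately from \Cref{prop:image}~(1), and you supply the details that reference leaves implicit: the image discs agree digit by digit, each disc satisfies $|z| \le 1/(|\alpha|-1)$, and $f_{d_n}(0)\to 0$. Your preliminary check that $\diam(s_i(\overline{\mathbb{D}}))\to 0$ is never used in the argument and can be dropped.
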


\begin{proof}
    Both (1) and (2) follow immediately from \Cref{prop:image} (1).
\end{proof}

\begin{rmk}
    As in Proposition 4.4 and Theorem 4.5 of \cite{MauldinUrbanski1996},
    for a general CIFS $\mathcal{S} = \set{s_{i}: X \to X}_{i \in I}$,
    there exist some properties of $J_{\mathcal{S}}$ that can be
    determined only by $J_{\mathcal{S}}^{(1)}$.
    These properties and \Cref{prop:comparison_of_f_and_g} (1)
    lead us to compare the properties of $\mathcal{F}\paren{\mathbf{d}, T}$
    and $\mathcal{G}\paren{\mathbf{d}, T}$.
\end{rmk}

\subsection{Pressure Functions of \texorpdfstring{$\mathcal{F}$ and $\mathcal{G}$}{F and G}}
In this subsection, we study the pressure functions of $\mathcal{F}$ and $\mathcal{G}$
and investigate their convergence.

\begin{lem}\label{lem:pressure_of_f}
    Let $\mathbf{d} = \set{d_n}_{n=1}^{\infty} \in \mathcal{D}$ and $T \in \mathbb{N}$.
    Suppose that $\mathcal{F}\paren{\mathbf{d}, T}$ is a CIFS. Then, for any $t \geq 0$,
    \begin{equation*}
        P_{\mathcal{F}\paren{\mathbf{d}, T}}(t) = \log\paren{T\sum_{n=1}^{\infty} \frac{1}{(d_{n}^{2}-1)^{t}}}.
    \end{equation*}
\end{lem}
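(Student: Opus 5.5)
We compute the pressure directly from \Cref{def:pressure_function}, exploiting that every map in $\mathcal{F}$ is an affine similarity with constant derivative. Index $\mathcal{F}$ by $I := \set{(n, j) \mid n\in\mathbb{N},\ j = 0, 1, \dots, T-1}$ and write $\alpha_{(n,j)} := e(j/T)d_{n}$. Since $\abs{e(j/T)} = 1$, we have $\abs{\alpha_{(n,j)}}^{2} - 1 = d_{n}^{2} - 1$. By \Cref{prop:derivative}, for every word $\omega = (\omega_{1}, \dots, \omega_{N}) \in I^{N}$ with $\omega_{k} = (n_{k}, j_{k})$, the derivative of $f_{\omega}$ is constant on $\overline{\mathbb{D}}$:
\begin{equation*}
    \abs{f_{\omega}'(z)} = \prod_{k=1}^{N}\frac{1}{d_{n_{k}}^{2}-1} \quad (z \in \overline{\mathbb{D}}).
\end{equation*}
In particular, the supremum over $x$ in the definition of the pressure is just this product.

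Next, sum over $I^{N}$. Because the summand factorizes over the letters of $\omega$ and the sum runs over the full product set $I^{N}$, the sum factorizes as well. This holds in $[0,\infty]$ by Tonelli, since all terms are nonnegative. We obtain
\begin{equation*}
    \sum_{\omega \in I^{N}} \sup_{x \in \overline{\mathbb{D}}}\abs{f_{\omega}'(x)}^{t} = \paren{\sum_{(n,j) \in I} \frac{1}{(d_{n}^{2}-1)^{t}}}^{N} = \paren{T\sum_{n=1}^{\infty}\frac{1}{(d_{n}^{2}-1)^{t}}}^{N}.
\end{equation*}
Here the factor $T$ comes from the $T$ choices of $j$, which do not affect the summand. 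Taking $\frac{1}{N}\log$ gives the claimed value for every $N$, so the limit (equivalently the infimum) equals $\log\paren{T\sum_{n}(d_{n}^{2}-1)^{-t}}$.

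Alternatively, one can note that the two bounds in \Cref{lem:pressure_bounds_rough_estimate} coincide here because $\inf = \sup$ for each $\abs{f_{(n,j)}'}$. That gives the result immediately.

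The only point needing care is the case where the series diverges (for instance, small $t$, including $t = 0$). In that case both sides equal $+\infty$, and the factorization identity remains valid in the extended sense. There is no genuine obstacle beyond this bookkeeping. Note also that $d_{n}^{2} - 1 \geq 3 > 0$, so every term is well defined.
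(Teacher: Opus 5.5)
Your proposal is correct and follows the same route as the paper, whose proof just cites \Cref{prop:derivative}: the derivatives of compositions in $\mathcal{F}$ are the constants $\prod_{k}(d_{n_k}^{2}-1)^{-1}$, so the sum over $I^{N}$ factorizes into $\bigl(T\sum_{n}(d_{n}^{2}-1)^{-t}\bigr)^{N}$. Your handling of the divergent case and your alternative via \Cref{lem:pressure_bounds_rough_estimate} (where the upper and lower bounds coincide) are both valid.
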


\begin{proof}
    The statement follows from \Cref{prop:derivative}.
\end{proof}

\begin{lem}\label{lem:pressure_ineq_g}
    Let $\mathbf{d} = \set{d_n}_{n=1}^{\infty} \in \mathcal{D}$ and $T \in \mathbb{N}$.
    Suppose that $\mathcal{G}\paren{\mathbf{d}, T}$ is a CIFS. Then, for any $t \geq 0$,
    \begin{equation*}
        \log\paren{T\sum_{n=1}^{\infty} \frac{1}{(d_{n}+1)^{2t}}} \leq P_{\mathcal{G}\paren{\mathbf{d}, T}}(t) \leq \log\paren{T\sum_{n=1}^{\infty} \frac{1}{(d_{n}-1)^{2t}}}.
    \end{equation*}
\end{lem}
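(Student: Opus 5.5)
The plan is to apply \Cref{lem:pressure_bounds_rough_estimate} directly to $\mathcal{G} = \mathcal{G}(\mathbf{d}, T)$. That lemma bounds the pressure by the sums of $\inf_{z}\abs{g_{\alpha}'(z)}^{t}$ and $\sup_{z}\abs{g_{\alpha}'(z)}^{t}$ over the digits $\alpha$. So the whole task is to compute, for each digit $\alpha = e(j/T)d_{n}$, the infimum and supremum of $\abs{g_{\alpha}'}$ over $\overline{\mathbb{D}}$.

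By \Cref{prop:derivative} with $N=1$, we have $g_{\alpha}'(z) = -1/(z+\alpha)^{2}$, so $\abs{g_{\alpha}'(z)} = \abs{z+\alpha}^{-2}$. For $z \in \overline{\mathbb{D}}$ and $\abs{\alpha} = d_{n}$, the triangle inequality gives
\begin{equation*}
    d_{n} - 1 \leq \abs{z + \alpha} \leq d_{n} + 1 .
\end{equation*}
Both bounds are attained, at $z = -\alpha/\abs{\alpha}$ and at $z = \alpha/\abs{\alpha}$ respectively. Since $d_{n} \geq 2$, the quantity $d_{n}-1$ is positive, and therefore
\begin{equation*}
    \inf_{z\in\overline{\mathbb{D}}}\abs{g_{\alpha}'(z)}^{t} = (d_{n}+1)^{-2t}, \qquad \sup_{z\in\overline{\mathbb{D}}}\abs{g_{\alpha}'(z)}^{t} = (d_{n}-1)^{-2t}.
\end{equation*}
Neither value depends on $j$.

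Next we sum over the index set $\set{(n,j) \mid n \in \mathbb{N},\ j = 0, \dots, T-1}$. Because the bounds above do not depend on $j$, each value of $n$ contributes a factor of exactly $T$. Inserting these sums into \Cref{lem:pressure_bounds_rough_estimate} yields the claimed inequalities, with the convention $\log\infty = \infty$ when a series diverges.

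The only point needing care is that the digits $e(j/T)d_{n}$ are pairwise distinct, so that $\mathcal{G}$ is genuinely indexed by the pairs $(n,j)$ and no term is counted twice. This holds because the $d_{n}$ are strictly increasing and the $T$ phases $e(j/T)$ are distinct. There is no real obstacle beyond this; the argument is a direct computation.
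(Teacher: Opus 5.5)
Your proposal is correct and follows the paper's proof: you bound $\abs{g_{\alpha}'(z)} = \abs{z+\alpha}^{-2}$ between $(d_{n}+1)^{-2}$ and $(d_{n}-1)^{-2}$ on $\overline{\mathbb{D}}$, feed these into \Cref{lem:pressure_bounds_rough_estimate}, and note that each $n$ contributes $T$ identical terms. Your remark that the digits are pairwise distinct is a harmless extra check that the paper leaves implicit.
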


\begin{proof}
    Since $g_{\alpha}'(z) = -1/(z+\alpha)^{2}$ for $\alpha \in \mathbb{C}$ with $\abs{\alpha} \geq 2$, we have
    \begin{equation*}
        \sup_{z\in \overline{\mathbb{D}}}|g_{\alpha}'(z)| = \frac{1}{(|\alpha|-1)^{2}}, \quad \inf_{z\in \overline{\mathbb{D}}}|g_{\alpha}'(z)| = \frac{1}{(|\alpha|+1)^{2}}.
    \end{equation*}
    Therefore, by \Cref{lem:pressure_bounds_rough_estimate},
    we obtain
    \begin{align*}
        P_{\mathcal{G}\paren{\mathbf{d}, T}}(t) &\leq \log\paren{\sum_{j=0}^{T-1}\sum_{n=1}^{\infty} \frac{1}{\paren{\abs{e(j/T)d_{n}}-1}^{2t}}} = \log\paren{T\sum_{n=1}^{\infty} \frac{1}{(d_{n}-1)^{2t}}}, \\
        P_{\mathcal{G}\paren{\mathbf{d}, T}}(t) &\geq \log\paren{\sum_{j=0}^{T-1}\sum_{n=1}^{\infty} \frac{1}{\paren{\abs{e(j/T)d_{n}}+1}^{2t}}} = \log\paren{T\sum_{n=1}^{\infty} \frac{1}{(d_{n}+1)^{2t}}}
    \end{align*}
    for any $t \geq 0$.
\end{proof}

\begin{thm}\label{thm:comparison_of_f_and_g}
    Let $\mathbf{d} = \set{d_n}_{n \in \mathbb{N}} \in \mathcal{D}$, $T \in \mathbb{N}$.
    Suppose that $\mathcal{F}\paren{\mathbf{d}, T}$ and $\mathcal{G}\paren{\mathbf{d}, T}$ are CIFSs.
    Then, the following (1) and (2) hold.
    \begin{enumerate}
        \item $F\paren{\mathcal{F}\paren{\mathbf{d}, T}} = F\paren{\mathcal{G}\paren{\mathbf{d}, T}}$. In particular, $\theta_{\mathcal{F}\paren{\mathbf{d}, T}} = \theta_{\mathcal{G}\paren{\mathbf{d}, T}}$.
        \item $\mathcal{F}\paren{\mathbf{d}, T}$ is hereditarily regular if and only if $\mathcal{G}\paren{\mathbf{d}, T}$ is hereditarily regular.
    \end{enumerate}
\end{thm}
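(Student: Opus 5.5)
The plan is to compare the two pressure functions directly, using \Cref{lem:pressure_of_f} and \Cref{lem:pressure_ineq_g}. Finiteness of both is controlled by series of the form $\sum_{n} d_{n}^{-2t}$.

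For part (1), fix $t \geq 0$. By \Cref{lem:pressure_of_f}, $P_{\mathcal{F}}(t) < \infty$ if and only if $\sum_{n}(d_{n}^{2}-1)^{-t} < \infty$. By \Cref{lem:pressure_ineq_g}, $P_{\mathcal{G}}(t) < \infty$ whenever $\sum_{n}(d_{n}-1)^{-2t} < \infty$, and $P_{\mathcal{G}}(t) = \infty$ whenever $\sum_{n}(d_{n}+1)^{-2t} = \infty$.

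Since $\mathbf{d} \in \mathcal{D}$, we have $d_{n} \geq 2$ and $d_{n} \to \infty$. This gives uniform comparisons:
\begin{equation*}
    \tfrac{1}{2}d_{n} \leq d_{n}-1 \leq d_{n}+1 \leq \tfrac{3}{2}d_{n}, \qquad \tfrac{3}{4}d_{n}^{2} \leq d_{n}^{2}-1 \leq d_{n}^{2}.
\end{equation*}
Hence each of the three series $\sum(d_{n}^{2}-1)^{-t}$, $\sum(d_{n}-1)^{-2t}$ and $\sum(d_{n}+1)^{-2t}$ is comparable, up to multiplicative constants depending only on $t$, to $\sum d_{n}^{-2t}$. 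They therefore converge or diverge simultaneously. It follows that $P_{\mathcal{F}}(t)<\infty$ iff $\sum d_n^{-2t}<\infty$ iff $P_{\mathcal{G}}(t)<\infty$, so $F(\mathcal{F})=F(\mathcal{G})$. Taking infima gives $\theta_{\mathcal{F}}=\theta_{\mathcal{G}}$.

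For part (2), both index sets are infinite, so \Cref{thm:hereditarily-regularity-and-pressure} applies to each system. Thus $\mathcal{F}$ is hereditarily regular iff $P_{\mathcal{F}}(\theta)=\infty$, and likewise for $\mathcal{G}$, where $\theta$ denotes the common value from (1). Now $P_{\mathcal{F}}(\theta)=\infty$ means $\theta\notin F(\mathcal{F})$, and $P_{\mathcal{G}}(\theta)=\infty$ means $\theta\notin F(\mathcal{G})$. Since these two sets are equal by (1), the two conditions are equivalent.

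I expect no real obstacle here. The only care needed is to note that the comparison constants are uniform in $n$, which holds because $d_{n}\geq 2$, and that the case $t=0$ is consistent: all the series then diverge, since $\mathbf{d}$ is infinite.
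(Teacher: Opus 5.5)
Your proposal is correct and is essentially the paper's proof: the paper likewise notes that $\sum(d_n^2-1)^{-t}$, $\sum(d_n-1)^{-2t}$ and $\sum(d_n+1)^{-2t}$ converge or diverge together, combines this with \Cref{lem:pressure_of_f} and \Cref{lem:pressure_ineq_g} to get (1), and then deduces (2) from (1). Your explicit constants coming from $d_n \geq 2$, and your explicit use of \Cref{thm:hereditarily-regularity-and-pressure} for (2), spell out steps the paper leaves implicit.
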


\begin{proof}
    Fix $t \geq 0$. The three series
    \begin{equation*}
        \sum_{n=1}^{\infty} \frac{1}{(d_{n}+1)^{2t}}, \quad \sum_{n=1}^{\infty} \frac{1}{(d_{n}-1)^{2t}}, \quad \sum_{n=1}^{\infty} \frac{1}{(d_{n}^{2}-1)^{t}}
    \end{equation*}
    either converge or diverge simultaneously by the comparison test.
    Therefore, (1) follows from \Cref{lem:pressure_of_f} and \Cref{lem:pressure_ineq_g}.
    (2) also follows from (1), \Cref{lem:pressure_of_f} and \Cref{lem:pressure_ineq_g}.
\end{proof}

The results of Dirichlet series in analytic number theory
help us calculate the values of $\theta_{\mathcal{F}\paren{\mathbf{d}, T}}$ and $\theta_{\mathcal{G}\paren{\mathbf{d}, T}}$.
We now quote these results from \cite{Zagier-Katayama-1990-en}.

\begin{definition}
    Let $\{a_{n}\}_{n\in\mathbb{N}}$ be a sequence of complex numbers and
    let $\{\lambda_{n}\}_{n\in\mathbb{N}}$ be a strictly increasing sequence of real numbers
    satisfying $\lambda_{n} \to \infty \ (n\to\infty)$.
    We call the series defined by
    \begin{equation*}
        \sum_{n=1}^{\infty} a_{n} e^{-\lambda_{n}s} \quad (s\in\mathbb{C})
    \end{equation*}
    a Dirichlet series.
\end{definition}

\begin{thm}[\cite{Zagier-Katayama-1990-en} Part I §1 Theorem 1, Theorem 2]\label{thm:Dirichlet_series}
    Let $\sum_{n=1}^{\infty} a_{n} e^{-\lambda_{n}s}$ be a Dirichlet series.
    Then, the following (1) and (2) hold.
    \begin{enumerate}
        \item There exists $\sigma \in [-\infty, \infty]$ such that
            the Dirichlet series $\sum_{n=1}^{\infty} a_{n} e^{-\lambda_{n}s}$
            converges if $\mathrm{Re}(s) > \sigma$ and diverges if $\mathrm{Re}(s) < \sigma$.
        \item Suppose that $\sum_{n=1}^{\infty}a_{n}$ diverges.
            Then, the $\sigma$ in (1) is given by
        \begin{equation*}
            \sigma = \limsup_{N\to\infty} \frac{\log\left|\sum_{n=1}^{N} a_{n}\right|}{\lambda_{N}}.
        \end{equation*}
    \end{enumerate}
\end{thm}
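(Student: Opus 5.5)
The statement has two parts. I would prove (1) from a single comparison lemma plus Abel summation. I would prove (2) by establishing two inequalities between $\sigma$ and $L := \limsup_{N\to\infty} \log\abs{A_{N}}/\lambda_{N}$, where $A_{N} := \sum_{n=1}^{N} a_{n}$. Discarding finitely many terms changes neither convergence nor $L$, so throughout I may assume $\lambda_{n} > 0$ for all $n$.

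The basic tool is the following estimate. For $s = \sigma_{1} + \iu\tau$ with $\sigma_{1} > 0$ and $0 < \lambda < \mu$,
\begin{equation*}
  \abs{e^{-\lambda s} - e^{-\mu s}} = \abs{s \int_{\lambda}^{\mu} e^{-us}\, du} \leq \frac{\abs{s}}{\sigma_{1}}\paren{e^{-\lambda\sigma_{1}} - e^{-\mu\sigma_{1}}}.
\end{equation*}

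\textbf{Part (1).} Suppose the series converges at some $s_{0}$, and let $B_{N} := \sum_{n\le N} a_{n}e^{-\lambda_{n}s_{0}}$, which is bounded by some $M$ and has a limit. For $\mathrm{Re}(s) > \mathrm{Re}(s_{0})$, write $a_{n}e^{-\lambda_{n}s} = (a_{n}e^{-\lambda_{n}s_{0}})\, e^{-\lambda_{n}(s-s_{0})}$ and apply Abel summation. The boundary term $B_{N}e^{-\lambda_{N}(s-s_{0})}$ tends to $0$. By the estimate applied to $s-s_{0}$, the remaining sum $\sum_{n} B_{n}\paren{e^{-\lambda_{n}(s-s_{0})} - e^{-\lambda_{n+1}(s-s_{0})}}$ is dominated by a telescoping series, so it converges absolutely. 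Hence the set of convergence points is closed under increasing the real part. Setting $\sigma := \inf\set{\mathrm{Re}(s) \mid \text{the series converges at } s}$ (with $\inf\emptyset = \infty$) gives (1).

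\textbf{Part (2), showing $\sigma \le L$.} I may assume $L < \infty$. Fix $s$ with $\mathrm{Re}(s) = \sigma_{1} > \max(L, 0)$ and choose $\varepsilon > 0$ with $L + \varepsilon < \sigma_{1}$. For large $N$ we have $\abs{A_{N}} \le e^{(L+\varepsilon)\lambda_{N}}$. Abel summation gives
\begin{equation*}
  \sum_{n\le N} a_{n}e^{-\lambda_{n}s} = A_{N}e^{-\lambda_{N}s} + \sum_{n<N} A_{n}\paren{e^{-\lambda_{n}s} - e^{-\lambda_{n+1}s}}.
\end{equation*}
The boundary term is $O(e^{-(\sigma_{1}-L-\varepsilon)\lambda_{N}}) \to 0$. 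By the estimate, the $n$-th summand is at most $\frac{\abs{s}}{\sigma_{1}}\, e^{(L+\varepsilon)\lambda_{n}}\paren{e^{-\lambda_{n}\sigma_{1}} - e^{-\lambda_{n+1}\sigma_{1}}}$. This is bounded by $\frac{\abs{s}}{\sigma_{1}}\int_{\lambda_{n}}^{\lambda_{n+1}} \sigma_{1}e^{(L+\varepsilon)u - \sigma_{1}u}\,du$, and these integrals have a finite total. So the series converges and $\sigma \le \max(L, 0)$.

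\textbf{Part (2), showing $L \le \sigma$.} First, $\sigma \ge 0$: since $\sum a_{n}$ diverges, part (1) applied at $s = 0$ shows the series cannot converge at any $s$ with $\mathrm{Re}(s) < 0$. Now let $\sigma_{0} > \sigma$ be real, so the series converges at $\sigma_{0} > 0$ with bounded partial sums $B_{N}$. Reverse the roles: $a_{n} = (a_{n}e^{-\lambda_{n}\sigma_{0}})\,e^{\lambda_{n}\sigma_{0}}$. Abel summation gives
\begin{equation*}
  A_{N} = B_{N}e^{\lambda_{N}\sigma_{0}} - \sum_{n<N} B_{n}\paren{e^{\lambda_{n+1}\sigma_{0}} - e^{\lambda_{n}\sigma_{0}}}.
\end{equation*}
The sum telescopes in absolute value, so $\abs{A_{N}} \le 2M e^{\lambda_{N}\sigma_{0}} + O(1)$. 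Hence $L \le \sigma_{0}$, and letting $\sigma_{0} \downarrow \sigma$ gives $L \le \sigma$.

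\textbf{Closing the gap.} The two inequalities give $L \le \sigma \le \max(L, 0)$. This finishes the proof when $L \ge 0$, and I expect the case $L < 0$ to be the main obstacle. In that case $\abs{A_{N}} \to 0$, so $A_{N} \to 0$ and $\sum a_{n}$ converges. This contradicts the hypothesis, so $L \ge 0$ and therefore $\sigma = L$.
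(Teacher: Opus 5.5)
Your argument is correct. The paper does not prove this statement; it only quotes it from Zagier's book. Your proof is the standard partial-summation argument for these two facts: Abel summation combined with the estimate $|e^{-\lambda s}-e^{-\mu s}|\le \frac{|s|}{\sigma_1}(e^{-\lambda\sigma_1}-e^{-\mu\sigma_1})$, used once for the growth bound $\sigma\le L$ and once in reverse for $L\le\sigma$.

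In part (2), reorder the steps so that you derive $L\ge 0$ from the divergence of $\sum a_n$ before proving $\sigma\le L$. Your comparison $e^{(L+\varepsilon)\lambda_n}\le e^{(L+\varepsilon)u}$ for $u\ge\lambda_n$ needs $L+\varepsilon\ge 0$. The same divergence hypothesis is also what justifies your remark that discarding finitely many terms, which shifts $A_N$ by a constant, leaves $L$ unchanged.
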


This theorem allows us to calculate $\theta_{\mathcal{F}\paren{\mathbf{d}, T}}$ and $\theta_{\mathcal{G}\paren{\mathbf{d}, T}}$ as follows.

\begin{thm}\label{thm:theta_via_Dirichlet}
    Let $\mathcal{F}\paren{\mathbf{d}, T}$ be a CIFS.
    Then,
    \begin{equation*}
        \theta_{\mathcal{F}\paren{\mathbf{d}, T}} = \limsup_{N\to\infty} \frac{\log{N}}{2\log{d_{N}}}.
    \end{equation*}
    Also, if $\mathcal{G}\paren{\mathbf{d}, T}$ is a CIFS,
    \begin{equation*}
        \theta_{\mathcal{G}\paren{\mathbf{d}, T}} = \limsup_{N\to\infty} \frac{\log{N}}{2\log{d_{N}}}.
    \end{equation*}
\end{thm}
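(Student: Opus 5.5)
By \Cref{lem:pressure_of_f}, the pressure of $\mathcal{F}$ is a logarithm of a Dirichlet series, so the plan is to compute the abscissa of convergence of that series with \Cref{thm:Dirichlet_series} and then transfer the answer to $\mathcal{G}$. For $\mathcal{F}=\mathcal{F}(\mathbf{d},T)$ and $t\ge0$,
\begin{equation*}
P_{\mathcal{F}}(t)=\log\paren{T\sum_{n=1}^{\infty}(d_n^2-1)^{-t}}.
\end{equation*}
Hence $P_{\mathcal{F}}(t)<\infty$ exactly when $\sum_{n}(d_n^2-1)^{-t}$ converges. As in the proof of \Cref{thm:comparison_of_f_and_g}, this series converges if and only if $\sum_n d_n^{-2t}$ converges. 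Indeed, $d_n\ge 2$ gives $d_n^2\ge d_n^2-1\ge \tfrac34 d_n^2$, so the two series are comparable term by term.

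Write $d_n^{-2t}=e^{-\lambda_n s}$ with $a_n=1$, $\lambda_n=2\log d_n$ and $s=t$. The condition $d_{n+1}-d_n\ge2$ makes $\lambda_n$ strictly increasing, and it tends to $\infty$ because $d_n\ge 2n$. So $\sum_n e^{-\lambda_n s}$ is a Dirichlet series in the sense of the definition above. Since $\sum_n a_n=\sum_n 1$ diverges, \Cref{thm:Dirichlet_series}(2) gives the abscissa of convergence
\begin{equation*}
\sigma=\limsup_{N\to\infty}\frac{\log N}{2\log d_N}.
\end{equation*}
The series converges for real $t>\sigma$ and diverges for $t<\sigma$.

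Next I identify $\theta_{\mathcal{F}}=\inf F(\mathcal{F})$ with $\sigma$. We have $\sigma\ge0$, and also $\sigma\le\frac12$ because $d_N\ge 2N$. Every $t>\sigma$ lies in $F(\mathcal{F})$. Every $0\le t<\sigma$ lies outside $F(\mathcal{F})$. Therefore $\inf F(\mathcal{F})=\sigma$, whether or not $t=\sigma$ itself belongs to $F(\mathcal{F})$. The only point needing care is that $F(\mathcal{F})$ is nonempty, which holds since $t=1$ gives convergence ($d_n\ge2n$). The degenerate case $\sigma=0$ also needs no special treatment.

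For $\mathcal{G}$, \Cref{thm:comparison_of_f_and_g}(1) gives $\theta_{\mathcal{G}}=\theta_{\mathcal{F}}$ when both are CIFSs. If only $\mathcal{G}$ is assumed to be a CIFS, I instead use \Cref{lem:pressure_ineq_g}: it shows that $P_{\mathcal{G}}(t)<\infty$ if and only if $\sum_n(d_n\pm1)^{-2t}<\infty$. Since $d_n-1\ge d_n/2$ and $d_n+1\le 2d_n$, this is again equivalent to convergence of $\sum_n d_n^{-2t}$, and the same Dirichlet argument gives the same value. I expect no real obstacle here. The only step requiring attention is checking the hypotheses of \Cref{thm:Dirichlet_series}: strict monotonicity and divergence of $\lambda_n$, and divergence of $\sum_n a_n$.
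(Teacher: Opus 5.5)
Your proof is correct and follows essentially the same route as the paper. You view the series in \Cref{lem:pressure_of_f} as a Dirichlet series with $a_n=1$, read off its abscissa of convergence from \Cref{thm:Dirichlet_series}(2), and pass to $\mathcal{G}$ through the bounds of \Cref{lem:pressure_ineq_g}. The only difference is cosmetic. You first compare with $\sum_n d_n^{-2t}$ and take $\lambda_n=2\log d_n$, whereas the paper takes $\lambda_n=\log(d_n^2-1)$ and then uses $d_n\to\infty$ to simplify the limsup. Your explicit checks of the hypotheses and of $\inf F(\mathcal{F})=\sigma$ supply details the paper leaves implicit.
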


\begin{proof}
    Since
    \begin{equation*}
        \sum_{n=1}^{\infty} \frac{1}{(d_{n}^{2}-1)^{t}} = \sum_{n=1}^{\infty}1\cdot e^{-t\log{(d_{n}^{2}-1)}},
    \end{equation*}
    by \Cref{thm:Dirichlet_series}(2),
    the definition of $\theta_{\mathcal{F}\paren{\mathbf{d}, T}}$, \Cref{lem:pressure_of_f} and $d_{n} \to \infty \ (n\to\infty)$,
    we have
    \begin{equation*}
        \theta_{\mathcal{F}\paren{\mathbf{d}, T}} = \limsup_{N\to\infty} \frac{\log{N}}{\log{(d_{N}^{2}-1)}} = \limsup_{N\to\infty} \frac{\log{N}}{2\log{d_{N}}}.
    \end{equation*}
    The same argument works for $\theta_{\mathcal{G}\paren{\mathbf{d}, T}}$.
\end{proof}

\subsection{The Hausdorff and Packing Measures of \texorpdfstring{$J_{\mathcal{F}}$ and $J_{\mathcal{G}}$}{J\_F and J\_G}}
Throughout this subsection, we assume that
$\mathcal{F} = \mathcal{F}(\mathbf{d}, T)$ and $\mathcal{G} = \mathcal{G}(\mathbf{d}, T)$
are regular CIFSs for some
$\mathbf{d} = \set{d_{n}}_{n=1}^{\infty} \in \mathcal{D}$ and $T \in \mathbb{N}$.

\begin{definition}
    For $0 < r < \frac{1}{d_{1}+1}$, we define $n(r)$ by
    \begin{equation*}
        n(r) := \max\left\{n\in\mathbb{N} \mid \frac{1}{d_{n}+1} \geq r\right\}.
    \end{equation*}
\end{definition}

\begin{lem}\label{lem:image_estimate}
    Let $r>0$.
    Then, the following statements hold.
    \begin{enumerate}
        \item If $n\geq n(r) + 2$, then for $j = 0, 1, \dots, T-1$,
            we have
            \begin{equation*}
                B(0, r) \supset f_{e\paren{j/T}d_{n}}\paren{\overline{\mathbb{D}}} = g_{e\paren{j/T}d_{n}}\paren{\overline{\mathbb{D}}}.
            \end{equation*}
        \item If $n\leq n(r)$, then for $j = 0, 1, \dots, T-1$,
            we have
            \begin{equation*}
                B(0, r) \cap f_{e\paren{j/T}d_{n}}\paren{\overline{\mathbb{D}}}= B(0, r) \cap g_{e\paren{j/T}d_{n}}\paren{\overline{\mathbb{D}}} = \emptyset.
            \end{equation*}
    \end{enumerate}
\end{lem}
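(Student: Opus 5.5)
The plan is to reduce everything to the explicit description of the first-level images in \Cref{prop:image} (1). For $\alpha = e(j/T)d_{n}$ we have $\abs{\alpha} = d_{n}$ and $\overline{\alpha}/(d_{n}^{2}-1)$ has modulus $d_{n}/(d_{n}^{2}-1)$. Hence the common image $f_{\alpha}(\overline{\mathbb{D}}) = g_{\alpha}(\overline{\mathbb{D}})$ is the closed disc of radius $1/(d_{n}^{2}-1)$ centred at distance $d_{n}/(d_{n}^{2}-1)$ from the origin. Therefore
\begin{equation*}
    \min_{w \in f_{\alpha}(\overline{\mathbb{D}})}\abs{w} = \frac{d_{n}-1}{d_{n}^{2}-1} = \frac{1}{d_{n}+1}, \qquad \max_{w \in f_{\alpha}(\overline{\mathbb{D}})}\abs{w} = \frac{d_{n}+1}{d_{n}^{2}-1} = \frac{1}{d_{n}-1}.
\end{equation*}
These two quantities do not depend on $j$, so both claims reduce to comparing $r$ with $1/(d_{n}\pm 1)$. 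The identity $f_{\alpha}(\overline{\mathbb{D}}) = g_{\alpha}(\overline{\mathbb{D}})$ in each statement is just \Cref{prop:image} (1).

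For (2), I take $n \leq n(r)$. Since $\mathbf{d}$ is increasing, $d_{n} \leq d_{n(r)}$, so $1/(d_{n}+1) \geq 1/(d_{n(r)}+1) \geq r$ by the definition of $n(r)$. Every point $w$ of the image then satisfies $\abs{w} \geq 1/(d_{n}+1) \geq r$. So the image misses the open ball $B(0,r)$, and this is why the open ball appears in (2).

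For (1), I take $n \geq n(r)+2$. The key step, and the only place where $\mathbf{d} \in \mathcal{D}$ is really used, is the inequality $d_{n}-1 \geq d_{n(r)+1}+1$, which follows from $d_{n} - d_{n(r)+1} \geq 2$ (monotonicity plus gaps of at least $2$). By maximality of $n(r)$ we have $1/(d_{n(r)+1}+1) < r$. Combining, every $w$ in the image satisfies
\begin{equation*}
    \abs{w} \leq \frac{1}{d_{n}-1} \leq \frac{1}{d_{n(r)+1}+1} < r,
\end{equation*}
so the image lies in $B(0,r)$.

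The main obstacle is this index shift by $2$: it must turn the outer radius $1/(d_{n}-1)$ into the inner-radius quantity $1/(d_{m}+1)$ that defines $n(r)$, and the gap condition of $\mathcal{D}$ does exactly that. A minor point is that $n(r)$ is only defined for $0<r<1/(d_{1}+1)$, so I implicitly restrict to such $r$. The maximum defining $n(r)$ exists because $d_{n}\to\infty$.
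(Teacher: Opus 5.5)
Your proof is correct and follows the paper's argument. The paper likewise combines the chain $\frac{1}{d_{n(r)+2}-1} \leq \frac{1}{d_{n(r)+1}+1} < r \leq \frac{1}{d_{n(r)}+1}$ with the explicit disc from \Cref{prop:image} (1); it treats $j=0$ and calls the other $j$ analogous, whereas your observation that the nearest and farthest distances $1/(d_{n}+1)$ and $1/(d_{n}-1)$ do not depend on $j$ covers all $j$ at once.
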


\begin{proof}
    Since
    \begin{equation*}
        \frac{1}{d_{n(r)+2}-1} \leq \frac{1}{d_{n(r)+1}+1} < r \leq \frac{1}{d_{n(r)}+1}
    \end{equation*}
    and
    \begin{equation*}
        f_{d_{n}}\paren{\overline{\mathbb{D}}} = g_{d_{n}}\paren{\overline{\mathbb{D}}} = \overline{B}\paren{\frac{d_{n}}{d_{n}^{2}-1}, \frac{1}{d_{n}^{2}-1}},
    \end{equation*}
    (1) and (2) hold if $j = 0$. The other cases are similar to the above.
\end{proof}

\begin{lem}\label{lem:measure_estimate}
    Let $r>0$. Then, the following inequalities hold.
    \begin{align*}
        T\sum_{n=n(r)+2}^{\infty} \frac{1}{(d_{n}^{2}-1)^{h_{\mathcal{F}}}} \leq m_{\mathcal{F}}(B(0, r)) \leq T\sum_{n=n(r)+1}^{\infty} \frac{1}{(d_{n}^{2}-1)^{h_{\mathcal{F}}}}, \\
        T\sum_{n=n(r)+2}^{\infty} \frac{1}{(d_{n}+1)^{2h_{\mathcal{G}}}} \leq m_{\mathcal{G}}(B(0, r)) \leq T\sum_{n=n(r)+1}^{\infty} \frac{1}{(d_{n}-1)^{2h_{\mathcal{G}}}}.
    \end{align*}
\end{lem}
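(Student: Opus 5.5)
Since $B(0,r)$ is open, it is enough to split $J$ according to first-level cylinders and bound the conformal measure of each piece. By \Cref{prop:self-similarity-of-limit-set} and property (3) of \Cref{def:conformal-measure}, we have
\begin{equation*}
    m_{\mathcal{F}}(B(0,r)) = \sum_{i} m_{\mathcal{F}}\paren{B(0,r)\cap f_{i}(J_{\mathcal{F}})},
\end{equation*}
where $i$ runs over the digits $e(j/T)d_{n}$. The overlaps between different cylinders are singletons by \Cref{prop:image}(2), so they are $m_{\mathcal{F}}$-null. The same decomposition holds for $m_{\mathcal{G}}$.

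\Cref{lem:image_estimate} now sorts the digits into three groups.
\begin{itemize}
    \item For $n \le n(r)$, the cylinder $f_{e(j/T)d_n}(\overline{\mathbb{D}})$ does not meet $B(0,r)$, so these digits contribute $0$.
    \item For $n \ge n(r)+2$, the cylinder lies inside $B(0,r)$, so these digits contribute exactly $m_{\mathcal{F}}\paren{f_{e(j/T)d_n}(J_{\mathcal{F}})}$.
    \item The digits with $n = n(r)+1$ give a contribution between $0$ and $m_{\mathcal{F}}\paren{f_{e(j/T)d_{n(r)+1}}(J_{\mathcal{F}})}$.
\end{itemize}
Dropping the middle terms gives the lower bound, and including them gives the upper bound.

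It remains to compute or estimate the measure of a single cylinder. Property (2) of \Cref{def:conformal-measure} with $A = J$ and $m(J)=1$ gives
\begin{equation*}
    m\paren{s_{i}(J)} = \int_{J}\abs{s_i'}^{h}\,dm.
\end{equation*}
\begin{itemize}
    \item For $\mathcal{F}$, \Cref{prop:derivative} shows the derivative is the constant $(d_n^2-1)^{-1}$, because $\abs{e(j/T)d_n}=d_n$. Hence each cylinder has measure exactly $(d_n^2-1)^{-h_{\mathcal{F}}}$. Summing over the $T$ values of $j$ gives the factor $T$ in both bounds.
    \item For $\mathcal{G}$, the quantity $\abs{g_\alpha'(z)} = \abs{z+\alpha}^{-2}$ lies between $(\abs{\alpha}+1)^{-2}$ and $(\abs{\alpha}-1)^{-2}$ on $\overline{\mathbb{D}}$, as in the proof of \Cref{lem:pressure_ineq_g}. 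Integrating against the probability measure gives the bounds $(d_n+1)^{-2h_{\mathcal{G}}} \le m_{\mathcal{G}}(g_{e(j/T)d_n}(J_{\mathcal{G}})) \le (d_n-1)^{-2h_{\mathcal{G}}}$. Summing over $j$ and $n$ gives the second line of the statement.
\end{itemize}

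The only delicate point is the range of $r$: $n(r)$ is defined only for $0<r<1/(d_1+1)$. For larger $r$ I would read $n(r)$ as $0$ (the maximum over an empty set), so that the sums start at $n=1$ or $n=2$. The same argument still applies, since then no digit falls into the first group. Apart from that, the main care point is justifying that the pieces $B(0,r)\cap s_i(J)$ are disjoint up to null sets; this is exactly what property (3) of \Cref{def:conformal-measure} provides.
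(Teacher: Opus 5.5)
Your proposal is correct and follows essentially the same route as the paper: decompose $J$ into first-level cylinders with overlaps null by property (3), use \Cref{lem:image_estimate} to retain the cylinders with $n\ge n(r)+2$ for the lower bound and $n\ge n(r)+1$ for the upper bound, and compute each cylinder's measure from property (2) of the conformal measure. Working with $f_i(J)$ instead of the paper's $f_i(\overline{\mathbb{D}})$, and adopting the convention $n(r)=0$ for $r>1/(d_1+1)$, only spell out details the paper leaves implicit.
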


\begin{proof}
    We prove the inequalities for $m_{\mathcal{F}}(B(0, r))$.
    Let $r>0$. By \Cref{lem:image_estimate}, we have
    \begin{align*}
        m_{\mathcal{F}}(B(0, r)) &= m_{\mathcal{F}}\paren{B(0, r) \cap J_{\mathcal{F}}} = m_{\mathcal{F}}\paren{B(0, r) \cap \bigcup_{j=0}^{T-1}\bigcup_{n \geq 1} f_{e\paren{j/T}d_{n}}\paren{\overline{\mathbb{D}}}} \\
        &\geq m_{\mathcal{F}}\paren{B(0, r) \cap \bigcup_{j=0}^{T-1}\bigcup_{n \geq n(r)+2} f_{e\paren{j/T}d_{n}}\paren{\overline{\mathbb{D}}}} \\
        &\geq \sum_{j=0}^{T-1}\sum_{n = n(r)+2}^{\infty} m_{\mathcal{F}}\paren{f_{e\paren{j/T}d_{n}}\paren{\overline{\mathbb{D}}}} \\
        &= T\sum_{n = n(r)+2}^{\infty} \frac{1}{(d_{n}^{2}-1)^{h_{\mathcal{F}}}}.
    \end{align*}
    Here, the last equality follows from the $h_{\mathcal{F}}$-conformality of $m_{\mathcal{F}}$.
    The other inequality can be shown similarly. The same argument applies to the inequalities for $m_{\mathcal{G}}(B(0, r))$.
\end{proof}

For general $\mathbf{d} \in \mathcal{D}$ and $T \in \mathbb{N}$,
we have the following criteria.

\begin{thm}[\Cref{main1}]\label{thm:density-at-zero-and-packing-measure-infinity-main1}
    \begin{enumerate}
        \item Suppose $P_{\mathcal{F}}\paren{h_{\mathcal{F}}/2} < \infty$. Then, we have
        \begin{equation*}
            \limsup_{r \downarrow 0}\frac{m_{\mathcal{F}}(B(0,r))}{r^{h_{\mathcal{F}}}} = 0, \quad \Pi_{h_{\mathcal{F}}}(J_{\mathcal{F}}) = \infty.
        \end{equation*}

        \item Suppose $P_{\mathcal{G}}\paren{h_{\mathcal{G}}/2} < \infty$. Then, we have
        \begin{equation*}
            \limsup_{r \downarrow 0}\frac{m_{\mathcal{G}}(B(0,r))}{r^{h_{\mathcal{G}}}} = 0, \quad \Pi_{h_{\mathcal{G}}}(J_{\mathcal{G}}) = \infty.
        \end{equation*}
    \end{enumerate}
\end{thm}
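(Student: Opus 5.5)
The plan is to prove the density statement from \Cref{lem:measure_estimate}, and then obtain $\Pi_{h}(J)=\infty$ from \Cref{thm:packing-measure-x-infty} applied at $z=0$. By \Cref{prop:comparison_of_f_and_g}(2), $0\in X_{\mathcal{F}}(\infty)=X_{\mathcal{G}}(\infty)$, and clearly $0\in\Int(\overline{\mathbb{D}})$. So once the $\limsup$ is shown to be $0$, the packing statement follows directly. I treat $\mathcal{F}$ first; $\mathcal{G}$ is identical, using the second pair of inequalities in \Cref{lem:measure_estimate} and the fact that $(d_n-1)^{2}$ is comparable to $d_n^{2}-1$.

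For small $r>0$ write $n=n(r)$. The definition of $n(r)$ gives $r>\frac{1}{d_{n+1}+1}$, so $r^{-h}<(d_{n+1}+1)^{h}$, where $h=h_{\mathcal{F}}$. The upper bound in \Cref{lem:measure_estimate} then yields
\begin{equation*}
\frac{m_{\mathcal{F}}(B(0,r))}{r^{h}}\le T(d_{n+1}+1)^{h}\sum_{k=n+1}^{\infty}\frac{1}{(d_k^{2}-1)^{h}}.
\end{equation*}
Since $d_k\ge d_{n+1}$ for $k\ge n+1$, each term satisfies
\begin{equation*}
(d_k^{2}-1)^{-h}=(d_k^{2}-1)^{-h/2}(d_k^{2}-1)^{-h/2}\le (d_{n+1}^{2}-1)^{-h/2}(d_k^{2}-1)^{-h/2}.
\end{equation*}
Substituting, the ratio is bounded by
\begin{equation*}
T\,\frac{(d_{n+1}+1)^{h}}{(d_{n+1}^{2}-1)^{h/2}}\sum_{k\ge n+1}(d_k^{2}-1)^{-h/2}.
\end{equation*}
The prefactor equals $\bigl(\frac{d_{n+1}+1}{d_{n+1}-1}\bigr)^{h/2}$. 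It is bounded by $3^{h/2}$ because $d_{n+1}\ge 2$.

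The remaining factor is a tail of the series $\sum_k (d_k^{2}-1)^{-h/2}$. This series converges because $P_{\mathcal{F}}(h/2)<\infty$ together with \Cref{lem:pressure_of_f}. Since $n(r)\to\infty$ as $r\downarrow0$ (because $d_n\to\infty$), the tail tends to $0$, and hence the $\limsup$ is $0$.

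For $\mathcal{G}$ I use $h=h_{\mathcal{G}}$ and the same splitting with $(d_k-1)^{-2h}\le (d_{n+1}-1)^{-h}(d_k-1)^{-h}$. The prefactor becomes $\bigl(\frac{d_{n+1}+1}{d_{n+1}-1}\bigr)^{h}\le 3^{h}$. The tail $\sum_k (d_k-1)^{-h}$ converges by \Cref{lem:pressure_ineq_g}: its upper bound at $t=h/2$ shows that finiteness of $P_{\mathcal{G}}(h/2)$ is equivalent to convergence of this series, by the comparison used in \Cref{thm:comparison_of_f_and_g}.

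The only delicate point is matching the scale $r$ to $d_{n(r)+1}$ so that the prefactor stays bounded. I also need $n(r)$ to be defined, i.e. $r<1/(d_1+1)$, which holds for all small $r$. Beyond this the argument is routine.
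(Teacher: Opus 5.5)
Your proof is correct and follows essentially the paper's route: you bound $m(B(0,r))$ by the tail in \Cref{lem:measure_estimate}, use $r > 1/(d_{n(r)+1}+1)$, trade half of the exponent $h$ to reach a convergent $h/2$-series, and conclude with \Cref{thm:packing-measure-x-infty} at $0 \in X_{\mathcal{F}}(\infty)\cap\Int(\overline{\mathbb{D}})$. The differences are minor. The paper absorbs the prefactor via $(d_{n(r)+1}+1)/(d_n+1)\le 1$ and lands on $\sum_n (d_n-1)^{-h}$, whereas you keep a bounded factor $3^{h/2}$ and land directly on the series defining $P_{\mathcal{F}}(h/2)$. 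Also, for $\mathcal{G}$ the direction you actually need (finite $P_{\mathcal{G}}(h/2)$ implies convergence) comes from the lower bound in \Cref{lem:pressure_ineq_g} plus comparison, not the upper bound, although your appeal to the equivalence in \Cref{thm:comparison_of_f_and_g} already covers this.
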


\begin{proof}
    We prove (1). Let $r>0$. By \Cref{lem:measure_estimate}, we have
    \begin{align*}
        \frac{m_{\mathcal{F}}(B(0, r))}{r^{h_{\mathcal{F}}}} &\leq \frac{T\sum_{n = n(r)+1}^{\infty} \frac{1}{(d_{n}^{2}-1)^{h_{\mathcal{F}}}}}{\paren{\frac{1}{d_{n(r)+1}+1}}^{h_{\mathcal{F}}}} = T \sum_{n = n(r)+1}^{\infty} \paren{\frac{d_{n(r)+1}+1}{d_{n}^{2}-1}}^{h_{\mathcal{F}}} \\
        &= T\sum_{n = n(r)+1}^{\infty}\paren{\frac{d_{n(r)+1}+1}{d_{n}+1}\frac{d_{n}+1}{d_{n}^{2}-1}}^{h_{\mathcal{F}}} \\
        &\leq T\sum_{n = n(r)+1}^{\infty}\paren{\frac{1}{(d_{n}-1)^{2}}}^{h_{\mathcal{F}}/2}.
    \end{align*}
    Note that $P_{\mathcal{F}}(h_{\mathcal{F}}/2) < \infty$ holds by the assumption.
    Therefore, by the comparison test and \Cref{lem:pressure_of_f}, we obtain
    \begin{equation*}
        T\sum_{n = 1}^{\infty}\paren{\frac{1}{(d_{n}-1)^{2}}}^{h_{\mathcal{F}}/2} < \infty.
    \end{equation*}
    Since $n(r) \to \infty$ as $r \downarrow 0$,
    \begin{equation*}
        \limsup_{r\downarrow 0} \frac{m_{\mathcal{F}}(B(0, r))}{r^{h_{\mathcal{F}}}} \leq \limsup_{r\downarrow 0}\paren{T\sum_{n = n(r)+1}^{\infty}\paren{\frac{1}{(d_{n}-1)^{2}}}^{h_{\mathcal{F}}/2}} = 0.
    \end{equation*}
    Thus, it follows from \Cref{thm:packing-measure-x-infty} that
    $\Pi_{h_{\mathcal{F}}}(J_{\mathcal{F}}) = \infty$.
    We complete the proof of (1).

    (2) can be shown similarly by using \Cref{lem:measure_estimate,lem:pressure_ineq_g}
    and \Cref{cor:hausdorff-measure-x-infty}.
\end{proof}

The following two lemmas are direct consequences of \Cref{thm:density-at-zero-and-packing-measure-infinity-main1,thm:hereditarily-regularity-and-pressure}.

\begin{cor}\label{cor:packing_infty_when_hereditarily_regular}
    \begin{enumerate}
        \item Suppose $\mathcal{F}$ is hereditarily regular.
            If $h_{\mathcal{F}} > 2\theta_{\mathcal{F}}$, then
            \begin{equation*}
                \limsup_{r \downarrow 0}\frac{m_{\mathcal{F}}(B(0,r))}{r^{h_{\mathcal{F}}}} = 0, \quad \Pi_{h_{\mathcal{F}}}(J_{\mathcal{F}}) = \infty.
            \end{equation*}

        \item Suppose $\mathcal{G}$ is hereditarily regular.
            If $h_{\mathcal{G}} > 2\theta_{\mathcal{G}}$, then
            \begin{equation*}
                \limsup_{r \downarrow 0}\frac{m_{\mathcal{G}}(B(0,r))}{r^{h_{\mathcal{G}}}} = 0, \quad \Pi_{h_{\mathcal{G}}}(J_{\mathcal{G}}) = \infty.
            \end{equation*}
    \end{enumerate}
\end{cor}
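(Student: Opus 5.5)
The plan is to derive the corollary directly from \Cref{thm:density-at-zero-and-packing-measure-infinity-main1}. By that theorem it suffices to check that $P_{\mathcal{F}}(h_{\mathcal{F}}/2) < \infty$ in part (1) and that $P_{\mathcal{G}}(h_{\mathcal{G}}/2) < \infty$ in part (2). Note that a hereditarily regular system is in particular regular: take $I' = I$ in \Cref{def:regularity}(3). Hence the standing assumptions of the subsection are met, and $m_{\mathcal{F}}$ and $m_{\mathcal{G}}$ exist by \Cref{thm:existence-and-uniqueness-of-conformal-measure-for-hausdorff-dimension}.

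For (1), assume $h_{\mathcal{F}} > 2\theta_{\mathcal{F}}$, so that $h_{\mathcal{F}}/2 > \theta_{\mathcal{F}} = \inf F(\mathcal{F})$. I will show that $F(\mathcal{F})$ is an up-set in $[0,\infty)$. If $s \in F(\mathcal{F})$ and $t \geq s$, then $P_{\mathcal{F}}(t) \leq P_{\mathcal{F}}(s) < \infty$, since $P_{\mathcal{F}}$ is non-increasing by \Cref{prop:properties-of-pressure-function}(1). Because $h_{\mathcal{F}}/2$ exceeds the infimum of $F(\mathcal{F})$, there is some $s \in F(\mathcal{F})$ with $s < h_{\mathcal{F}}/2$. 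The up-set property then gives $h_{\mathcal{F}}/2 \in F(\mathcal{F})$, that is, $P_{\mathcal{F}}(h_{\mathcal{F}}/2) < \infty$. Applying \Cref{thm:density-at-zero-and-packing-measure-infinity-main1}(1) yields both the density statement and $\Pi_{h_{\mathcal{F}}}(J_{\mathcal{F}}) = \infty$. Part (2) is identical with $\mathcal{G}$ in place of $\mathcal{F}$, using \Cref{thm:density-at-zero-and-packing-measure-infinity-main1}(2).

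The role of hereditary regularity deserves a remark, since the argument above never actually uses \Cref{thm:hereditarily-regularity-and-pressure}. That theorem gives $P(\theta) = \infty$, so $\theta \notin F$ and $F = (\theta, \infty)$. This pins down the shape of $F$ exactly, but the strict inequality $h/2 > \theta$ already places $h/2$ inside $F$ without it. Hereditary regularity, via \Cref{thm:theta-and-dimh-if-hereditarily-regular}, also supplies $\theta < h$, which is consistent with the hypothesis. I expect no real obstacle here. The only point needing care is checking that $F$ is an up-set, which is immediate from monotonicity of the pressure function, and keeping the hypothesis strict: when $h = 2\theta$, hereditary regularity gives $P(h/2) = \infty$, and the corollary would not apply.
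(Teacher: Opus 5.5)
Your proposal is correct and takes the same route as the paper, which also treats this corollary as a direct consequence of \Cref{thm:density-at-zero-and-packing-measure-infinity-main1} once $P(h/2)<\infty$ is established from $h/2>\theta$. The paper cites \Cref{thm:hereditarily-regularity-and-pressure} for that step, but you are right that monotonicity of the pressure already suffices when the inequality is strict, so hereditary regularity is used only to guarantee regularity.
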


\begin{cor}
    \begin{enumerate}
        \item Suppose $\mathcal{F}$ is regular but not hereditarily regular.
            If $h_{\mathcal{F}} \geq 2\theta_{\mathcal{F}}$, then
            \begin{equation*}
                \limsup_{r \downarrow 0}\frac{m_{\mathcal{F}}(B(0,r))}{r^{h_{\mathcal{F}}}} = 0, \quad \Pi_{h_{\mathcal{F}}}(J_{\mathcal{F}}) = \infty.
            \end{equation*}

        \item Suppose $\mathcal{G}$ is regular but not hereditarily regular.
            If $h_{\mathcal{G}} \geq 2\theta_{\mathcal{G}}$, then
            \begin{equation*}
                \limsup_{r \downarrow 0}\frac{m_{\mathcal{G}}(B(0,r))}{r^{h_{\mathcal{G}}}} = 0, \quad \Pi_{h_{\mathcal{G}}}(J_{\mathcal{G}}) = \infty.
            \end{equation*}
    \end{enumerate}
\end{cor}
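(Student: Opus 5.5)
The plan is to reduce the statement to \Cref{thm:density-at-zero-and-packing-measure-infinity-main1}: it suffices to check that the hypothesis $P_{\mathcal{F}}(h_{\mathcal{F}}/2) < \infty$ (respectively $P_{\mathcal{G}}(h_{\mathcal{G}}/2) < \infty$) holds. I treat (1); (2) is identical with $\mathcal{G}$ in place of $\mathcal{F}$.

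First, the index set of $\mathcal{F} = \mathcal{F}(\mathbf{d}, T)$ is $\mathbb{N} \times \set{0, 1, \dots, T-1}$, which is infinite. So \Cref{thm:hereditarily-regularity-and-pressure} applies. Since $\mathcal{F}$ is not hereditarily regular, that theorem gives $P_{\mathcal{F}}(\theta_{\mathcal{F}}) < \infty$, that is, $\theta_{\mathcal{F}} \in F(\mathcal{F})$.

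Next I use that $F(\mathcal{F})$ is an up-set of $[0,\infty)$. Indeed, $P_{\mathcal{F}}$ is non-increasing by \Cref{prop:properties-of-pressure-function}(1), so $t \in F(\mathcal{F})$ and $s \geq t$ imply $s \in F(\mathcal{F})$. Together with $\theta_{\mathcal{F}} \in F(\mathcal{F})$, this gives $F(\mathcal{F}) = [\theta_{\mathcal{F}}, \infty)$. One can also see this directly from \Cref{lem:pressure_of_f}, since $\sum_{n} (d_{n}^{2}-1)^{-t}$ is non-increasing in $t$.

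The hypothesis $h_{\mathcal{F}} \geq 2\theta_{\mathcal{F}}$ says exactly that $h_{\mathcal{F}}/2 \geq \theta_{\mathcal{F}}$. Hence $h_{\mathcal{F}}/2 \in F(\mathcal{F})$, i.e.\ $P_{\mathcal{F}}(h_{\mathcal{F}}/2) < \infty$. Since $\mathcal{F}$ is assumed regular, \Cref{thm:density-at-zero-and-packing-measure-infinity-main1}(1) now yields both
\begin{equation*}
    \limsup_{r \downarrow 0} m_{\mathcal{F}}(B(0,r))/r^{h_{\mathcal{F}}} = 0 \quad \text{and} \quad \Pi_{h_{\mathcal{F}}}(J_{\mathcal{F}}) = \infty.
\end{equation*}

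There is no real obstacle; the only point needing care is the boundary case $h_{\mathcal{F}} = 2\theta_{\mathcal{F}}$. This case is covered only because failure of hereditary regularity forces $\theta_{\mathcal{F}}$ itself to lie in $F(\mathcal{F})$. That is why the non-strict inequality is allowed here, whereas \Cref{cor:packing_infty_when_hereditarily_regular} requires the strict one.
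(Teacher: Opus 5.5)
Your proposal is correct and follows the same route as the paper, which states this corollary as a direct consequence of \Cref{thm:density-at-zero-and-packing-measure-infinity-main1} and \Cref{thm:hereditarily-regularity-and-pressure}. Failure of hereditary regularity gives $P_{\mathcal{F}}(\theta_{\mathcal{F}})<\infty$; monotonicity of the pressure together with $h_{\mathcal{F}}/2 \geq \theta_{\mathcal{F}}$ then gives $P_{\mathcal{F}}(h_{\mathcal{F}}/2)<\infty$, and Main Theorem 1 applies (likewise for $\mathcal{G}$). Your explanation of why the non-strict inequality suffices here is also correct.
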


Let us consider an infinite subset $I$ of $\mathbb{N}$
with $\min I > 2$ and $\abs{i-j} \geq 2$ for any distinct $i, j \in I$.
We can regard $I$ as a sequence belonging to $\mathcal{D}$.
Then, as for $\widetilde{\mathcal{G}}$  defined in \Cref{sec:introduction},
we have $J_{\mathcal{G}(I, 1)} = J_{\widetilde{\mathcal{G}}(I, 1)}$.
Therefore, \Cref{cor:packing_infty_when_hereditarily_regular} is a
partial generalization of \Cref{thm:MauldinUrbanski1999-4.4-5.2}(2).
Thus, we conjecture the following.

\begin{conjecture}\label{conj:hausdorff_measure_zero_when_less_than_2theta}
    \begin{enumerate}
        \item Suppose $\mathcal{F}$ is a regular CIFS.
            If $h_{\mathcal{F}} < 2\theta_{\mathcal{F}}$, then
            $H_{h_{\mathcal{F}}}(J_{\mathcal{F}}) =0$.
        \item Suppose $\mathcal{G}$ is a regular CIFS.
            If $h_{\mathcal{G}} < 2\theta_{\mathcal{G}}$, then
            $H_{h_{\mathcal{G}}}(J_{\mathcal{G}}) =0$.
    \end{enumerate}
\end{conjecture}

We prove that this conjecture is true in the case where
$d_{n}$ grows polynomially with respect to $n$
in \Cref{thm:measure_poly_f,thm:measure_poly_g}.
However, the general case remains open.
Note that the digits are restricted to natural numbers in \Cref{thm:MauldinUrbanski1999-4.4-5.2}
and that Mauldin and Urba\'nski did not use the density theorems like \Cref{thm:hausdorff-measure-x-infty}
to show \Cref{thm:MauldinUrbanski1999-4.4-5.2}.

\section{Relationship between the growth rate of digits and dimensions/measures}\label{sec:relationship_between_increasing_rate_and_measures}
In this section,
we investigate the relationship between the growth rate of the sequence $\set{d_{n}}_{n=1}^{\infty}$
and the properties of the CIFSs $\mathcal{F}$ and $\mathcal{G}$.
In particular, we focus on the case of polynomial growth.
Throughout this section, we always assume that $\mathbf{d} = \set{d_n}_{n=1}^{\infty} \in \mathcal{D}$
satisfies the following conditions:
there exist $0 < c_{1} \leq c_{2}, c_{3} > 0, \gamma \geq 1$ and $N \in \mathbb{N}$ such that
\begin{equation}\label{eq:polynomial_growth_conditions}
    c_{1}n^{\gamma} \leq d_{n} \leq c_{2}n^{\gamma}, \quad d_{n+1} - d_{n} \geq c_{3}n^{\gamma-1}, \quad c_{1}N^{\gamma} > 1 \quad (N \leq n \in \mathbb{N}).
\end{equation}
Note that if $c_{1} = c_{2} =: c$, then
\begin{equation*}
    d_{n+1} - d_{n} = cn^{\gamma}\paren{\paren{1+\frac{1}{n}}^{\gamma} - 1} \geq cn^{\gamma}\paren{1 + \frac{\gamma}{n} - 1} = c\gamma n^{\gamma - 1}.
\end{equation*}
Thus, the second condition in \eqref{eq:polynomial_growth_conditions} holds
when the first condition holds with $c_{1} = c_{2} =: c$ and $c_{3} = c\gamma$.

Throughout this section, we suppose that both $\mathcal{F} = \mathcal{F}(\mathbf{d}, T)$ and $\mathcal{G} = \mathcal{G}(\mathbf{d}, T)$
are CIFSs for $\mathbf{d} \in \mathcal{D}$ satisfying \eqref{eq:polynomial_growth_conditions} and $T \in \mathbb{N}$.

\subsection{Zero Hausdorff Measure and Infinite Packing Measure}

We first state the properties of $\theta_{\mathcal{F}}$ and $\theta_{\mathcal{G}}$.
\begin{prop}\label{prop:theta_poly}
    We have $\theta_{\mathcal{F}} = \theta_{\mathcal{G}} = \frac{1}{2\gamma}$.
\end{prop}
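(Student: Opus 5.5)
By \Cref{thm:theta_via_Dirichlet}, both $\theta_{\mathcal{F}}$ and $\theta_{\mathcal{G}}$ equal
\begin{equation*}
    \limsup_{N\to\infty} \frac{\log N}{2\log d_{N}}.
\end{equation*}
So it suffices to compute this limsup under \eqref{eq:polynomial_growth_conditions}; the fact that both thetas coincide is also recorded in \Cref{thm:comparison_of_f_and_g}(1). Only the first condition in \eqref{eq:polynomial_growth_conditions} is needed, namely $c_{1}n^{\gamma} \leq d_{n} \leq c_{2}n^{\gamma}$ for $n \geq N$. The gap condition and $c_{1}N^{\gamma} > 1$ only guarantee positivity of the logarithms and are otherwise irrelevant here.

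The key step is a two-sided estimate of $\log d_{n}$. For $n \geq N$ we have
\begin{equation*}
    \log c_{1} + \gamma \log n \leq \log d_{n} \leq \log c_{2} + \gamma\log n.
\end{equation*}
Since $d_{n} \geq 2$, all quantities are positive once $n$ is large, and dividing gives
\begin{equation*}
    \frac{\log n}{2(\log c_{2} + \gamma \log n)} \leq \frac{\log n}{2\log d_{n}} \leq \frac{\log n}{2(\log c_{1} + \gamma\log n)}
\end{equation*}
for $n$ large enough that $\log c_{1} + \gamma\log n > 0$. Both bounds tend to $1/(2\gamma)$ as $n\to\infty$, because $\log c_{i}/\log n \to 0$. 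Hence the limit, and in particular the limsup, equals $1/(2\gamma)$.

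As an alternative that avoids \Cref{thm:theta_via_Dirichlet}, one could argue directly. By \Cref{lem:pressure_of_f}, $P_{\mathcal{F}}(t) < \infty$ if and only if $\sum_{n} d_{n}^{-2t} < \infty$. By comparison with $\sum_{n} n^{-2\gamma t}$, this holds if and only if $2\gamma t > 1$, which again gives $\theta_{\mathcal{F}} = 1/(2\gamma)$. \Cref{lem:pressure_ineq_g} then gives the same for $\mathcal{G}$, since $(d_{n} \pm 1) \asymp d_{n}$.

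There is no serious obstacle in either route. The only point needing care is that finitely many initial terms (those with $n < N$) do not affect the limsup or the convergence of the series, and that $\log c_{1}$ may be negative, which is handled by taking $n$ large.
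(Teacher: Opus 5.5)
Your proposal is correct and matches the paper's proof, which simply invokes \Cref{thm:theta_via_Dirichlet} together with the bounds $c_{1}n^{\gamma} \leq d_{n} \leq c_{2}n^{\gamma}$; your squeeze on $\log n/(2\log d_{n})$ spells out the computation the paper leaves implicit. Your alternative comparison with $\sum_{n} n^{-2\gamma t}$ is also valid, and it additionally shows that $P_{\mathcal{F}}(1/(2\gamma)) = \infty$, which the paper proves separately in the next proposition. One small slip in a side remark: the gap condition $d_{n+1}-d_{n} \geq c_{3}n^{\gamma-1}$ has nothing to do with positivity of the logarithms (only $c_{1}N^{\gamma}>1$, or $d_{n} \geq 2$, plays that role), but this does not affect the argument.
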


\begin{proof}
    This follows immediately from \Cref{thm:theta_via_Dirichlet} and the assumption \eqref{eq:polynomial_growth_conditions}.
\end{proof}

\begin{prop}\label{prop:hereditarily_regular_poly}
    Both of $\mathcal{F}$ and $\mathcal{G}$ are hereditarily regular.
\end{prop}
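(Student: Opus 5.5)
By \Cref{thm:hereditarily-regularity-and-pressure}, since both index sets are infinite, it suffices to show that $P_{\mathcal{F}}(\theta_{\mathcal{F}}) = \infty$ and $P_{\mathcal{G}}(\theta_{\mathcal{G}}) = \infty$. By \Cref{prop:theta_poly} we have $\theta_{\mathcal{F}} = \theta_{\mathcal{G}} = \frac{1}{2\gamma}$. By \Cref{thm:comparison_of_f_and_g}(2) it is enough to treat $\mathcal{F}$; alternatively one can run the same argument for $\mathcal{G}$ using the lower bound in \Cref{lem:pressure_ineq_g}.

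By \Cref{lem:pressure_of_f},
\begin{equation*}
    P_{\mathcal{F}}\paren{\tfrac{1}{2\gamma}} = \log\paren{T\sum_{n=1}^{\infty}\frac{1}{(d_{n}^{2}-1)^{1/(2\gamma)}}}.
\end{equation*}
For $n \geq N$, the growth condition \eqref{eq:polynomial_growth_conditions} gives $d_{n}^{2}-1 \leq d_{n}^{2} \leq c_{2}^{2}n^{2\gamma}$. Hence each term satisfies
\begin{equation*}
    (d_{n}^{2}-1)^{-1/(2\gamma)} \geq c_{2}^{-1/\gamma}n^{-1}.
\end{equation*}
The tail of the series therefore dominates a constant multiple of the harmonic series and diverges. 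So $P_{\mathcal{F}}(\theta_{\mathcal{F}}) = \infty$.

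For $\mathcal{G}$ directly, the same comparison applies to the lower bound $\sum_{n}(d_{n}+1)^{-1/\gamma}$, using $d_{n}+1 \leq 2c_{2}n^{\gamma}$ for $n \geq N$ (recall $d_n \geq 2 > 1$). This yields $P_{\mathcal{G}}(\theta_{\mathcal{G}}) = \infty$.

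Applying \Cref{thm:hereditarily-regularity-and-pressure} now gives hereditary regularity of both systems. The only real point is the harmonic-series comparison exactly at the critical exponent. No step here is a genuine obstacle; the only thing to check is that the upper bound $d_n \leq c_2 n^\gamma$ is the relevant half of \eqref{eq:polynomial_growth_conditions}.
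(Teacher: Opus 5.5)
Your proposal is correct and follows the paper's proof essentially verbatim. Like the paper, you reduce via \Cref{thm:hereditarily-regularity-and-pressure} to showing $P(\theta)=\infty$ at $\theta = 1/(2\gamma)$, and compare the series from \Cref{lem:pressure_of_f} (resp.\ the lower bound in \Cref{lem:pressure_ineq_g}) with the harmonic series using $d_n \le c_2 n^\gamma$. Your restriction of that comparison to the tail $n \ge N$ is in fact slightly more careful than the paper, which writes the bound for all $n \ge 1$ although \eqref{eq:polynomial_growth_conditions} only gives it for $n \ge N$.
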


\begin{proof}
    Since $d_{n} \leq c_{2}n^{\gamma}$ by \eqref{eq:polynomial_growth_conditions},
    \Cref{prop:theta_poly} and \Cref{lem:pressure_of_f} imply
    \begin{equation*}
        P_{\mathcal{F}}(\theta_{\mathcal{F}}) = \log\paren{T\sum_{n=1}^{\infty} \frac{1}{(d_{n}^{2}-1)^{\theta_{\mathcal{F}}}}} \geq \log\paren{T\sum_{n=1}^{\infty} \frac{1}{(c_{2}^{2}n^{2\gamma}-1)^{\frac{1}{2\gamma}}}} = \infty.
    \end{equation*}
    Therefore, $\mathcal{F}$ is hereditarily regular by \Cref{thm:hereditarily-regularity-and-pressure}.
    The same argument works for $\mathcal{G}$ by \Cref{lem:pressure_ineq_g}.
\end{proof}

We now state the properties related to (C1) and (C2),
which are peculiar to infinite IFSs.
The following two theorems show that
\Cref{conj:hausdorff_measure_zero_when_less_than_2theta} is true
under the assumption \eqref{eq:polynomial_growth_conditions}.

\begin{thm}[\Cref{main2}-(1)-(a)]\label{thm:measure_poly_f}
    We have the following.
    \begin{enumerate}
        \item If $h_{\mathcal{F}} < \frac{1}{\gamma}$, then $\limsup_{r\downarrow 0} \frac{m_{\mathcal{F}}(B(0, r))}{r^{h_{\mathcal{F}}}} = \infty$ and $H_{h_{\mathcal{F}}}(J_{\mathcal{F}}) = 0$.
        \item If $h_{\mathcal{F}} > \frac{1}{\gamma}$, then $\limsup_{r\downarrow 0} \frac{m_{\mathcal{F}}(B(0, r))}{r^{h_{\mathcal{F}}}} = 0$ and $\Pi_{h_{\mathcal{F}}}(J_{\mathcal{F}}) = \infty$.
    \end{enumerate}
\end{thm}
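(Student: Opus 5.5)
Part (2) reduces to the already-proved criterion \Cref{thm:density-at-zero-and-packing-measure-infinity-main1}(1). If $h_{\mathcal{F}} > 1/\gamma$, then $h_{\mathcal{F}}/2 > 1/(2\gamma) = \theta_{\mathcal{F}}$ by \Cref{prop:theta_poly}. Hence $P_{\mathcal{F}}(h_{\mathcal{F}}/2) < \infty$: either because $h_{\mathcal{F}}/2$ lies strictly above $\theta_{\mathcal{F}}$, or directly, since $\sum_n (c_1^2 n^{2\gamma}-1)^{-h_{\mathcal{F}}/2}$ converges because $\gamma h_{\mathcal{F}} > 1$. Note that $\mathcal{F}$ is regular by \Cref{prop:hereditarily_regular_poly}, so $m_{\mathcal{F}}$ exists. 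The density statement and $\Pi_{h_{\mathcal{F}}}(J_{\mathcal{F}})=\infty$ then follow immediately.

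For part (1), I will show that the density of $m_{\mathcal{F}}$ at $0$ is infinite and then apply \Cref{cor:hausdorff-measure-x-infty}, using $0 \in X_{\mathcal{F}}(\infty)$ from \Cref{prop:comparison_of_f_and_g}. The natural radii are $r_k := 1/(d_k+1)$, for which $n(r_k) = k$. \Cref{lem:measure_estimate} then gives
\begin{equation*}
  \frac{m_{\mathcal{F}}(B(0,r_k))}{r_k^{h_{\mathcal{F}}}} \geq T (d_k+1)^{h_{\mathcal{F}}} \sum_{n=k+2}^{\infty} \frac{1}{(d_n^2-1)^{h_{\mathcal{F}}}} \geq T (c_1 k^{\gamma})^{h_{\mathcal{F}}} \sum_{n=k+2}^{\infty} \frac{1}{(c_2 n^{\gamma})^{2h_{\mathcal{F}}}}
\end{equation*}
for $k \geq N$. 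The sum converges, since $\theta_{\mathcal{F}} < h_{\mathcal{F}}$ by \Cref{thm:theta-and-dimh-if-hereditarily-regular}, i.e.\ $2\gamma h_{\mathcal{F}} > 1$.

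The tail sum is then bounded below by an integral comparison: $\sum_{n \geq k+2} n^{-2\gamma h_{\mathcal{F}}} \geq \int_{k+2}^{\infty} x^{-2\gamma h_{\mathcal{F}}}\,dx = (k+2)^{1-2\gamma h_{\mathcal{F}}}/(2\gamma h_{\mathcal{F}}-1)$. Hence the ratio is at least a positive constant times $k^{\gamma h_{\mathcal{F}}} k^{1-2\gamma h_{\mathcal{F}}} = k^{1-\gamma h_{\mathcal{F}}}$, which tends to $\infty$ exactly when $h_{\mathcal{F}} < 1/\gamma$. This yields $\limsup_{r\downarrow 0} m_{\mathcal{F}}(B(0,r))/r^{h_{\mathcal{F}}} = \infty$ and therefore $H_{h_{\mathcal{F}}}(J_{\mathcal{F}}) = 0$.

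The only delicate points are bookkeeping ones. First, $n(r_k) = k$ must hold exactly. Second, the constants must be uniform: this requires $d_n^2 - 1 \leq c_2^2 n^{2\gamma}$ and $d_k + 1 \geq c_1 k^{\gamma}$, both of which are trivial. The main conceptual input is that the exponent $1-\gamma h_{\mathcal{F}}$ comes out exactly, which in turn relies on the two-sided polynomial bounds; the difference condition on $d_{n+1}-d_n$ is not needed for this $\mathcal{F}$ statement.
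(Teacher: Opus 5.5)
Your proposal is correct and follows essentially the same route as the paper. Part (2) goes through the criterion $P_{\mathcal{F}}(h_{\mathcal{F}}/2)<\infty$ (the paper packages this as \Cref{cor:packing_infty_when_hereditarily_regular}), and part (1) combines the lower bound of \Cref{lem:measure_estimate}, an integral comparison yielding growth like $k^{1-\gamma h_{\mathcal{F}}}$, and the density criterion at $0\in X_{\mathcal{F}}(\infty)$. The only cosmetic difference is that you evaluate along the radii $r_k=1/(d_k+1)$, whereas the paper uses arbitrary small $r$ together with $r\le 1/(d_{n(r)}+1)$; either suffices for the $\limsup$.
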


\begin{proof}
    (2) follows from \Cref{prop:hereditarily_regular_poly} and \Cref{cor:packing_infty_when_hereditarily_regular}.
    We now prove (1). Let $r>0$ be small enough such that $n(r) > N$.
    Since $h_{\mathcal{F}} > \theta_{\mathcal{F}} = \frac{1}{2\gamma}$
    by \Cref{prop:theta_poly,prop:hereditarily_regular_poly} and \Cref{thm:theta-and-dimh-if-hereditarily-regular},
    we obtain
    \begin{align}
        \sum_{k=n(r)+2}^{\infty}\frac{1}{(d_{k}^{2}-1)^{h_{\mathcal{F}}}} &\geq \sum_{k=n(r)+2}^{\infty} \frac{1}{(c_{2}^{2}k^{2\gamma}-1)^{h_{\mathcal{F}}}} = \frac{1}{c_{2}^{2h_{\mathcal{F}}}} \sum_{k=n(r)+2}^{\infty} \frac{1}{k^{2\gamma h_{\mathcal{F}}}(1-k^{-2\gamma}c_{2}^{-2})^{h_{\mathcal{F}}}} \notag \\
        &\geq \frac{1}{c_{2}^{2h_{\mathcal{F}}}} \sum_{k=n(r)+2}^{\infty} \frac{1}{k^{2\gamma h_{\mathcal{F}}}} \notag \\
        &\geq \frac{1}{c_{2}^{2h_{\mathcal{F}}}} \int_{n(r)+2}^{\infty} \frac{1}{x^{2\gamma h_{\mathcal{F}}}} dx \notag \\
        &= \const \cdot (n(r)+2)^{-2\gamma h_{\mathcal{F}} + 1} \label{eq:measure_poly_f_proof_1},
    \end{align}
    where $\const$ denotes a constant independent of $r$.
    We also have
    \begin{equation*}
        r \leq \frac{1}{d_{n(r)}+1} \leq \frac{1}{d_{n(r)}} \leq \frac{1}{c_{1}n(r)^{\gamma}}
    \end{equation*}
    by the definition of $n(r)$.
    Therefore, it follows from \Cref{lem:measure_estimate} and \eqref{eq:measure_poly_f_proof_1} that
    \begin{align*}
        \frac{m_{\mathcal{F}}(B(0, r))}{r^{h_{\mathcal{F}}}} &\geq \const \cdot (n(r)+2)^{-2\gamma h_{\mathcal{F}} + 1}(c_{1}n(r)^{\gamma})^{h_{\mathcal{F}}} \\
        &= \const \cdot n(r)^{-\gamma h_{\mathcal{F}} + 1} \paren{\frac{n(r)+2}{n(r)}}^{-2\gamma h_{\mathcal{F}} + 1}.
    \end{align*}
    Thus, if $h_{\mathcal{F}} < \frac{1}{\gamma}$, then
    \begin{equation*}
        \limsup_{r\downarrow 0} \frac{m_{\mathcal{F}}(B(0, r))}{r^{h_{\mathcal{F}}}} = \infty.
    \end{equation*}
    Hence, by \Cref{thm:hausdorff-measure-x-infty}, we have $H_{h_{\mathcal{F}}}(J_{\mathcal{F}}) = 0$.
    We complete the proof of our theorem.
\end{proof}

\begin{thm}[\Cref{main2}-(2)-(a)]\label{thm:measure_poly_g}
    We have the following.
    \begin{enumerate}
        \item If $h_{\mathcal{G}} < \frac{1}{\gamma}$, then $\limsup_{r\downarrow 0} \frac{m_{\mathcal{G}}(B(0, r))}{r^{h_{\mathcal{G}}}} = \infty$ and $H_{h_{\mathcal{G}}}(J_{\mathcal{G}}) = 0$.
        \item If $h_{\mathcal{G}} > \frac{1}{\gamma}$, then $\limsup_{r\downarrow 0} \frac{m_{\mathcal{G}}(B(0, r))}{r^{h_{\mathcal{G}}}} = 0$ and $\Pi_{h_{\mathcal{G}}}(J_{\mathcal{G}}) = \infty$.
    \end{enumerate}
\end{thm}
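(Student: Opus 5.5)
The plan is to mirror the proof of \Cref{thm:measure_poly_f}, replacing the exact pressure formula of \Cref{lem:pressure_of_f} by the two-sided bounds of \Cref{lem:pressure_ineq_g} and \Cref{lem:measure_estimate}.

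Part (2) comes directly from earlier results. By \Cref{prop:hereditarily_regular_poly}, $\mathcal{G}$ is hereditarily regular, and by \Cref{prop:theta_poly}, $\theta_{\mathcal{G}} = \frac{1}{2\gamma}$. Hence the hypothesis $h_{\mathcal{G}} > 1/\gamma$ reads $h_{\mathcal{G}} > 2\theta_{\mathcal{G}}$. \Cref{cor:packing_infty_when_hereditarily_regular}(2) then gives that the upper density at $0$ vanishes and that $\Pi_{h_{\mathcal{G}}}(J_{\mathcal{G}}) = \infty$. Underneath, this uses \Cref{thm:packing-measure-x-infty} with $z = 0$, which is legitimate because $0 \in X_{\mathcal{G}}(\infty) \cap \Int(\overline{\mathbb{D}})$ by \Cref{prop:comparison_of_f_and_g}(2).

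For part (1), I take $r>0$ small enough that $n(r) > N$. The lower bound in \Cref{lem:measure_estimate} gives
\begin{equation*}
m_{\mathcal{G}}(B(0,r)) \geq T\sum_{k=n(r)+2}^{\infty} (d_{k}+1)^{-2h_{\mathcal{G}}}.
\end{equation*}
For $k \geq N$ we have $d_{k}+1 \leq c_{2}k^{\gamma} + 1 \leq 2c_{2}k^{\gamma}$. This holds because $c_{2}k^{\gamma} \geq c_{1}N^{\gamma} > 1$, which is exactly where the assumption $c_{1}N^{\gamma}>1$ is used. Therefore the sum is at least $(2c_{2})^{-2h_{\mathcal{G}}}\sum_{k\geq n(r)+2} k^{-2\gamma h_{\mathcal{G}}}$.

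Next, $h_{\mathcal{G}} > \theta_{\mathcal{G}} = 1/(2\gamma)$ by \Cref{thm:theta-and-dimh-if-hereditarily-regular}, so $2\gamma h_{\mathcal{G}} > 1$. An integral comparison then bounds the tail below by $\const\cdot (n(r)+2)^{1-2\gamma h_{\mathcal{G}}}$. For the radius, the definition of $n(r)$ gives $r \leq 1/(d_{n(r)}+1) \leq 1/(c_{1}n(r)^{\gamma})$, so $r^{-h_{\mathcal{G}}} \geq (c_{1}n(r)^{\gamma})^{h_{\mathcal{G}}}$. Combining the two,
\begin{equation*}
\frac{m_{\mathcal{G}}(B(0,r))}{r^{h_{\mathcal{G}}}} \geq \const \cdot n(r)^{1-\gamma h_{\mathcal{G}}}\paren{\frac{n(r)+2}{n(r)}}^{1-2\gamma h_{\mathcal{G}}}.
\end{equation*}
Since $h_{\mathcal{G}} < 1/\gamma$ and $n(r)\to\infty$ as $r \downarrow 0$, the right-hand side tends to $\infty$. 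Then \Cref{cor:hausdorff-measure-x-infty} with $z=0\in X_{\mathcal{G}}(\infty)$ yields $H_{h_{\mathcal{G}}}(J_{\mathcal{G}})=0$.

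I do not expect a genuine obstacle. The only point needing care is that the $\mathcal{G}$-measure bounds involve $d_k \pm 1$ instead of $d_k^2-1$. Absorbing these shifts into constants requires $d_k$ to be bounded away from $0$ for $k \geq N$, and this is guaranteed by $c_1 N^\gamma > 1$.
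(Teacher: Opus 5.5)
Your proposal is correct and matches the paper's proof, which just says to repeat the argument for $\mathcal{F}$. Part (2) comes from hereditary regularity, $\theta_{\mathcal{G}}=1/(2\gamma)$ and the packing-measure corollary at $z=0$. Part (1) comes from the lower bound in the measure estimate lemma, an integral comparison using $2\gamma h_{\mathcal{G}}>1$, the bound $r\le 1/(c_{1}n(r)^{\gamma})$, and the density criterion at $0\in X_{\mathcal{G}}(\infty)$. Your replacement of $d_{k}^{2}-1\le c_{2}^{2}k^{2\gamma}$ by $d_{k}+1\le 2c_{2}k^{\gamma}$ is exactly the needed adaptation. That bound also follows from $d_{k}\ge 2$ alone, so $c_{1}N^{\gamma}>1$ is not essential at that step.
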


\begin{proof}
    The proof is similar to that of \Cref{thm:measure_poly_f}.
\end{proof}

We continue to study the remaining parts of \Cref{main2}.

\begin{lem}\label{lem:box-dimension-of-g-poly}
    For all $x\in [0, 1]$, we have $\underline{\dim}_{B}(\mathcal{G}_{x}) = \overline{\dim}_{B}(\mathcal{G}_{x}) = \frac{1}{\gamma+1}$.
    Here, as defined in \Cref{def:S_x},
    \begin{equation*}
        \mathcal{G}_{x} = \set{\frac{1}{e(j/T)d_{n}+x} \mid j = 0, 1, \dots, T-1; n\in\mathbb{N}}.
    \end{equation*}
\end{lem}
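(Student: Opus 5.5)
The plan is to compute the box dimension of the countable set $\mathcal{G}_{x}$ directly from its geometry. The points are $w_{n,j} := 1/(e(j/T)d_{n}+x)$, and they accumulate only at $0$. Since $x\in[0,1]$ and $d_{n}\geq 2$, we have $|w_{n,j}| \asymp 1/d_{n} \asymp n^{-\gamma}$ for $n\geq N$, with constants depending only on $c_{1},c_{2}$.

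The key quantity is the spacing between consecutive points. For fixed $j$,
\begin{equation*}
    |w_{n+1,j}-w_{n,j}| = \frac{d_{n+1}-d_{n}}{|e(j/T)d_{n}+x|\,|e(j/T)d_{n+1}+x|}.
\end{equation*}
By \eqref{eq:polynomial_growth_conditions}, the numerator is at least $c_{3}n^{\gamma-1}$, and the denominator is at most $(d_{n}+1)(d_{n+1}+1)\lesssim n^{2\gamma}$. So the gap is $\gtrsim n^{-\gamma-1}$. Points with different $j$ and comparable $n$ are separated by an angle of order $1/T$ at modulus $\asymp n^{-\gamma}$, so their distance is $\gtrsim n^{-\gamma}$, which is even larger. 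For different $j$ and very different $n$, the moduli differ.

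For the upper bound, I would fix $\delta>0$ and choose $M\asymp \delta^{-1/(\gamma+1)}$, so that $M^{-\gamma-1}\asymp\delta$. The points with $n\geq M$ lie in $B(0, CM^{-\gamma})$, which is covered by $\lesssim (M^{-\gamma}/\delta)^{2}$ balls of radius $\delta$. That bound is too crude, because the set is $T$ rays, essentially one-dimensional. Instead, each ray segment $\{w_{n,j}: n\geq M\}$ lies in an arc or segment of length $\lesssim M^{-\gamma}$, so it needs only $\lesssim M^{-\gamma}/\delta \asymp M$ balls. The points with $n<M$ need at most $TM$ balls. This gives $N_{\delta}(\mathcal{G}_{x})\lesssim M \asymp \delta^{-1/(\gamma+1)}$, hence $\overline{\dim}_{B}(\mathcal{G}_{x})\leq 1/(\gamma+1)$. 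Note that the points $w_{n,j}$ for fixed $j$ lie on the image under $z\mapsto 1/z$ of a ray, which is a circle arc through $0$ of bounded curvature near $0$; a direct length estimate on $\{1/(e(j/T)s+x): s\geq d_{M}\}$ also works.

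For the lower bound, I would take $n$ with $N\leq n\leq M' := c\,\delta^{-1/(\gamma+1)}$, with $c$ small. Then consecutive points on the ray $j=0$ are $\delta$-separated by the gap estimate, since $n^{-\gamma-1}\gtrsim M'^{-\gamma-1}\gg\delta$. This produces $\gtrsim \delta^{-1/(\gamma+1)}$ points that are pairwise more than $2\delta$ apart, because gaps along a monotone sequence of moduli add up. Hence $N_{\delta}\gtrsim\delta^{-1/(\gamma+1)}$ and $\underline{\dim}_{B}\geq 1/(\gamma+1)$.

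The main obstacle is the upper-bound bookkeeping. One has to make sure that the tail $\{w_{n,j}\}_{n\geq M}$ really sits in a one-dimensional curve piece of length $O(M^{-\gamma})$, uniformly in $x$. I would handle this via the explicit curve $s\mapsto 1/(e(j/T)s+x)$, whose length on $[d_{M},\infty)$ is at most $\int_{d_{M}}^{\infty}(s-1)^{-2}\,ds = 1/(d_{M}-1)$.
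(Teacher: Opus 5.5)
Your proposal is correct and follows the paper's proof. The lower bound counts the pairwise $2\delta$-separated points $1/(d_n+x)$ on the real ray $j=0$ for $N\le n\lesssim\delta^{-1/(\gamma+1)}$. The upper bound covers the first $\asymp\delta^{-1/(\gamma+1)}$ points individually and each of the $T$ tails by $O(M^{-\gamma}/\delta)$ balls. Your arc-length bound $\int_{d_M}^{\infty}(s-1)^{-2}\,ds$ for the non-real rays makes rigorous a step the paper passes over, since it only treats the real tail $[0,\cdot]$ and multiplies by $T$. Your remarks about separation between points with different $j$ are not needed anywhere.
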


\begin{proof}
    Let $x \in [0, 1]$.
    For each $r > 0$, set $k(r)$ be the minimum number of closed balls with radius $r$
    that cover $\mathcal{G}_{x}$. Define $r_{0}$ by
    \begin{equation*}
        r_{0} := \min_{1 \leq n \leq N}\frac{1}{2}\paren{\frac{1}{d_{n}+x} - \frac{1}{d_{n+1}+x}}
    \end{equation*}
    and take $r \in (0, r_{0})$.
    If $1 \leq n \leq N$, then
    \begin{equation*}
        \frac{1}{d_{n}+x} - \frac{1}{d_{n+1}+x} \geq 2 r_{0} > 2 r.
    \end{equation*}
    In addition, if $N < n < N_{1}(r)$, where
    \begin{equation}
        N_{1}(r) := \paren{2c_{2}^{2}c_{3}^{-1}\paren{1+\frac{1}{c_{2}N^{\gamma}}}^{2}r}^{-1/(\gamma+1)}-1 \quad (0 < r < r_{0}),
    \end{equation}
    then, by \eqref{eq:polynomial_growth_conditions}, we have
    \begin{align*}
        \frac{1}{d_{n}+x} - \frac{1}{d_{n+1}+x} & = \frac{d_{n+1} - d_{n}}{(d_{n}+x)(d_{n+1}+x)} \geq \frac{c_{3}n^{\gamma-1}}{\paren{c_{2}n^{\gamma}+x}\paren{c_{2}(n+1)^{\gamma}+x}}\\
        &= \frac{c_{3}n^{\gamma-1}}{c_{2}^{2}n^{\gamma}\paren{n+1}^{\gamma}{\paren{1+\frac{x}{c_{2}n^{\gamma}}}\paren{1+\frac{x}{c_{2}(n+1)^{\gamma}}}}} \\
        &\geq \frac{c_{3}}{c_{2}^{2}(n+1)^{\gamma+1}\paren{1+\frac{1}{c_{2}N^{\gamma}}}^{2}} > 2 r.
    \end{align*}
    Therefore, any closed ball with radius $r$ cannot contain both of $1/(d_{n}+x)$ and $1/(d_{n+1}+x)$
    for $1 \leq n < N_{1}(r)$.
    Thus, if $r$ is sufficiently small, then we have
    \begin{equation*}
        k(r) \geq N_{1}(r).
    \end{equation*}
    Hence, we obtain
    \begin{equation}\label{eq:box-dim-lower-bound}
        \underline{\dim}_{B}(\mathcal{G}_{x}) = \liminf_{r\downarrow 0} \frac{\log k(r)}{-\log r} \geq \frac{1}{\gamma+1}.
    \end{equation}

    We set
    \begin{equation*}
        N_{2}(r) := \left\lceil \paren{\frac{1}{c_{1}}\cdot\paren{\frac{2c_{1}r}{\gamma}}^{-\frac{\gamma}{\gamma+1}} - x}^{1/\gamma} \right\rceil.
    \end{equation*}
    If we take $r > 0$ small enough to satisfy $N_{2}(r) > N$,
    then,
    \begin{equation*}
        \left[0, \frac{1}{d_{N_{2}(r)}+x}\right] \subset \left[0, \frac{1}{c_{1}N_{2}(r)^{\gamma}+x}\right] \subset \left[0, \paren{\frac{2c_{1}r}{\gamma}}^{\frac{\gamma}{\gamma+1}}\right].
    \end{equation*}
    Therefore, we have
    \begin{equation*}
        k(r) \leq T\left\{\paren{\frac{2c_{1}r}{\gamma}}^{\frac{\gamma}{\gamma+1}} \cdot \frac{1}{2r} + N_{2}(r)\right\} \leq T\paren{\const \cdot r^{-\frac{1}{\gamma+1}}},
    \end{equation*}
    where $\const$ is a constant independent of $r$.
    Thus, we obtain
    \begin{equation}\label{eq:box-dim-upper-bound}
        \overline{\dim}_{B}(\mathcal{G}_{x}) = \limsup_{r\downarrow 0} \frac{\log k(r)}{-\log r} \leq \frac{1}{\gamma+1}.
    \end{equation}

    It follows from \eqref{eq:box-dim-lower-bound} and \eqref{eq:box-dim-upper-bound} that
    \begin{equation*}
        \underline{\dim}_{B}(\mathcal{G}_{x}) = \overline{\dim}_{B}(\mathcal{G}_{x}) = \frac{1}{\gamma+1}.
    \end{equation*}
    Thus, we have proved our lemma.
\end{proof}

\begin{lem}\label{lem:box-dimension-of-f-poly}
    For all $x\in [0, 1]$, we have
    $\underline{\dim}_{B}(\mathcal{F}_{x}) = \overline{\dim}_{B}(\mathcal{F}_{x}) = \frac{1}{\gamma+1}$.
\end{lem}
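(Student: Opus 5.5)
We will mirror the proof of \Cref{lem:box-dimension-of-g-poly}, exploiting that $\mathcal{F}_{x}$ has the same structure as $\mathcal{G}_{x}$. For $x \in [0,1]$ and $\alpha = e(j/T)d_{n}$ we have
\begin{equation*}
    f_{\alpha}(x) = \frac{x + \overline{e(j/T)}d_{n}}{d_{n}^{2}-1} = \overline{e(j/T)}\,\frac{d_{n} + e(j/T)x}{d_{n}^{2}-1}.
\end{equation*}
So $\mathcal{F}_{x}$ is the union over $j$ of the rotated copies $\overline{e(j/T)}A_{j}$ of the sequences $A_{j} := \set{a_{n}^{(j)}}_{n}$, where $a_{n}^{(j)} := (d_{n}+e(j/T)x)/(d_{n}^{2}-1)$. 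Rotations preserve covering numbers. Therefore, with $k(r)$ the minimal number of closed $r$-balls covering $\mathcal{F}_{x}$, it suffices to bound the covering number of a single $A_{j}$ from below and of each $A_{j}$ from above. Losing the factor $T$ in the upper bound does not affect the dimension.

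For the lower bound we take $j=0$, so $a_{n}^{(0)} = (d_{n}+x)/(d_{n}^{2}-1)$ is real and decreasing in $n$. We estimate consecutive gaps:
\begin{equation*}
    a_{n}^{(0)} - a_{n+1}^{(0)} \geq \frac{1}{d_{n}+1} - \frac{1}{d_{n+1}-1} - (\text{correction}).
\end{equation*}
More cleanly, we write $a_{n}^{(0)} = \frac{1}{d_{n}-1} - \frac{1-x}{d_{n}^{2}-1}$ and compute the difference directly. The main term is $\frac{d_{n+1}-d_{n}}{(d_{n}-1)(d_{n+1}-1)} \gtrsim c_{3}c_{2}^{-2}(n+1)^{-(\gamma+1)}$. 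The correction $(1-x)\bigl(\frac{1}{d_{n}^{2}-1}-\frac{1}{d_{n+1}^{2}-1}\bigr)$ is nonnegative and is subtracted, so we must control it. It is bounded by $\frac{(d_{n+1}-d_{n})(d_{n+1}+d_{n})}{(d_{n}^{2}-1)(d_{n+1}^{2}-1)}$, which is smaller than the main term by a factor $\lesssim (d_{n}+d_{n+1})/((d_{n}+1)(d_{n+1}+1)) \leq 2/(d_{n}+1) \to 0$. Alternatively, we can avoid the issue entirely by noting $a_{n}^{(0)} = 1/(d_{n} - x) \cdot (\ldots)$. Indeed, $(d_{n}+x)/(d_{n}^{2}-1) \in [1/(d_{n}+1), 1/(d_{n}-1)]$, and on $n$ large a direct derivative estimate of $t \mapsto (t+x)/(t^{2}-1)$ (derivative $\asymp -t^{-2}$) combined with the mean value theorem gives gaps $\geq c\,(d_{n+1}-d_{n})/d_{n+1}^{2} \geq c'(n+1)^{-(\gamma+1)}$. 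I will use the mean value theorem route.

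Hence, for $N < n < N_{1}(r) \asymp r^{-1/(\gamma+1)}$, consecutive points are more than $2r$ apart. For small $n$, a fixed $r_{0}$ handles the separation as before. This gives $k(r) \gtrsim r^{-1/(\gamma+1)}$ and $\underline{\dim}_{B}(\mathcal{F}_{x}) \geq 1/(\gamma+1)$.

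For the upper bound, we use $|a_{n}^{(j)}| \leq 1/(d_{n}-1) \leq 2/(c_{1}n^{\gamma})$ for $n \geq N$ large, since $d_{n} \geq 2$. We choose $N_{2}(r) \asymp r^{-1/(\gamma+1)}$ so that all points with $n \geq N_{2}(r)$ lie in a disc of radius $\rho \asymp r^{\gamma/(\gamma+1)}$ around $0$. This disc is two-dimensional, so it needs $\asymp (\rho/r)^{2}$ balls, which is too many. Instead, we note that all $a_{n}^{(j)}$ for fixed $j$ lie on a curve close to a segment. Concretely, $a_{n}^{(j)} = \frac{d_{n}}{d_{n}^{2}-1} + e(j/T)\frac{x}{d_{n}^{2}-1}$, so the tail lies within distance $x/(d_{N_{2}}^{2}-1) = O(r^{2\gamma/(\gamma+1)}) \leq r/2$ of the real segment $[0,\rho]$ (valid since $\gamma \geq 1$). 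Covering that segment by $\asymp \rho/r$ balls of radius $r/2$ and doubling the radius covers the tail. Covering the first $N_{2}(r)$ points individually, we get $k(r) \leq T(\rho/r + N_{2}(r)) = O(r^{-1/(\gamma+1)})$, so $\overline{\dim}_{B}(\mathcal{F}_{x}) \leq 1/(\gamma+1)$.

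The main obstacle is precisely this two-dimensionality, absent for $\mathcal{G}$ only superficially. For real $x$, $\mathcal{G}_{x}$ also has rotated copies, but each lies on a line. Here the tail of $A_{j}$ only approximately lies on a line, and one must check that the transverse offset is $o(r)$. This is exactly the estimate above.
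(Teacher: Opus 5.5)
Your proposal is correct and follows essentially the same route as the paper. A gap bound $\gtrsim (n+1)^{-(\gamma+1)}$ for the real branch $j=0$ gives $\asymp r^{-1/(\gamma+1)}$ separated points for the lower bound, and for the upper bound you cover the first $N_{2}(r)\asymp r^{-1/(\gamma+1)}$ points individually and the tail near a segment of length $\asymp r^{\gamma/(\gamma+1)}$.

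The differences are minor. You obtain the gap from the mean value theorem (using $\abs{h'(t)}\geq t^{-2}$ for $h(t)=(t+x)/(t^{2}-1)$) rather than the paper's direct algebra. You also explicitly control the transverse offset $x/(d_{n}^{2}-1)$ of the rotated branches, a detail the paper leaves inside ``the same argument''. At $\gamma=1$ that offset is only $O(r)$, so take the constant in $N_{2}(r)$ large, or cover an $O(r)$-neighbourhood of the segment, which costs only a constant factor.
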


\begin{proof}
    Let $x \in [0, 1]$. For each $n \in \mathbb{N}$ with $n \geq N$, we have
    \begin{align*}
        \frac{d_{n}+x}{d_{n}^{2}-1} - \frac{d_{n+1}+x}{d_{n+1}^{2}-1} &= \frac{1 + x/d_{n}}{d_{n} - 1/d_{n}} - \frac{1 + x/d_{n+1}}{d_{n+1} - 1/d_{n+1}} \\
        &= \frac{(1 + x/d_{n})(d_{n+1}-1/d_{n+1}) - (1 + x/d_{n+1})(d_{n} - 1/d_{n})}{(d_{n} - 1/d_{n})(d_{n+1} - 1/d_{n+1})} \\
        &\geq \frac{d_{n+1} - d_{n} + x\paren{d_{n+1}/d_{n} - d_{n}/d_{n+1}} + 1/d_{n} - 1/d_{n+1}}{d_{n}d_{n+1}} \\
        &\geq \frac{d_{n+1} - d_{n}}{d_{n}d_{n+1}} \geq \frac{c_{3}n^{\gamma-1}}{c_{2}^{2}n^{\gamma}(n+1)^{\gamma}} \\
        &\geq \frac{c_{3}}{c_{2}^{2}} (n+1)^{-\gamma - 1}.
    \end{align*}
    We also have
    \begin{equation*}
        \bracket{0, \frac{d_{n}+x}{d_{n}^{2}-1}} \subset \bracket{0, \frac{c_{2}n^{\gamma}+1}{\paren{c_{1}n^{\gamma}}^{2}- 1}} \subset \bracket{0, \frac{c_{2} + \frac{1}{N^{\gamma}}}{c_{1}^{2}n^{\gamma} - \frac{1}{c_{1}N^{\gamma}}}}
    \end{equation*}
    for $n \geq N$.
    Therefore, by setting
    \begin{equation*}
        N_{1}(r) := \paren{\frac{2c_{2}^{2}r}{c_{3}}}^{-1/(\gamma+1)} - 1,  \quad N_{2}(r) := \left\lceil \paren{c_{1}^{-1}\paren{c_{2} + N^{-\gamma}}r^{-\gamma/(\gamma+1)} + N^{\gamma}}^{1/\gamma} \right\rceil,
    \end{equation*}
    we can show $\underline{\dim}_{B}(\mathcal{F}_{x}) = \overline{\dim}_{B}(\mathcal{F}_{x}) = \frac{1}{\gamma+1}$
    by the same argument as in the proof of \Cref{lem:box-dimension-of-g-poly}.
\end{proof}

\begin{cor}\label{cor:dimh-dimp-packing-infty}
    \begin{enumerate}
        \item If $h_{\mathcal{F}} \geq \frac{1}{\gamma+1}$, then $\dimp{J_{\mathcal{F}}} = h_{\mathcal{F}}$.
        \item If $h_{\mathcal{G}} \geq \frac{1}{\gamma+1}$, then $\dimp{J_{\mathcal{G}}} = h_{\mathcal{G}}$.
    \end{enumerate}
\end{cor}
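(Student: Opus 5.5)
This is a direct application of \Cref{thm:packing-dimension-and-box-dimension}, which gives $\dimp{J_{\mathcal{S}}} = \max\set{\dimh{J_{\mathcal{S}}}, \dimub{\mathcal{S}_{x}}}$ for any CIFS $\mathcal{S}$ and any $x\in X$, combined with the box-dimension computations of \Cref{lem:box-dimension-of-f-poly} and \Cref{lem:box-dimension-of-g-poly}.

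For (1), fix any $x\in[0,1]\subset\overline{\mathbb{D}}$, say $x=0$. By \Cref{lem:box-dimension-of-f-poly} we have $\dimub{\mathcal{F}_{x}} = \frac{1}{\gamma+1}$. Since $\mathcal{F}$ is a CIFS by the standing assumption of this section, \Cref{thm:packing-dimension-and-box-dimension} gives
\begin{equation*}
    \dimp{J_{\mathcal{F}}} = \max\set{h_{\mathcal{F}}, \tfrac{1}{\gamma+1}}.
\end{equation*}
Under the hypothesis $h_{\mathcal{F}} \geq \frac{1}{\gamma+1}$, the maximum equals $h_{\mathcal{F}}$, which proves (1).

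Part (2) is identical, using \Cref{lem:box-dimension-of-g-poly} for $\mathcal{G}_{x}$ and the fact that $\mathcal{G}$ is a CIFS.

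There is no genuine obstacle here. The only point to check is that the chosen base point $x$ lies in $X=\overline{\mathbb{D}}$ and in the range $[0,1]$ covered by the lemmas, which holds for $x=0$. Note also that the sets $\mathcal{F}_{x}$ and $\mathcal{G}_{x}$ contain all $T$ rotated copies of the points along the real direction. The lemmas' covering counts already include the factor $T$ in their upper bounds, and rotating does not change box dimension, so the value $\frac{1}{\gamma+1}$ applies to the full sets.
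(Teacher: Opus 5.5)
Your proposal is correct and follows the same argument as the paper: \Cref{thm:packing-dimension-and-box-dimension} combined with \Cref{lem:box-dimension-of-f-poly,lem:box-dimension-of-g-poly} gives $\dimp{J_{\mathcal{F}}} = \max\set{h_{\mathcal{F}}, \frac{1}{\gamma+1}}$ (and likewise for $\mathcal{G}$), and this equals $h_{\mathcal{F}}$ under the hypothesis. Choosing $x=0$ is a good choice, because then the $T$ rotated families in $\mathcal{F}_{0}$ and $\mathcal{G}_{0}$ are exact rotations of the real family, so your remark about rotation invariance holds literally at that base point.
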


\begin{proof}
    These are the direct consequences of \Cref{lem:box-dimension-of-f-poly,lem:box-dimension-of-g-poly} and \Cref{thm:packing-dimension-and-box-dimension}.
\end{proof}

\begin{cor}[\Cref{main2}-(b)]\label{cor:hausdorff-dim-equals-packing-dim-and-packing-measure-infty}
    \begin{enumerate}
        \item If $h_{\mathcal{F}} > \frac{1}{\gamma}$, then $\dimp{J_{\mathcal{F}}} = h_{\mathcal{F}}$ and $\Pi_{h_{\mathcal{F}}}(J_{\mathcal{F}}) = \infty$.
        \item If $h_{\mathcal{G}} > \frac{1}{\gamma}$, then $\dimp{J_{\mathcal{G}}} = h_{\mathcal{G}}$ and $\Pi_{h_{\mathcal{G}}}(J_{\mathcal{G}}) = \infty$.
    \end{enumerate}
\end{cor}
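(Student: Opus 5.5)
The claim is that if $h_{\mathcal{F}} > 1/\gamma$ then $\dimp{J_{\mathcal{F}}} = h_{\mathcal{F}}$ and $\Pi_{h_{\mathcal{F}}}(J_{\mathcal{F}}) = \infty$, and the same for $\mathcal{G}$. The plan is to combine two results already established. The dimension equality will come from \Cref{cor:dimh-dimp-packing-infty}. The infinite packing measure will come from part (2) of \Cref{thm:measure_poly_f} (respectively \Cref{thm:measure_poly_g}). No new estimates should be needed.

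\emph{Step 1: packing dimension.} Assume $h_{\mathcal{F}} > 1/\gamma$. Since $\gamma \geq 1 > 0$, we have $1/\gamma > 1/(\gamma+1)$, so $h_{\mathcal{F}} \geq 1/(\gamma+1)$. \Cref{cor:dimh-dimp-packing-infty} (1) then gives $\dimp{J_{\mathcal{F}}} = h_{\mathcal{F}}$.

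The reasoning behind that corollary is as follows. \Cref{thm:packing-dimension-and-box-dimension} gives $\dimp{J_{\mathcal{F}}} = \max\set{h_{\mathcal{F}}, \dimub{\mathcal{F}_{x}}}$. By \Cref{lem:box-dimension-of-f-poly}, $\dimub{\mathcal{F}_{x}} = 1/(\gamma+1)$. Hence the maximum is $h_{\mathcal{F}}$.

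\emph{Step 2: packing measure.} Regularity is needed, and it holds: by \Cref{prop:hereditarily_regular_poly}, $\mathcal{F}$ is hereditarily regular, hence regular. Part (2) of \Cref{thm:measure_poly_f} then gives $\Pi_{h_{\mathcal{F}}}(J_{\mathcal{F}}) = \infty$ under the hypothesis $h_{\mathcal{F}} > 1/\gamma$.

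That part is itself a consequence of \Cref{cor:packing_infty_when_hereditarily_regular}. By \Cref{prop:theta_poly}, $2\theta_{\mathcal{F}} = 1/\gamma$. So the hypothesis reads $h_{\mathcal{F}} > 2\theta_{\mathcal{F}}$, which gives vanishing density at $0 \in X_{\mathcal{F}}(\infty) \cap \Int(\overline{\mathbb{D}})$. \Cref{thm:packing-measure-x-infty} then turns this into infinite packing measure.

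\emph{Step 3: the system $\mathcal{G}$.} The argument for $\mathcal{G}$ is identical. Use \Cref{lem:box-dimension-of-g-poly}, \Cref{cor:dimh-dimp-packing-infty} (2), and \Cref{thm:measure_poly_g} (2).

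The only point that needs checking is that the hypotheses of the cited results are met. Regularity is required for $m_{\mathcal{F}}$ to exist, and it is supplied by \Cref{prop:hereditarily_regular_poly}. The point $0$ must lie in the interior of $\overline{\mathbb{D}}$, which is clear. Neither step is a real obstacle.
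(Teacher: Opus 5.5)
Your proposal is correct and follows the paper's proof: the paper likewise gets $\dimp{J_{\mathcal{F}}} = h_{\mathcal{F}}$ from \Cref{cor:dimh-dimp-packing-infty}, since $h_{\mathcal{F}} > 1/\gamma > 1/(\gamma+1)$, and gets $\Pi_{h_{\mathcal{F}}}(J_{\mathcal{F}}) = \infty$ from \Cref{thm:measure_poly_f}(2), and does the same for $\mathcal{G}$ via \Cref{thm:measure_poly_g}(2). Your check of the hypotheses behind those results is also accurate: hereditary regularity from \Cref{prop:hereditarily_regular_poly}, $2\theta_{\mathcal{F}} = 1/\gamma$, and $0 \in X_{\mathcal{F}}(\infty)\cap\Int(\overline{\mathbb{D}})$.
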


\begin{proof}
    The statements follow from Theorems \ref{thm:measure_poly_f}(2), \ref{thm:measure_poly_g}(2) and \Cref{cor:dimh-dimp-packing-infty}.
\end{proof}

\begin{cor}\label{cor:lower-box-dim-poly}
    We have the following lower bounds for the lower box dimensions of $J_{\mathcal{F}}$ and $J_{\mathcal{G}}$.
    \begin{enumerate}
        \item $\dimlb{J_{\mathcal{F}}} \geq \frac{1}{\gamma+1}$.
        \item $\dimlb{J_{\mathcal{G}}} \geq \frac{1}{\gamma+1}$.
    \end{enumerate}
\end{cor}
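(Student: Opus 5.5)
The plan is to bound the lower box dimension of $J_{\mathcal{F}}$ (resp.\ $J_{\mathcal{G}}$) from below by that of a well-separated subset of its closure whose box dimension we already know from \Cref{lem:box-dimension-of-f-poly} (resp.\ \Cref{lem:box-dimension-of-g-poly}). The lower box dimension is defined through minimal covering numbers. These are monotone under inclusion and unchanged under passing to the closure, so $\dimlb{A} \leq \dimlb{B}$ whenever $A \subset \overline{B}$. It therefore suffices to find a point $x \in [0,1]$ such that $\mathcal{F}_{x} \subset \overline{J_{\mathcal{F}}}$ and $\mathcal{G}_{x} \subset \overline{J_{\mathcal{G}}}$.

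For the first step I would take a fixed point of the system lying on the real axis. Consider the digit $d_{1}$ (with $j=0$). The map $f_{d_{1}}$ is a contraction of $\overline{\mathbb{D}}$, so it has a unique fixed point $x_{0}$, and $x_{0} \in J_{\mathcal{F}}$ since it is the point coded by the constant sequence. Since $f_{d_{1}}$ maps $[0,1]$ into itself (indeed into $[d_{1}/(d_{1}^{2}-1), (d_{1}+1)/(d_{1}^{2}-1)] \subset [0,1]$), we have $x_{0} \in [0,1]$. By \Cref{prop:self-similarity-of-limit-set}, $f_{e(j/T)d_{n}}(x_{0}) \in J_{\mathcal{F}}$ for every digit, so $\mathcal{F}_{x_{0}} \subset J_{\mathcal{F}}$. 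Similarly $g_{d_{1}}$ maps $[0,1]$ into $[1/(d_{1}+1), 1/d_{1}] \subset [0,1]$, so its fixed point $y_{0}$ lies in $[0,1] \cap J_{\mathcal{G}}$, and $\mathcal{G}_{y_{0}} \subset J_{\mathcal{G}}$. Existence of these fixed points uses only that each map is a contraction, which holds for both CIFSs.

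We then combine these inclusions with \Cref{lem:box-dimension-of-f-poly,lem:box-dimension-of-g-poly}, which hold for every $x \in [0,1]$:
\begin{equation*}
\dimlb{J_{\mathcal{F}}} \geq \dimlb{\mathcal{F}_{x_{0}}} = \frac{1}{\gamma+1}, \quad \dimlb{J_{\mathcal{G}}} \geq \dimlb{\mathcal{G}_{y_{0}}} = \frac{1}{\gamma+1}.
\end{equation*}

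The only point needing care is that the earlier lemmas require $x \in [0,1]$ real, which is why I choose the fixed point of the real-digit map rather than an arbitrary point of the limit set. Monotonicity of $\underline{\dim}_{B}$ under inclusion is standard and needs no further argument. I do not expect a genuine obstacle here.
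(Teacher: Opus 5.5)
Your proposal is correct and follows the paper's proof. The paper likewise fixes a point $x \in J_{\mathcal{F}} \cap [0,1]$, uses self-similarity to get $\mathcal{F}_{x} \subset J_{\mathcal{F}}$, and concludes by monotonicity of $\underline{\dim}_{B}$ together with \Cref{lem:box-dimension-of-f-poly} (and the same for $\mathcal{G}$). Your explicit choice of the fixed point of $f_{d_{1}}$ (resp.\ $g_{d_{1}}$) usefully justifies the nonemptiness of $J_{\mathcal{F}} \cap [0,1]$ (resp.\ $J_{\mathcal{G}} \cap [0,1]$), which the paper only asserts.
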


\begin{proof}
    We prove (1). Note that $J_{\mathcal{F}} \cap [0, 1] \ne \emptyset$ and
    fix $x \in J_{\mathcal{F}} \cap [0, 1]$.
    Since $\mathcal{F}_{x} \subset J_{\mathcal{F}}$ by \Cref{prop:self-similarity-of-limit-set},
    it follows from \Cref{lem:box-dimension-of-f-poly} that
    \begin{equation*}
        \dimlb{J_{\mathcal{F}}} \geq \underline{\dim}_{B}(\mathcal{F}_{x}) = \frac{1}{\gamma+1}.
    \end{equation*}
    The same argument works for (2) by using \Cref{lem:box-dimension-of-g-poly}.
\end{proof}

The following theorem is an analogue of \Cref{thm:MauldinUrbanski1999-6.1}.
\begin{thm}[\Cref{main2}-(c)]\label{thm:measure_poly_gamma_general_T_large}
    Suppose $T \geq c_{2}^{2/\gamma}\cdot (N+1)$. Then,
    \begin{equation*}
        h_{\mathcal{F}} > \frac{1}{\gamma}, \quad h_{\mathcal{G}} > \frac{1}{\gamma}.
    \end{equation*}
    Therefore, in particular, by \Cref{cor:hausdorff-dim-equals-packing-dim-and-packing-measure-infty},
    \begin{equation*}
        \dimp{J_{\mathcal{F}}} = h_{\mathcal{F}},\quad  \dimp{J_{\mathcal{G}}} = h_{\mathcal{G}}, \quad \Pi_{h_{\mathcal{F}}}(J_{\mathcal{F}}) = \Pi_{h_{\mathcal{G}}}(J_{\mathcal{G}}) = \infty.
    \end{equation*}
\end{thm}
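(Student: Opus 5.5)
The plan is to show that both pressure functions are strictly positive at $t = 1/\gamma$, and then read off $h_{\mathcal{F}}, h_{\mathcal{G}} > 1/\gamma$ from \Cref{thm:hausdorff-dimension-and-pressure}. The second assertion then follows directly from \Cref{cor:hausdorff-dim-equals-packing-dim-and-packing-measure-infty}.

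\textbf{Reduction to $P(1/\gamma)>0$.} Suppose $P_{\mathcal{S}}(1/\gamma) > 0$ for $\mathcal{S} \in \{\mathcal{F},\mathcal{G}\}$. By \Cref{prop:properties-of-pressure-function}(1), $P_{\mathcal{S}}(t) \geq P_{\mathcal{S}}(1/\gamma) > 0$ for all $t \leq 1/\gamma$. By \Cref{prop:theta_poly}, $1/\gamma > 1/(2\gamma) = \theta_{\mathcal{S}}$, so $1/\gamma$ lies in the interior of $F(\mathcal{S})$. Hence $P_{\mathcal{S}}$ is finite and continuous near $1/\gamma$, and therefore positive on some interval $[0, 1/\gamma + \delta)$. \Cref{thm:hausdorff-dimension-and-pressure} then gives $h_{\mathcal{S}} \geq 1/\gamma + \delta > 1/\gamma$.

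\textbf{Estimate for $\mathcal{G}$.} By \Cref{lem:pressure_ineq_g},
\begin{equation*}
P_{\mathcal{G}}(1/\gamma) \geq \log\paren{T\sum_{n=N}^{\infty} (d_{n}+1)^{-2/\gamma}}.
\end{equation*}
Since $\mathbf{d} \in \mathcal{D}$, we have $d_{n}+1 \leq d_{n+1}-1 < d_{n+1} \leq c_{2}(n+1)^{\gamma}$ for $n \geq N$. Therefore $(d_{n}+1)^{-2/\gamma} > c_{2}^{-2/\gamma}(n+1)^{-2}$. Telescoping gives
\begin{equation*}
\sum_{m \geq N+1} m^{-2} > \sum_{m\geq N+1}\frac{1}{m(m+1)} = \frac{1}{N+1}.
\end{equation*}
Combining these, the sum inside the logarithm is strictly larger than $T c_{2}^{-2/\gamma}/(N+1) \geq 1$ by the hypothesis on $T$, so $P_{\mathcal{G}}(1/\gamma) > 0$. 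This step is the main obstacle. The crude bound $d_n + 1 \leq c_2 n^\gamma + 1$ does not lead to the constant $c_{2}^{2/\gamma}(N+1)$. Instead, one has to absorb the $+1$ by shifting the index through the gap condition $d_{n+1}-d_{n}\geq 2$, and this shift is exactly what produces the factor $N+1$.

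\textbf{Estimate for $\mathcal{F}$.} By \Cref{lem:pressure_of_f},
\begin{equation*}
P_{\mathcal{F}}(1/\gamma) = \log\paren{T\sum_{n}(d_{n}^{2}-1)^{-1/\gamma}}.
\end{equation*}
For every $n \geq N$ we have $(d_{n}^{2}-1)^{-1/\gamma} > d_{n}^{-2/\gamma} > (d_{n}+1)^{-2/\gamma}$. So the same chain of inequalities as for $\mathcal{G}$ shows that the sum exceeds $1/T$, hence $P_{\mathcal{F}}(1/\gamma) > 0$. Alternatively, one can use $d_{n} \leq c_{2}n^{\gamma}$ directly, which gives the bound $T c_{2}^{-2/\gamma}/N > 1$.

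Applying the reduction step to both systems yields $h_{\mathcal{F}}, h_{\mathcal{G}} > 1/\gamma$. The packing-dimension and packing-measure conclusions then follow from \Cref{cor:hausdorff-dim-equals-packing-dim-and-packing-measure-infty}.
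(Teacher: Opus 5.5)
Your proof is correct and follows the paper's argument: bound $P(1/\gamma)$ from below by the first-level sums, absorb the $\pm 1$ by shifting to $d_{n+1}\le c_2(n+1)^\gamma$ via the gap condition $d_{n+1}-d_n\ge 2$, and compare with $\sum_{m\ge N+1}m^{-2}>1/(N+1)$. The paper uses an integral comparison where you telescope, and your explicit step from $P(1/\gamma)>0$ to $h>1/\gamma$ (using $\theta_{\mathcal{F}}=\theta_{\mathcal{G}}=1/(2\gamma)<1/\gamma$ and continuity of the pressure) fills in a step the paper leaves implicit.
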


\begin{proof}
    We prove the statement for $\mathcal{F}$.
    By \Cref{lem:pressure_of_f} and $d_{n}^{2} - 1 \leq d_{n+1}^{2}$, we have
    \begin{equation*}
        P_{\mathcal{F}}\paren{\frac{1}{\gamma}} = \log\paren{T\sum_{n=1}^{\infty} \frac{1}{(d_{n}^{2}-1)^{\frac{1}{\gamma}}}} \geq \log\paren{T\sum_{n=1}^{\infty} \frac{1}{d_{n+1}^{2/\gamma}}} \geq \log\paren{\frac{T}{c_{2}^{2/\gamma}} \sum_{n=N+1}^{\infty} \frac{1}{n^{2}}}.
    \end{equation*}
    Since $\sum_{n=N+1}^{\infty} \frac{1}{n^{2}} > \int_{N+1}^{\infty} \frac{1}{x^{2}} dx = \frac{1}{N+1}$,
    we obtain
    \begin{equation*}
        P_{\mathcal{F}}\paren{\frac{1}{\gamma}} > \log\paren{\frac{T}{c_{2}^{2/\gamma}\cdot (N+1)}} \geq 0.
    \end{equation*}
    Therefore, we have $h_{\mathcal{F}} > \frac{1}{\gamma}$.
    For $\mathcal{G}$, since $d_{n} + 1 < d_{n+1}$, we obtain
    \begin{equation*}
        P_{\mathcal{G}}\paren{\frac{1}{\gamma}} \geq \log\paren{T\sum_{n=1}^{\infty} \frac{1}{(d_{n}+1)^{2/\gamma}}} \geq \log\paren{T\sum_{n=1}^{\infty} \frac{1}{d_{n+1}^{2/\gamma}}} \geq \log\paren{\frac{T}{c_{2}^{2/\gamma}} \sum_{n=N+1}^{\infty} \frac{1}{n^{2}}}.
    \end{equation*}
    Therefore, we have $h_{\mathcal{G}} > \frac{1}{\gamma}$ similarly.
\end{proof}

\Cref{thm:measure_poly_gamma_general_T_large} and \Cref{thm:MauldinUrbanski1999-6.1}
suggest the following conjecture.

\begin{conjecture}
    Suppose $T \geq c_{2}^{2/\gamma}\cdot (N+1)$. Then,
    \begin{equation*}
        0 < H_{h_{\mathcal{F}}}(J_{\mathcal{F}}) < \infty, \ 0 < H_{h_{\mathcal{G}}}(J_{\mathcal{G}}) < \infty.
    \end{equation*}
\end{conjecture}

\subsection{Strict Inequality Between Hausdorff and Packing Dimensions}

In this subsection, we give an analogue of \Cref{thm:MauldinUrbanski1999-6.2},
which indicates that there are numerous examples of linear/nonlinear CIFSs with \textbf{(C3)}.

\begin{thm}[\Cref{main3}]\label{thm:dimension_gap_poly}
    Suppose $\gamma > 1$.
    For each $l \in \mathbb{N}$, define $\mathcal{F}^{(l)}$ and $\mathcal{G}^{(l)}$ by
    \begin{equation*}
        \mathcal{F}^{(l)} := \mathcal{F}(\set{d_{n}}_{n=l}^{\infty}, T), \quad \mathcal{G}^{(l)} := \mathcal{G}(\set{d_{n}}_{n=l}^{\infty}, T).
    \end{equation*}
    Then, there exists $q = q(\mathbf{d}, T) \in \mathbb{N}$ such that
    for all $l \in \mathbb{N}$ with $l \geq q$, we have
    \begin{align*}
        \dimh{J_{\mathcal{F}^{(l)}}} &< \dimlb{J_{\mathcal{F}^{(l)}}} \leq \dimub{J_{\mathcal{F}^{(l)}}} = \dimp{J_{\mathcal{F}^{(l)}}}, \\
        \dimh{J_{\mathcal{G}^{(l)}}} &< \dimlb{J_{\mathcal{G}^{(l)}}} \leq \dimub{J_{\mathcal{G}^{(l)}}} = \dimp{J_{\mathcal{G}^{(l)}}}.
    \end{align*}
\end{thm}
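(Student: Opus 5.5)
The plan has three steps: show that the Hausdorff dimension of the truncated system tends to $\theta=\frac{1}{2\gamma}$, bound the lower box dimension below by $\frac{1}{\gamma+1}$, and identify the upper box dimension with the packing dimension. The gap then comes from $\frac{1}{2\gamma}<\frac{1}{\gamma+1}$, which is exactly $\gamma>1$.

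\textbf{Step 1: $h_{\mathcal{F}^{(l)}}\to\frac{1}{2\gamma}$ and $h_{\mathcal{G}^{(l)}}\to\frac{1}{2\gamma}$.} The shifted sequence $\set{d_{n}}_{n=l}^{\infty}$ still lies in $\mathcal{D}$ and satisfies \eqref{eq:polynomial_growth_conditions} with suitably adjusted constants. Hence \Cref{prop:theta_poly} applies and gives $\theta_{\mathcal{F}^{(l)}}=\theta_{\mathcal{G}^{(l)}}=\frac{1}{2\gamma}$. Since each subsystem is obtained by deleting finitely many maps, it is still a CIFS. Now fix $t>\frac{1}{2\gamma}$. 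By \Cref{lem:pressure_of_f},
\begin{equation*}
P_{\mathcal{F}^{(l)}}(t)=\log\paren{T\sum_{n=l}^{\infty}(d_{n}^{2}-1)^{-t}}.
\end{equation*}
The sum is the tail of a convergent series, so $P_{\mathcal{F}^{(l)}}(t)\to-\infty$ as $l\to\infty$. For $\mathcal{G}^{(l)}$, \Cref{lem:pressure_ineq_g} bounds the pressure above by $\log\paren{T\sum_{n\ge l}(d_{n}-1)^{-2t}}$, which also tends to $-\infty$. By \Cref{thm:hausdorff-dimension-and-pressure}, for $l$ large we get $h_{\mathcal{F}^{(l)}}\le t$ and $h_{\mathcal{G}^{(l)}}\le t$. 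Combined with $h\ge\theta=\frac{1}{2\gamma}$, this gives $\lim_{l\to\infty}h_{\mathcal{F}^{(l)}}=\lim_{l\to\infty}h_{\mathcal{G}^{(l)}}=\frac{1}{2\gamma}$. Because $\gamma>1$, we have $\frac{1}{2\gamma}<\frac{1}{\gamma+1}$, so I choose $t\in\paren{\frac{1}{2\gamma},\frac{1}{\gamma+1}}$ and take $q$ large enough that both dimensions are at most $t$ for all $l\ge q$.

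\textbf{Step 2: lower box dimension.} \Cref{cor:lower-box-dim-poly}, applied to the shifted sequence, gives $\dimlb{J_{\mathcal{F}^{(l)}}}\ge\frac{1}{\gamma+1}>t\ge h_{\mathcal{F}^{(l)}}$. The same holds for $\mathcal{G}^{(l)}$. This is the strict inequality in the statement; the middle inequality $\dimlb{}\le\dimub{}$ is trivial.

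\textbf{Step 3: $\dimub{J}=\dimp{J}$.} For $\mathcal{F}^{(l)}$ every map is a similarity, hence bi-Lipschitz, and \Cref{thm:box-dimension-and-packing-dimension-biLipschitz-case} gives the equality directly. For $\mathcal{G}^{(l)}$, each $g_{\alpha}$ is a M\"obius map with derivative bounded above and below on $\overline{\mathbb{D}}$. It is injective on a convex neighbourhood, so its inverse is Lipschitz on the image and $g_{\alpha}$ is bi-Lipschitz on $\overline{\mathbb{D}}$. The same theorem then applies. Alternatively, one can argue via \Cref{thm:packing-dimension-and-box-dimension}: $\dimp{J}=\max\{h,\dimub{\mathcal{G}_{x}}\}=\frac{1}{\gamma+1}$ by \Cref{lem:box-dimension-of-g-poly}.

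\textbf{Main obstacle.} I expect the hardest part to be the bookkeeping in Step 1. One must check that the constants in \eqref{eq:polynomial_growth_conditions} transfer to the reindexed sequence $d'_{m}=d_{m+l-1}$, using for example $m^{\gamma}\le(m+l-1)^{\gamma}\le l^{\gamma}m^{\gamma}$. The key point is that the exponent $\gamma$ is unchanged even though $c_{1},c_{2},c_{3},N$ depend on $l$. This matters because the only quantities used from the lemmas, namely $\theta$ and the box dimension $\frac{1}{\gamma+1}$, depend on $\gamma$ alone.
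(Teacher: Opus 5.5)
Your proposal is correct and follows essentially the same route as the paper: make the pressure of the truncated system negative at a point below $\frac{1}{\gamma+1}$ via a tail sum, bound $\dimlb{J}\geq\frac{1}{\gamma+1}$ through the box dimension of the orbit $\mathcal{F}_{x}$ (resp.\ $\mathcal{G}_{x}$), and get $\dimub{J}=\dimp{J}$ from \Cref{thm:box-dimension-and-packing-dimension-biLipschitz-case}. The differences are cosmetic: you evaluate the pressure at some $t<\frac{1}{\gamma+1}$ rather than at $\frac{1}{\gamma+1}$ itself (which neatly avoids having to justify the strict inequality), you re-verify the growth constants for the shifted sequence where the paper simply notes that $\mathcal{F}_{x}\setminus\mathcal{F}^{(l)}_{x}$ is finite, and the bi-Lipschitz property of $g_{\alpha}$ is most directly seen from $\abs{g_{\alpha}(z)-g_{\alpha}(w)}=\abs{z-w}/(\abs{z+\alpha}\abs{w+\alpha})\geq\abs{z-w}/(\abs{\alpha}+1)^{2}$.
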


\begin{proof}
    By \Cref{thm:measure_poly_f},
    if $\gamma > 1$, we have
    \begin{equation*}
        \theta_{\mathcal{F}} = \frac{1}{2\gamma} < \frac{1}{\gamma + 1}
    \end{equation*}
    and $P_{\mathcal{F}}(1/(\gamma+1)) < \infty$.
    Therefore, by \Cref{lem:pressure_of_f},
    we obtain
    \begin{equation*}
        T\sum_{n=1}^{\infty} \frac{1}{(d_{n}^{2}-1)^{\frac{1}{\gamma+1}}} < \infty.
    \end{equation*}
    Thus, there exists $q = q(\mathbf{d}, T) \in \mathbb{N}$ such that
    for all $l \in \mathbb{N}$ with $l \geq q$, we have
    \begin{equation*}
        T\sum_{n=l}^{\infty} \frac{1}{(d_{n}^{2}-1)^{\frac{1}{\gamma+1}}} < 1.
    \end{equation*}
    This inequality and \Cref{lem:pressure_of_f} imply
    \begin{equation*}
        P_{\mathcal{F}^{(l)}}\paren{\frac{1}{\gamma+1}} = \log\paren{T\sum_{n=l}^{\infty} \frac{1}{(d_{n}^{2}-1)^{\frac{1}{\gamma+1}}}} < 0.
    \end{equation*}
    Therefore, by \Cref{thm:hausdorff-dimension-and-pressure} and \Cref{prop:hereditarily_regular_poly},
    we have
    \begin{equation}\label{eq:dimension_gap_poly_1}
        \dimh{J_{\mathcal{F}^{(l)}}}  < \frac{1}{\gamma+1}.
    \end{equation}
    Since $\mathcal{F}_{x}\setminus \mathcal{F}^{(l)}_{x}$ is a finite set for any $x \in [0, 1] \cap J_{\mathcal{F}} \cap J_{\mathcal{F}^{(l)}}$,
    the same argument as in the proof of \Cref{cor:lower-box-dim-poly} implies
    \begin{equation}\label{eq:finite_removed}
        \dimlb{J_{\mathcal{F}^{(l)}}} \geq \dimlb{\mathcal{F}^{(l)}_{x}} = \dimlb{\mathcal{F}_{x}} = \frac{1}{\gamma+1}.
    \end{equation}
    Thus, by \Cref{thm:box-dimension-and-packing-dimension-biLipschitz-case}, \eqref{eq:dimension_gap_poly_1} and \eqref{eq:finite_removed},
    we obtain
    \begin{equation*}
        \dimh{J_{\mathcal{F}^{(l)}}} < \dimlb{J_{\mathcal{F}^{(l)}}} \leq \dimub{J_{\mathcal{F}^{(l)}}} = \dimp{J_{\mathcal{F}^{(l)}}}.
    \end{equation*}
    The same argument works for $\mathcal{G}^{(l)}$.
\end{proof}

\section{Comparison of the Hausdorff dimensions of \texorpdfstring{$J_{\mathcal{F}}$ and $J_{\mathcal{G}}$}{J\_F and J\_G}}\label{sec:comparison-of-dimension-f-and-g}
\subsection{Motivation}
In this subsection, we compare the Hausdorff dimension of $J_{\mathcal{F}}$ and that of $J_{\mathcal{G}}$.
In \Cref{sec:f-and-g,sec:relationship_between_increasing_rate_and_measures},
we showed that $\mathcal{F}$ and $\mathcal{G}$ have similar properties.
Here, we refer to the following two results from \cite{MauldinUrbanski1996}.

\begin{prop}[\cite{MauldinUrbanski1996} Proposition 4.4]\label{prop:MauldinUrbanski1996-4.4}
    Let $\mathcal{S}$ be a CIFS on a compact subset $X$ of $\mathbb{R}^{d}$.
    Suppose that $\Leb_{d}\paren{\Int(X) \setminus J_{\mathcal{S}}^{(1)}} > 0$.
    Then, there exists $\mu \in (0, 1)$ such that for all $n \in \mathbb{N}$,
    $\Leb_{d}\paren{J_{\mathcal{S}}^{(n)}} \leq \mu^{n} \Leb_{d}\paren{X}$.
    In particular, $\Leb_{d}\paren{J_{\mathcal{S}}} = 0$.
\end{prop}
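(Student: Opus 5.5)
Set $U := \Int(X)\setminus J_{\mathcal{S}}^{(1)}$. This is an open set: by \Cref{prop:closure-of-limit-set} and the compactness of each $s_i(X)$, the closure of $J_{\mathcal{S}}^{(1)}$ differs from it only by points of $X_{\mathcal{S}}(\infty)$, which is Lebesgue-null in this setting. So we may work with $\Leb_d(U)>0$. The plan is to show that a fixed proportion of each cylinder $s_\omega(X)$ is removed at the next level, and then iterate.

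\textbf{Key estimate.} We will show there is $\delta>0$ such that
\begin{equation*}
\Leb_d\paren{s_\omega(X)\setminus s_\omega(J_{\mathcal{S}}^{(1)})}\geq \delta\,\Leb_d(s_\omega(X))
\end{equation*}
for every finite word $\omega$. The left side contains $s_\omega(U)$. By conformality and the change of variables formula,
\begin{equation*}
\Leb_d(s_\omega(U))=\int_U |s_\omega'|^d\,d\Leb_d \geq K^{-d}\|s_\omega'\|^d\Leb_d(U),
\end{equation*}
using (BDP), and similarly $\Leb_d(s_\omega(X))\leq \|s_\omega'\|^d\Leb_d(X)$. Hence $\delta=K^{-d}\Leb_d(U)/\Leb_d(X)$ works. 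Injectivity of $s_\omega$ on $X$ is used here.

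\textbf{Iteration.} We have $J_{\mathcal{S}}^{(n+1)}=\bigcup_{\omega\in I^n}s_\omega(J_{\mathcal{S}}^{(1)})$, and $J_{\mathcal{S}}^{(n+1)}\cap s_\omega(X)\subset s_\omega(J_{\mathcal{S}}^{(1)})$ up to overlaps with other level-$n$ cylinders. By (OSC), distinct cylinders $s_\omega(\Int X)$ are disjoint. Moreover, the boundaries $s_\omega(\partial X)$ are null sets: by (CC), $\partial X$ is porous, so $\Leb_d(\partial X)=0$, and each $s_\omega$ is a $C^1$ diffeomorphism. Therefore
\begin{equation*}
\Leb_d(J_{\mathcal{S}}^{(n+1)})\leq \sum_{\omega\in I^n}\Leb_d(s_\omega(J_{\mathcal{S}}^{(1)}))\leq (1-\delta)\sum_{\omega}\Leb_d(s_\omega(X))=(1-\delta)\Leb_d(J_{\mathcal{S}}^{(n)}),
\end{equation*}
where the final sum equals $\Leb_d(J_{\mathcal{S}}^{(n)})$ by the a.e. disjointness just noted. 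Induction gives the bound with $\mu=1-\delta$. Finally, \Cref{lem:Borel-measurability-of-limit-set} yields $\Leb_d(J_{\mathcal{S}})\leq\inf_n\mu^n\Leb_d(X)=0$.

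\textbf{Main obstacle.} The delicate step is the measure-theoretic bookkeeping: showing that boundaries are null, so that the sums over cylinders equal the measures of their unions. This is where (CC) is needed. The distortion estimate itself is routine.
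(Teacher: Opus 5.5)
Your argument is correct and is essentially the standard Mauldin--Urba\'nski proof cited here. (BDP) shows that a uniform proportion $\delta=K^{-d}\Leb_{d}(U)/\Leb_{d}(X)$ of each cylinder $s_\omega(X)$ is missed at the next level, and (OSC) together with $\Leb_{d}(\partial X)=0$ (from the cone condition) lets you sum over cylinders of a fixed length and iterate.

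The only flaw is your opening paragraph, which is both false and unnecessary. First, $U$ need not be open: for the paper's own $\mathcal{F}$ and $\mathcal{G}$ one has $0\in U$, yet every disc about $0$ meets $J_{\mathcal{F}}^{(1)}$. Second, $X_{\mathcal{S}}(\infty)$ can have positive Lebesgue measure: take $s_i$ affine onto the closed gaps of a fat Cantor set in $[0,1]$. Neither claim is used later; all your subsequent steps need is that $U$ is Borel, which holds because $J_{\mathcal{S}}^{(1)}$ is a countable union of compact sets.
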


\begin{thm}[\cite{MauldinUrbanski1996} Theorem 4.5]\label{thm:MauldinUrbanski1996-4.5}
    Let $\mathcal{S}$ be a CIFS on a compact set $X$ of $\mathbb{R}^{d}$.
    \begin{enumerate}
        \item If $\mathcal{S}$ is regular and $\Leb_{d}\paren{\Int(X) \setminus J_{\mathcal{S}}^{(1)}} > 0$,
            then $h_{\mathcal{S}} < d$.
        \item If $\Leb_{d}\paren{X \setminus J_{\mathcal{S}}^{(1)}} = 0$,
            then $\mathcal{S}$ is regular and $0 < \Leb_{d}\paren{J_{\mathcal{S}}} = \Leb_{d}\paren{X} < \infty$.
        In particular, $h_{\mathcal{S}} = d$ and $m_{\mathcal{S}} = \Leb_{d}|_{X} / \Leb_{d}\paren{X}$.
    \end{enumerate}
\end{thm}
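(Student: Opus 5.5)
The plan is to prove both parts by comparing Lebesgue measures of the cylinder sets $s_{\omega}(X)$ with the quantities $\abs{s_{\omega}'}^{d}$ that enter the pressure function at $t=d$. This works through change of variables and (BDP). Two facts are used throughout. First, for $\omega \in I^{n}$ we have $\Leb_{d}(s_{\omega}(A)) = \int_{A}\abs{s_{\omega}'(x)}^{d}\,dx$, since $s_{\omega}$ is conformal. Combined with (BDP), this gives
\begin{equation*}
K^{-d}\sup_{X}\abs{s_{\omega}'}^{d}\Leb_{d}(A) \leq \Leb_{d}(s_{\omega}(A)) \leq \sup_{X}\abs{s_{\omega}'}^{d}\Leb_{d}(A).
\end{equation*}
Second, $\Leb_{d}(\partial X) = 0$. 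This follows from (CC): every point of $\partial X$ has a cone of fixed angle and altitude lying in $\Int(X)$, so the Lebesgue density of $\partial X$ is at most some $c<1$ at each of its points. The Lebesgue density theorem then forces $\Leb_{d}(\partial X)=0$.

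For part (1), I would argue by contradiction. Since $J_{\mathcal{S}} \subset \mathbb{R}^{d}$, we always have $h_{\mathcal{S}} \leq d$, so suppose $h_{\mathcal{S}} = d$. Regularity and \Cref{thm:existence-and-uniqueness-of-conformal-measure-for-hausdorff-dimension} give $P_{\mathcal{S}}(d) = 0$. Since $P_{\mathcal{S}}$ is the infimum over $N$ in \Cref{def:pressure_function}, this yields $\sum_{\omega\in I^{n}}\sup_{X}\abs{s_{\omega}'}^{d} \geq 1$ for every $n$. By (OSC) applied inductively, the sets $s_{\omega}(\Int(X))$ with $\omega\in I^{n}$ are pairwise disjoint and contained in $J_{\mathcal{S}}^{(n)}$. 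Hence
\begin{equation*}
\Leb_{d}\paren{J_{\mathcal{S}}^{(n)}} \geq K^{-d}\Leb_{d}(\Int(X)) \sum_{\omega\in I^{n}}\sup_{X}\abs{s_{\omega}'}^{d} \geq K^{-d}\Leb_{d}(\Int(X)) > 0.
\end{equation*}
This contradicts \Cref{prop:MauldinUrbanski1996-4.4}, which gives $\Leb_{d}(J_{\mathcal{S}}^{(n)}) \leq \mu^{n}\Leb_{d}(X) \to 0$. Therefore $h_{\mathcal{S}}<d$.

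For part (2), I would first show by induction that $\Leb_{d}(X\setminus J_{\mathcal{S}}^{(n)})=0$ for all $n$. The inductive step uses the inclusion
\begin{equation*}
X\setminus J_{\mathcal{S}}^{(n+1)} \subset (X\setminus J_{\mathcal{S}}^{(1)}) \cup \bigcup_{i\in I} s_{i}(X\setminus J_{\mathcal{S}}^{(n)}),
\end{equation*}
together with the fact that the $C^{1}$ maps $s_{i}$ send null sets to null sets, and countability of $I$. By \Cref{lem:Borel-measurability-of-limit-set}, $J_{\mathcal{S}} = \bigcap_{n} J_{\mathcal{S}}^{(n)}$, so $\Leb_{d}(X\setminus J_{\mathcal{S}})=0$. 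Thus $\Leb_{d}(J_{\mathcal{S}}) = \Leb_{d}(X)$, which is positive because $\Int(X)\neq\emptyset$ by (CC) and finite because $X$ is compact. In particular $h_{\mathcal{S}}=d$.

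For regularity, the sets $s_{\omega}(X)$ with $\omega\in I^{n}$ overlap only in the null sets $s_{\omega}(\partial X)$, by (OSC) and $\Leb_{d}(\partial X)=0$. They also cover $X$ up to a null set. Hence $\sum_{\omega\in I^{n}}\Leb_{d}(s_{\omega}(X)) = \Leb_{d}(X)$. Combined with the two-sided distortion estimate, this bounds $\sum_{\omega\in I^{n}}\sup_{X}\abs{s_{\omega}'}^{d}$ between $\Leb_{d}(X)^{-1}\Leb_{d}(X)=1$ and $K^{d}$ for all $n$, so $P_{\mathcal{S}}(d)=0$ and $\mathcal{S}$ is regular.

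Finally, I would verify that $\Leb_{d}|_{X}/\Leb_{d}(X)$ satisfies the three conditions of \Cref{def:conformal-measure} with $t=d$. Condition (1) is the full-measure statement just proved, (2) is the change-of-variables formula, and (3) is the null-overlap fact. Uniqueness in \Cref{thm:existence-and-uniqueness-of-conformal-measure} then identifies this measure with $m_{\mathcal{S}}$.

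The step I expect to need the most care is the claim $\Leb_{d}(\partial X)=0$ from (CC), and more generally the handling of boundary overlaps. The density argument sketched above should suffice. In part (1) I deliberately avoid this issue by working only with the disjoint images of $\Int(X)$.
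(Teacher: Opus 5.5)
Your argument is correct. The paper gives no proof of this statement to compare against: it is quoted from Mauldin--Urba\'{n}ski (Theorem 4.5 of \cite{MauldinUrbanski1996}). What you wrote is a sound, self-contained derivation from results the paper does record, namely \Cref{prop:MauldinUrbanski1996-4.4}, \Cref{lem:Borel-measurability-of-limit-set} and \Cref{thm:existence-and-uniqueness-of-conformal-measure}. It uses the standard comparison of $\Leb_{d}(s_{\omega}(\cdot))$ with $\abs{s_{\omega}'}^{d}$ via conformality and (BDP).

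A few minor remarks.

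\begin{enumerate}
    \item In part (1), your inductive use of (OSC) implicitly needs $s_{i}(\Int(X)) \subset \Int(X)$. This holds because each $s_{i}$ extends to a diffeomorphism of the open set $V \supset X$, so $s_{i}(\Int(X))$ is open and contained in $X$.
    \item In part (2), the partition-sum computation is redundant. Once you have checked that $\Leb_{d}|_{X}/\Leb_{d}(X)$ satisfies the three conditions of a $d$-conformal measure, \Cref{thm:existence-and-uniqueness-of-conformal-measure}(2) already gives $P_{\mathcal{S}}(d)=0$, hence regularity. Part (1) of that theorem then identifies the measure with $m_{\mathcal{S}}$.
    \item For the same reason, $\Leb_{d}(\partial X)=0$ is needed only for condition (3) of the conformal measure; your cone/density argument for it is fine. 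The bounds
    \[
        1 \leq \sum_{\omega \in I^{n}} \sup_{X}\abs{s_{\omega}'}^{d} \leq K^{d}\,\Leb_{d}(X)/\Leb_{d}(\Int(X))
    \]
    follow from the covering property and the disjointness of the sets $s_{\omega}(\Int(X))$ alone.
\end{enumerate}
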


These results show that the property of $J_{\mathcal{S}}^{(1)}$ has a strong influence
on the measure-theoretic properties of the limit set $J_{\mathcal{S}}$.
Since $J_{\mathcal{F}}^{(1)} = J_{\mathcal{G}}^{(1)}$ by \Cref{prop:comparison_of_f_and_g},
it is natural to expect that $J_{\mathcal{F}}$ and $J_{\mathcal{G}}$ have similar properties.

On the other hand,
it is interesting to investigate how different the properties of $\mathcal{F}$ and $\mathcal{G}$ are.
Therefore, in this section, we focus on the Hausdorff dimensions of $J_{\mathcal{F}}$ and $J_{\mathcal{G}}$,
and make the differences between them clear.

Throughout this section, we assume that both $\mathcal{F} = \mathcal{F}(\mathbf{d}, T)$ and $\mathcal{G} = \mathcal{G}(\mathbf{d}, T)$
are CIFSs for $\mathbf{d} = \set{d_{n}}_{n=1}^{\infty} \in \mathcal{D}$ and $T \in \mathbb{N}$.

\subsection{The Case of Irregular Systems}
We first consider the case where either $\mathcal{F}$ or $\mathcal{G}$ is irregular.

\begin{thm}[\Cref{main4}]\label{thm:comparison-of-dimension-f-and-g-irregular}
    \begin{enumerate}
        \item Suppose $\mathcal{F}$ is irregular.
            Then, we have $h_{\mathcal{F}} \leq h_{\mathcal{G}}$.
        \item Suppose $\mathcal{G}$ is irregular.
            Then, we have $h_{\mathcal{G}} \leq h_{\mathcal{F}}$.
    \end{enumerate}
\end{thm}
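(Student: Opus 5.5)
The plan is to show that an irregular system has Hausdorff dimension equal to $\theta$, and then use the equality $\theta_{\mathcal{F}} = \theta_{\mathcal{G}}$ from \Cref{thm:comparison_of_f_and_g}(1) together with the general lower bound $\theta_{\mathcal{S}} \leq h_{\mathcal{S}}$ from \Cref{thm:hausdorff-dimension-and-pressure}.

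For (1), suppose $\mathcal{F}$ is irregular. I first claim that $h_{\mathcal{F}} = \theta_{\mathcal{F}}$. By \Cref{prop:properties-of-pressure-function}, $P_{\mathcal{F}}$ is continuous on $F(\mathcal{F})$ and strictly decreasing on $[\theta_{\mathcal{F}}, \infty)$. Suppose $P_{\mathcal{F}}(t_0) \geq 0$ for some $t_0 > \theta_{\mathcal{F}}$. The pressure tends to $-\infty$ as $t \to \infty$. For $\mathcal{F}$ this is explicit from \Cref{lem:pressure_of_f}, since $T\sum_n (d_n^2-1)^{-t} \to 0$ by dominated convergence, using $d_n^2 - 1 \geq 3$ and $d_n \to \infty$. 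The intermediate value theorem on $[t_0, \infty) \subset F(\mathcal{F})$ would then give a zero of $P_{\mathcal{F}}$, contradicting irregularity. Hence $P_{\mathcal{F}}(t) < 0$ for every $t > \theta_{\mathcal{F}}$, and \Cref{thm:hausdorff-dimension-and-pressure} gives $h_{\mathcal{F}} \leq \theta_{\mathcal{F}}$. Combining this with $\theta_{\mathcal{F}} \leq h_{\mathcal{F}}$ yields $h_{\mathcal{F}} = \theta_{\mathcal{F}}$. Then
\begin{equation*}
h_{\mathcal{F}} = \theta_{\mathcal{F}} = \theta_{\mathcal{G}} \leq h_{\mathcal{G}}.
\end{equation*}

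Part (2) is identical with the roles of $\mathcal{F}$ and $\mathcal{G}$ swapped. For $\mathcal{G}$, the fact that $P_{\mathcal{G}}(t) \to -\infty$ follows from the upper bound in \Cref{lem:pressure_ineq_g}, since $T\sum_n (d_n-1)^{-2t} \to 0$ as $t \to \infty$. This uses $d_n - 1 \geq 1$ with $d_n - 1 \to \infty$; if $d_1 = 2$ the first term is $1$, so I will instead use the strict CIFS bound $\sup_i \Lip(g_i) < 1$ from the definition of an IFS, which makes every $\sup|g_i'| \leq \lambda < 1$ and lets the tail argument go through.

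The main obstacle is the possibility $\theta = \infty$, which would leave the intermediate-value interval empty and the argument vacuous. This does not occur: $d_n \geq 2n$, so $\sum_n (d_n^2-1)^{-t} < \infty$ for $t > 1/2$, giving $\theta \leq 1/2$. One boundary subtlety remains: a sign change inside $(\theta, \infty)$ is excluded by the argument above, while a zero exactly at $\theta$ would itself make the system regular. So the equality $h = \theta$ for irregular systems is sound.
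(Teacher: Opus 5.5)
Your proof is correct and takes the paper's route: irregularity forces $h = \theta$, and then $h_{\mathcal{F}} = \theta_{\mathcal{F}} = \theta_{\mathcal{G}} \leq h_{\mathcal{G}}$, with (2) symmetric. Your intermediate-value argument spells out the step the paper compresses into ``$P_{\mathcal{F}}(h_{\mathcal{F}}) < 0$, hence $h_{\mathcal{F}} = \theta_{\mathcal{F}}$''; note also that the case $d_{1} = 2$ never arises for $\mathcal{G}$, since requiring $\mathcal{G}$ to be an IFS already forces $d_{1} > 2$.
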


\begin{proof}
    We prove (1). Since $\mathcal{F}$ is irregular,
    we have $P_{\mathcal{F}}(h_{\mathcal{F}}) < 0$.
    Therefore, \Cref{thm:comparison_of_f_and_g,thm:hausdorff-dimension-and-pressure} imply
    \begin{equation*}
        h_{\mathcal{F}}=\theta_{\mathcal{F}} = \theta_{\mathcal{G}} \leq h_{\mathcal{G}}.
    \end{equation*}
    The same argument works for (2).
\end{proof}

\subsection{The Case of \texorpdfstring{$T=1$}{T=1}}

We next consider the case where $T=1$.
When $T=1$, it is easy to estimate the pressure of $\mathcal{G}$ and compare $h_{\mathcal{F}}$ and $h_{\mathcal{G}}$.

\begin{lem}\label{lem:pressure_ineq_T1}
    Let $\mathbf{d} = \set{d_{n}}_{n=1}^{\infty}\in\mathcal{D}$ and $T=1$.
    For $t \geq 0$, we have
    \begin{equation*}
        \log\paren{\sum_{n=1}^{\infty} \frac{1}{(d_{n}+1)^{2t}}} \leq P_{\mathcal{G}}(t) \leq \log\paren{\sum_{n=1}^{\infty} \frac{1}{d_{n}^{2t}}} \leq \log\paren{\sum_{n=1}^{\infty} \frac{1}{(d_{n}^{2}-1)^{t}}} = P_{\mathcal{F}}(t).
    \end{equation*}
\end{lem}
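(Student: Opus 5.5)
The plan is to prove the chain of inequalities one link at a time. The leftmost inequality is \Cref{lem:pressure_ineq_g} with $T=1$. The equality on the right is \Cref{lem:pressure_of_f} with $T=1$. The third inequality is trivial: since $d_{n}^{2}-1 \leq d_{n}^{2}$, we get $(d_{n}^{2})^{-t} \leq (d_{n}^{2}-1)^{-t}$ for $t \geq 0$, and $\log$ is monotone. So the only real content is the middle upper bound
\begin{equation*}
    P_{\mathcal{G}}(t) \leq \log\paren{\sum_{n=1}^{\infty} \frac{1}{d_{n}^{2t}}}.
\end{equation*}
This improves on the crude bound $(d_{n}-1)^{-2t}$ of \Cref{lem:pressure_ineq_g}, which came from taking suprema over all of $\overline{\mathbb{D}}$.

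To get it, I will not use suprema. Instead I will use the pointwise expression of the pressure from \Cref{lem:other-expressions-of-pressure-function}, evaluated at the single point $x = 0 \in \overline{\mathbb{D}}$:
\begin{equation*}
    P_{\mathcal{G}}(t) = \lim_{N\to\infty}\frac{1}{N}\log\paren{\sum_{\omega \in I^{N}} \abs{g_{\omega}'(0)}^{t}}.
\end{equation*}
When $T=1$, every digit is a positive real number $d_{n} \geq 2$. For $\omega$ corresponding to digits $\alpha_{1}, \dots, \alpha_{N} \in \set{d_{n}}_{n\in\mathbb{N}}$, \Cref{prop:derivative} gives
\begin{equation*}
    \abs{g_{\omega}'(0)} = \prod_{j=1}^{N}[\alpha_{j}, \dots, \alpha_{N}+0]^{2}.
\end{equation*}

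The key observation is that each finite continued fraction here is a real number satisfying
\begin{equation*}
    0 < [\alpha_{j}, \dots, \alpha_{N}] = \frac{1}{\alpha_{j} + [\alpha_{j+1}, \dots, \alpha_{N}]} \leq \frac{1}{\alpha_{j}}.
\end{equation*}
This holds because all entries are positive reals, so by backward induction on $j$ the tail $[\alpha_{j+1},\dots,\alpha_{N}]$ is nonnegative; when $j=N$ the tail is just $0$. This positivity is the one point that needs care, and it is exactly where $T=1$ and the choice $x=0$ are used. For $T>1$ the tails are complex and could reduce $\abs{\alpha_{j}+\cdot}$ below $\abs{\alpha_{j}}$.

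Hence $\abs{g_{\omega}'(0)}^{t} \leq \prod_{j=1}^{N}\alpha_{j}^{-2t}$. Summing over all $\omega \in I^{N}$ factorizes the sum:
\begin{equation*}
    \sum_{\omega \in I^{N}} \abs{g_{\omega}'(0)}^{t} \leq \paren{\sum_{n=1}^{\infty} d_{n}^{-2t}}^{N}.
\end{equation*}
Taking $\frac{1}{N}\log$ and letting $N\to\infty$ gives the middle inequality. This holds also when the series diverges, in which case the bound is trivially $+\infty$.
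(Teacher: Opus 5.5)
Your proposal is correct and follows essentially the same route as the paper. Both evaluate the pressure at $z=0$ via \Cref{lem:other-expressions-of-pressure-function}, bound each positive real continued fraction $[\alpha_{j},\dots,\alpha_{N}]$ by $1/\alpha_{j}$, and factorize the sum over $I^{N}$ into $\paren{\sum_{n} d_{n}^{-2t}}^{N}$; the remaining links come from \Cref{lem:pressure_of_f,lem:pressure_ineq_g} and the trivial comparison $d_{n}^{2}-1\leq d_{n}^{2}$.
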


\begin{proof}
    As for the second inequality, we have
    \begin{align}
        P_{\mathcal{G}}(t) &= \lim_{N\to\infty} \frac{1}{N}\log\paren{\sum_{(n_{1}, \dots, n_{N})\in\mathbb{N}^{N}} \left|\paren{g_{d_{n_{1}}} \circ g_{d_{n_{2}}} \circ \dots \circ g_{d_{n_{N}}}}'(0)\right|^{t}} \notag \\
        &= \lim_{N\to\infty} \frac{1}{N}\log\paren{\sum_{(n_{1}, \dots, n_{N})\in\mathbb{N}^{N}} \prod_{j=1}^{N} [d_{n_{j}}, d_{n_{j+1}}, \dots, d_{n_{N}}]^{2t}} \notag \\
        &\leq \lim_{N\to\infty} \frac{1}{N}\log\paren{\sum_{(n_{1}, \dots, n_{N})\in\mathbb{N}^{N}} \prod_{j=1}^{N} \frac{1}{d_{n_{j}}^{2t}}} \notag \\
        &= \lim_{N\to\infty} \frac{1}{N}\log\paren{\paren{\sum_{n=1}^{\infty} \frac{1}{d_{n}^{2t}}}^{N}} \notag \\
        &= \log\paren{\sum_{n=1}^{\infty} \frac{1}{d_{n}^{2t}}}. \notag 
    \end{align}
    The remaining inequalities follow from \Cref{lem:pressure_of_f,lem:pressure_ineq_g}.
\end{proof}

\begin{thm}[\Cref{main5}]\label{thm:main_theorem_dim1_T1}
    Let $\mathbf{d} \in \mathcal{D}$.
    If $\mathcal{G} = \mathcal{G}(\mathbf{d}, 1)$ is a regular CIFS,
    then we have $h_{\mathcal{F}\paren{\mathbf{d}, 1}} > h_{\mathcal{G}\paren{\mathbf{d}, 1}}$.
\end{thm}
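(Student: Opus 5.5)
The plan is to compare the pressure functions at the single point $t = h_{\mathcal{G}}$ and then use monotonicity and continuity of $P_{\mathcal{F}}$ to push $h_{\mathcal{F}}$ strictly past $h_{\mathcal{G}}$. Throughout, write $h_{\mathcal{G}} = h_{\mathcal{G}(\mathbf{d},1)}$ and $h_{\mathcal{F}} = h_{\mathcal{F}(\mathbf{d},1)}$.

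First, since $\mathcal{G}$ is regular, \Cref{thm:existence-and-uniqueness-of-conformal-measure-for-hausdorff-dimension} gives $P_{\mathcal{G}}(h_{\mathcal{G}}) = 0$. We also have $h_{\mathcal{G}} > 0$: the index set is infinite, so $P_{\mathcal{G}}(0) = \infty$ by \Cref{lem:pressure_ineq_g}.

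Next, apply \Cref{lem:pressure_ineq_T1} at $t = h_{\mathcal{G}}$:
\begin{equation*}
    0 = P_{\mathcal{G}}(h_{\mathcal{G}}) \leq \log\paren{\sum_{n=1}^{\infty} \frac{1}{d_{n}^{2h_{\mathcal{G}}}}} \leq \log\paren{\sum_{n=1}^{\infty} \frac{1}{(d_{n}^{2}-1)^{h_{\mathcal{G}}}}} = P_{\mathcal{F}}(h_{\mathcal{G}}).
\end{equation*}
For $t > 0$ we have $(d_{n}^{2}-1)^{-t} > d_{n}^{-2t}$ termwise, so the last inequality is strict as soon as the sums are finite. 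Finiteness holds because $P_{\mathcal{G}}(h_{\mathcal{G}}) = 0 < \infty$. Hence the lower bound in \Cref{lem:pressure_ineq_g} makes $\sum_{n}(d_{n}+1)^{-2h_{\mathcal{G}}}$ finite. By the comparison test (as in \Cref{thm:comparison_of_f_and_g}), $h_{\mathcal{G}} \in F(\mathcal{F})$, i.e.\ $0 < P_{\mathcal{F}}(h_{\mathcal{G}}) < \infty$.

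Finally, conclude with monotonicity and continuity. By \Cref{prop:properties-of-pressure-function}, $P_{\mathcal{F}}$ is non-increasing, so $P_{\mathcal{F}}(t) \geq P_{\mathcal{F}}(h_{\mathcal{G}}) > 0$ for all $t \leq h_{\mathcal{G}}$. Moreover $F(\mathcal{F}) \supset [h_{\mathcal{G}}, \infty)$, and $P_{\mathcal{F}}$ is continuous there. Hence there is $\delta > 0$ with $P_{\mathcal{F}}(t) > 0$ on $[0, h_{\mathcal{G}}+\delta)$. \Cref{thm:hausdorff-dimension-and-pressure} then gives $h_{\mathcal{F}} = \inf\set{t \mid P_{\mathcal{F}}(t) < 0} \geq h_{\mathcal{G}} + \delta > h_{\mathcal{G}}$.

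The main point needing care is the strictness. It relies on $P_{\mathcal{F}}(h_{\mathcal{G}})$ being finite, so that a strict termwise inequality survives summation and the logarithm. It also relies on continuity of $P_{\mathcal{F}}$ from the right at $h_{\mathcal{G}}$, even when $h_{\mathcal{G}}$ might be an endpoint of $F(\mathcal{F})$; this is fine because continuity on $F(\mathcal{F})$ includes one-sided continuity at the endpoint. No regularity of $\mathcal{F}$ is needed.
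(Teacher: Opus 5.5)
Your proposal is correct and follows the paper's proof. The paper also takes $P_{\mathcal{G}}(h_{\mathcal{G}})=0$, combines \Cref{lem:pressure_ineq_T1} with \Cref{thm:comparison_of_f_and_g} to get $0<P_{\mathcal{F}}(h_{\mathcal{G}})<\infty$, and concludes $h_{\mathcal{F}}>h_{\mathcal{G}}$ from \Cref{thm:hausdorff-dimension-and-pressure}; you simply make explicit the points the paper leaves implicit, namely $h_{\mathcal{G}}>0$, the termwise strict inequality surviving summation because the sum is finite, and right-continuity of $P_{\mathcal{F}}$ at $h_{\mathcal{G}}$.
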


\begin{proof}
    By the regularity of $\mathcal{G}$, we have $P_{\mathcal{G}}(h_{\mathcal{G}}) = 0$.
    Therefore, it follows from \Cref{lem:pressure_ineq_T1} and \Cref{thm:comparison_of_f_and_g} that
    $0 < P_{\mathcal{F}}(h_{\mathcal{G}}) < \infty$.
    Hence, we have $h_{\mathcal{F}} > h_{\mathcal{G}}$ by \Cref{thm:hausdorff-dimension-and-pressure}.
\end{proof}

\subsection{The Case of \texorpdfstring{$T \geq 2$}{T geq 2}}
In this subsection, we consider the case where $T \geq 2$.
We show that the inequalities for $P_{\mathcal{F}}$ and $P_{\mathcal{G}}$ differ from those in \Cref{lem:pressure_ineq_T1}.
We also give a sufficient condition for
$h_{\mathcal{F}} > h_{\mathcal{G}}$ when $T=4$.

We begin with the definition of a key function for establishing the inequality between the pressure functions.

\begin{definition}
    Let $a \geq 2, t > 0, 0 < r \leq 1$ and $T \in \mathbb{N}$.
    We define the function $\phi_{a, t, T}(z)$ on $\overline{\mathbb{D}}$ by
    \begin{equation*}
        \phi_{a, t, T}(z) := \sum_{j=0}^{T-1} \abs{ae\paren{j/T} + z}^{-2t} \quad (z \in \overline{\mathbb{D}}).
    \end{equation*}
    We also define the function $\psi_{r, a, t, T}(\theta)$ on $\mathbb{R}$ by
    \begin{equation*}
        \psi_{r, a, t, T}(\theta) := \phi_{a, t, T}(-re(\theta)) = \sum_{j=0}^{T-1} \abs{ae\paren{j/T} - re(\theta)}^{-2t} \quad (\theta \in \mathbb{R}).
    \end{equation*}
\end{definition}

\begin{lem}\label{lem:minimum_and_maximum_of_psi}
    Let $a \geq 2, t > 0, 0 < r \leq 1$ and $T \in \mathbb{N}$.
    Then, we have
    \begin{equation*}
        \argmax_{\theta \in \mathbb{R}} \psi_{r, a, t, T}(\theta) = \set{\frac{k}{T} \mid k \in \mathbb{Z}}.
    \end{equation*}
\end{lem}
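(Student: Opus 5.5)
The plan is to compute the Fourier series of $\psi_{r, a, t, T}$ in $\theta$ and read off the maximizers from the signs of its coefficients. The key observation is that averaging over the $T$ rotations $e(j/T)$ kills every Fourier mode whose index is not a multiple of $T$. The surviving modes turn out to have strictly positive coefficients.

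First I would factor each term. Since $\abs{e(j/T)} = 1$,
\begin{equation*}
    \abs{ae(j/T) - re(\theta)}^{-2t} = a^{-2t}\abs{1 - \rho e(\theta - j/T)}^{-2t}, \qquad \rho := r/a \le 1/2 < 1.
\end{equation*}
For $\abs{w} < 1$ the binomial series gives
\begin{equation*}
    (1-w)^{-t} = \sum_{k \ge 0} b_k w^k, \qquad b_k = \frac{t(t+1)\cdots(t+k-1)}{k!} > 0 .
\end{equation*}
Writing $\abs{1-\rho e(\varphi)}^{-2t} = (1-\rho e(\varphi))^{-t}\,\overline{(1-\rho e(\varphi))^{-t}}$ and multiplying the two absolutely convergent series, I get
\begin{equation*}
    \abs{1-\rho e(\varphi)}^{-2t} = \sum_{m \in \mathbb{Z}} A_m e(m\varphi), \qquad A_m = \sum_{l \ge 0} b_{l+\abs{m}} b_l \rho^{2l+\abs{m}} > 0 .
\end{equation*}
This double series converges absolutely because $\rho < 1$, since $\sum_{k,l} b_k b_l \rho^{k+l} = (1-\rho)^{-2t}$. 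Moreover $A_{-m} = A_m$.

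Next I would sum over $j$ with $\varphi = \theta - j/T$. The sum $\sum_{j=0}^{T-1} e(-mj/T)$ equals $T$ when $T \mid m$ and equals $0$ otherwise. Hence
\begin{equation*}
    \psi_{r, a, t, T}(\theta) = T a^{-2t}\Bigl(A_0 + 2\sum_{k \ge 1} A_{kT}\cos(2\pi k T\theta)\Bigr).
\end{equation*}
Since every $A_{kT} > 0$ and every cosine is at most $1$, we get $\psi_{r,a,t,T}(\theta) \le T a^{-2t}(A_0 + 2\sum_{k\ge1} A_{kT})$. Equality holds at every $\theta \in \frac{1}{T}\mathbb{Z}$, where all the cosines equal $1$. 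If instead $T\theta \notin \mathbb{Z}$, then $\cos(2\pi T\theta) < 1$. Because $A_T > 0$, the inequality is then strict. This identifies the argmax set as exactly $\set{k/T \mid k \in \mathbb{Z}}$.

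The only delicate point I anticipate is justifying the rearrangement of the double series, including interchanging it with the finite sum over $j$, and making sure the strictness comes from a genuinely positive coefficient $A_T$. Both are settled by absolute convergence for $\rho < 1$ and the positivity of all the $b_k$. The hypothesis $t > 0$ is what guarantees $b_k > 0$.
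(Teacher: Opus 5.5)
Your proposal is correct and follows the paper's proof essentially step by step: the binomial expansion of $(1-w)^{-t}$, the product of that series with its conjugate to obtain positive Fourier coefficients, the roots-of-unity sum killing every mode whose index is not divisible by $T$, and reading off the maximizers from the resulting cosine series. You add two things the paper only asserts in passing: an absolute-convergence justification for the rearrangements, and the use of $A_T>0$ to show the inequality is strict whenever $T\theta\notin\mathbb{Z}$, which pins down the argmax set exactly.
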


\begin{proof}
    Fix $a \geq 2, t > 0, 0 < r \leq 1$ and $T \in \mathbb{N}$.
    Define $c_{k}$ by
    \begin{equation*}
        c_{k} := \begin{cases}
            1 & k = 0,\\
            \frac{\prod_{j=0}^{k-1}(t+j)}{k!} & k \in \mathbb{N}.
        \end{cases}
    \end{equation*}
    Note that $(1 - z)^{-t} = \sum_{k=0}^{\infty} c_{k} z^{k}$ for $z \in \mathbb{D}$.
    For $\theta \in \mathbb{R}$, we have
    \begin{align*}
        \psi_{r, a, t, T}(\theta) &= \sum_{j=0}^{T-1} \abs{ae(j/T) - re(\theta)}^{-2t} \\
        &= a^{-2t} \sum_{j=0}^{T-1}\abs{1 - \frac{re(\theta - j/T)}{a}}^{-2t} \\
        &= a^{-2t} \sum_{j=0}^{T-1}\set{\paren{1 - \frac{re\paren{\theta - j/T}}{a}}\paren{1 - \frac{re\paren{j/T-\theta}}{a}}}^{-t} \\
        &= a^{-2t} \sum_{j=0}^{T-1}\paren{\sum_{k=0}^{\infty}\frac{c_{k}r^{k}}{a^{k}}e\paren{k(\theta - j/T)}}\paren{\sum_{l=0}^{\infty}\frac{c_{l}r^{l}}{a^{l}}e\paren{l(j/T - \theta)}} \\
        &= a^{-2t} \sum_{j=0}^{T-1} \sum_{k=0}^{\infty}\sum_{l=0}^{\infty} \frac{c_{k}c_{l}r^{k+l}}{a^{k+l}} e\paren{(k-l)(\theta - j/T)} \\
        &= a^{-2t} \sum_{j=0}^{T-1}\sum_{p\in\mathbb{Z}}\sum_{k=0}^{\infty}\frac{c_{k}c_{k+\abs{p}}r^{2k+\abs{p}}}{a^{2k+\abs{p}}} e\paren{p(\theta - j/T)}.
    \end{align*}
    Therefore, we obtain
    \begin{equation}\label{eq:fourier_equation_of_psi_0}
        \psi_{r, a, t, T}(\theta) = a^{-2t} \sum_{j=0}^{T-1}\sum_{p\in\mathbb{Z}}\widetilde{c}_{r, p}e\paren{p(\theta - j/T)},
    \end{equation}
    where \begin{equation}\label{eq:fourier_coefficients}
        \widetilde{c}_{r, p} := \sum_{k=0}^{\infty}\frac{c_{k}c_{k+\abs{p}}r^{2k+\abs{p}}}{a^{2k+\abs{p}}} \quad (p \in \mathbb{Z}).
    \end{equation}
    For each $p\in\mathbb{Z}$, there exist $p_{1}, p_{2} \in \mathbb{Z}$ such that
    $p = Tp_{1} - p_{2}$ and $0 \leq p_{2} < T$.
    Thus, for each $p \in \mathbb{Z}$, we have
    \begin{align}\label{eq:sum-of-exponentials}
        \sum_{j=0}^{T-1} e\paren{p(\theta - j/T)} &= e(p\theta)\sum_{j=0}^{T-1} e\paren{(Tp_{1} - p_{2})(- j/T)} \notag \\
        &= e\paren{p\theta} \sum_{j=0}^{T-1} e(p_{2}j/T) \notag \\
        &= \begin{cases}
            Te(p\theta) & p_{2} = 0, \\
            0 & \text{otherwise}.
        \end{cases}
    \end{align}
    Applying \eqref{eq:sum-of-exponentials} to \eqref{eq:fourier_equation_of_psi_0}, we obtain
    \begin{equation}\label{eq:fourier_equation_of_psi_1}
        \psi_{r, a, t, T}(\theta) = \frac{T}{a^{2t}} \sum_{p\in\mathbb{Z}} \widetilde{c}_{r, pT} e(pT\theta) = \frac{T}{a^{2t}}\paren{\widetilde{c}_{r, 0} + 2\sum_{p=1}^{\infty} \widetilde{c}_{r, pT} \cos(2\pi p T \theta)}.
    \end{equation}
    Since $\widetilde{c}_{r, p}$ is positive by \eqref{eq:fourier_coefficients},
    $\psi_{r, a, t, T}(\theta)$
    attains its maximum when $\cos(2\pi p T \theta) = 1$ for all $p \in \mathbb{N}$.
    Thus, we conclude that
    \begin{equation*}
        \argmax_{\theta \in \mathbb{R}} \psi_{r, a, t, T}(\theta) = \set{\frac{k}{T} \mid k \in \mathbb{Z}}.
    \end{equation*}
\end{proof}

\begin{cor}\label{cor:minimum_and_maximum_of_phi}
    Let $a \geq 2, t > 0$ and $T \in \mathbb{N}$.
    Then, we have
    \begin{equation*}
        \max_{z \in \overline{\mathbb{D}}} \phi_{a, t, T}(z) = \sum_{j=0}^{T-1} \abs{ae(j/T) - 1}^{-2t} = \sum_{j=0}^{T-1} (a^{2} + 1 - 2a\cos(2\pi j/T))^{-t}.
    \end{equation*}
\end{cor}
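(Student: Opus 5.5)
The plan is to reduce the maximization over the closed disc to the circle $|z|=1$ and then apply \Cref{lem:minimum_and_maximum_of_psi}. Fix $a \geq 2$, $t>0$, $T \in \mathbb{N}$. Every $z \in \overline{\mathbb{D}}$ can be written as $z = -re(\theta)$ with $0 \leq r \leq 1$ and $\theta \in \mathbb{R}$. Hence $\phi_{a,t,T}(z) = \psi_{r,a,t,T}(\theta)$ when $r>0$, and $\phi_{a,t,T}(0) = Ta^{-2t}$.

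The first step is to show that, for fixed $\theta$, the map $r \mapsto \psi_{r,a,t,T}(\theta)$ is non-decreasing on $[0,1]$. For this I will use the Fourier expansion \eqref{eq:fourier_equation_of_psi_1} from the proof of \Cref{lem:minimum_and_maximum_of_psi}. The same computation, the binomial expansion of $(1-w)^{-t}$ with $|w| = r/a \leq 1/2$, is valid for every $0 \leq r \leq 1$ and gives
\begin{equation*}
\psi_{r,a,t,T}(\theta) = \frac{T}{a^{2t}}\sum_{p\in\mathbb{Z}} \widetilde{c}_{r,pT}\, e(pT\theta).
\end{equation*}
Monotonicity in $r$ for a general $\theta$ does not follow directly from this expansion, because the cosine terms can be negative.

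So I combine the two variables differently. By \Cref{lem:minimum_and_maximum_of_psi}, for each fixed $r\in(0,1]$ we have $\psi_{r,a,t,T}(\theta) \leq \psi_{r,a,t,T}(0)$. By \eqref{eq:fourier_equation_of_psi_1} at $\theta=0$,
\begin{equation*}
\psi_{r,a,t,T}(0) = \frac{T}{a^{2t}}\sum_{p\in\mathbb{Z}}\widetilde{c}_{r,pT}.
\end{equation*}
By \eqref{eq:fourier_coefficients}, each $\widetilde{c}_{r,p}$ is a power series in $r$ with nonnegative coefficients, so it is non-decreasing in $r\in[0,1]$. The series converges absolutely, since $\sum_p \widetilde{c}_{r,p} = (1-r/a)^{-2t} < \infty$. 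Hence $\psi_{r,a,t,T}(0) \leq \psi_{1,a,t,T}(0)$ for all $0<r\leq 1$. The point $z=0$ is covered as well, since $Ta^{-2t} = \frac{T}{a^{2t}}\widetilde{c}_{0,0} \leq \psi_{1,a,t,T}(0)$.

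Combining these bounds gives $\phi_{a,t,T}(z) \leq \psi_{1,a,t,T}(0) = \phi_{a,t,T}(-1)$ for all $z\in\overline{\mathbb{D}}$. The maximum is therefore attained at $z=-1$, where
\begin{equation*}
\phi_{a,t,T}(-1) = \sum_{j=0}^{T-1}|ae(j/T)-1|^{-2t}.
\end{equation*}
The last equality in the statement is the elementary identity
\begin{equation*}
|ae(j/T)-1|^{2} = a^{2}+1-2a\cos(2\pi j/T).
\end{equation*}

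The only delicate point is justifying the monotonicity of the value at $\theta=0$ in $r$. That requires the absolute convergence and the rearrangement in the Fourier expansion for all $r\le 1$, which holds because $r/a\le 1/2<1$.
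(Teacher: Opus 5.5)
Your proof is correct and matches the paper's argument. Both first bound $\phi_{a,t,T}(-re(\theta))$ by $\psi_{r,a,t,T}(0)$ using \Cref{lem:minimum_and_maximum_of_psi}, and then use that the coefficients $\widetilde{c}_{r,pT}$ are nonnegative and non-decreasing in $r$ to get $\psi_{r,a,t,T}(0)\le\phi_{a,t,T}(-1)$. The differences are cosmetic: you treat $z=0$ directly through $\widetilde{c}_{0,0}=1$ where the paper uses continuity, and your abandoned first attempt at monotonicity in $r$ for fixed $\theta$ can simply be deleted.
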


\begin{proof}
    Let $0 < r \leq 1$.
    By \Cref{lem:minimum_and_maximum_of_psi} and \eqref{eq:fourier_equation_of_psi_1},
    if $\abs{z} = r$, then we have
    \begin{equation*}
        \phi_{a, t, T}(z) \leq \psi_{r, a, t, T}\paren{0} = \frac{T}{a^{2t}} \sum_{p\in\mathbb{Z}} \widetilde{c}_{r, pT}.
    \end{equation*}
    Note that $\psi_{r, a, t, T}\paren{0} = \phi_{a, t, T}(-r)$.
    Since $\widetilde{c}_{r, pT}$ is increasing with respect to $r$ by \eqref{eq:fourier_coefficients},
    for $z \in \overline{\mathbb{D}} \setminus \set{0}$, we obtain
    \begin{align*}
        \phi_{a, t, T}(z) &\leq \phi_{a, t, T}(-\abs{z}) \leq \phi_{a, t, T}(-1) \\
        &= \sum_{j=0}^{T-1} \abs{ae(j/T) - 1}^{-2t} = \sum_{j=0}^{T-1} (a^{2} + 1 - 2a\cos(2\pi j/T))^{-t}.
    \end{align*}
    $\phi_{a, t, T}(0) \leq \phi_{a, t, T}(-1)$ also follows
    from the continuity of $\phi_{a, t, T}$.
\end{proof}

\begin{cor}\label{lem:pressure_ineq_g_general_T}
    Let $\mathbf{d} = \set{d_{n}}_{n=1}^{\infty}\in\mathcal{D}$ and $T \in \mathbb{N}$.
    Suppose $\mathcal{G} = \mathcal{G}(\mathbf{d}, T)$ is a CIFS.
    For $t \geq 0$, we have
    \begin{equation*}
        P_{\mathcal{G}}(t) \leq \log\paren{\sum_{n=1}^{\infty}\sum_{j=0}^{T-1}\paren{d_{n}^{2}+1-2d_{n}\cos\paren{2\pi j/T}}^{-t}}.
    \end{equation*}
\end{cor}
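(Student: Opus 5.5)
We will bound the derivative of a composition $g_{\omega}$ at the base point by peeling off the innermost map, and then sum over the $T$ rotated digits using \Cref{cor:minimum_and_maximum_of_phi}. We use the formula of \Cref{lem:other-expressions-of-pressure-function} with base point $x=0$:
\begin{equation*}
    P_{\mathcal{G}}(t) = \lim_{N\to\infty}\frac{1}{N}\log\paren{\sum_{\omega\in I^{N}}\abs{g_{\omega}'(0)}^{t}},
\end{equation*}
where $I$ is the set of digits $\alpha = e(j/T)d_{n}$.

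By the chain rule, for $\omega = (\alpha_{1},\dots,\alpha_{N})$ we have
\begin{equation*}
    \abs{g_{\omega}'(0)} = \prod_{k=1}^{N}\abs{g_{\alpha_{k}}'(z_{k})},
\end{equation*}
where $z_{N}=0$ and $z_{k} = g_{\alpha_{k+1}}\circ\dots\circ g_{\alpha_{N}}(0)\in\overline{\mathbb{D}}$. Note that $z_{k}$ depends only on $\alpha_{k+1},\dots,\alpha_{N}$, and that $\abs{g_{\alpha}'(z)}^{t} = \abs{z+\alpha}^{-2t}$.

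We then sum over $\alpha_{1}$ first, with $\alpha_{2},\dots,\alpha_{N}$ fixed. Writing $\alpha_{1} = e(j/T)d_{n}$, the inner sum is
\begin{equation*}
    \sum_{n}\sum_{j=0}^{T-1}\abs{d_{n}e(j/T)+z_{1}}^{-2t} = \sum_{n}\phi_{d_{n},t,T}(z_{1}),
\end{equation*}
and since $z_{1}\in\overline{\mathbb{D}}$ and $d_{n}\geq 2$, \Cref{cor:minimum_and_maximum_of_phi} bounds this by
\begin{equation*}
    S(t) := \sum_{n=1}^{\infty}\sum_{j=0}^{T-1}\paren{d_{n}^{2}+1-2d_{n}\cos(2\pi j/T)}^{-t},
\end{equation*}
uniformly in $z_{1}$.

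Iterating this peeling argument over $\alpha_{2},\dots,\alpha_{N}$ gives $\sum_{\omega\in I^{N}}\abs{g_{\omega}'(0)}^{t}\leq S(t)^{N}$. Taking $\frac{1}{N}\log$ and letting $N\to\infty$ yields $P_{\mathcal{G}}(t)\leq\log S(t)$. If $S(t)=\infty$ the inequality is trivial. The case $t=0$ is also trivial, since then both sides equal $\log(\#I)=\infty$.

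The main obstacle is the induction step. The point $z_{k}$ varies with the later digits, so the bound must be uniform over $\overline{\mathbb{D}}$; \Cref{cor:minimum_and_maximum_of_phi} supplies exactly this uniformity. The order of summation must also be arranged correctly, starting from the outermost map $g_{\alpha_{1}}$, whose evaluation point depends only on the remaining digits. The proof of \Cref{cor:minimum_and_maximum_of_phi} requires $t>0$, which is the case handled here.
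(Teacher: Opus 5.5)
Your proposal is correct and follows essentially the same route as the paper: use the base point $0$ in \Cref{lem:other-expressions-of-pressure-function}, sum first over the outermost digit $\alpha_{1}$ with $\alpha_{2},\dots,\alpha_{N}$ fixed, bound that inner sum uniformly by $\sum_{n}\phi_{d_{n},t,T}(-1)$ via \Cref{cor:minimum_and_maximum_of_phi}, and iterate to obtain $S(t)^{N}$. Your chain-rule points $z_{k}$ are exactly the continued fractions $[\alpha_{k+1},\dots,\alpha_{N}]$ used in the paper, and your separate treatment of the cases $t=0$ and $S(t)=\infty$ fills in details the paper leaves implicit.
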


\begin{proof}
    Let $I := \set{e\paren{j/T}d_{n} \mid j \in \set{0, \dots, T-1}, n \in \mathbb{N}}$.
    Note that
    \begin{equation}\label{eq:pressure_expression_g_at_0}
        P_{\mathcal{G}}(t) = \lim_{N\to\infty} \frac{1}{N}\log\paren{\sum_{\omega\in I^{N}} \abs{g_{\omega}'(0)}^{t}} \quad (t \geq 0).
    \end{equation}
    by \Cref{lem:other-expressions-of-pressure-function}.
    \Cref{cor:minimum_and_maximum_of_phi} implies that
    for each $N \in \mathbb{N}$ and $t \geq 0$, we obtain
    \begin{align*}
         & \sum_{\omega \in I^{N}} \abs{g_{\omega}'(0)}^{t} \\
        =& \sum_{\omega \in I^{N}} \prod_{j=1}^{N}\abs{\bracket{\omega_{j}, \omega_{j+1}, \dots, \omega_{N}}}^{2t} \\
        =& \sum_{\omega_{2}\dots\omega_{N} \in I^{N-1}}\left[\sum_{\omega_{1}\in I}\set{\abs{[\omega_{1}, \omega_{2}, \dots, \omega_{N}]}^{2t}\prod_{j=2}^{N}\abs{\bracket{\omega_{j}, \omega_{j+1}, \dots, \omega_{N}}}^{2t}}\right] \\
        =& \sum_{\omega_{2}\dots\omega_{N} \in I^{N-1}}\left[\paren{\prod_{j=2}^{N}\abs{\bracket{\omega_{j}, \omega_{j+1}, \dots, \omega_{N}}}^{2t}}\sum_{n=1}^{\infty}\sum_{j=0}^{T-1}\abs{[e\paren{j/T}d_{n}, \omega_{2}, \dots, \omega_{N}]}^{2t}\right] \\
        \leq& \sum_{\omega_{2}\dots\omega_{N} \in I^{N-1}}\left[\paren{\prod_{j=2}^{N}\abs{\bracket{\omega_{j}, \omega_{j+1}, \dots, \omega_{N}}}^{2t}}\sum_{n=1}^{\infty}\max_{z \in \overline{\mathbb{D}}}\phi_{d_{n}, t, T}(z)\right] \\
        \leq& \sum_{\omega_{2}\dots\omega_{N} \in I^{N-1}}\left[\paren{\prod_{j=2}^{N}\abs{\bracket{\omega_{j}, \omega_{j+1}, \dots, \omega_{N}}}^{2t}}\sum_{n=1}^{\infty}\phi_{d_{n}, t, T}(-1)\right] \\
        \leq& \dots \\
        \leq& \paren{\sum_{n=1}^{\infty}\phi_{d_{n}, t, T}(-1)}^{N} \\
        \leq& \paren{\sum_{n=1}^{\infty}\sum_{j=0}^{T-1}\paren{d_{n}^{2}+1-2d_{n}\cos\paren{2\pi j/T}}^{-t}}^{N}.
    \end{align*}
    This inequality and \eqref{eq:pressure_expression_g_at_0} imply the desired result.
\end{proof}

We now focus on the case where $T=4$.

\begin{lem}\label{lem:delta_t_existence}
    For $0 < t < 1$ and $v \geq 0$, define
    \begin{equation*}
        q(t, v) := 2\cosh\paren{tv} + 2\paren{\cosh\paren{v}}^{-t}.
    \end{equation*}
    Then, for each $0 < t < 1$, there exists a unique $v > 0$ such that $q(t, v) = 4$.
    Also, denoting this $v$ by $\delta_{t}$, we have the following.
    \begin{enumerate}
        \item If $0 < v < \delta_{t}$, then $q(t, v) < 4$.
        \item If $v > \delta_{t}$, then $q(t, v) > 4$.
    \end{enumerate}
\end{lem}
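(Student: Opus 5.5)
Fix $0<t<1$ and write $q(v)=q(t,v)$. We will establish three facts: $q(0)=4$; $q$ is strictly below $4$ for small $v>0$; and $q(v)\to\infty$ as $v\to\infty$. We will then show that $q$ crosses the level $4$ exactly once on $(0,\infty)$. Existence of $\delta_t$ follows from continuity and the intermediate value theorem. Statements (1) and (2) then follow, since a continuous function that is negative near $0^+$, positive at infinity and has a single zero on $(0,\infty)$ must have the stated signs.

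For the endpoint behaviour we expand at $v=0$:
\begin{equation*}
2\cosh(tv)=2+t^{2}v^{2}+O(v^{4}), \qquad 2(\cosh v)^{-t}=2-tv^{2}+O(v^{4}).
\end{equation*}
Hence $q(v)-4=(t^{2}-t)v^{2}+O(v^{4})$, and this is negative for small $v>0$ because $0<t<1$. As $v\to\infty$, we have $2\cosh(tv)\to\infty$ and $(\cosh v)^{-t}>0$, so $q\to\infty$.

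For uniqueness we look at the derivative:
\begin{equation*}
q'(v)=2t\bigl(\sinh(tv)-(\cosh v)^{-t-1}\sinh v\bigr).
\end{equation*}
The sign of $q'$ is the sign of $u(v):=\sinh(tv)(\cosh v)^{t+1}-\sinh v$. Our aim is to show that $u$ has exactly one zero $v_0$ on $(0,\infty)$, with $u<0$ on $(0,v_0)$ and $u>0$ afterwards. Then $q$ strictly decreases from $4$ on $(0,v_0]$ and strictly increases to $\infty$ on $[v_0,\infty)$, which gives exactly one crossing of $4$.

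To prove the sign pattern of $u$, we consider the ratio
\begin{equation*}
w(v):=\frac{\sinh(tv)(\cosh v)^{t+1}}{\sinh v}.
\end{equation*}
This tends to $t<1$ as $v\to0$ and to $\infty$ as $v\to\infty$, so it suffices to show that $w$ is strictly increasing, for instance by proving that $\log w$ has positive derivative. We have
\begin{equation*}
(\log w)'=t\coth(tv)+(t+1)\tanh v-\coth v.
\end{equation*}
We need this to be positive. Using $\coth v-\tanh v=\frac{2}{\sinh 2v}$, the inequality becomes
\begin{equation*}
t\coth(tv)+t\tanh v>\frac{2}{\sinh 2v}.
\end{equation*}
This is the main obstacle. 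We will first try the bound $t\coth(tv)>1/v$, which holds since $x\coth x>1$ for $x>0$. It then suffices to show $\frac1v+t\tanh v\ge\frac{2}{\sinh 2v}$, and this follows from $\sinh 2v\ge 2v$. If this chain turns out to be too lossy somewhere, the fallback is to argue directly on $q$: at any zero of $q'$ we compute $q''$ and show it is positive, so every critical point is a strict local minimum. Together with the endpoint behaviour, this forces a single critical point and hence a single crossing of $4$.
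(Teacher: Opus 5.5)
Your proof is correct, and it takes a genuinely different route from the paper.

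\textbf{The paper's route.} The paper works with $q_t'$ directly. It notes $q_t'<0$ near $0$, since $q_t''(0)=2t(t-1)<0$. It then takes the first point $v_*$ where $q_t'$ becomes positive, so that $q_t'(v_*)=0$ and $q_t''(v_*)\ge 0$. Next it proves the differential inequality $q_t'''>t^{2}q_t'$ on $(0,\infty)$ by a third-derivative computation using $\tanh^{3}v<\tanh v$. A Sturm-type comparison, which the paper states without detail, then shows that $q_t'$ changes sign exactly once.

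\textbf{Your route.} You factor $q'(v)=2t(\cosh v)^{-t-1}\sinh v\,\bigl(w(v)-1\bigr)$. You then show that $w$ increases strictly from $t$ to $\infty$ by checking its logarithmic derivative. Your key step closes with room to spare: already $t\coth(tv)>1/v>2/\sinh 2v$ for $v>0$, so the term $t\tanh v$ is not even needed. You can therefore drop the hedging and the fallback and state the chain outright.

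\textbf{Comparison.} Your argument is shorter and fully elementary: it uses only $x\coth x>1$ and $\sinh x\ge x$. It avoids the third derivative and the implicit comparison step. It also identifies the unique interior critical point explicitly, as the solution $v_0$ of $\sinh(tv)(\cosh v)^{t+1}=\sinh v$. The paper's approach offers a template, a differential inequality for $q'$ combined with a first-critical-point analysis, that does not depend on spotting a convenient monotone ratio.

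One further point: \Cref{lem:delta_t_strictly_decreasing} relies on $\frac{\partial q}{\partial v}(t,\delta_t)>0$, which it attributes to the proof of this lemma. Your argument supplies this as well, since $\delta_t>v_0$ and $q'>0$ on $(v_0,\infty)$.
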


\begin{proof}
    Fix $0 < t < 1$ and let $q_{t}(v) := q(t, v)$ for $v \geq 0$.
    We have
    \begin{align}
        q_{t}'(v) &= 2t\paren{\sinh\paren{tv} - \paren{\cosh\paren{v}}^{-t-1}\sinh\paren{v}},\label{eq:derivative_of_q_t} \\
        q_{t}''(v) &= 2t\paren{t\cosh\paren{tv} + \paren{t+1}\paren{\cosh\paren{v}}^{-t-2}\paren{\sinh\paren{v}}^{2} - \paren{\cosh\paren{v}}^{-t}}. \label{eq:second_derivative_of_q_t}
    \end{align}
    In particular, $q_{t}'(0) = 0$ and $q_{t}''(0) = 2t(t-1) < 0$.
    Therefore, there exists $\varepsilon > 0$ such that
    \begin{equation}
        q_{t}'(v) < 0  \quad (0 < v < \varepsilon).\label{eq:q_t_derivative_negative_near_0}
    \end{equation}
    Let $v_{*} := \inf\set{v > 0 \mid q_{t}'(v) > 0}$.
    Note that $\set{v > 0 \mid q_{t}'(v) > 0} \ne \emptyset$
    since $q_{t}(v) \to \infty$ as $v \to \infty$,
    and that $v_{*} \geq \varepsilon$ by \eqref{eq:q_t_derivative_negative_near_0}.
    We have 
    \begin{equation}\label{eq:q_t_derivative_at_minimum_point}
        q_{t}'(v_{*}) = 0
    \end{equation}
    by the continuity of $q_{t}'$.
    We also have
    \begin{equation}\label{eq:second_derivative_at_minimum_point}
        q_{t}''\paren{v_{*}} \geq 0
    \end{equation}
    by the definition of $v_{*}$ and \eqref{eq:q_t_derivative_at_minimum_point}.

    Since $0 < \tanh(v) < 1$ for $v > 0$,
    \eqref{eq:derivative_of_q_t} and \eqref{eq:q_t_derivative_at_minimum_point} imply that
    for $v > 0$, we have
    \begin{align}\label{eq:third_derivative_of_q_t_and_first_derivative_relation}
         \ & q_{t}'''(v) \\
        = \ & 2t \Bigl\{ t^{2}\sinh(tv) - (t+1)(t+2)\paren{\cosh(v)}^{-t-3}\paren{\sinh(v)}^{3}  \notag \\
        & \quad \quad \quad  + 2(t+1)\paren{\cosh(v)}^{-t-1}\sinh(v) + t\paren{\cosh(v)}^{-t-1}\sinh(v) \Bigr\} \notag \\
        = \ & 2t \Bigl\{t^{2}\sinh(tv)   \notag \\
        & \quad \quad \quad  - (t+1)(t+2)\paren{\cosh(v)}^{-t}\paren{\tanh(v)}^{3} + (3t+2)\paren{\cosh(v)}^{-t}\tanh(v) \Bigr\} \notag \\
        >\ & 2t\Bigl\{t^{2}\sinh(tv)   \notag \\
        & \quad \quad \quad  - (t+1)(t+2)\paren{\cosh(v)}^{-t}\tanh(v) + (3t+2)\paren{\cosh(v)}^{-t}\tanh(v) \Bigr\} \notag \\
        = \ & 2t^{3}\set{\sinh(tv)  - \paren{\cosh(v)}^{-t}\tanh(v)} \notag \\
        = \ & t^{2} \cdot 2t\paren{\sinh\paren{tv} - \paren{\cosh\paren{v}}^{-t-1}\sinh\paren{v}} \notag \\
        = \ & t^{2} q_{t}'(v). \notag
    \end{align}
    \eqref{eq:q_t_derivative_at_minimum_point}, \eqref{eq:second_derivative_at_minimum_point} and
    \eqref{eq:third_derivative_of_q_t_and_first_derivative_relation} imply
    \begin{equation*}
        \begin{cases}
            q_{t}'(v) < 0 & (0 < v < v_{*}), \\
            q_{t}'(v) > 0 & (v > v_{*}).
        \end{cases}
    \end{equation*}
    Therefore, since $q_{t}(0) = 4$ and $q_{t}(v) \to \infty$ as $v \to \infty$,
    there exists a unique $\delta_{t} > 0$ such that $q_{t}(\delta_{t}) = 4$
    and the statements (1) and (2) hold.
\end{proof}

\begin{lem}\label{lem:delta_t_strictly_decreasing}
    $\delta_{t}$ in \Cref{lem:delta_t_existence} is strictly decreasing with respect to $t \in (0, 1)$.
\end{lem}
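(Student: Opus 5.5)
Since $\delta_{t}$ is defined implicitly by $q(t, \delta_{t}) = 4$, I will compare signs. Fix $0 < t_{1} < t_{2} < 1$. By \Cref{lem:delta_t_existence}(1),(2) applied at $t_{1}$, the claim $\delta_{t_{2}} < \delta_{t_{1}}$ is equivalent to $q(t_{1}, \delta_{t_{2}}) < 4$. Since $q(t_{2}, \delta_{t_{2}}) = 4$, it suffices to prove that for each fixed $v > 0$ the map $t \mapsto q(t, v)$ is strictly increasing on $(0,1)$ at the relevant point. More precisely, I will show that $q(t, v) > 4$ at $t = t_{2}$ forces $q(s, v) \ge \dots$; to avoid circularity, I aim directly for the following claim: for fixed $v > 0$, the function $t \mapsto q(t,v)$ is strictly increasing on $(0,1)$.

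The derivative is
\begin{equation*}
    \partial_{t} q(t, v) = 2v\sinh(tv) - 2(\cosh v)^{-t}\log\cosh v .
\end{equation*}
Positivity means $v\sinh(tv)(\cosh v)^{t} > \log\cosh v$. This can fail for small $t$: as $t\to 0$ the left side tends to $0$. So global monotonicity in $t$ is false, and I need to use that $v$ is near $\delta_{t}$.

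I therefore switch to the implicit function theorem. From the proof of \Cref{lem:delta_t_existence}, $q_{t}'(\delta_{t}) > 0$, since $\delta_{t} > v_{*}$. Hence $\delta_{t}$ is $C^{1}$ in $t$, with
\begin{equation*}
    \delta_{t}' = -\frac{\partial_{t}q(t,\delta_{t})}{\partial_{v}q(t,\delta_{t})}.
\end{equation*}
It then suffices to show $\partial_{t}q(t,\delta_{t}) > 0$. Write $v = \delta_{t}$ and use the defining relation $\cosh(tv) = 2 - (\cosh v)^{-t}$. Setting $y := (\cosh v)^{-t} \in (0,1)$, we get $\cosh(tv) = 2 - y$, and therefore $\sinh(tv) = \sqrt{(2-y)^{2}-1}$ and $\log\cosh v = -\log y / t$. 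The target inequality becomes
\begin{equation*}
    tv\sqrt{(1-y)(3-y)} > -y\log y ,
\end{equation*}
with $tv = \operatorname{arccosh}(2-y)$. So everything reduces to a one-variable inequality: for $u := 1 - y \in (0,1)$,
\begin{equation*}
    \operatorname{arccosh}(1+u)\,\sqrt{u(2+u)} > -(1-u)\log(1-u).
\end{equation*}

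The main obstacle is verifying this elementary inequality. The left side is $\sinh(w)\,w$ with $w = \operatorname{arccosh}(1+u)$, which equals $2u + O(u^{2})$. The right side is $u - u^{2}/2 + \dots$, which is less than $u$. I will prove that $\mathrm{LHS} \ge 2u$ and $\mathrm{RHS} \le u$, giving strict inequality on $(0,1)$. The lower bound follows from $w\sinh w \ge 2(\cosh w - 1)$, which holds because $h(w) = w\sinh w - 2\cosh w + 2$ has $h(0)=0$ and $h'(w) = w\cosh w - \sinh w \ge 0$. The upper bound follows from $-(1-u)\log(1-u) \le u$, which is equivalent to $\log(1-u) \ge -u/(1-u)$, a standard inequality. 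The conclusion is $\delta_{t}' < 0$, so $\delta_{t}$ is strictly decreasing.
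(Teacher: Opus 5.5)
Your proof is correct and follows the paper's own route. Both use the implicit function theorem with $\partial_v q(t,\delta_t)>0$ taken from the proof of \Cref{lem:delta_t_existence}, and both use $\cosh(t\delta_t)=2-(\cosh\delta_t)^{-t}$ to reduce $\partial_t q(t,\delta_t)>0$ to the one-variable inequality $w\sinh w+(2-\cosh w)\log(2-\cosh w)>0$ with $w=t\delta_t$; this is exactly the paper's $Q(w)>0$. The only difference is that the paper proves this inequality via $Q'>0$, whereas you sandwich it using $w\sinh w>2(\cosh w-1)$ and $-y\log y\le 1-y$; the abandoned comparison argument in your first paragraph should simply be deleted.
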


\begin{proof}
    For $0 < t < 1$ and $v > 0$, we have
    \begin{align}
        \frac{\partial q}{\partial t}(t, v) &= 2v\sinh(tv) - 2(\cosh(v))^{-t}\log(\cosh(v)) \notag \\
        &= 2\set{v\sinh(tv) +\frac{(\cosh(v))^{-t}\log\paren{\paren{\cosh(v)}^{-t}}}{t}}. \label{eq:partial_qt_partial_t}
    \end{align}
    Fix $s \in (0, 1)$. By the definition of $\delta_{s}$, we have
    \begin{equation}\label{eq:cosh_delta_s_power_minus_s}
        \paren{\cosh(\delta_{s})}^{-s} = 2 - \cosh(s\delta_{s}).
    \end{equation}
    Therefore, \eqref{eq:partial_qt_partial_t} and \eqref{eq:cosh_delta_s_power_minus_s} imply
    \begin{equation}\label{eq:partial_qt_partial_t_at_delta_s}
        \frac{\partial q}{\partial t}\paren{s, \delta_{s}} = \frac{2}{s}\set{s\delta_{s}\sinh(s\delta_{s}) + (2 - \cosh(s\delta_{s}))\log(2 - \cosh(s\delta_{s}))}.
    \end{equation}
    For $0 < u < u_{0}$, where $u_{0} > 0$ satisfies $\cosh(u_{0}) = 2$,
    we define the function $Q(u)$ by
    \begin{equation*}
        Q(u) := u\sinh(u) + \paren{2 - \cosh(u)}\log(2 - \cosh u) \quad (0 < u < u_{0}).
    \end{equation*}
    Since $0 < 2 - \cosh(u) < 1$ for $0 < u < u_{0}$, we have
    \begin{align*}
        Q'(u) &= \sinh(u) + u\cosh(u) - \sinh(u)\set{1 + \log(2 - \cosh(u))} \\
        &= u\cosh(u) - \sinh(u) \log(2 - \cosh(u)) > 0.
    \end{align*}
    Therefore, $Q(u) > \lim_{u \to 0} Q(u) = 0$ for $0 < u < u_{0}$.
    Hence, by $s > 0, \delta_{s} > 0$, \eqref{eq:cosh_delta_s_power_minus_s} and \eqref{eq:partial_qt_partial_t_at_delta_s},
    we obtain
    \begin{equation}\label{eq:partial_qt_partial_t_at_delta_s_positive}
        \frac{\partial q}{\partial t}\paren{s, \delta_{s}} = \frac{2}{s} Q(s\delta_{s}) > 0.
    \end{equation}

    By the proof of \Cref{lem:delta_t_existence},
    we have
    \begin{equation}\label{eq:partial_qt_partial_v_at_delta_s_positive}
        \frac{\partial q}{\partial v} (s, \delta_{s}) = q_{s}'(\delta_{s}) > 0.
    \end{equation}
    Therefore, by the implicit function theorem,
    there exists a neighborhood $I_{s} \subset (0, 1)$ of $s$ and
    a $C^{1}$-function $\widehat{\delta}$ defined on $I_{s}$ such that
    $q(t, \widehat{\delta}(t)) = 4$ for $t \in I_{s}$ and
    \begin{equation}\label{eq:derivative_of_delta_hat}
        \widehat{\delta}'(s) = - \frac{\frac{\partial q}{\partial t}\paren{s, \widehat{\delta}(s)}}{\frac{\partial q}{\partial v}\paren{s, \widehat{\delta}(s)}}.
    \end{equation}
    \Cref{lem:delta_t_existence} implies that $\widehat{\delta}(t) = \delta_{t}$ for $t \in I_{s}$.
    Thus, by \eqref{eq:partial_qt_partial_t_at_delta_s_positive}, \eqref{eq:partial_qt_partial_v_at_delta_s_positive} and \eqref{eq:derivative_of_delta_hat},
    we obtain
    \begin{equation}
        \frac{d}{dt} \delta_{t}\Big|_{t=s} = \widehat{\delta}'(s) = - \frac{\frac{\partial q}{\partial t}\paren{s, \widehat{\delta}(s)}}{\frac{\partial q}{\partial v}\paren{s, \widehat{\delta}(s)}} < 0.
    \end{equation}
    Since $s \in (0, 1)$ is arbitrary, we conclude that $\delta_{t}$ is strictly decreasing with respect to $t \in (0, 1)$.
\end{proof}

\begin{cor}\label{cor:delta_tilde_existence}
    For $0 < t < 1$ and $0 < x < 1$, define $\widetilde{q}_{t}(x)$ by
    \begin{equation*}
        \widetilde{q}_{t}(x) := \paren{\frac{1+x}{1-x}}^{t} + \paren{\frac{1-x}{1+x}}^{t} + 2\paren{\frac{1-x^2}{1+x^2}}^{t}.
    \end{equation*}
    Then, there exists $\widetilde{\delta}_{t} > 0$ such that
    if $0 < x < \widetilde{\delta}_{t}$, then $\widetilde{q}_{t}(x) < 4$.
    Moreover, $\widetilde{\delta}_{t}$ is strictly decreasing with respect to $t \in (0, 1)$.
\end{cor}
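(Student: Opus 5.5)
Set $x=\tanh(v/2)$ with $v>0$, which is a bijection of $(0,\infty)$ onto $(0,1)$; the plan is to rewrite $\widetilde{q}_{t}(x)$ as $q(t,v)$ from \Cref{lem:delta_t_existence} and transfer the conclusions of \Cref{lem:delta_t_existence,lem:delta_t_strictly_decreasing}. With $x=\tanh(v/2)$ we have
\begin{equation*}
    \frac{1+x}{1-x}=e^{v}, \qquad \frac{1-x^{2}}{1+x^{2}}=\frac{\cosh^{2}(v/2)-\sinh^{2}(v/2)}{\cosh^{2}(v/2)+\sinh^{2}(v/2)}=\frac{1}{\cosh(v)}.
\end{equation*}
Hence
\begin{equation*}
    \widetilde{q}_{t}(\tanh(v/2)) = e^{tv}+e^{-tv}+2(\cosh v)^{-t} = 2\cosh(tv)+2(\cosh v)^{-t} = q(t,v).
\end{equation*}
Both identities are checked directly from the half-angle formulas.

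Define $\widetilde{\delta}_{t}:=\tanh(\delta_{t}/2)$, where $\delta_{t}>0$ is given by \Cref{lem:delta_t_existence}. Then $\widetilde{\delta}_{t}\in(0,1)$. If $0<x<\widetilde{\delta}_{t}$, write $x=\tanh(v/2)$; since $\tanh$ is strictly increasing, $0<v<\delta_{t}$. By \Cref{lem:delta_t_existence}(1), $q(t,v)<4$, and therefore $\widetilde{q}_{t}(x)<4$. As a byproduct, \Cref{lem:delta_t_existence}(2) also gives $\widetilde{q}_{t}(x)>4$ for $\widetilde{\delta}_{t}<x<1$, so $\widetilde{\delta}_{t}$ is canonical rather than an arbitrary admissible threshold.

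For monotonicity, $t\mapsto\delta_{t}$ is strictly decreasing on $(0,1)$ by \Cref{lem:delta_t_strictly_decreasing}, and $v\mapsto\tanh(v/2)$ is strictly increasing. Their composition $t\mapsto\widetilde{\delta}_{t}$ is therefore strictly decreasing. There is no real obstacle here. The only point requiring care is the algebraic identity $\frac{1-x^{2}}{1+x^{2}}=\operatorname{sech}(v)$, together with making sure that $\widetilde{\delta}_{t}$ is defined as the specific value $\tanh(\delta_{t}/2)$, since the monotonicity claim depends on this choice.
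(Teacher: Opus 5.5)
Your proof is correct and follows the paper's argument: your substitution $x=\tanh(v/2)$ is the same as the paper's $e^{v}=\frac{1+x}{1-x}$. Your choice $\widetilde{\delta}_{t}=\tanh(\delta_{t}/2)$ is the paper's $\frac{e^{\delta_{t}}-1}{e^{\delta_{t}}+1}$, and you get monotonicity from \Cref{lem:delta_t_strictly_decreasing} in the same way, by composing with a strictly increasing map.
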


\begin{proof}
    Let $e^{v} = \frac{1+x}{1-x}$. We then have
    \begin{equation*}
        \widetilde{q}_{t}(x) = \paren{e^v}^{t} + \paren{e^{-v}}^{t} + 2\paren{\frac{2}{e^{v} + e^{-v}}}^{t} = 2\cosh(tv) + 2\paren{\cosh(v)}^{-t} = q(t, v).
    \end{equation*}
    Therefore, setting
    \begin{equation*}
        \widetilde{\delta}_{t} := \frac{e^{\delta_{t}} - 1}{e^{\delta_{t}} + 1},
    \end{equation*}
    $0 < v < \delta_{t}$ is equivalent to $0 < x < \widetilde{\delta}_{t}$.
    Thus, \Cref{lem:delta_t_existence} implies that
    if $0 < x < \widetilde{\delta}_{t}$, then $\widetilde{q}_{t}(x) < 4$.
    Moreover, since
    \begin{equation*}
        \widetilde{\delta}_{t} = 1 - \frac{2}{e^{\delta_{t}} + 1},
    \end{equation*}
    $\widetilde{\delta}_{t}$ is strictly increasing with respect to $\delta_{t}$.
    Therefore, by \Cref{lem:delta_t_strictly_decreasing}
    $\widetilde{\delta}_{t}$ is strictly decreasing with respect to $t \in (0, 1)$.
\end{proof}

We set $L(t)$ for $0 < t < 1$ by
\begin{equation}\label{eq:def_L_hG}
    L(t) := \max\set{2, \widetilde{\delta}_{t}^{-1}} \geq 2 \quad (0 < t < 1),
\end{equation}
where $\widetilde{\delta}_{t}$ is as in \Cref{cor:delta_tilde_existence}.
Note that $L(t)$ is non-decreasing with respect to $t \in (0, 1)$.

\begin{thm}\label{thm:value_of_L_at_half}
    If $0 < t \leq 1/2$, then $L(t) = 2$.
    Moreover, if $1/2 < t < 1$, then
    \begin{equation*}
        L(t) \leq 1 + \frac{1+t}{1-t + \sqrt{2(1-t)}}.
    \end{equation*}
\end{thm}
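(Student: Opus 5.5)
The plan is to reduce both claims to a single explicit lower bound for $\widetilde{\delta}_{t}$, namely
\begin{equation*}
    \widetilde{\delta}_{t} \geq \sqrt{\frac{1-t}{1+t}} \quad (0 < t < 1).
\end{equation*}
By \Cref{lem:delta_t_existence} and the substitution in the proof of \Cref{cor:delta_tilde_existence}, we have $q(t, v) \leq 4$ exactly when $v \leq \delta_{t}$, and $x \leq \widetilde{\delta}_{t}$ exactly when $v \leq \delta_{t}$, where $e^{v} = (1+x)/(1-x)$. It therefore suffices to find an explicit $v_{1}(t)$ with $q(t, v) \leq 4$ for $0 < v \leq v_{1}(t)$, and then translate $v_{1}(t)$ back to the variable $x$.

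\textbf{Step 1: an elementary upper bound for $q$.} Write $c = \cosh(v) \geq 1$. I will use two inequalities.
\begin{itemize}
    \item Comparing the power series termwise, $t^{2k} \leq t^{2}$ for $k \geq 1$ and $0 < t < 1$, which gives $\cosh(tv) - 1 \leq t^{2}(\cosh(v) - 1)$.
    \item The map $u \mapsto u^{t}$ is concave, so at $u = 1/c \in (0, 1]$ we get $c^{-t} \leq 1 + t(1/c - 1)$.
\end{itemize}
Adding these gives
\begin{equation*}
    q(t, v) \leq 4 + 2t^{2}(c-1) - 2t\paren{1 - \frac{1}{c}} = 4 + 2t(c-1)\paren{t - \frac{1}{c}}.
\end{equation*}
Hence $q(t, v) \leq 4$ whenever $\cosh(v) \leq 1/t$.

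\textbf{Step 2: translation to $x$.} With $e^{v} = (1+x)/(1-x)$ we have $\cosh(v) = (1+x^{2})/(1-x^{2})$. The condition $\cosh(v) \leq 1/t$ is therefore equivalent to $x^{2} \leq (1-t)/(1+t)$. By Step 1 and the characterization above, this yields $\widetilde{\delta}_{t} \geq \sqrt{(1-t)/(1+t)}$, and so
\begin{equation*}
    L(t) \leq \max\set{2, \sqrt{\frac{1+t}{1-t}}}.
\end{equation*}

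\textbf{Step 3: the two claims.}
\begin{itemize}
    \item For $0 < t \leq 1/2$ we have $\sqrt{(1+t)/(1-t)} \leq \sqrt{3} < 2$, so $L(t) = 2$. As a cross-check, one can evaluate directly $\widetilde{q}_{1/2}(1/2) = \sqrt{3} + 1/\sqrt{3} + 2\sqrt{3/5} \approx 3.86 < 4$ and then use the monotonicity in \Cref{cor:delta_tilde_existence}.
    \item For $1/2 < t < 1$, put $s = \sqrt{2(1-t)}$ and $B(t) := 1 + (1+t)/(1-t+s)$. I need $\max\set{2, \sqrt{(1+t)/(1-t)}} \leq B(t)$.
    \item First, $B(t) \geq 2$ is equivalent to $s \leq 2t$, i.e.\ $2(1-t) \leq 4t^{2}$, which holds for $t \geq 1/2$.
    \item Second, I need $\sqrt{(1+t)/(1-t)} \leq B(t)$. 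Substituting $u = 1-t \in (0, 1/2)$, this becomes a one-variable algebraic inequality in $u$ and $\sqrt{u}$. I will verify it by squaring. The asymptotics are consistent: as $t \to 1$ the left side is $\sim \sqrt{2}/\sqrt{1-t}$ and the right side is $\sim 1 + \sqrt{2}/\sqrt{1-t}$.
\end{itemize}

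The main obstacle I expect is this final algebraic comparison. If the direct squaring becomes messy, the fallback is to sharpen Step 1 instead. One option is to keep the next term of the $\cosh$ series. Another is to evaluate $q$ directly at the point
\begin{equation*}
    x_{0} = \frac{1-t+s}{2+s},
\end{equation*}
which corresponds to $1/x_{0} = B(t)$. Here $e^{v_{0}} = (3-t+2s)/(1+t)$, and the task is to show $q(t, v_{0}) \leq 4$ using the same concavity estimate. This is presumably where the author's $\sqrt{2(1-t)}$ comes from.
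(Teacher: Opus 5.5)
Your proof is correct, but it takes a different and sharper route than the paper at the one step that matters, the upper bound for $(\cosh v)^{-t}$. The paper uses your estimate $\cosh(tv)\le 1+t^{2}(\cosh v-1)$, but bounds $(1+u)^{-t}\le 1-tu+\frac{t(1+t)}{2}u^{2}$ with $u=\cosh v-1$. This gives $q(t,v)\le 4+t(1+t)(\cosh v-1)\bigl(\cosh v-\frac{3-t}{1+t}\bigr)$, so it only certifies $q\le 4$ up to $\cosh\xi_{t}=\frac{3-t}{1+t}$. It then concludes $\widetilde{\delta}_{t}^{-1}\le 1+2/(e^{\xi_{t}}-1)$.

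Your tangent-line bound $c^{-t}\le 1+t(1/c-1)$ is cruder for $\cosh v$ near $1$, but it is better at the threshold, because $\frac{1}{t}-\frac{3-t}{1+t}=\frac{(1-t)^{2}}{t(1+t)}\ge 0$.

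The final comparison you flagged as the main obstacle is immediate. With $s=\sqrt{2(1-t)}$ one has $1-t+s=s(1+s/2)$ and $1+t=2-s^{2}/2$, so
\begin{equation*}
1+\frac{1+t}{1-t+s}=\frac{2+s}{s(1+s/2)}=\frac{2}{s}=\sqrt{\frac{2}{1-t}}.
\end{equation*}
The required inequality $\sqrt{(1+t)/(1-t)}\le\sqrt{2/(1-t)}$ is then just $1+t\le 2$. In particular the right side is exactly $\sqrt{2/(1-t)}$, with no additive $1$, and neither fallback is needed.

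Your route therefore proves a strictly stronger statement: $L(t)\le\max\{2,\sqrt{(1+t)/(1-t)}\}$. This already gives $L(t)=2$ for all $0<t\le 3/5$. The paper's second-order estimate reaches exactly its stated constant $\sqrt{2/(1-t)}$ and the cutoff $t=1/2$.
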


\begin{proof}
    For $0 < t < 1$ and $v \geq 0$, we have
    \begin{align}
        \cosh(tv) &= \sum_{n=0}^{\infty} \frac{(tv)^{2n}}{(2n)!} = 1 + \sum_{n=1}^{\infty} \frac{t^{2n} v^{2n}}{(2n)!} \notag \\
        &\leq t^{2}\cosh(v) + 1 - t^{2} \label{ineq:cosh}.
    \end{align}
    Also, for $0 < t < 1$ and $u \geq 0$, we have
    \begin{equation*}
        (1+u)^{-t} \leq 1 - t u + \frac{t(1+t)}{2} u^{2},
    \end{equation*}
    and then for $0 < t < 1, v \geq 0$, we have
    \begin{equation}\label{ineq:cosh-pow}
        (\cosh(v))^{-t} \leq 1 - t(\cosh(v) - 1) + \frac{t(1+t)}{2}(\cosh(v) - 1)^{2}.
    \end{equation}
    Therefore, by \eqref{ineq:cosh} and \eqref{ineq:cosh-pow},
    we obtain
    \begin{align*}
        q(t, v) &= 2\paren{\cosh(tv) + (\cosh(v))^{-t}} \\
        &\leq 2\paren{t^{2}\cosh(v) + 1 - t^{2} + 1 - t(\cosh(v) - 1) + \frac{t(1+t)}{2}(\cosh(v) - 1)^{2}} \\
        &= 4 + t(1+t)(\cosh(v) - 1)\paren{\cosh(v) - \frac{3 - t}{1 + t}}
    \end{align*}
    for $0 < t < 1$ and $v \geq 0$.
    Let $\xi_{t} >0$ be such that $\cosh(\xi_{t}) = \frac{3 - t}{1 + t}$.
    Note that
    \begin{equation*}
        e^{\xi_{t}} = \frac{3 - t}{1 + t} + \sqrt{\paren{\frac{3 - t}{1 + t}}^{2} - 1} = \frac{3 - t + 2\sqrt{2(1 - t)}}{1 + t}.
    \end{equation*}
    Since $q(t, \xi_{t}) \leq 4$,
    we have $\delta_{t} \geq \xi_{t}$ by \Cref{lem:delta_t_existence}.
    Thus,
    \begin{equation*}
        \paren{\widetilde{\delta}_{t}}^{-1} = \frac{e^{\delta_{t}}+1}{e^{\delta_{t}} - 1} = 1 + \frac{2}{e^{\delta_{t}} - 1} \leq 1 + \frac{2}{e^{\xi_{t}} - 1} = 1 + \frac{1 + t}{1 - t + \sqrt{2(1 - t)}}
    \end{equation*}
    holds for $0 < t < 1$.
    In particular,
    $\widetilde{L}(t) := 1 + \frac{1 + t}{1 - t + \sqrt{2(1 - t)}}$ is
    strictly increasing with respect to $t \in (0, 1)$
    and $\widetilde{L}(1/2) = 2$.
    Therefore, since
    \begin{equation*}
        2 \leq L(t) = \max\set{2, \paren{\widetilde{\delta}_{t}}^{-1}} \leq\max\set{{2, \widetilde{L}(t)}},
    \end{equation*}
    we conclude that $L(t) = 2$ for $0 < t \leq 1/2$
    and that $L(t) \leq \widetilde{L}(t)$ for $1/2 < t < 1$.
\end{proof}

\begin{thm}[First half of \Cref{main6}]\label{thm:comparison_of_dimensions_T_4}
    Let $\mathbf{d} \in \mathcal{D}$ and suppose that $\mathcal{F} = \mathcal{F}\paren{\mathbf{d}, 4}$ is a CIFS.
    Assume that $\mathcal{G} = \mathcal{G}\paren{\mathbf{d}, 4}$ is a regular CIFS
    and $0 < h_{\mathcal{G}} < 1$.
    Then,
    if $d_{1} \geq L\paren{h_{\mathcal{G}}}$, we have $h_{\mathcal{F}} > h_{\mathcal{G}}$.
\end{thm}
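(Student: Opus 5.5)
The strategy is to compare $P_{\mathcal{F}}$ and $P_{\mathcal{G}}$ at the single point $t := h_{\mathcal{G}}$. Since $\mathcal{G}$ is regular, $P_{\mathcal{G}}(t) = 0$ by \Cref{thm:existence-and-uniqueness-of-conformal-measure-for-hausdorff-dimension}. It therefore suffices to show $0 < P_{\mathcal{F}}(t) < \infty$. Finiteness is automatic: $P_{\mathcal{G}}(t) = 0 < \infty$, and \Cref{thm:comparison_of_f_and_g}(1) gives $F(\mathcal{F}) = F(\mathcal{G})$. Once $P_{\mathcal{F}}(t) > 0$ is established, \Cref{thm:hausdorff-dimension-and-pressure} together with the strict decrease of $P_{\mathcal{F}}$ on $F(\mathcal{F})$ (\Cref{prop:properties-of-pressure-function}) gives $h_{\mathcal{F}} > t = h_{\mathcal{G}}$, exactly as in the proof of \Cref{thm:main_theorem_dim1_T1}.

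To get $P_{\mathcal{F}}(t) > P_{\mathcal{G}}(t)$, I use \Cref{lem:pressure_ineq_g_general_T} with $T=4$. Since $\cos(2\pi j/4) = 1, 0, -1, 0$, it yields
\begin{equation*}
  0 = P_{\mathcal{G}}(t) \leq \log\paren{\sum_{n=1}^{\infty}\bracket{(d_{n}-1)^{-2t} + (d_{n}+1)^{-2t} + 2(d_{n}^{2}+1)^{-t}}}.
\end{equation*}
By \Cref{lem:pressure_of_f}, $P_{\mathcal{F}}(t) = \log\paren{\sum_{n} 4(d_{n}^{2}-1)^{-t}}$. It is therefore enough to prove the termwise strict inequality
\begin{equation*}
  (d_{n}-1)^{-2t} + (d_{n}+1)^{-2t} + 2(d_{n}^{2}+1)^{-t} < 4(d_{n}^{2}-1)^{-t} \quad \text{for every } n.
\end{equation*}
Summing over $n$ then gives $P_{\mathcal{G}}(t) \leq \log(\cdots) < P_{\mathcal{F}}(t)$. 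The sums are finite, and the strictness survives summation because every term is strictly smaller.

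The key computation rewrites this termwise inequality in the form of \Cref{cor:delta_tilde_existence}. Multiply both sides by $(d_{n}^{2}-1)^{t}/$ and set $x = 1/d_{n} \in (0, 1/2]$. The three ratios become
\begin{equation*}
  \paren{\frac{d_{n}+1}{d_{n}-1}}^{t} = \paren{\frac{1+x}{1-x}}^{t}, \quad \paren{\frac{d_{n}-1}{d_{n}+1}}^{t} = \paren{\frac{1-x}{1+x}}^{t}, \quad \paren{\frac{d_{n}^{2}-1}{d_{n}^{2}+1}}^{t} = \paren{\frac{1-x^{2}}{1+x^{2}}}^{t},
\end{equation*}
so the required inequality is exactly $\widetilde{q}_{t}(1/d_{n}) < 4$. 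By \Cref{cor:delta_tilde_existence}, this holds whenever $0 < 1/d_{n} < \widetilde{\delta}_{t}$, that is, $d_{n} > \widetilde{\delta}_{t}^{-1}$.

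The main obstacle is the boundary case, because the hypothesis $d_{1} \geq L(t) = \max\set{2, \widetilde{\delta}_{t}^{-1}}$ only gives $d_{1} \geq \widetilde{\delta}_{t}^{-1}$, not strict. For $n \geq 2$ this causes no trouble: $d_{n} \geq d_{1} + 2 > \widetilde{\delta}_{t}^{-1}$, so those terms are strictly smaller. For $n = 1$, equality $d_{1} = \widetilde{\delta}_{t}^{-1}$ means $x = \widetilde{\delta}_{t}$, and then $\widetilde{q}_{t}(x) = 4$ by the definition of $\delta_{t}$ in \Cref{lem:delta_t_existence}. That term is $\leq$ rather than $<$, but this is harmless, since the strict inequalities for $n \geq 2$ already make the total sum strictly smaller. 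Hence $P_{\mathcal{F}}(t) > 0$, which completes the argument.
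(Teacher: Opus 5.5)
Your proposal is correct and follows the paper's proof essentially step for step. It bounds $P_{\mathcal{G}}(h_{\mathcal{G}})$ via \Cref{lem:pressure_ineq_g_general_T} with $T=4$, rewrites each summand as $(d_n^2-1)^{-t}\widetilde{q}_{t}(1/d_n)$, uses $\widetilde{q}_{t}(1/d_n)<4$ for $n\ge 2$ and $\le 4$ for $n=1$, and compares with $P_{\mathcal{F}}(h_{\mathcal{G}})=\log\bigl(4\sum_n (d_n^2-1)^{-t}\bigr)$.

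One cosmetic point: passing from $0<P_{\mathcal{F}}(h_{\mathcal{G}})<\infty$ to $h_{\mathcal{F}}>h_{\mathcal{G}}$ really uses continuity of $P_{\mathcal{F}}$ on $F(\mathcal{F})$, not strict decrease. That fact is also in \Cref{prop:properties-of-pressure-function}, so nothing is missing.
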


\begin{proof}
    We use the same notation as in the proof of \Cref{cor:delta_tilde_existence}
    for $\widetilde{q}_{t}, \widetilde{\delta}_{t}$.
    By \Cref{lem:pressure_ineq_g_general_T},
    for $t > 0$, we obtain
    \begin{align*}
        P_{\mathcal{G}}(t) &\leq \log\paren{\sum_{n=1}^{\infty}\set{\paren{d_{n}-1}^{-2t} + \paren{d_{n}+1}^{-2t} + 2\paren{d_{n}^{2}+1}^{-t}}} \\
        &= \log\paren{\sum_{n=1}^{\infty} \frac{1}{\paren{d_{n}^{2}-1}^{t}}\set{\paren{\frac{d_{n}-1}{d_{n}+1}}^{t} + \paren{\frac{d_{n}+1}{d_{n}-1}}^{t} + 2\paren{\frac{d_{n}^{2}-1}{d_{n}^{2}+1}}^{t}}} \\
        &= \log\paren{\sum_{n=1}^{\infty} \frac{1}{\paren{d_{n}^{2}-1}^{t}}\widetilde{q}_{t}\paren{\frac{1}{d_{n}}}}.
    \end{align*}
    If $d_{1} \geq L\paren{h_{\mathcal{G}}}$, then for any $n \in \mathbb{N}$ with $n \geq 2$,
    we have
    \begin{equation*}
        0 < \frac{1}{d_{n}} < \frac{1}{d_{1}} \leq \frac{1}{L} \leq \widetilde{\delta}_{h_{\mathcal{G}}}.
    \end{equation*}
    Therefore, if $n \geq 2$, then
    $0 < \widetilde{q}_{h_{\mathcal{G}}}\paren{\frac{1}{d_{n}}} < 4$.
    Also, since $0 < 1/d_{1} \leq 1/L \leq \widetilde{\delta}_{h_{\mathcal{G}}}$,
    we have $0 < \widetilde{q}_{h_{\mathcal{G}}}\paren{\frac{1}{d_{1}}} \leq 4$.
    From the above, \Cref{lem:pressure_of_f} and \Cref{thm:comparison_of_f_and_g}(1), we obtain that
    \begin{equation*}
        0 = P_{\mathcal{G}}(h_{\mathcal{G}}) \leq \log\paren{\sum_{n=1}^{\infty} \frac{\widetilde{q}_{h_{\mathcal{G}}}\paren{\frac{1}{d_{n}}}}{\paren{d_{n}^{2}-1}^{h_{\mathcal{G}}}}} < \log\paren{\sum_{n=1}^{\infty} \frac{4}{\paren{d_{n}^{2}-1}^{h_{\mathcal{G}}}}} = P_{\mathcal{F}}(h_{\mathcal{G}}) < \infty
    \end{equation*}
    holds. Therefore, $h_{\mathcal{F}} > h_{\mathcal{G}}$.
\end{proof}

\begin{cor}[Second half of \Cref{main6}]\label{cor:comparison_of_dimensions_T_4_at_half}
    Let $\mathbf{d} \in \mathcal{D}$ and suppose that $\mathcal{F} = \mathcal{F}\paren{\mathbf{d}, 4}$ is a CIFS.
    Also, suppose that $\mathcal{G} = \mathcal{G}\paren{\mathbf{d}, 4}$ is a regular CIFS
    and satisfies $0 < h_{\mathcal{G}} \leq 1/2$.
    Then,
    $h_{\mathcal{F}} > h_{\mathcal{G}}$ holds.
\end{cor}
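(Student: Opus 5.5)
This follows directly from \Cref{thm:comparison_of_dimensions_T_4} combined with \Cref{thm:value_of_L_at_half}. All hypotheses of \Cref{thm:comparison_of_dimensions_T_4} except the bound on $d_{1}$ are assumed: $\mathbf{d} \in \mathcal{D}$, $\mathcal{F}\paren{\mathbf{d},4}$ is a CIFS, and $\mathcal{G}\paren{\mathbf{d},4}$ is a regular CIFS. The assumption $0 < h_{\mathcal{G}} \leq 1/2$ gives in particular $0 < h_{\mathcal{G}} < 1$, so $L(h_{\mathcal{G}})$ in \eqref{eq:def_L_hG} is well defined.

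First, apply \Cref{thm:value_of_L_at_half} with $t = h_{\mathcal{G}} \in (0, 1/2]$. This gives $L(h_{\mathcal{G}}) = 2$.

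Second, since $\mathbf{d} \in \mathcal{D}$, the definition of $\mathcal{D}$ (\Cref{def:d}) gives $d_{1} \geq 2 = L(h_{\mathcal{G}})$. So the remaining hypothesis of \Cref{thm:comparison_of_dimensions_T_4} is satisfied automatically, and that theorem yields $h_{\mathcal{F}} > h_{\mathcal{G}}$.

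There is no real obstacle here; the analytic work is already done in \Cref{thm:value_of_L_at_half}. The only detail to check is the boundary case $d_{1} = 2$. The proof of \Cref{thm:comparison_of_dimensions_T_4} only uses the non-strict inequality $\widetilde{q}_{h_{\mathcal{G}}}(1/d_{1}) \leq 4$ (by continuity of $\widetilde{q}_{t}$ at $\widetilde{\delta}_{t}$). Strictness comes from the terms with $n \geq 2$, where $1/d_{n} < 1/d_{1}$. Hence the case $d_{1} = L$ is covered by that theorem as stated.
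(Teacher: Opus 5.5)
Your proof is correct and is the same as the paper's: \Cref{thm:value_of_L_at_half} gives $L(h_{\mathcal{G}}) = 2$, so $d_{1} \geq 2 = L(h_{\mathcal{G}})$ holds automatically for $\mathbf{d} \in \mathcal{D}$, and then \Cref{thm:comparison_of_dimensions_T_4} applies. Your remark on the boundary case $d_{1} = 2$ is also accurate: that theorem only needs $\widetilde{q}_{h_{\mathcal{G}}}(1/d_{1}) \leq 4$, and strictness comes from the terms with $n \geq 2$.
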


\begin{proof}
    By \Cref{thm:value_of_L_at_half},
    $0 < h_{\mathcal{G}} \leq 1/2$ implies $L(h_{\mathcal{G}}) = 2$.
    Therefore, since $\mathbf{d} \in \mathcal{D}$, we have $d_{1} \geq 2 = L(h_{\mathcal{G}})$.
    Hence, by \Cref{thm:comparison_of_dimensions_T_4},
    $h_{\mathcal{F}} > h_{\mathcal{G}}$ holds.
\end{proof}

\begin{problem}
    Let $\mathbf{d} \in \mathcal{D}$ and $T \in \mathbb{N}$.
    Suppose that $\mathcal{F} = \mathcal{F}(\mathbf{d}, T)$ is a CIFS
    and that $\mathcal{G} = \mathcal{G}(\mathbf{d}, T)$ is a regular CIFS.
    Is it always true that $h_{\mathcal{F}} > h_{\mathcal{G}}$?
    If not, can one find explicit parameters $\mathbf{d}$ and $T$
    such that $h_{\mathcal{F}} \leq h_{\mathcal{G}}$?
\end{problem}

\section{Examples}\label{sec:examples}
We conclude the paper with some examples illustrating our results.

\subsection{Examples of Polynomial Growth Cases}
\begin{ex}[\Cref{main7}: Example where \textbf{(C1)} holds]\label{ex:C1_satisfied_example}
    For given $\gamma > 1$ and $T \in \mathbb{N}$,
    we set $\mathbf{d}^{(L)}$ for each $L \in \mathbb{N}$ by
    $\mathbf{d}^{(L)} := \set{(n+L)^{\gamma} + 1}_{n=1}^{\infty}$.
    Then, for any sufficiently large $L \in \mathbb{N}$,
    $\mathcal{F} = \mathcal{F}(\mathbf{d}^{(L)}, T)$ and $\mathcal{G} = \mathcal{G}(\mathbf{d}^{(L)}, T)$
    are both CIFSs and
    \begin{equation*}
        H_{h_{\mathcal{F}\paren{\mathbf{d}^{(L)}, T}}}\paren{J_{\mathcal{F}\paren{\mathbf{d}^{(L)}, T}}} = H_{h_{\mathcal{G}\paren{\mathbf{d}^{(L)}, T}}}\paren{J_{\mathcal{G}\paren{\mathbf{d}^{(L)}, T}}} = 0
    \end{equation*}
    hold.
\end{ex}

\begin{proof}
    It is easy to check $\mathbf{d}^{(L)} \in \mathcal{D}$ and $T \leq \pi/\arcsin\paren{1/d^{(L)}_{1}}$
    for sufficiently large $L \in \mathbb{N}$.
    Take such $L$.
    Then, \Cref{lem:cifs-sufficient-cond} implies that
    $\mathcal{F}(\mathbf{d}^{(L)}, T)$ and $\mathcal{G}(\mathbf{d}^{(L)}, T)$
    are both CIFSs.
    We have
    \begin{equation*}
        P_{\mathcal{F}\paren{\mathbf{d}^{(L)}, T}}(1/\gamma) \leq \log\paren{T\sum_{n=1}^{\infty} \paren{\frac{1}{\paren{d^{(L)}_{n}-1}^{2}}}^{1/\gamma}} = \log\paren{T\sum_{n=1}^{\infty} \frac{1}{(n+L)^{2}}}
    \end{equation*}
    by \Cref{lem:pressure_of_f}.
    Therefore, replacing $L$ by a larger integer if necessary,
    we obtain $P_{\mathcal{F}\paren{\mathbf{d}^{(L)}, T}}(1/\gamma) < 0$.
    Hence,
    we have $h_{\mathcal{F}\paren{\mathbf{d}^{(L)}, T}} < 1/\gamma$ by the regularity of $\mathcal{F}\paren{\mathbf{d}^{(L)}, T}$,
    and then \Cref{thm:measure_poly_f} indicates
    \begin{equation*}
        H_{h_{\mathcal{F}\paren{\mathbf{d}^{(L)}, T}}}\paren{J_{\mathcal{F}\paren{\mathbf{d}^{(L)}, T}}} = 0.
    \end{equation*}
    The same argument works for $\mathcal{G}\paren{\mathbf{d}^{(L)}, T}$.
\end{proof}

\begin{ex}[Example where \textbf{(C2)} holds]\label{ex:C2_satisfied_example_gamma1}
    Suppose $\set{d_{n}}_{n=1}^{\infty}\in\mathcal{D}$ satisfies \eqref{eq:polynomial_growth_conditions} for $\gamma = 1$.
    If $d_{1}^{2}-1 \leq T \leq \pi/\arcsin(1/d_{1})$,
    then $\dimp{J_{\mathcal{F}}} = h_{\mathcal{F}}$
    and $\Pi_{h_{\mathcal{F}}}(J_{\mathcal{F}}) = \infty$.
    In particular, if $d_{1} = 2$ and $T \in \set{3, 4, 5, 6}$,
    then $\dimp{J_{\mathcal{F}}} = h_{\mathcal{F}}$ and
    $\Pi_{h_{\mathcal{F}}}(J_{\mathcal{F}}) = \infty$ hold
    since $d_{1}^{2} - 1 = 3 \leq T \leq 6 = \pi/\arcsin\paren{1/d_{1}}$.
\end{ex}

\begin{proof}
    If $d_{1}^{2}-1 \leq T$ holds, \Cref{lem:pressure_of_f} and \Cref{prop:theta_poly} imply
    \begin{equation*}
        0 \leq \log\paren{\frac{T}{d_{1}^{2}-1}} < \log\paren{T\sum_{n=1}^{\infty} \frac{1}{(d_{n}^{2}-1)}} = P_{\mathcal{F}}(1) < \infty.
    \end{equation*}
    Thus, we have $h_{\mathcal{F}} > 1 = \gamma$ and then \Cref{cor:hausdorff-dim-equals-packing-dim-and-packing-measure-infty} implies
    $\dimp{J_{\mathcal{F}}} = h_{\mathcal{F}}$ and $\Pi_{h_{\mathcal{F}}}(J_{\mathcal{F}}) = \infty$.
\end{proof}

\begin{ex}[Example where $h_{\mathcal{F}} = 1/\gamma$ holds]
    Let $\mathbf{d} = \set{2n}_{n=1}^{\infty}$ and $T=2$.
    We then have
    \begin{equation*}
        h_{\mathcal{F}} = \dimp{J_{\mathcal{F}}} = 1, \quad 0 < H_{h_{\mathcal{F}}}\paren{J_{\mathcal{F}}} < \infty, \quad 0 < \Pi_{h_{\mathcal{F}}}\paren{J_{\mathcal{F}}} < \infty.
    \end{equation*}
\end{ex}

\begin{proof}
    Since $J_{\mathcal{F}}$ is contained in $[-1, 1]$,
    we have $\overline{J_{\mathcal{F}}} \subset [-1, 1]$.
    We show the opposite inclusion.
    Let $I := \set{2n \mid n \in \mathbb{N}} \cup \set{-2n \mid n \in \mathbb{N}}$.
    Take an arbitrary $x_{0} \in [-1, 1] \setminus \set{0}$.
    By \Cref{prop:image}, there exist $\omega_{1} \in I$ such that
    $x_{0} \in f_{\omega_{1}}\paren{\overline{\mathbb{D}}}$.
    Let $x_{1} := f_{\omega_{1}}^{-1}(x_{0}) \in [-1, 1]$.
    Then, if $x_{1} \ne 0$, by \Cref{prop:image} again,
    there exists $\omega_{2} \in I$ such that $x_{1} \in f_{\omega_{2}}\paren{\overline{\mathbb{D}}}$.
    Let $x_{2} := f_{\omega_{2}}^{-1}(x_{1}) \in [-1, 1]$.
    By repeating this procedure, if $x_{n} \ne 0$ for every $n \in \mathbb{N}$,
    we obtain a sequence $\omega = (\omega_{1}, \omega_{2}, \dots) \in I^{\mathbb{N}}$
    and a sequence $\set{x_{n}}_{n=0}^{\infty} \in [-1, 1]^{\mathbb{N}}$ such that
    $x_{n} = f_{\omega_{n}}^{-1}(x_{n-1})$ for each $n \in \mathbb{N}$.
    Therefore, we have $x_{0} = \paren{f_{\omega_{1}} \circ f_{\omega_{2}} \circ \dots \circ f_{\omega_{n}}}(x_{n})$
    for each $n \in \mathbb{N}$,
    and then $x_{0} \in J_{\mathcal{F}} = \bigcup_{\tau \in I^{\mathbb{N}}} \bigcap_{n=1}^{\infty} \paren{f_{\tau_{1}} \circ \dots \circ f_{\tau_{n}}}\paren{\overline{\mathbb{D}}}$.
    If $x_{n} = 0$ for some $n \in \mathbb{N}$,
    then $x_{0} = \paren{f_{\omega_{1}} \circ f_{\omega_{2}} \circ \dots \circ f_{\omega_{n}}}(0)$.
    Since we have
    \begin{equation*}
        \overline{J_{\mathcal{F}}} = J_{\mathcal{F}} \cup A
    \end{equation*}
    by \Cref{prop:closure-of-limit-set,prop:comparison_of_f_and_g}(2),
    where
    \begin{equation*}
        A := \set{0} \cup \bigcup_{N\in\mathbb{N}}\bigcup_{\omega \in I^{N}}f_{\omega}\paren{\set{0}},
    \end{equation*}
    we obtain $x_{0} \in \overline{J_{\mathcal{F}}}$.
    Thus, we obtain $\overline{J_{\mathcal{F}}} = [-1, 1]$ and $J_{\mathcal{F}} \cup A = [-1, 1]$.
    Since $A$ is countable and $H_{1}\paren{[-1, 1]}, \Pi_{1}\paren{[-1, 1]}\in (0, \infty)$,
    we conclude that
    \begin{equation*}
        h_{\mathcal{F}} = \dimp{J_{\mathcal{F}}} = 1, \quad 0 < H_{h_{\mathcal{F}}}\paren{J_{\mathcal{F}}} < \infty, \quad 0 < \Pi_{h_{\mathcal{F}}}\paren{J_{\mathcal{F}}} < \infty.
    \end{equation*}
\end{proof}

\begin{ex}[Example where \textbf{(C1)} or \textbf{(C2)} holds depending on $T$]
    For $\delta > 0$ with
    \begin{equation}\label{eq:delta_condition_for_g_packing_measure_infty}
        \sum_{n=1}^{\infty} \frac{1}{(2n+1+\delta)^{2}} > 0.2,
    \end{equation}
    let $d_{n} := 2n + 1 + \delta$ for $n \in \mathbb{N}$.
    Note that $\sum_{n=1}^{\infty} \frac{1}{(2n+1)^{2}} = \pi^{2}/8 - 1 \approx 0.2337$.
    Then, we have the following statements.
    \begin{enumerate}
        \item If $T=1, 2$, then $H_{h_{\mathcal{G}}}\paren{J_{\mathcal{G}}} = 0$.
        \item If $T = 5, 6$, then $\dimp{J_{\mathcal{G}}} = h_{\mathcal{G}}$ and $\Pi_{h_{\mathcal{G}}}\paren{J_{\mathcal{G}}} = \infty$.
    \end{enumerate}
\end{ex}

\begin{proof}
    We first prove (1).
    Let $T \in \set{1, 2}$. We set $\mathcal{G}|_{[-1, 1]} := \set{g|_{[-1, 1]} \mid g \in \mathcal{G}}$.
    We can check that $\mathcal{G}|_{[-1, 1]}$ is a CIFS on $[-1, 1]$ by the same argument as in the proof of \Cref{lem:cifs-sufficient-cond}.
    We also have $P_{\mathcal{G}|_{[-1, 1]}} = P_{\mathcal{G}}$ by \Cref{lem:other-expressions-of-pressure-function}.
    Therefore, $\mathcal{G}|_{[-1, 1]}$ is regular since $\mathcal{G}$ is regular by \Cref{prop:hereditarily_regular_poly}.
    We have
    \begin{equation*}
        \Int\paren{[-1, 1]} \setminus J_{\mathcal{G}|_{[-1, 1]}}^{(1)} \supset (-1, 1) \setminus \bracket{-\frac{1}{d_{1}-1}, \frac{1}{d_{1}-1}} = \paren{-1, -\frac{1}{1+\delta}} \cup \paren{\frac{1}{1+\delta}, 1}.
    \end{equation*}
    Hence, \Cref{thm:MauldinUrbanski1996-4.5} (1) implies $h_{\mathcal{G}|_{[-1, 1]}} < 1$.
    Therefore, we obtain $h_{\mathcal{G}} = h_{\mathcal{G}|_{[-1, 1]}} < 1$
    by $J_{\mathcal{G}} = J_{\mathcal{G}|_{[-1, 1]}}$,
    and then \Cref{thm:measure_poly_g} indicates
    \begin{equation*}
        H_{h_{\mathcal{G}}}\paren{J_{\mathcal{G}}} = 0.
    \end{equation*}

    We next prove (2).
    If $T = 5, 6$, then by \eqref{eq:delta_condition_for_g_packing_measure_infty}, we have
    \begin{equation*}
        0 \leq \log\paren{0.2 T} < \log\paren{T\sum_{n=1}^{\infty} \frac{1}{(2n+\delta+1)^{2}}} = P_{\mathcal{G}}(1) < \infty.
    \end{equation*}
    Thus, we have $h_{\mathcal{G}} > 1$ and then \Cref{cor:hausdorff-dim-equals-packing-dim-and-packing-measure-infty} indicates
    $\dimp{J_{\mathcal{G}}} = h_{\mathcal{G}}$ and $\Pi_{h_{\mathcal{G}}}\paren{J_{\mathcal{G}}} = \infty$.
\end{proof}

\begin{ex}[\Cref{main8}: Example where \textbf{(C2)} and $h_{\mathcal{F}} > h_{\mathcal{G}}$ hold]\label{ex:c2-and-comparison_of_dimensions_T4_example}
    Define $\mathbf{d} = \set{d_{n}}_{n=1}^{\infty}$ by
    \begin{equation*}
        d_{n} = \begin{cases}
            17 & (n=1), \\
            19 & (n=2), \\
            n^{3} & (n \geq 3).
        \end{cases}
    \end{equation*}
    Let $\mathcal{F} = \mathcal{F}(\mathbf{d}, 4), \ \mathcal{G} = \mathcal{G}(\mathbf{d}, 4)$.
    Then, we have the following statements.
    \begin{enumerate}
        \item $\dimp{J_{\mathcal{F}}} = h_{\mathcal{F}}$ and $\Pi_{h_{\mathcal{F}}}(J_{\mathcal{F}}) = \infty$.
        \item $\dimp{J_{\mathcal{G}}} = h_{\mathcal{G}}$ and $\Pi_{h_{\mathcal{G}}}(J_{\mathcal{G}}) = \infty$.
        \item $h_{\mathcal{G}} < h_{\mathcal{F}} < 1/2$.
    \end{enumerate}
\end{ex}

\begin{proof}
    Note that this $\mathbf{d}$ satisfies \eqref{eq:polynomial_growth_conditions}
    for $c_{1} = c_{2} = 1, \gamma =3, N=3$.
    We thus have (1) and (2) by \Cref{thm:measure_poly_gamma_general_T_large}.
    We show (3). Since we have
    \begin{equation*}
        \sum_{n=3}^{\infty}\frac{1}{d_{n}-1} = \sum_{n=3}^{\infty}\frac{n^{3}}{n^{3}(n^{3}-1)} \leq \frac{27}{26}\int_{2}^{\infty}x^{-3}dx = \frac{27}{208},
    \end{equation*}
    it follows that
    \begin{equation*}
        4\sum_{n=1}^{\infty}\frac{1}{d_{n}-1} \leq 4\paren{\frac{1}{16} + \frac{1}{18} + \frac{27}{208}} = \frac{116}{117} < 1.
    \end{equation*}
    Therefore, \Cref{prop:hereditarily_regular_poly} and \Cref{lem:pressure_of_f,lem:pressure_ineq_g} imply
    $h_{\mathcal{F}} < 1/2$ and $h_{\mathcal{G}} < 1/2$.
    Therefore, by \Cref{cor:comparison_of_dimensions_T_4_at_half},
    we obtain $h_{\mathcal{G}} < h_{\mathcal{F}} < 1/2$.
\end{proof}

\subsection{Construction of Irregular CIFSs}
In this subsection, we construct a family of irregular CIFSs.
This example is an analogue of Example 6.5 in \cite{MauldinUrbanski1999}.

For $\lambda > 1$, define the sequence $\mathbf{d}^{(\lambda)} = \set{d_{n}^{(\lambda)}}_{n\in\mathbb{N}}$ by
\begin{equation*}
    d^{(\lambda)}_{n} := \frac{2(n+2)\left(\log(n+2)\right)^{\lambda}}{\left(\log{3}\right)^{\lambda}}.
\end{equation*}
We have $d^{(\lambda)}_{1} = 6 > 2$ and
\begin{align*}
    d^{(\lambda)}_{n+1} - d^{(\lambda)}_{n} &= \frac{2}{(\log 3)^{\lambda}}\left((n+3)(\log(n+3))^{\lambda} - (n+2)(\log(n+2))^{\lambda}\right) \\
    &\geq \frac{2}{(\log 3)^{\lambda}}\left((n+3)(\log(n+2))^{\lambda} - (n+2)(\log(n+2))^{\lambda}\right) \\
    &= \frac{2}{(\log 3)^{\lambda}}(\log(n+2))^{\lambda} \\
    &\geq 2.
\end{align*}
Thus, we have $\mathbf{d}^{(\lambda)} \in \mathcal{D}$.
We also have
\begin{equation*}
    \limsup_{N\to\infty} \frac{\log{N}}{2\log{d_{N}^{(\lambda)}}} = \limsup_{N\to\infty} \frac{1}{2}\cdot\frac{\log{N}}{\log{(2(N+2))} + \lambda\log(\log(N+2)) - \lambda\log(\log 3)} = \frac{1}{2}.
\end{equation*}
Therefore, if $T \in \mathbb{N}$ satisfies $T \leq \pi/\arcsin(1/d^{(\lambda)}_{1})$,
then we have
$\theta := \theta_{\mathcal{F}\paren{\mathbf{d}^{(\lambda)}, T}} = \theta_{\mathcal{G}\paren{\mathbf{d}^{(\lambda)}, T}} = 1/2$
by \Cref{thm:theta_via_Dirichlet}.
We set
\begin{align*}
    s(\lambda) &:= \sum_{n=1}^{\infty}\frac{1}{\left(d^{(\lambda)}_{n}\right)^{2\theta}} = \sum_{n=1}^{\infty}\frac{1}{d^{(\lambda)}_{n}} \quad (\lambda > 1), \\
    \beta(x) &:= \frac{\left(\log 3\right)^{\lambda}}{2}\frac{1}{(x+1)(\log(x+1))^{\lambda}} \quad (x > 0).
\end{align*}
Since $1/d^{(\lambda)}_{n} = \beta(n+1)$ for $n \in \mathbb{N}$,
we have
\begin{equation*}
    \int_{2}^{\infty} \beta(x) dx \leq s(\lambda) \leq \int_{1}^{\infty} \beta(x) dx.
\end{equation*}
By simple calculations, we obtain
\begin{equation*}
    \int_{1}^{\infty} \beta(x) dx = \frac{\log 2}{2(\lambda - 1)}\paren{\frac{\log{3}}{\log{2}}}^{\lambda}, \quad \int_{2}^{\infty} \beta(x) dx = \frac{\log 3}{2(\lambda - 1)}.
\end{equation*}
Thus, we have
\begin{equation*}
    \frac{\log 3}{2(\lambda - 1)} \leq s(\lambda) \leq \frac{\log 2}{2(\lambda - 1)}\left(\frac{\log 3}{\log 2}\right)^{\lambda}.
\end{equation*}
Since $0.69 < \log{2} < 0.7$ and $1.09 < \log{3} < 1.1$, we obtain
\begin{equation*}
    s\paren{\frac{3}{2}} > 1, \quad s(2) < 1.
\end{equation*}

Hence, by the continuity of $s(\lambda)$ with respect to $\lambda > 1$,
there exists $\lambda_{0} \in (3/2, 2)$ such that $s(\lambda_{0}) = 1$.
We then have the following \Cref{prop:irregular_example_T1,prop:irregular_example_general_T}.

\begin{prop}\label{prop:irregular_example_T1}
    There exists $\lambda > 1$ such that
    $\mathcal{F}(\mathbf{d}^{(\lambda)}, 1)$ is regular,
    but
    $\mathcal{G}(\mathbf{d}^{(\lambda)}, 1)$ is not regular.
\end{prop}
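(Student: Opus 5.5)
The idea is to take $\lambda$ slightly larger than the $\lambda_{0}\in(3/2,2)$ with $s(\lambda_{0})=1$ constructed above. Then the comparison between $P_{\mathcal{F}}$ and $P_{\mathcal{G}}$ at the critical exponent $\theta=1/2$, which comes from \Cref{lem:pressure_ineq_T1}, will separate the two systems. Since $T=1\leq\pi/\arcsin(1/6)$ and $d^{(\lambda)}_{1}=6>2$, \Cref{lem:cifs-sufficient-cond} shows that both $\mathcal{F}=\mathcal{F}(\mathbf{d}^{(\lambda)},1)$ and $\mathcal{G}=\mathcal{G}(\mathbf{d}^{(\lambda)},1)$ are CIFSs. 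Also $\theta_{\mathcal{F}}=\theta_{\mathcal{G}}=1/2$ for every $\lambda>1$.

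First I would record two monotonicity and continuity facts in $\lambda$.
\begin{itemize}
\item[(a)] For $n\geq 2$ we have $\log(n+2)/\log 3>1$, so $d^{(\lambda)}_{n}$ is strictly increasing in $\lambda$. Hence $s(\lambda)$ is strictly decreasing, and $s(\lambda)<1$ for every $\lambda>\lambda_{0}$.
\item[(b)] The function
\begin{equation*}
u(\lambda):=\sum_{n=1}^{\infty}\paren{(d^{(\lambda)}_{n})^{2}-1}^{-1/2}
\end{equation*}
is continuous on $[\lambda_{0},2]$. This follows from dominated convergence, using the $\lambda$-independent majorant $\paren{(d^{(\lambda_{0})}_{n})^{2}-1}^{-1/2}\leq \const/d^{(\lambda_{0})}_{n}$, which is summable because $s(\lambda_{0})<\infty$.
\end{itemize}
Since $(d^{2}-1)^{-1/2}>d^{-1}$, we get $u(\lambda_{0})>s(\lambda_{0})=1$. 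By continuity there is $\lambda\in(\lambda_{0},2)$ with $u(\lambda)>1$ and, by (a), $s(\lambda)<1$. I fix this $\lambda$.

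For regularity of $\mathcal{F}$: by \Cref{lem:pressure_of_f}, $P_{\mathcal{F}}(1/2)=\log u(\lambda)\in(0,\infty)$. Moreover
\begin{equation*}
P_{\mathcal{F}}(t)\leq \log\paren{35^{\,1/2-t}\,u(\lambda)}\to-\infty \quad (t\to\infty),
\end{equation*}
because $d_{n}^{2}-1\geq 35$. Since $P_{\mathcal{F}}$ is continuous on $F(\mathcal{F})\supset[1/2,\infty)$ by \Cref{prop:properties-of-pressure-function}, the intermediate value theorem gives a zero, so $\mathcal{F}$ is regular.

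For irregularity of $\mathcal{G}$: \Cref{lem:pressure_ineq_T1} gives $P_{\mathcal{G}}(1/2)\leq\log s(\lambda)<0$, and $P_{\mathcal{G}}$ is non-increasing, so $P_{\mathcal{G}}(t)<0$ for all $t\geq 1/2$. For $t<1/2=\theta_{\mathcal{G}}$ we have $P_{\mathcal{G}}(t)=\infty$ by the definition of $\theta_{\mathcal{G}}$. Hence $P_{\mathcal{G}}$ has no zero and $\mathcal{G}$ is irregular.

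The main obstacle is obtaining a \emph{strict} negativity $P_{\mathcal{G}}(1/2)<0$. At $\lambda_{0}$ itself the bound in \Cref{lem:pressure_ineq_T1} only gives $\leq 0$, and passing strict termwise inequalities through the limit in $N$ is delicate. Perturbing $\lambda$ sidesteps this, and the only technical point left is the continuity in (b).
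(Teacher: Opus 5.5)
Your proposal is correct and follows the paper's proof. You perturb $\lambda$ slightly above $\lambda_{0}$ so that $s(\lambda) < 1 < \sum_{n}\paren{(d^{(\lambda)}_{n})^{2}-1}^{-1/2}$, and then read off $P_{\mathcal{G}}(1/2) < 0 < P_{\mathcal{F}}(1/2) < \infty$ from \Cref{lem:pressure_ineq_T1}. Your dominated-convergence argument for continuity, and your explicit deductions of regularity (intermediate value theorem with $P_{\mathcal{F}}(t)\to-\infty$) and irregularity ($P_{\mathcal{G}}=\infty$ below $\theta_{\mathcal{G}}=1/2$ and $P_{\mathcal{G}}<0$ from $1/2$ on), fill in steps the paper leaves implicit.
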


\begin{proof}
    \Cref{lem:pressure_ineq_T1}, \Cref{thm:comparison_of_f_and_g} and $\theta = 1/2$ imply
    \begin{equation}\label{eq:pressure_inequality_at_lambda0}
        P_{\mathcal{G}\paren{\mathbf{d}^{(\lambda_{0})}, 1}}(\theta) \leq \log\paren{\sum_{n=1}^{\infty} \frac{1}{d^{(\lambda_{0})}_{n}}} =  0 < \log\paren{\sum_{n=1}^{\infty} \frac{1}{\left(\left(d^{(\lambda_{0})}_{n}\right)^{2}-1\right)^{1/2}}} = P_{\mathcal{F}\paren{\mathbf{d}^{(\lambda_{0})}, 1}}(\theta) < \infty.
    \end{equation}
    Also,
    \begin{equation*}
        \log\paren{\sum_{n=1}^{\infty} \frac{1}{\left(\left(d^{(\lambda)}_{n}\right)^{2}-1\right)^{1/2}}}, \quad \log\paren{\sum_{n=1}^{\infty} \frac{1}{d^{(\lambda)}_{n}}}
    \end{equation*}
    are continuous and strictly decreasing with respect to $\lambda > 1$.
    Therefore, if we take $\delta > 0$ sufficiently small, then
    \eqref{eq:pressure_inequality_at_lambda0} indicates
    \begin{equation*}
        P_{\mathcal{G}\paren{\mathbf{d}^{(\lambda_{0} + \delta)}, 1}}(\theta) < 0 < P_{\mathcal{F}\paren{\mathbf{d}^{(\lambda_{0} + \delta)}, 1}}(\theta) < \infty.
    \end{equation*}
    Thus, for $\lambda = \lambda_{0} + \delta$,
    $\mathcal{F}\paren{\mathbf{d}^{(\lambda)}, 1}$ is regular.
    However, $\mathcal{G}\paren{\mathbf{d}^{(\lambda)}, 1}$ is not regular.
\end{proof}

\begin{prop}\label{prop:irregular_example_general_T}
    There exist $\mathbf{d}$ and $T \in \mathbb{N}$ such that
    $\mathcal{F}\paren{\mathbf{d}, T}, \mathcal{G}\paren{\mathbf{d}, T}$ are both irregular.
\end{prop}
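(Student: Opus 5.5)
We would use the family $\mathbf{d}^{(\lambda)}$ just constructed, for which $\theta_{\mathcal{F}} = \theta_{\mathcal{G}} = 1/2$. The key observation is that for any CIFS $\mathcal{S}$, if $P_{\mathcal{S}}(\theta_{\mathcal{S}}) < 0$ then $\mathcal{S}$ is irregular. Indeed, $P_{\mathcal{S}}$ is $+\infty$ on $[0,\theta_{\mathcal{S}})$, and by \Cref{prop:properties-of-pressure-function} it is strictly decreasing on $[\theta_{\mathcal{S}},\infty)$. So it takes only negative values wherever it is finite, and it has no zero. It therefore suffices to find $\lambda>1$ and $T\in\mathbb{N}$, with $T \le \pi/\arcsin(1/d^{(\lambda)}_1) = \pi/\arcsin(1/6)$ so that both systems are CIFSs by \Cref{lem:cifs-sufficient-cond}, such that $P_{\mathcal{F}(\mathbf{d}^{(\lambda)},T)}(1/2)<0$ and $P_{\mathcal{G}(\mathbf{d}^{(\lambda)},T)}(1/2)<0$.

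The simplest choice is $T=1$, with $\lambda$ large. By \Cref{lem:pressure_of_f},
\begin{equation*}
P_{\mathcal{F}(\mathbf{d}^{(\lambda)},1)}(1/2) = \log\paren{\sum_{n=1}^{\infty}\paren{(d^{(\lambda)}_n)^2-1}^{-1/2}}.
\end{equation*}
By \Cref{lem:pressure_ineq_T1}, $P_{\mathcal{G}(\mathbf{d}^{(\lambda)},1)}(1/2)\le P_{\mathcal{F}(\mathbf{d}^{(\lambda)},1)}(1/2)$, so it is enough to make the $\mathcal{F}$ sum less than $1$. Since $d^{(\lambda)}_n\ge 6$, we have $(d_n^2-1)^{-1/2}\le \frac{6}{\sqrt{35}}\, d_n^{-1}$, hence
\begin{equation*}
\sum_{n=1}^{\infty}\paren{(d^{(\lambda)}_n)^2-1}^{-1/2}\le \frac{6}{\sqrt{35}}\, s(\lambda)\le \frac{6}{\sqrt{35}}\cdot\frac{\log 2}{2(\lambda-1)}\paren{\frac{\log 3}{\log 2}}^{\lambda}.
\end{equation*}
The last bound is not monotone in $\lambda$, so it has to be evaluated at a specific value. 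At $\lambda=2$ it equals roughly $1.014\cdot 0.8724 \approx 0.885<1$, which would finish the proof with $\lambda=2$, $T=1$. The one step needing care is this numerical check, done with the bounds $0.69<\log 2<0.7$ and $1.09<\log 3<1.1$ in the same way the text shows $s(2)<1$.

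If one wants a genuinely two-dimensional example with $T\ge2$, we would instead take $\lambda$ large enough that $T\, s(\lambda)\cdot 6/\sqrt{35}<1$. We would also use \Cref{lem:pressure_ineq_g} to bound $P_{\mathcal{G}}(1/2)$ by $\log\paren{T\sum_n (d_n-1)^{-1}}$, with $(d_n-1)^{-1}\le \frac65 d_n^{-1}$. The minimum of the upper bound for $s(\lambda)$ over $\lambda$ is already small; for instance at $\lambda=3$ it is about $0.66$. For larger $T$, where this bound no longer suffices, we would sharpen the estimate using the lower-bound integral $\int_2^\infty\beta$ together with $s(\lambda)\le \beta(2)+\int_2^\infty\beta$. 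This is the main obstacle, since the crude bound grows like $(\log3/\log2)^\lambda$. For the statement as posed, however, $T=1$ and $\lambda=2$ suffice.
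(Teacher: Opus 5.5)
Your proof is correct. It reaches the statement by a more explicit route than the paper, trading generality for concreteness.

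The paper works at the critical parameter $\lambda_0$ (where $s(\lambda_0)=1$) with an arbitrary admissible $T$. It notes that $P_{\mathcal{F}}(1/2)$ and $P_{\mathcal{G}}(1/2)$ are finite, so neither system is hereditarily regular. It then deletes finitely many initial digits. The tail $\set{d^{(\lambda_0)}_n}_{n\ge l}$ keeps $\theta=1/2$ and the CIFS property, and the sums bounding both pressures at $1/2$ tend to $0$ as $l\to\infty$. So both pressures at $\theta$ become negative.

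You instead take $T=1$, $\lambda=2$ and check directly, via \Cref{lem:pressure_ineq_T1} and the integral bound, that $P_{\mathcal{G}}(1/2)\le P_{\mathcal{F}}(1/2)\le\log\paren{\tfrac{6}{\sqrt{35}}\,s(2)}<0$. With the stated bounds on $\log 2$ and $\log 3$ one has $s(2)\le 1.21/1.38<0.877$, so the argument of the logarithm is below $0.9$. Your criterion that $P_{\mathcal{S}}(\theta_{\mathcal{S}})<0$ forces irregularity is exactly the step the paper leaves implicit in ``taking an appropriate subsequence'', and you justify it correctly.

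What your version gives up is the two-dimensional content: $T=1$ is essentially the real continued-fraction case, while the paper's tail argument works for every admissible $T$. Deleting digits is genuinely needed there. For $6\le T\le \pi/\arcsin(1/6)$, the first digit alone contributes $T/\sqrt{35}>1$ to the $\mathcal{F}$-sum at $1/2$, so $\mathcal{F}(\mathbf{d}^{(\lambda)},T)$ is regular for every $\lambda$.

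Your side remarks on $T\ge 2$ are also slightly off. The minimum over $\lambda$ of your crude upper bound for $s(\lambda)$ is about $0.69$ (near $\lambda\approx 3.2$), not $0.66$, so that bound already fails at $T=2$. Only the refined bound $s(\lambda)\le 1/6+\log 3/(2(\lambda-1))$ makes your plan work for small $T\ge 2$ (e.g.\ $2\le T\le 4$).
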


\begin{proof}
    Take $T \in \mathbb{N}$ so that $T \leq \pi/\arcsin\paren{1/d^{(\lambda_{0})}_{1}}$.
    Since $s(\lambda_{0}) = 1$, using an argument similar to that in \Cref{thm:comparison_of_f_and_g}(1),
    we obtain
    $P_{\mathcal{F}\paren{\mathbf{d}^{(\lambda_{0})}, T}}(\theta) < \infty$
    and
    $P_{\mathcal{G}\paren{\mathbf{d}^{(\lambda_{0})}, T}}(\theta) < \infty$.
    These inequalities imply that $\mathcal{F}\paren{\mathbf{d}^{(\lambda_{0})}, T}$ and $\mathcal{G}\paren{\mathbf{d}^{(\lambda_{0})}, T}$ are not hereditarily regular.
    Therefore, by taking an appropriate subsequence $\mathbf{d}'$ of $\mathbf{d}^{(\lambda_{0})}$,
    $\mathcal{F}\paren{\mathbf{d}', T}$ and $\mathcal{G}\paren{\mathbf{d}', T}$ are both irregular.
\end{proof}

\section*{Acknowledgements}
The author would like to thank Professor Hiroki Sumi for his invaluable guidance and numerous helpful suggestions
throughout the preparation of this paper.
He has provided the author with continuous encouragement and instruction
over the past six years, from the fundamentals of mathematics to advanced topics.
The author is also grateful to Hiromichi Ohno for many helpful discussions.
Furthermore, the author would like to thank Masashi Kisaka, Kanji Inui, Takayuki Watanabe, and Yuto Nakajima for valuable comments and for answering questions.


\begin{thebibliography}{99}
    \bibitem{Falconer}
        K. J. Falconer,
        \textit{Fractal geometry: Mathematical foundations and applications, third edition},
        Wiley, 2014.
    \bibitem{Hutchinson1981}
        J. E. Hutchinson,
        \textit{Fractals and self-similarity}.
        Indiana Univ. Math. J. 30 (1981), 713–747.
    \bibitem{Mauldin1995}
        R. D. Mauldin,
        \textit{Infinite iterated function systems: Theory and applications}.
        Fractal Geometry and Stochastics, Progress in Probability 37, Birkh{\"a}user, Basel, 1995, pp. 91–110.
    \bibitem{MauldinUrbanski1996}
        R. D. Mauldin and M. Urba\'{n}ski,
        \textit{Dimensions and measures in infinite iterated function systems}.
        Proceedings of the London Mathematical Society 73 (1996), no. 1, 105–154.
    \bibitem{MauldinUrbanski1999}
        R. D. Mauldin and M. Urba\'{n}ski,
        \textit{Conformal iterated function systems with applications to the geometry of continued fractions}.
        Transactions of the American Mathematical Society 351 (1999), no. 12, 4995–5025.
    \bibitem{MauldinWilliams1986}
        R. D. Mauldin and S. C. Williams,
        \textit{Random recursive constructions: Asymptotic geometric and topological properties}.
        Transactions of the American Mathematical Society 295 (1986), no. 1, 325–346.
    \bibitem{Pommerenke1992}
        C. Pommerenke,
        \textit{Boundary Behaviour of Conformal Maps},
        Grundlehren der mathematischen Wissenschaften,
        Springer-Verlag, Berlin, Heidelberg, 1992.
    \bibitem{Sugita-en}
        H. Sugita,
        \textit{Dimension of limit sets of IFSs of complex continued fractions (in Japanese)}.
        Master's thesis, Osaka University, 2014, under supervision of H. Sumi.
    \bibitem{Takemoto-en}
        M. Takemoto,
        \textit{Properties of the family of CIFSs of generalized complex continued fractions (in Japanese)}.
        Master's thesis, Osaka University, 2015, under supervision of H. Sumi.
    \bibitem{yamaguchi-hata-kigami-en}
        M. Yamaguti, M. Hata, and J. Kigami,
        \textit{Mathematics of Fractals},
        Translations of Mathematical Monographs. American Mathematical Society, 1997.
        Translated into English by K. Hudson.
    \bibitem{Zagier-Katayama-1990-en}
        D. B. Zagier,
        \textit{Introduction to Number Theory: Zeta Functions and Quadratic Fields (in Japanese)}.
        Iwanami Shoten, Tokyo, 1990.
        Translated into Japanese by Koji Katayama. Original title: Zetafunktionen und quadratische Körper.
\end{thebibliography}
\end{document}